\newcommand{\comm}[1]{\medskip {\tt #1}}
\newcommand{\compl}[1]{\footnote{Compl: #1}}
\renewcommand{\comm}[1]{}
\renewcommand{\compl}[1]{}
\DeclareRobustCommand{\SkipTocEntry}[5]{}
\renewcommand {\epsilon}{\varepsilon} 
\newcommand{\eps}{\varepsilon}        
\newcommand{\RR}{\mathbb{R}}
\newcommand{\NN}{\mathbb{N}}
\newcommand{\ZZ}{\mathbb{Z}}
\newcommand{\QQ}{\mathbb{Q}}
\newcommand{\KK}{\mathbb{K}}
\newcommand{\PP}{\mathbb{P}}
\newcommand{\FF}{\mathbb{F}}
\renewcommand{\SS}{\mathbb{S}} 
\newcommand{\Aa}{\mathbb{A}} 
\newcommand{\R}{\mathbb{R}}
\newcommand{\N}{\mathbb{N}}
\newcommand{\Z}{\mathbb{Z}}
\newcommand{\Q}{\mathbb{Q}}
\newcommand{\C}{\mathbb{C}}
\newcommand{\F}{\mathbb{F}}
\newcommand{\B}{\mathbb{B}}
\newcommand{\K}{\mathbb{K}}
\renewcommand{\L}{\mathbb{L}}
\newcommand{\ov}{\overline}
\newcommand{\abs}[1]{\left\lvert #1 \right\rvert}     
\newcommand{\norm}[1]{\lVert #1 \rVert} 
\newcommand{\pinfty}{{+\infty}}         
\newcommand{\minfty}{{-\infty}}         
\newcommand{\function}[5]{
{#1} \from
\begin{array}[t]{cll}
{#2} & \rightarrow & {#3} \\
{#4} & \mapsto & {#5}
\end{array}   
}
\DeclareMathOperator{\Hom}{Hom} 
\DeclareMathOperator{\Isom}{Isom}           
\DeclareMathOperator{\Mat}{Mat}
\DeclareMathOperator{\Inte}{Int}
\newcommand{\Int}[1]{\Inte{(#1)}}
\DeclareMathOperator{\Id}{Id}        
\DeclareMathOperator{\diag}{diag}   
\DeclareMathOperator{\End}{End}
\DeclareMathOperator{\GL}{GL}        
\DeclareMathOperator{\PGL}{PGL}
\DeclareMathOperator{\SL}{SL}
\DeclareMathOperator{\PSL}{PSL}
\DeclareMathOperator{\SO}{SO}
\newcommand{\dhex}{d_{\textnormal{H}}}
\newcommand{\dmodel}{d_{\textnormal{H}}}
\newcommand{\dquot}{\bar{d}}
\newcommand{\nhex}[1]{\lVert #1 \rVert_{\textnormal{H}}}
\newcommand*\from{\colon}
\newcommand{\RSp}{\textnormal{RSp}}
\newcommand{\cl}{\textnormal{cl}}
\newcommand{\Symm}{\textnormal{Sym}}
\newcommand{\PS}{\mathfrak{P}}
\newcommand{\Flags}{\textnormal{Flags}} 
\newcommand{\Barc}{\textnormal{Bar}} 
\newcommand{\Aap}{\Aa^+}
\newcommand{\Faces}{\textnormal{Faces}} 
\newcommand{\simplex}{S} 
\newcommand{\Model}{K}
\newcommand{\polyt}{P}
\newcommand{\Kpolyt}{{\mathbf{P}}}
\newcommand{\Fpolyt}{P}
\newcommand{\Sin}{\simplex_{\eb}}
\newcommand{\Sout}{\simplex_{\eb'}}
\newcommand{\projM}{\alpha}
\newcommand{\CR}[1]{\textrm{cr}\left(#1\right)}
\newcommand{\conv}{\textnormal{conv}}
\DeclareMathOperator{\ulim}{ulim}  
    \newcommand{\dn}{d_n} 
    \newcommand{\oom}{o_\omega}
    \newcommand{\Xom}{X_\omega}%
\newcommand{\absom}[1]{\abs{#1}}     %
    \newcommand{\Vom}{V_\omega}%
\newcommand{\eb}{\mathbf{e}} 
\newcommand{\en}{\mathbf{e}_n} 
\newcommand{\eom}{\mathbf{e}_\omega} 
\newcommand{\dPV}{d_{\SS}}   
\newcommand{\PV}{\PP V} 
\DeclareMathOperator{\Ulim}{Ulim}  
\newcommand{\RRom}{\RR_\omega}
\newcommand{\PVom}{\PP V_\omega}%
\newcommand{\ulimPVom}[1]{\ulim_{\PVom}#1} 
\newcommand{\ulimXom}[1]{\ulim_{X_\omega}#1} 
\newcommand{\dom}{d_\omega}%
\newcommand{\etp}[1]{{{\big(#1\big)^b}}}
\newtheorem*{theoremIntro*}{Theorem}
\newtheorem{theoremIntro}{Theorem}
\newtheorem*{lemmaIntro*}{Lemma}
\newtheorem{lemmaIntro}[theoremIntro]{Lemma}
\newtheorem*{corollaryIntro*}{Corollary}
\newtheorem{corollaryIntro}[theoremIntro]{Corollary}
\newtheorem{proposition}{Proposition}[section]
\newtheorem{lemma}[proposition]{Lemma}
\newtheorem{theorem}[proposition]{Theorem}
\theoremstyle{definition}
\newtheorem{definition}[proposition]{Definition}
\newtheorem{example}[proposition]{Example}
\newtheorem{examples}[proposition]{Examples}
\newtheorem{remark}[proposition]{Remark}
\newtheorem{remarks}[proposition]{Remarks}
\newtheorem*{remark*}{Remark}
\newtheorem*{question*}{Question}
\title[Non-Archimedean Hilbert geometry]{Non-Archimedean Hilbert geometry and degenerations of real Hilbert geometries}
\date{\today}
\author{Xenia Flamm}
\address{Xenia Flamm, Institut des Hautes \'Etudes Scientifiques, France 
\newline
\indent Max Planck Institute for Mathematics in the Sciences, Germany}
\email{flamm@ihes.fr\\
xenia.flamm@mis.mpg.de}
\author{Anne Parreau}
\address{Anne Parreau, Institut Fourier, UMR 5582, Laboratoire de Mathématiques, Université Grenoble Alpes, CS 40700, 38058 Grenoble cedex 9, France}
\email{anne.parreau@univ-grenoble-alpes.fr}
\providecommand\@dotsep{5}
\renewcommand{\listoftodos}[1][\@todonotes@todolistname]{%
  \@starttoc{tdo}{#1}}
\newcommand{\aptodo}[2][]{\todo[color=blue!40,#1]{#2}}
\newcommand{\xftodo}[2][]{\todo[color=magenta!80,#1]{#2}}
\renewcommand{\todo}[1]{}
\renewcommand{\aptodo}[1]{}
\renewcommand{\xftodo}[1]{}
\newcommand{\xfchanged}[1]{{\color{magenta} #1}}
\renewcommand{\xfchanged}[1]{{#1}}
\begin{document}
\def\subjclassname{\textup{2020} Mathematics Subject Classification}
\subjclass{
53C60, 
53C15,  
54A20,  	
12J15,  	
12J25.	
}
\keywords{
Hilbert geometry,  convex projective geometry,  non-Archimedean ordered valued fields,   ultralimits of metric spaces}

\maketitle

\begin{abstract}
We develop a theory of Hilbert geometry over general ordered valued
fields, associating with an open convex subset of the
projective space a quotient Hilbert metric space.
Under natural non-degeneracy assumptions,  we prove that the
ultralimit of a sequence of rescaled real Hilbert geometries 
is isometric to the Hilbert metric space 
of an open convex projective subset over a Robinson field. 
This result allows us to prove that ideal points
of the space of convex real projective structures on a closed manifold 
arise from actions on 
non-Archimedean Hilbert geometries without global fixed point.
We explicitly describe the Hilbert metric space 
of a non-Archimedean bounded polytope $\polyt$ defined over a subfield
of the valuation ring as the geometric realization of the flag complex
of $\polyt$ modeled on a Weyl chamber.
As an application, we obtain a complete description of Gromov--Hausdorff
limits of a real polytope with rescaled Hilbert metric. 
\end{abstract}

\section{Introduction}
Hilbert geometry is a generalization of hyperbolic geometry, 
first introduced by Hilbert in his letter to Klein
\cite{Hilbert_StraightLinesShortestConnection}, that has
been extensively studied in many different contexts of mathematics.
It has been in particular successfully used in the context 
of group actions on real projective spaces 
and  convex real projective structures on manifolds,  see e.g.\
\cite{Benzecri_VarietiesLocalementAffinesProjectives,
  Vinberg_DiscreteLinearGroupsGeneratedReflections}.
More recent developments include for example
\cite{Benoist_ConvexesDivisiblesI, 
KarlssonNoskov_HilbertMetricGromovHyp, 
ColboisVernicosVerovic_AreaIdealTrianglesHilbertGeom,  
ColboisVernicosVerovic_HilbertGeometryConvexPolygonalDomains, 
CooperLongTillmann_ConvexProjManifoldsCusps, 
DancigerGueritaudKassel_ConvexCocompactActionsRealProjGeom}.
For a thorough introduction we recommend \cite{Vernicos_IntroHilbertGeom, PapadopoulosTroyanov_HandbookHilbertGeometry},  as well as \cite{Benoist_SurveyDivisibleConvexSets, Quint_DivisibileConvexSets, Marquis_AroundGroupsHilbertGeometry} in the context of group actions.

The goal of this paper is 
to develop the foundations 
of  Hilbert geometry 
over general (possibly non-Archimedean) ordered valued fields,
and to deduce some applications to degenerations of convex real
projective geometries.
%
We are motivated in particular by
degenerations of representations of a finitely generated group in $\PSL(d,\RR)$ 
that preserve open \emph{properly convex} subsets of the real
projective space,  i.e.\ subsets that are convex and bounded in an
affine chart. 
Representations over non-Archimedean (ordered) valued fields 
arise naturally in the study of degenerations of representations, see e.g.\
\cite{
MorganShalen_ValuationsTreesDegHypStructures,
Brumfiel_RSCTeichmullerSpace,
Parreau_CompEspReprGroupesTypeFini,
BurgerIozziParreauPozzetti_RSCCharacterVarieties2}.
They appear for example 
by taking asymptotic cones of actions on symmetric spaces
and can be seen as representations over Robinson fields,  which
are obtained by taking ultralimits of $\RR$ endowed with a rescaled
absolute value.
Degenerations of convex real projective structures on manifolds,
have been studied by numerous authors, mostly in the case of surfaces,
see for example
\cite{Loftin_CompModuliSpaceConvexProjStrSurfaces,
Alessandrini_CompactificationConvexProjStrMnfd,
Nie_HilbGeomSimplTitsSets,
Zhang_DegConvexRP2StrSurf,
Parreau_InvariantWeaklyConvexCocompactSubspacesSurfaceGroupsA2Buildings,
Nie_LimitPolygonsConvexDomainsProjPlane,
OuyangTamburelli_LimitsBlaschkeMetric,
LoftinTamburelliWolf_LimitsCubicDifferentialsBuildings,
Reid_LimitsConvexProjectiveSurfacesFinslerMetrics}.
In the case of hyperbolic structures on a closed surface,
which corresponds to the case  where the convex set is the unit disk, 
Brumfiel interpreted degenerations as hyperbolic structures over 
non-Archimedean ordered valued fields \cite{Brumfiel_RSCTeichmullerSpace,
  Brumfiel_TreeNonArchimedeanHyperbolicPlane}. 
Generalizing these ideas to convex real projective
structures on a closed manifold,
Cooper and Delp suggested   
to study their degenerations using ultralimits 
and convex projective structures over the non-standard field 
of hyperreals and announced results in this direction at conferences 
around 2008.

Let us now explain the objects and results of the present article in more detail.

\subsection{Hilbert geometry over ordered valued fields} 
An \emph{ordered valued field} $\F$ is a field together with a total
order $<$ and a non-trivial absolute value $\abs{\cdot}\from \F^\times\to\R$ 
that are compatible in the sense that $0 \leq x\leq y$ implies that $\abs{x}\leq \abs{y}$ for all $x,y \in \F$.
We note $\Lambda\coloneqq \log \abs{\F^\times} \subseteq \R$ the value group of $\abs{\cdot}$. 
Such fields are endowed with a natural topology induced from the order,  or equivalently from the absolute value).
The field  $\F$ is called \emph{non-Archimedean} if there exists $x \in \F$ with $x>n$ for all $n \in \N$,  or equivalently if $\abs{\cdot}$ satisfies the ultrametric triangle inequality.

Let $V$ be a finite-dimensional $\F$-vector space.
Since $\F$ is ordered there is no difficulty in defining convex subsets of the projective space $\PP V$ as in the real case.
Let thus $\Omega$ be a convex subset of $\PP V$, bounded in some affine chart.
Classically (when $\F=\R$), 
given $x,y\in\Omega$ their Hilbert distance is defined 
as $d_\Omega (x,y) \coloneqq  \log |\CR{a,x,y,b}|$
where $a, b$ are the unique intersection points of $\partial\Omega$ with a
projective line $L_{xy}$ through $x$ and $y$,
and $\CR{a,x,y,b}$ is the cross-ratio, with convention
$\CR{0,1,t,\infty}=t$ in affine charts.
Over general ordered valued fields, $\partial\Omega$ might be empty and $a,b$ not defined.
One can  circumvent this problem using the following alternative
description of the Hilbert distance, 
which makes sense for general valued fields \cite{Guilloux_PAdicHypDisc,FalbelGuillouxWill_HilbertGeometryWithoutConvexity, FalbelGuillouxWill_HilbertMetricBoundedSymmetricDomains}.

\begin{definition}[Hilbert pseudo-distance]
Let $\Omega \subset \PP V$ be a convex set.
We define the function $d_\Omega \from \Omega \times \Omega \to \R \cup \{\infty\}$ by
\[d_\Omega (x,y) \coloneqq \sup_{a,b \in L_{xy}\setminus \Omega} \log |\CR{a,x,y,b}|\]
where $L_{xy}$ is a projective line  through $x$ and $y$.
\end{definition}

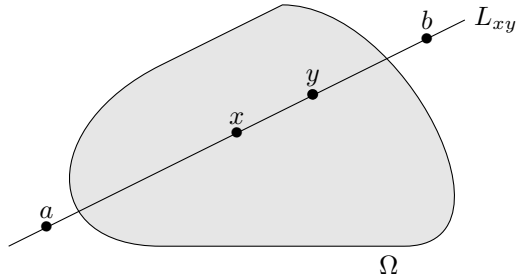
\begin{figure}[h]
\begin{tikzpicture}
\draw [scale=0.8,rotate=90, fill=gray!20]
(0,0) .. controls +(0,2) and +(-1,2) .. (3,0)
..controls +(0,0) and +(0,0)..(4,-2)
.. controls +(0,-2) and +(0,-2) .. (0,-4)
.. controls +(0,4) and +(0,0) .. (0,0);
\draw (-2,0)--(4,3);
\draw (3,0) node[below]{$\Omega$} ;
\draw (-1.5,0.25) node{$\bullet$} ;
\draw (-1.5,0.25) node[above]{$a$} ;
\draw (1,1.5) node{$\bullet$} ;
\draw (1,1.5) node[above]{$x$} ;
\draw (2,2) node{$\bullet$} ;
\draw (2,2) node[above]{$y$} ;
\draw (2,2) node{$\bullet$} ;
\draw (3.5,2.75) node[above]{$b$} ;
\draw (3.5,2.75) node{$\bullet$} ;
\draw (4,3) node[right]{$L_{xy}$} ;
\end{tikzpicture}
\caption{The Hilbert pseudo-distance on $\Omega$.}
\end{figure}

\noindent This function is symmetric,  non-negative and satisfies the triangle inequality,  see \Cref{lem:PseudoDistdOmega}.
We show in \Cref{lem:PropertiesdOmega:Finite} that if $\Omega$ is open,  then $d_\Omega$ is finite.
In \Cref{lem:PropertiesdOmega} we revisit some more basic properties of $d_\Omega$,  such as the inequality between nested convex sets,  the restriction to projective subspaces,  and the  invariance under a projective transformation preserving $\Omega$,  that are classical in the real case.
We also prove that $d_\Omega$ is additive on segments (\Cref{lem:PropertiesdOmega:AdditivityOnSegments}),  which implies that projective lines are shortest paths.

However contrary to the real case,  if $\F$ is non-Archimedean,  then $d_\Omega$ does not separate points and is thus only a pseudo-distance,  see e.g.\ \Cref{example:ball} for an example where all points have distance zero.
We define the quotient metric space.

\begin{definition}[Associated Hilbert metric space]
\label{dfn_AssociatedMetricSpace}
Let $\Omega \subseteq \PP V$ be open, convex and contained in an affine chart with Hilbert pseudo-distance $d_\Omega$.
The \emph{associated Hilbert metric space} is 
\[ X_\Omega \coloneqq \Omega/_\sim, \, \textnormal{ where } x \sim y \iff d_\Omega(x,y) = 0,\]
endowed with the distance induced by $d_\Omega$.
\end{definition}
We denote by $\overline{x}$ the class in $X_\Omega$ of $x\in \Omega$,  and write $\pi \from \Omega \to X_\Omega$ for the projection.

Let us now turn to some first examples.
When $\Omega$ is the interior of the unit ball, i.e.\ the non-Archimedean analogue of hyperbolic space,  and $\F$ is additionally \emph{real closed},  meaning that $\F[\sqrt{-1}]$ is algebraically closed,  Brumfiel in dimension two \cite{Brumfiel_TreeNonArchimedeanHyperbolicPlane},   and Bouzoubaa \cite{Bouzoubaa_CompRealSpectrumCharVarSOn1} in general,  showed that  $X_\Omega$ is a $\Lambda$-tree,  a generalization of real trees \cite{Chiswell_IntroductionLambdaTrees}.

On the other hand of the spectrum,  we show that the simplices play,  as in the real case,  the role of flats.
To a basis  $\eb = (e_1,\ldots,e_d)$ of $V$ one associates the closed $(d-1)$-dimensional \emph{simplex}
\begin{align*}
\simplex_{\eb} &\coloneqq \PP\big(\big\{ x=\sum_{i=1}^d x_i e_i \bigm| x_i \in \F, \, x_i \geq 0 \textrm{ for all } i=1,\ldots,d\big\}\big),
\end{align*}
and we denote by $\simplex^o_\eb$ its interior in $\PP V$.
To simplify for the introduction,  we suppose from now that $\Lambda=\R$,  
but the general statements are given and proven later on.
As in the real case \cite{delaHarpe_HilbertMetricSimplices},  the map that assigns to a point in $\simplex^o_\eb$ the logarithm of the absolute value of its coordinates,  induces an isometry from $(X_{\simplex^o_\eb},d_{\simplex^o_\eb})$ to $(\Aa,\dhex)$,  see \Cref{propo:Simplex}.
In fact,  in any affine chart containing $\simplex_\eb$,  every simplex of the barycentric subdivision of $\simplex^o_\eb$ is mapped to the closed Weyl chamber $\Aap \coloneqq \{ [\alpha] \in \R^d/\R(1,\ldots,1) \mid \alpha_1\geq \ldots\geq \alpha_d \} \subset \Aa$ of the root system $A_{d-1}$.

\begin{figure}[H]
\begin{tikzpicture}[scale=2]
\filldraw[fill=gray!40]  (1/2,0) --({1/2*cos(60)},{1/2*sin(60)})-- ({1/2*cos(120)},{1/2*sin(120)})--(-1/2,0)-- (-{1/2*cos(60)},-{1/2*sin(60)})--(-{1/2*cos(120)},-{1/2*sin(120)})-- cycle;
\fill [black!40,path fading=north,fading transform={rotate=-60}] (0,0)--(1,0) --({cos(60)},{sin(60)})-- cycle;
\draw (1/2,1/2) node{$\Aap$};
\draw (-1,0)--(1,0);
\draw (-{cos(60)},-{sin(60)})--({cos(60)},{sin(60)});
\draw (-{cos(120)},-{sin(120)})--({cos(120)},{sin(120)});
\end{tikzpicture}
\caption{The model flat $(\Aa,\dhex)$ with its unit ball and the closed Weyl chamber $\Aap$ for $d=3$.}
\label{fig:ModelFlat}
\end{figure}

\noindent 
This allows for example to see that when $\Omega$ 
is 
the projective model of the symmetric space of $\SL(n,\F)$, 
namely the convex cone of symmetric positive definite matrices, 
then $X_\Omega$ 
can be equivariantly identified 
with  the affine building of $\SL(n,\F)$,
see \Cref{propo:SymSpace}. 

\subsection{Ultralimits of Hilbert geometries} 
\label{subsec:IntroUlim}
Our first main result motivates the study of such non-Archimedean Hilbert geometries as we prove that they allow to describe ultralimits of rescaled
real Hilbert geometries.
Roughly speaking,  taking an ultralimit of metric spaces
is a geometric construction
generalizing the pointed Gromov--Hausdorff convergence, that
associates to a sequence of pointed metric spaces $(X_n,d_n,o_n)$ a
pointed metric space $(X_\omega,d_\omega,o_\omega)$ constructed using a non-principal
ultrafilter $\omega$ on $\NN$.
More precisely, 
$X_\omega$ is the quotient space
\[X_\omega \coloneqq \{x=(x_n)_{n\in \N} \in \prod_{n\in \N} E_n \colon \lim_\omega d_n(o_n,x_n) < \infty\}_{/\sim},\]
where $x\sim y$ if and only if $\lim_\omega d_n(x_n,y_n)=0$, 
endowed with the metric $d_\omega$ induced by the pseudo-metric 
$d_\omega(x,y) \coloneqq \lim_\omega d_n(o_n,x_n)$.
The class in $X_\omega$ of a sequence $(x_n)_{n \in \N}$ will be called its
ultralimit (in $X_\omega$) and denoted by $\ulim_{X_\omega}x_n$.
For precise definitions and more details, see
\Cref{subsection:UltralimitsMetricSpaces}.

Let $V$ be a finite-dimensional real vector space and $(\Omega_n)_{n \in \N}$ a
sequence of non-empty open properly convex subsets in the projective space $\PP V$.
Let $o_n \in \Omega_n$ and $(\lambda_n)_{n \in \N}$ a scaling sequence,
namely a sequence  of real numbers  such that $\lambda_n\geq 1$ and $\lambda_n \to \infty$. 
Let $\omega$ be a non principal ultrafilter on $\NN$.
Let $X_\omega$ be the ultralimit of the sequence of pointed metric spaces
$(\Omega_n,\tfrac{1}{\lambda_n} d_{\Omega_n},o_n)$, where $d_{\Omega_n}$ is the Hilbert
metric on $\Omega_n$.
Let $\PVom$  be the asymptotic cone of the projective space $\PP V$
for the scaling sequence $(\lambda_n)_{n \in \N}$, namely the ultralimit of $\PP V$
with respect to the metric $d^{1/_{\lambda_n}}$ where $d$ is the spherical
metric, 
see \cite{Parreau_InvariantWeaklyConvexCocompactSubspacesSurfaceGroupsA2Buildings}.
It can be identified with the projective space over $\RRom$
 of an $\RRom$-vector space,  
where $\RRom$ is a \emph{Robinson field}---an ordered valued and non-Archimedean field that can be described
 as the ultralimit of the sequence of metric spaces
 $(\R,\abs{\cdot}^{1/\lambda_n},0)$, see \Cref{subsection:RobinsonField}.
As the ultralimit $\ulim \Omega_n \coloneqq \{\ulim_{\PVom}x_n \mid  (x_n)_{n\in\N} \in \prod_{n \in \N}\Omega_n\}$
 is always a closed subset of $\PVom$,  
we consider its interior $\Omega_\omega$ in $\PVom$ in order to define a Hilbert pseudo-distance that is finite.
The set $\Omega_\omega$ is an open convex subset of $\PVom$ (possibly empty).
Let $\oom=\ulimPVom{o_n} \in \ulim \Omega_n$ be the ultralimit of the sequence $(o_n)$ in $\PVom$. 
With this setup we can now state our first result.

\begin{theoremIntro}[\Cref{thm:UltralimitsHilbertGeometries}]
\label{thm:Intro:UltralimitsHilbertGeometries}
Suppose that $\ulim \Omega_n$ is contained in an affine chart
and $\oom \in \Omega_\omega$. 
\begin{enumerate}
\item 
There is an isometry $\overline{\Phi}\from \Xom \to X_{\Omega_\omega}$, 
satisfying
$\overline{\Phi}(\ulimXom x_n)=\overline{\ulimPVom x_n}$.
\item 
Let $\Gamma$ be a finitely generated group, and $\rho_n \from \Gamma\to\PGL(V)$ be a representation preserving $\Omega_n$ for each $n \in \N$.
Suppose that for every $\gamma$ in a finite symmetric generating set of $\Gamma$, we have
\[\lim_\omega\, \norm{\rho_n(\gamma)}^{1/\lambda_n} < \infty \]
for any fixed norm $\norm{\cdot}$ on $\End(V)$.
Then $(\rho_n)_{n \in \N}$ induces a well-defined isometric action of $\Gamma$ on $\Xom$ and on $X_{\Omega_\omega}$, for which the map $\overline{\Phi}$ is equivariant.
\end{enumerate}
\end{theoremIntro}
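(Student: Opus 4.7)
The plan is to construct $\overline{\Phi}$ as the map induced by the projective ultralimit in $\PVom$, verify its good behavior through one key distance identity, and then obtain surjectivity and equivariance as essentially formal consequences. For the construction, given $(x_n) \in \prod_n \Omega_n$ representing a point of $\Xom$, i.e.\ with $\lim_\omega \tfrac{1}{\lambda_n} d_{\Omega_n}(o_n, x_n) < \infty$, I would set $\Phi((x_n)) \coloneqq \ulim_{\PVom} x_n \in \PVom$, a well-defined projective ultralimit. The first task is to verify that $\Phi((x_n))$ lies in the interior $\Omega_\omega$ of $\ulim \Omega_n$, not merely in $\ulim \Omega_n$ itself; this will follow a posteriori from the distance identity below, combined with the hypothesis $\oom \in \Omega_\omega$ and the finiteness of the Hilbert pseudo-distance on open convex sets (\Cref{lem:PropertiesdOmega:Finite}).

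The heart of the proof is the identity
\begin{equation*}
d_{\Omega_\omega}\bigl(\ulim_{\PVom} x_n,\ \ulim_{\PVom} y_n\bigr) \;=\; \lim_\omega \tfrac{1}{\lambda_n}\, d_{\Omega_n}(x_n, y_n)
\end{equation*}
whenever both ultralimits lie in $\Omega_\omega$. I would prove this through the cross-ratio formula: for each $n$, the supremum defining $d_{\Omega_n}(x_n, y_n)$ is attained at the two boundary intersections $a_n, b_n \in \partial \Omega_n \cap L_{x_n y_n}$, so $d_{\Omega_n}(x_n, y_n) = \log|\CR{a_n, x_n, y_n, b_n}|$. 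Passing to $a_\omega \coloneqq \ulim_{\PVom} a_n$, $b_\omega \coloneqq \ulim_{\PVom} b_n$, and letting $x_\omega, y_\omega$ denote the ultralimits of $x_n, y_n$, the compatibility of the cross-ratio with the Robinson field construction of $\R_\omega$ yields
\begin{equation*}
|\CR{a_\omega, x_\omega, y_\omega, b_\omega}| \;=\; \lim_\omega |\CR{a_n, x_n, y_n, b_n}|^{1/\lambda_n},
\end{equation*}
where the left-hand absolute value is that of $\R_\omega$. Checking $a_\omega, b_\omega \notin \Omega_\omega$ and applying the supremum definition of $d_{\Omega_\omega}$ gives the $\geq$ direction; the reverse $\leq$ inequality follows by lifting a near-supremizing pair of external points in $\PVom \setminus \Omega_\omega$ back to sequences outside $\Omega_n$ for $\omega$-almost every $n$. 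The main obstacle is this boundary analysis—showing that ultralimits of boundary points remain outside the interior $\Omega_\omega$, and executing the reverse lift—where the hypothesis that $\ulim \Omega_n$ is contained in an affine chart is essential, since it permits working entirely in an ambient affine space and making the cross-ratio computations concrete and bounded.

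Once the identity is secured, well-definedness on the quotient $\Xom \to X_{\Omega_\omega}$ is immediate: $(x_n) \sim (y_n)$ forces $d_{\Omega_\omega}(\overline{\Phi}((x_n)), \overline{\Phi}((y_n))) = 0$, so $\overline{\Phi}$ descends and is tautologically isometric. For surjectivity, any $x \in \Omega_\omega \subset \ulim \Omega_n$ may be written $x = \ulim_{\PVom} x_n$ with $x_n \in \Omega_n$ by the very definition of $\ulim \Omega_n$, and the identity forces $\lim_\omega \tfrac{1}{\lambda_n} d_{\Omega_n}(o_n, x_n) = d_{\Omega_\omega}(\oom, x) < \infty$ via \Cref{lem:PropertiesdOmega:Finite}, so $(x_n)$ represents a point of $\Xom$ mapping to $\overline{x}$.

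For part (2), the bound $\lim_\omega \norm{\rho_n(\gamma)}^{1/\lambda_n} < \infty$ on generators extends multiplicatively to all $\gamma \in \Gamma$ and guarantees that $(\rho_n(\gamma))$ admits an ultralimit as a projective transformation $\rho_\omega(\gamma)$ of $\PVom$; since each $\rho_n(\gamma)$ preserves $\Omega_n$, the limit preserves $\ulim \Omega_n$ and hence its interior $\Omega_\omega$, producing an isometric $\Gamma$-action on $X_{\Omega_\omega}$. The same bound makes $\gamma \cdot [(x_n)] \coloneqq [(\rho_n(\gamma) x_n)]$ a well-defined isometric action on $\Xom$, since each $\rho_n(\gamma)$ is a $d_{\Omega_n}$-isometry and the rescaled distance $\tfrac{1}{\lambda_n} d_{\Omega_n}(o_n, \rho_n(\gamma) x_n)$ remains bounded. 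Equivariance of $\overline{\Phi}$ then follows immediately from the tautological identity $\ulim_{\PVom}(\rho_n(\gamma) x_n) = \rho_\omega(\gamma) \cdot \ulim_{\PVom} x_n$.
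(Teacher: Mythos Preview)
Your overall strategy matches the paper's: define $\Phi$ via the projective ultralimit, prove the distance identity through the cross-ratio, and derive well-definedness, surjectivity and equivariance formally. But two points in your execution are genuine gaps that the paper resolves differently.

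First, your proposed $\leq$ direction---lifting a near-supremizing pair $a'_\omega,b'_\omega \in L_\omega\setminus\Omega_\omega$ to sequences $a'_n,b'_n \in L_n\setminus\Omega_n$---does not work as stated. A point $a'_\omega$ of $L_\omega$ outside $\Omega_\omega$ may still lie in $\ulim\Omega_n$ (on its boundary), and there is no a priori reason a lift along $L_n$ stays outside $\Omega_n$. The paper sidesteps this entirely by first proving the structural result \Cref{prop-UltralimitConvexAndLines}: under the affine-chart hypothesis, $L_\omega\cap\Omega_\omega=\,]a_\omega,b_\omega[$ exactly, where $a_\omega=\ulim a_n$, $b_\omega=\ulim b_n$ are the ultralimits of the real endpoints. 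This makes $\Omega_\omega$ well-bordered, so the supremum in $d_{\Omega_\omega}$ is \emph{realized} at $a_\omega,b_\omega$, and the distance identity becomes a single equality $d_{\Omega_\omega}(x_\omega,y_\omega)=\log\abs{\CR{a_\omega,x_\omega,y_\omega,b_\omega}}=\lim_\omega\tfrac{1}{\lambda_n}d_{\Omega_n}(x_n,y_n)$ rather than a two-sided estimate. Your ``main obstacle'' is precisely this proposition; without it the lifting step is unjustified.

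Second, your ``a posteriori'' argument that $\Phi((x_n))\in\Omega_\omega$ is circular: the distance identity you state is only for points already in $\Omega_\omega$, so it cannot be used to deduce membership in $\Omega_\omega$. The paper avoids this by establishing the cross-ratio equality $d_\omega(x,y)=\log\abs{\CR{a_\omega,x_\omega,y_\omega,b_\omega}}$ assuming only $x_\omega\in\Omega_\omega$ (using that $a_\omega,x_\omega,b_\omega$ are then distinct, via \Cref{prop-UltralimitConvexAndLines}), and then specializing to $x=o$ to read off the equivalence $d_\omega(o,y)<\infty \iff y_\omega\neq b_\omega \iff y_\omega\in\,]a_\omega,b_\omega[\,=L_\omega\cap\Omega_\omega$. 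Part~(2) in your proposal is fine and matches the paper.
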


To prove this theorem,  we analyze ultralimits of convex sets,  their boundaries,  and how ultralimits of projective lines intersect those.
The two conditions in the theorem allow to avoid pathological behavior,
for example 
a sequence of scales that grows too fast or too slow in comparison with the convex sets $\Omega_n$, 
or with the distance between the observation
point $o_n$ and the boundary of $\Omega_n$. 
Assuming this does not happen,  we show that the boundary of the
ultralimit is the ultralimit of the sequence of boundaries
(\Cref{prop:ultralimitsOfConvex:boundary}),  as well as that
ultralimits are \emph{well-bordered},  
meaning that in any affine chart 
the intersection with a projective line is either contained in the
boundary or a closed interval (\Cref{prop-UltralimitConvexAndLines}).

\subsection{Degenerations of convex projective structures on manifolds}
Motivated by the study of degenerations of convex projective structures on manifolds,  we use \Cref{thm:Intro:UltralimitsHilbertGeometries} to associate to a closed point in the boundary of the real spectrum compactification of the space of convex real projective structures a properly convex open subset of the projective space over a Robinson field.

Let $M$ be a closed connected oriented topological manifold of dimension $d-1 \geq 2$ with hyperbolic fundamental group $\Gamma=\pi_1(M)$.
Denote by $\PS(M)$ the space of marked convex real projective structures on $M$.
Via the holonomy representations,  the space $\PS(M)$ can be identified with a subset of the $\PSL(d,\R)$-character variety of reductive representations modulo conjugation
\[\mathcal{X}(\Gamma, d) \coloneqq \Hom_\textnormal{red}(\Gamma,\PSL(d,\R))/\PSL(d,\R).\]
In fact,  $\PS(M)$ is a union  of connected components of $\mathcal{X}(\Gamma,d)$ \cite{Koszul_DeformationsConnexionsLocalementPlates, Benoist_ConvexesDivisiblesIII},  and thus a closed and semi-algebraic subset of $\mathcal{X}(\Gamma,d)$.
As such $\PS(M)$ admits a compactification using the real spectrum,  defined as the closure of $\PS(M)$ inside the compact Hausdorff space $\overline{\mathcal{X}(\Gamma,d)}^{\RSp,\cl}$; see \Cref{subsection:ConvexRealProjStr} or \cite{BurgerIozziParreauPozzetti_RSCCharacterVarieties2} for more details.
We write $\partial^{\RSp} \PS(M)\coloneqq \overline{\PS(M)} \setminus \PS(M)$ for its boundary points.
Points in $\overline{\mathcal{X}(\Gamma,d)}^{\RSp,\cl}$ can be \emph{represented} by representations $\rho \from \Gamma \to \PSL(d,\F)$,  where $\F$ is a real closed field containing $\R$ (see \Cref{definition:RepresentingPtsinRSC}).

\begin{theoremIntro}[\Cref{thm:DegenerationsConvProjStr}]
\label{thm:Intro:DegenerationsConvProjStr}
Let $\alpha \in \partial^{\RSp} \PS(M)$.
Then there exist a non-Archimedean,  real closed valued field $\F$ and a representation $\rho \from \Gamma \to \PSL(d,\F)$ representing $\alpha$,  such that $\rho(\Gamma)$ preserves a non-empty,  open,  properly convex,  well-bordered (\Cref{dfn:WellBordered}) subset $\Omega \subset\PP(\F^d)$,  and $\Gamma$ acts on the associated Hilbert metric space $X_\Omega$ without global fixed point.
\end{theoremIntro}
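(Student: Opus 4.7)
The plan is to realize $\alpha$ as a rescaled ultralimit of genuine convex projective structures and invoke \Cref{thm:Intro:UltralimitsHilbertGeometries}. Since $\alpha \in \overline{\PS(M)}^{\RSp, \cl} \setminus \PS(M)$ and $\PS(M)$ is semialgebraic, I would pick a sequence $(\rho_n)_{n\in\N}$ in $\Hom_{\mathrm{red}}(\Gamma,\PSL(d,\R))$ whose characters $[\rho_n] \in \PS(M)$ converge to $\alpha$ in the real spectrum compactification. Each $\rho_n$ preserves a non-empty, open, properly convex domain $\Omega_n \subset \PP(\R^d)$ with cocompact action (since $M$ is closed). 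After a conjugation in $\PGL(d,\R)$ (which does not alter the character), I may assume all $\Omega_n$ lie in a fixed bounded affine chart $\Aa_0$.

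Next, fix a finite symmetric generating set $S \subset \Gamma$, and set
\[ \mu_n = \inf_{x \in \Omega_n} \max_{\gamma \in S} d_{\Omega_n}(x, \rho_n(\gamma) x), \qquad \lambda_n = \max\{1, \mu_n\}, \]
and pick a basepoint $o_n \in \Omega_n$ achieving the infimum up to a factor of $2$. A key step is to show $\mu_n \to \infty$: otherwise, along a subsequence and after conjugating so that $o_n$ stays bounded, the $\rho_n(\gamma)$ would be norm-bounded for $\gamma \in S$; extracting a convergent subsequence and passing to the semisimplification yields a representation $\rho$ with $[\rho] = \alpha$, contradicting that $\alpha \notin \PS(M)$ while $\PS(M)$ is closed in $\mathcal{X}(\Gamma,d)$. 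The displacement bound at $o_n$, combined with the uniform control of the Hilbert metric on $\Aa_0$, yields the scaling bound $\lim_\omega \norm{\rho_n(\gamma)}^{1/\lambda_n} < \infty$ required in \Cref{thm:Intro:UltralimitsHilbertGeometries}.

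Now fix a non-principal ultrafilter $\omega$ and form the ultralimit $\Xom$ of $(\Omega_n, \tfrac{1}{\lambda_n} d_{\Omega_n}, o_n)$ together with the interior $\Omega_\omega$ of $\ulim \Omega_n$ in $\PVom$ over the Robinson field $\RRom$. By \Cref{thm:Intro:UltralimitsHilbertGeometries}—whose hypotheses follow from our normalization ($\ulim \Omega_n \subset \Aa_0$) and the inf-max choice of $o_n$ ($\oom \in \Omega_\omega$)—one obtains a $\Gamma$-equivariant isometry $\overline{\Phi}\from \Xom \to X_{\Omega_\omega}$ and an isometric action of $\Gamma$ on $X_{\Omega_\omega}$ induced by $\rho_\omega = \ulim \rho_n$, which we may regard as a representation into $\PSL(d,\RRom)$ up to the finite ambiguity. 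The set $\Omega_\omega$ is non-empty, open, convex, properly convex as a bounded subset of an affine chart, and well-bordered by \Cref{prop-UltralimitConvexAndLines}. For the absence of a global fixed point: any $\bar{y} = \overline{\Phi}(\ulim y_n) \in X_{\Omega_\omega}$ satisfies $\max_{\gamma \in S} d_{\Omega_n}(y_n, \rho_n(\gamma) y_n) \geq \mu_n = \lambda_n$ for $\omega$-large $n$, so by finiteness of $S$ and the ultrafilter pigeonhole, some fixed $\gamma_* \in S$ gives $d_{X_{\Omega_\omega}}(\bar{y}, \rho_\omega(\gamma_*)\bar{y}) \geq 1$. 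Finally, $\rho_\omega$ represents $\alpha$: its character, as a representation over the real closed valued field $\RRom$, is the ultralimit of the characters $\chi_{\rho_n}$, which converge to $\alpha$ by construction.

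The main obstacle will be the coupled normalization required to simultaneously verify the hypotheses of \Cref{thm:Intro:UltralimitsHilbertGeometries}: choosing the $\PGL(d,\R)$-conjugations so that $\Omega_n \subset \Aa_0$ is controlled, ensuring $o_n$ stays deep enough inside $\Omega_n$ (at scale $\lambda_n$) so that the ultralimit basepoint $\oom$ lands in the interior $\Omega_\omega$ rather than on the boundary, and maintaining the growth bound on $\norm{\rho_n(\gamma)}$. A secondary delicate point is the step $\mu_n \to \infty$, which combines the closedness of $\PS(M)$ with a semisimplification argument in the character variety to rule out precompactness of the $(\rho_n)$ modulo conjugation.
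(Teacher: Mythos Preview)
Your outline shares the paper's skeleton—realize $\alpha$ via an ultralimit of real holonomies and work with the interior $\Omega_\omega$ of $\ulim \Omega_n$—but differs in execution and has one genuine gap.

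The gap is your final sentence: that $\rho_\omega$ represents $\alpha$. The paper obtains this from the accessibility theorem \cite[Theorem~7.16]{BurgerIozziParreauPozzetti_RSCCharacterVarieties2}, which for a closed boundary point $\alpha$ produces a sequence $(\rho_n)$, an ultrafilter $\omega$, and a scaling $(\lambda_n)$ such that $\rho_\omega$ over $\RRom$ is conjugate to a representative of $\alpha$. Picking an arbitrary sequence with $[\rho_n]\to\alpha$ in the real spectrum topology and scaling by your own $\mu_n$ does not automatically yield this: one must match the valuation on $\RRom$ with the order-compatible valuation attached to $\alpha$, and your Benz\'ecri conjugations (which may be unbounded in $\PGL(d,\RR)$) alter the ultralimit representation itself, not just its conjugacy class over $\RR$. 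This is not a technicality; it is exactly the content of accessibility, and the paper treats it as a black box.

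Where both approaches work, you and the paper take different routes. For proper convexity the paper does not renormalize the $\Omega_n$: it shows via Tarski--Seidenberg transfer that $\rho_\omega$ and its contragredient are non-parabolic, extracts a simplex inside $\ulim \Omega_n^*$, and dualizes to obtain a simplex containing $\ulim\Omega_n$; your Benz\'ecri normalization is more direct but feeds back into the gap above. For the absence of a global fixed point the paper again uses closedness of $\alpha$ to find $\gamma$ with $|\lambda_1(\rho_\omega(\gamma))|>|\lambda_d(\rho_\omega(\gamma))|$ \cite[Theorem~1.2]{BurgerIozziParreauPozzetti_RSCCharacterVarieties2}, transfers biproximality by Tarski--Seidenberg, and bounds the translation length below by $\log(|\lambda_1|/|\lambda_d|)$ using the attracting and repelling hyperplanes in $\Omega_\omega^*$. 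Your minimax-displacement argument is cleaner and avoids these external inputs, but it is only available because you chose $\lambda_n=\mu_n$, a freedom the paper's route does not have. Finally, the obstacle you flag—that bounded Hilbert displacement at a Benz\'ecri-normalized basepoint controls $\lim_\omega\norm{\rho_n(\gamma)}^{1/\lambda_n}$, and likewise the $\mu_n\to\infty$ step—genuinely requires a comparison between Hilbert displacement and operator norm via Benz\'ecri compactness; the paper sidesteps this entirely since the norm bound is part of what accessibility delivers.
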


The proof of this theorem is based on an accessibility result for the closed points in the boundary of the real spectrum compactification \cite[Theorem~7.16]{BurgerIozziParreauPozzetti_RSCCharacterVarieties2}.
It implies that there is a sequence $(\rho_n)_{n \in \N}$ of real representations whose ultralimit $\rho_\omega$ is a representation to $\PSL(d,\RRom)$,  where $\RRom$ is a Robinson field,  that is conjugated to a representation representing $\alpha$.
Now every real representation $\rho_n$ preserves a properly convex domain $\Omega_n \subset \PP(\R^d)$, and we then define $\Omega$ as the interior in $\PP (\RRom^d)$ of the ultralimit of the sequence $(\Omega_n)_{n \in \N}$.
To prove that $\Omega$ is non-empty we use the Tarski--Seidenberg transfer principle \cite[\S 1.4]{BochnakCosteRoy_RealAlgebraicGeometry}.
To show that it is contained in an affine chart,  we construct a simplex containing $\Omega$ as the ultralimit of a sequence of simplices containing $\Omega_n$.
For the second part of the statement,  we relate the translation length to the eigenvalues of the representation $\rho_\omega$.

In order to investigate the metric properties of the Hilbert metric spaces associated with degenerations of convex projective structures,  in particular of those with positive systole,  the we study of polytopal Hilbert geometries,  as we expect them to give a model for the local structure of the Hilbert metric spaces in this case.

\subsection{Hilbert geometry of non-Archimedean integral polytopes}
For a general polytope in real projective space its Hilbert distance is Lipschitz-equivalent to a normed vector space,  and isometric to it if and only if the polytope is a simplex \cite{FoertschKarlsson_HilbertMetricsMinkowskiNorms,Bernig_HilbertGeometryPolytopes, ColboisVernicosVerovic_HilbertGeometryConvexPolygonalDomains, Vernicos_HilbertGeometryConvexPolytopes}.

When $\F$ be a non-Archimedean ordered valued field,
the next result gives a complete
description of the associated Hilbert metric space of an
\emph{integral polytope}, i.e.\ a bounded convex polytope in $\F^{d-1}\subset\PP(\F^d)$ 
whose vertices have coefficients in a subfield $\K$ of the valuation ring $\mathcal{O}=\{x \in \F \mid \abs{x}\leq 1\}$.

We have already seen as a first example that an open simplex is isometric to $(\Aa,\dhex)$,
and that every simplex of a barycentric subdivision is mapped isometrically to $\Aap$.

Let now $\Fpolyt$ be an integral polytope.
Denote by $\Model$ the \emph{geometric realization of the flag complex of $\Fpolyt$ modeled on $(\Aap,\dhex)$}.
More precisely,  $\Model$ is the polyhedral fan obtained by gluing together copies of $\Aap$ according to the combinatorial data of the flag complex of $\Fpolyt$ (or equivalently a barycentric subdivision of $\Fpolyt$) around a point $o \in \Model$,  called the \emph{cone point}.
For example if $d=3$,  $\Model$ is a fan of $2n$ Weyl chambers,  where $n$ is the number of vertices of $\Fpolyt$ (\Cref{fig:ModelMetricSpace6gon}),  and if $\Fpolyt$ is a simplex,  we recover $(\Aa, \dhex)$.
We endow $\Model$ with the distance induced by $\dhex$.
For more details on these definitions,  we refer to \Cref{subsection:DefinitionModelSpace}.

\begin{figure}[h]
\begin{tikzpicture}[scale=.7]
\fill[fill=gray!80] (0,0) --(-4,0) --  (-4,-1.5) -- cycle;
\fill[fill=gray!80] (0,0) --(3,-1) --  (3,-1.5) -- cycle;
\fill[fill=gray!80] (0,0) --(5,1) --  (4,1.5) -- cycle;
\draw (0,0)--(-4,-1.5);
\draw (0,0)--(3,-1.5);
\draw (0,0)--(5,1);
\fill[fill=gray!40] (-4,0)--(-3.8,0)--(-3.7,1.5)--(0.2,1)--(4,2) --(5,1)--(4,1.5)--(4.7,.1)--(3,-1.5)--(3,-1)--(1,-2.5)--(-1,-2.3)-- cycle;
\draw (0,0)--(-4,0);
\draw (0,0)--(-3.7,1.5);
\draw (0,0)--(0.2,1);
\draw (0,0)--(4,2);
\draw (0,0)--(4,1.5);
\draw (0,0)--(4.7,.1);
\draw (0,0)--(3,-1);
\draw (0,0)--(1,-2.5);
\draw (0,0)--(-1,-2.3);
\draw (0,0) node{$\bullet$};
\draw (-.15,0) node[above]{$o$};
\end{tikzpicture}
\caption{The geometric realization of the flag complex of a 6-gon.}
\label{fig:ModelMetricSpace6gon}
\end{figure}
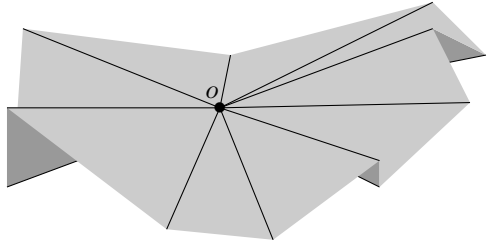

\begin{theoremIntro}[\Cref{thm:NAPolytopalHilbertMetricSpace}]
\label{thm:Intro:NAPolytopalHilbertMetricSpace}
Let $\Fpolyt \subset \F^{d-1} \subset \PP(\F^d)$ be an integral polytope over a subfield $\KK \subset \mathcal{O}$,  
and let $\Omega \subset \PP(\F^d)$ be its interior.
Then there exists an isometry
\[ \overline{\Psi} \from (X_\Omega,d_\Omega)\to (\Model,\dmodel).\]
Moreover $\overline{\Psi}$ sends the image of the  barycenter of $\Fpolyt$ to the cone point $o \in K$,  and,  if $S$ is a maximal simplex in the barycentric subdivision of $\Fpolyt$,  then $\overline{\Psi}$ maps $\pi(S\cap \Omega)$ isometrically onto $(\Aap,\dhex)$.
\end{theoremIntro}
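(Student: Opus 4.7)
The plan is to build $\overline{\Psi}$ chamber by chamber using \Cref{propo:Simplex}. For each flag $\mathcal{F} = (F_0 \subsetneq F_1 \subsetneq \cdots \subsetneq F_{d-1} = \Fpolyt)$ of faces of $\Fpolyt$, I would attach a containing simplex $\simplex_{\eb(\mathcal{F})} \supseteq \Fpolyt$ whose associated isometry $X_{\simplex^o_{\eb(\mathcal{F})}} \cong \Aa$ sends the barycentric-subdivision simplex $S(\mathcal{F}) \cap \Omega$ attached to $\mathcal{F}$ onto the Weyl chamber $\Aap$. Assembling these local isometries across all flags should then produce a well-defined global isometry $\overline{\Psi}$.

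\textbf{Choice of the supporting simplex.}
For a flag $\mathcal{F}$, I would choose an adapted basis $\eb(\mathcal{F}) = (v_1, \ldots, v_d)$ of $\F^d$ inductively: pick vertices $v_1, \ldots, v_k$ of $\Fpolyt$ whose affine span is the hull of $F_{k-1}$ (possible by the flag condition), and complete by a vector $v_d$ transverse to $F_{d-2}$ placed so that $\Fpolyt \subseteq \simplex_{\eb(\mathcal{F})}$. The integrality assumption $\KK \subset \calO$ ensures that in these coordinates the coefficients of the inequalities defining $\Fpolyt$ lie in $\calO$. A direct computation writing a point $x \in S(\mathcal{F}) \cap \Omega$ in the basis $\eb(\mathcal{F})$ shows that its coordinates satisfy the magnitude ordering $|x_1| \geq |x_2| \geq \cdots \geq |x_d|$, so the log-coordinate map of \Cref{propo:Simplex} lands inside $\Aap$.

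\textbf{Equality of metrics on a single chamber.}
By nestedness (\Cref{lem:PropertiesdOmega}), one has $d_\Omega \geq d_{\simplex^o_{\eb(\mathcal{F})}}$ automatically. The crux of the proof is the reverse inequality on $S(\mathcal{F}) \cap \Omega$. For $x, y$ in this chamber and any $a, b \in L_{xy} \setminus \Omega$, I would expand $\CR{a,x,y,b}$ in the basis $\eb(\mathcal{F})$ and combine the ultrametric inequality with the $\calO$-integrality of the face equations of $\Fpolyt$ to obtain $\log|\CR{a,x,y,b}| \leq d_{\simplex^o_{\eb(\mathcal{F})}}(x,y)$. Informally, the ``extra'' points $a, b \in \simplex_{\eb(\mathcal{F})} \setminus \Omega$ cannot produce larger cross-ratios than those already achieved by pushing $a, b$ to the vertices of $\simplex_{\eb(\mathcal{F})}$, because the intermediate faces of $\Fpolyt$ are cut out by equations with coefficients in $\calO$ and the non-Archimedean absolute value is blind to such coefficients. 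This precise non-Archimedean estimate, crucially using integrality, is the main obstacle.

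\textbf{Gluing and conclusion.}
Two flags $\mathcal{F}, \mathcal{F}'$ sharing a sub-flag yield barycentric simplices $S(\mathcal{F}), S(\mathcal{F}')$ with a common face; the local maps factor through taking log-absolute-values of the shared coordinates and land in the common sub-Weyl-chamber face of $\Aap$ inside $\Model$, so they agree there. This produces a continuous bijection $\overline{\Psi} \from X_\Omega \to \Model$ sending the class of the barycenter of $\Fpolyt$ to the cone point $o$. For $x, y \in \Omega$, the projective segment $[x,y]$ decomposes into consecutive pieces each contained in a single chamber $S(\mathcal{F}_i) \cap \Omega$; by additivity of $d_\Omega$ on segments (\Cref{lem:PropertiesdOmega:AdditivityOnSegments}) together with the chamberwise isometries, $d_\Omega(x,y)$ equals the length of the image broken path in $\Model$, giving $d_\Model(\overline{\Psi}(x), \overline{\Psi}(y)) \leq d_\Omega(x,y)$. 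Conversely, any path in $\Model$ lifts via the chamberwise inverse log-absolute-value maps to a path in $X_\Omega$ of the same length, yielding $d_\Model \geq d_\Omega$. Hence $\overline{\Psi}$ is a global isometry with the announced behavior on the barycenter and on each maximal chamber $\pi(S \cap \Omega)$.
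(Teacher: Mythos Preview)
Your overall architecture---local isometries on barycentric chambers, then gluing---matches the paper's, but the core step is not established and the paper's key idea is missing.

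\textbf{The main gap: you are missing the inner simplex.} You use only an outer simplex $\simplex_{\eb(\mathcal{F})} \supseteq \Fpolyt$ and then try to prove $d_\Omega \leq d_{\simplex^o_{\eb(\mathcal{F})}}$ on the chamber by a direct cross-ratio estimate. You correctly identify this as ``the main obstacle,'' but your informal argument (``the non-Archimedean absolute value is blind to such coefficients'') is not a proof. The paper avoids this estimate entirely by a \emph{sandwich}: it constructs \emph{two} simplices $\Sin \subset \Fpolyt \subset \Sout$ (\Cref{lem:SandwichLem}) such that the barycentric simplex $S_F$ is a flag simplex of both and a barycentric simplex of $\Sin$. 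Nestedness then gives $d_{\Sout^o} \leq d_\Omega \leq d_{\Sin^o}$ for free, and the problem reduces to showing $d_{\Sin^o} = d_{\Sout^o}$ on the chamber---a comparison of two \emph{simplices}, not of $\Omega$ with a simplex. This is handled by the base-change lemma (\Cref{lem:BaseChangeUO}): since both $\Sin$ and $\Sout$ have vertices in $\KK \subset \calO$, the transition matrix lies in $\GL(d,\calO)$ and is upper triangular, forcing $\log_{\eb} = \log_{\eb'}$ on the chamber. This is the Flag Simplex Sandwich Lemma (\Cref{lem:FlagSimplexSandwichLemma}), and it is the heart of the argument you are missing.

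\textbf{A secondary issue: your outer simplex does not exist as described.} You take $v_1,\ldots,v_{d-1}$ to be vertices of $\Fpolyt$ lying in the facet $F_{d-2}$. But then the face of $\simplex_{\eb(\mathcal{F})}$ opposite $v_d$ is $\conv(v_1,\ldots,v_{d-1}) \subseteq F_{d-2}$, and if $F_{d-2}$ has more than $d-1$ vertices (which happens whenever $\Fpolyt$ is not simplicial) this convex hull is strictly smaller than $F_{d-2}$; hence $F_{d-2} \not\subset \simplex_{\eb(\mathcal{F})}$ and the containment $\Fpolyt \subseteq \simplex_{\eb(\mathcal{F})}$ fails. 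The paper's construction of $\Sout$ (\Cref{lem:SandwichLem}) is more delicate: it proceeds by induction on dimension and uses an affine shear to push the polytope into a large simplex, without requiring the simplex vertices to be vertices of $\Fpolyt$.

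\textbf{Gluing.} Your claim that the local maps agree on chamber intersections because they ``factor through taking log-absolute-values of the shared coordinates'' skips the point: the two maps use \emph{different} bases $\eb(\mathcal{F})$, $\eb(\mathcal{F}')$, and one must check that $\log_{\eb(\mathcal{F})} = \log_{\eb(\mathcal{F}')}$ on the overlap. The paper does this via \Cref{lem:BaseChangeGL(O)}, again using that the transition matrix lies in the appropriate parabolic over $\calO$. Your Step 3 argument for the global isometry (segments one way, chains the other) is correct and matches the paper once the local pieces are in place.
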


The proof of the above theorem follows in its structure the proof of the Lipschitz-equivalence between real polytopal Hilbert geometries and normed vector space \cite{Vernicos_HilbertGeometryConvexPolytopes}.
However,  the local maps defined on the barycentric subdivision of $\Fpolyt$,  that in the real case are only Lipschitz-equivalences,  descend to local isometries in the non-Archimedean case,  that glue together to a global isometry $\overline{\Psi}$.
To prove this,  we use an approximation lemma of independent interest,  which is essentially due to the ultrametricity of the absolute value (\Cref{lem:NonArchimedeanPropertiesCR}).

\begin{lemmaIntro}[Flag simplex sandwich lemma, \Cref{lem:FlagSimplexSandwichLemma}]
\label{lem:Intro:FlagSimplexSandwichLemma}
Let $\Omega\subset \PP V$ be an open convex subset,  and $\simplex_\eb$, $\simplex_{\eb'}$ two simplices such that $\simplex_\eb \subset \overline{\Omega} \subset \simplex_{\eb'}$.
Let $M = \Mat_{\eb'}(\eb)$ be the matrix of $\eb$ in $\eb'$.
If $M$ is upper triangular and in $\GL(d,\mathcal{O})$,  then
\[d_\Omega = d_{\simplex^o_\eb} = d_{\simplex^o_{\eb'}} \textnormal{ on } (\simplex \cap \Omega) \times (\simplex \cap \Omega),  \]
where $S \coloneqq \PP(\{ x=\sum_{i=1}^d x_i e_i | x_1 \geq \ldots \geq x_d \geq 0 \})$.
In particular,  $\pi(\simplex \cap \Omega) \subset X_\Omega$ is isometric to $(\Aap,\dhex)$.
\end{lemmaIntro}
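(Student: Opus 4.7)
The plan is to reduce the triple equality $d_\Omega = d_{\simplex^o_\eb} = d_{\simplex^o_{\eb'}}$ on $\simplex \cap \Omega$ to the single identity $d_{\simplex^o_\eb} = d_{\simplex^o_{\eb'}}$ by a monotonicity argument, and then to establish this identity using the isometry $X_{\simplex^o_\eb} \cong (\Aa,\dhex)$ from \Cref{propo:Simplex} together with a direct computation on absolute values of coordinates.

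First, since $\Omega$ is open with $\simplex_\eb \subset \overline{\Omega} \subset \simplex_{\eb'}$, I would deduce $\simplex^o_\eb \subset \Omega \subset \simplex^o_{\eb'}$, and then invoke the monotonicity of the Hilbert pseudo-distance under inclusion of convex sets (\Cref{lem:PropertiesdOmega}) to get $d_{\simplex^o_{\eb'}} \leq d_\Omega \leq d_{\simplex^o_\eb}$ on $\simplex \cap \Omega$. Thus the triple equality reduces to the outer one. I would also note that applying the inclusion $\simplex_\eb \subset \simplex_{\eb'}$ to the vertices $e_j$ of $\simplex_\eb$ forces $m_{ij} \geq 0$ in $\F$ for all $i,j$; together with the upper triangularity and $M \in \GL(d,\mathcal{O})$, this gives $m_{ii} > 0$ with $|m_{ii}| = 1$, and $0 \leq m_{ij}$ with $|m_{ij}| \leq 1$ for $j > i$.

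For the outer equality, by \Cref{propo:Simplex} each side is $\dhex$ applied to the difference of the log-absolute-value vectors in $\Aa = \R^d/\R(1,\ldots,1)$. It therefore suffices to show that for each $x \in \simplex \cap \Omega$, after choosing a single representative of $x$ in $V$, one has $|x_{\eb',i}| = |x_{\eb,i}|$ for every $i$. I would take the representative with $x_{\eb,1} \geq \ldots \geq x_{\eb,d} \geq 0$ (which exists since $x \in S$), so that by order-absolute-value compatibility $|x_{\eb,1}| \geq \ldots \geq |x_{\eb,d}|$. By upper triangularity, $x_{\eb',i} = \sum_{j \geq i} m_{ij} x_{\eb,j}$ is a sum of non-negative elements of $\F$. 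The identity
\[
|a + b| = \max(|a|, |b|) \quad \text{for } a, b \geq 0 \text{ in } \F
\]
holds because compatibility gives $|a+b| \geq \max(|a|, |b|)$ while the ultrametric inequality gives the reverse. Iterating this identity and using $|m_{ii}| = 1$ together with $|m_{ij}|\,|x_{\eb,j}| \leq |x_{\eb,j}| \leq |x_{\eb,i}|$ for $j > i$, we obtain
\[
|x_{\eb',i}| = \max_{j \geq i} |m_{ij}|\, |x_{\eb,j}| = |x_{\eb,i}|,
\]
which yields the equality of log-absolute-value vectors and hence of the two simplex distances. The final assertion about $\pi(\simplex \cap \Omega)$ being isometric to $(\Aap,\dhex)$ then follows by a last application of \Cref{propo:Simplex}, which identifies $\pi(S \cap \simplex^o_\eb)$ with $\Aap$.

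I expect the main obstacle to be ensuring that the identity $|a+b| = \max(|a|, |b|)$ can be applied at every step of the iteration. This is exactly where all the hypotheses cooperate: the non-Archimedean property supplies the ultrametric inequality, the sandwich $\simplex_\eb \subset \simplex_{\eb'}$ forces the entries of $M$, hence the summands $m_{ij} x_{\eb,j}$, to be non-negative, the upper triangularity restricts the sum to indices $j \geq i$ so that $|x_{\eb,j}| \leq |x_{\eb,i}|$, and the $\GL(d,\mathcal{O})$ condition pins the leading term to absolute value exactly $|x_{\eb,i}|$. Dropping any one of these assumptions breaks the computation, as one can already check in the case $d = 2$.
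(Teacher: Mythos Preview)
Your proof is correct and follows essentially the same approach as the paper. The only organizational difference is that the paper factors out the key coordinate computation---showing $|x_{\eb',i}| = |x_{\eb,i}|$ via the ultrametric identity $|a+b| = \max(|a|,|b|)$ for non-negative $a,b$---into a separate preliminary lemma (\Cref{lem:BaseChangeUO}) stated for general block upper-triangular matrices in $P_D(d,\mathcal{O})$, whereas you carry it out inline for the strictly upper-triangular case $D = \{1,\ldots,d-1\}$; the monotonicity sandwich and the appeal to \Cref{propo:Simplex} are identical.
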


\begin{figure}[H]
\begin{tikzpicture}[scale=1.2]
\filldraw[fill=gray!40] (0,2) --(3,0) --  (5,3) -- cycle;
\draw (1,1.8) node{$\simplex_{\eb'}$} ;

\filldraw[fill=gray!30] (3,0)-- (4.5,2.25)  .. controls +(-.5,.2) and +(0,0) ..   (2.5,2.3)-- (1.5,1.875) .. controls +(-.1,-.2) and +(0,0) ..  cycle;
\draw (2,1.7) node{$\overline{\Omega}$} ;

\filldraw[fill=gray!20] (3,0) --(4.3,1.95) --  (2.3,1.8)-- cycle;
\draw (3.7,1.6) node{$\simplex_\eb$} ;

\filldraw[fill=blue!20] (3,0) -- (3.872,1.308) -- (2.86,1.625) -- cycle;
\draw (3.2,1) node{$\simplex$} ;

\draw[dotted] (3.872,1.308)--(2.3,1.8);
\draw[dotted] (4.3,1.95)--(2.41,1.52);
\draw[dotted] (2.86,1.625)--(2.83,1.85);
\end{tikzpicture}
\caption{Sandwiching $\overline{\Omega}$ between two simplices.}
\label{fig_SandwichLemmaProof}
\centering
\end{figure}
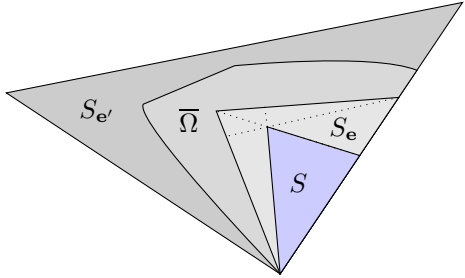

In other words,  locally on $S\cap \Omega$ we can approximate $\Omega$ by the interior of a simplex,  whose associated Hilbert metric space we understand well (\Cref{propo:Simplex}).
The condition $\Mat_{\eb'}(\eb) \in \GL(d,\mathcal{O})$ is for example satisfied if the vertices of the simplices $\Sin$ and $\Sout$ have coefficients in a subfield $\K \subseteq \mathcal{O}$.
Furthermore,  we prove in \Cref{lem:SandwichLem} that in the case that $\Omega$ is the interior of a polytope, there exist $\Sin$ and $\Sout$ that satisfy the assumptions of the above lemma,  and such that $\PP(\sum_{i=1}^d e_i)$ is a barycenter of the polytope.
In the case that the polytope is integral,  also the vertices of the simplices $\Sin$ and $\Sout$ have coefficients in the same subfield $\K \subseteq \mathcal{O}$,  and thus the condition $\Mat_{\eb'}(\eb) \in \GL(d,\mathcal{O})$ 
is satisfied.
This already proves that the local maps defined in the proof of \Cref{thm:Intro:NAPolytopalHilbertMetricSpace} descend to local isometries.

Combining the results of \Cref{thm:Intro:UltralimitsHilbertGeometries} and \Cref{thm:Intro:NAPolytopalHilbertMetricSpace} allows us to understand asymptotic cones of real polytopal Hilbert geometries.

\subsection{Asymptotic cones of real polytopal Hilbert geometries}
The goal is to describe asymptotic cones of a fixed real Hilbert geometry using non-Archimedean Hilbert geometries.
An asymptotic cone of a metric space $(X,d)$ 
are the ultralimit of $(X,\frac{1}{\lambda_n}d,o_n)$ 
for a scaling sequence $\lambda_n \to\infty$ and a sequence of observation points
$o_n \in X$, with respect to some ultrafilter $\omega$.
For example,  when $X$ is hyperbolic space,  then
\Cref{thm:Intro:UltralimitsHilbertGeometries}
gives a
new proof of the classical result that the asymptotic cone of
hyperbolic space is a real tree.
Similarly, in the case where $X$
is the symmetric space of $\SL(n,\RR)$,  
endowed with the Hilbert metric on the convex projective model,
then
\Cref{thm:Intro:UltralimitsHilbertGeometries} gives a
new proof that the asymptotic cone of $X$ is
the affine building of $\SL(n,\cdot)$ over a Robinson field 
\cite{KlLe97, Parreau_CompEspReprGroupesTypeFini}.

For real polytopal Hilbert geometries,  putting together \Cref{thm:Intro:UltralimitsHilbertGeometries} and \Cref{thm:Intro:NAPolytopalHilbertMetricSpace}, we obtain a complete description of the asymptotic cones of a fixed real polytope.

\begin{theoremIntro}[\Cref{thm:AsymptoticConesPolytopalHilbertGeometries}]
\label{thm:Intro:AsymptoticConesPolytopalHilbertGeometries}
Let $\polyt \subset \RR^{d-1} \subset\PP(\RR^d)$ be a bounded real convex polytope with non-empty interior $\Omega\subset \RR^{d-1} \subset\PP(\RR^d)$.
Let $\omega$ be a non-principal ultrafilter on $\NN$,  and let $(X_\omega,d_\omega,o_\omega)$ be the asymptotic cone of $(\Omega, d_\Omega)$ for the scaling sequence $(\lambda_n)_{n \in \N}$ and fixed observation points $o_n=x_0\in \Omega$.
Then $(X_\omega,o_\omega)$ is isometric to $(\Model,o)$.
\end{theoremIntro}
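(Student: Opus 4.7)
The plan is to combine the ultralimit theorem (\Cref{thm:Intro:UltralimitsHilbertGeometries}) with the description of non-Archimedean polytopal Hilbert geometries (\Cref{thm:Intro:NAPolytopalHilbertMetricSpace}), applied to the constant sequence $\Omega_n = \Omega$ with constant observation points $o_n = x_0$. The point is that both pieces already do the real work: the first converts the rescaled limit into a non-Archimedean Hilbert geometry, and the second identifies that non-Archimedean object with the model space $\Model$.

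First I would apply \Cref{thm:Intro:UltralimitsHilbertGeometries}. Its two hypotheses are easy to verify: since $\polyt$ is bounded in the affine chart $\R^{d-1}\subset \PP(\R^d)$, the ultralimit $\ulim \Omega$ is bounded in the corresponding affine chart of $\PVom$; and $x_0$, being an interior point of $\Omega$, yields a constant-sequence ultralimit $\oom$ lying in the interior $\Omega_\omega$ of $\ulim \Omega$. This produces an isometry $\overline{\Phi}\colon (\Xom,\oom) \to (X_{\Omega_\omega}, \overline{x_0})$ compatible with constant ultralimits.

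Next I would identify $\Omega_\omega$ as the interior of an integral polytope over the Robinson field $\RRom$. Let $\polyt(\RRom)\subset \RRom^{d-1}$ denote the polytope defined over $\RRom$ by the same vertices (equivalently, by the same linear inequalities) as $\polyt$. The set $\ulim \Omega$ coincides with $\polyt(\RRom)$: the defining inequalities pass to ultralimits, and any strict interior point of $\polyt(\RRom)$ is approximated $\omega$-almost everywhere by points of $\Omega$, so $\Omega_\omega = \polyt(\RRom)^o$. Since every nonzero real $t$ satisfies $\abs{t}_\omega = \lim_\omega \abs{t}^{1/\lambda_n}=1$, the subfield $\R$ lies in the valuation ring $\mathcal{O}$ of $\RRom$, so $\polyt(\RRom)$ is integral in the sense of \Cref{thm:Intro:NAPolytopalHilbertMetricSpace}, with combinatorially identical flag complex. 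Applying that theorem yields an isometry $\overline{\Psi}\colon X_{\Omega_\omega} \to \Model$ sending the class of the barycenter $b$ of $\polyt$ to the cone point $o$.

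Finally I would check that the composition $\overline{\Psi}\circ \overline{\Phi}$ is a pointed isometry, i.e.\ sends $\oom$ to $o$. Since $\overline{\Phi}(\oom) = \overline{x_0}$, it suffices to show $\overline{x_0} = \overline{b}$ in $X_{\Omega_\omega}$. But $\overline{\Phi}$ is an isometry and in $\Xom$ one has
\[
d_\omega\bigl(\ulimXom{x_0},\, \ulimXom{b}\bigr) \;=\; \lim_\omega \tfrac{1}{\lambda_n}\, d_\Omega(x_0,b) \;=\; 0,
\]
because $d_\Omega(x_0,b)$ is a fixed finite real constant while $\lambda_n \to \infty$; hence the two constant ultralimits coincide in $\Xom$, and their images under $\overline{\Phi}$ coincide in $X_{\Omega_\omega}$. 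The only step that deserves some care is the identification $\Omega_\omega = \polyt(\RRom)^o$, which is where boundedness of $\polyt$ in the affine chart and the approximation of interior points by sequences in $\Omega$ are needed; everything else reduces to invoking the previous theorems.
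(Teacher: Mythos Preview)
Your proposal is correct and follows essentially the same approach as the paper: verify the hypotheses of \Cref{thm:Intro:UltralimitsHilbertGeometries} for the constant sequence, identify $\ulim \Omega$ with the $\RRom$-extension $\polyt_{\RRom}$ (the paper invokes \Cref{rem:UlimVSExtension} for this), apply \Cref{thm:Intro:NAPolytopalHilbertMetricSpace} using that $\RR\subset\mathcal{O}$, and check that $\oom$ is sent to the cone point via the same ``$d_\Omega(x_0,b)$ is a fixed constant so the rescaled distance vanishes'' argument. The only point the paper makes explicit that you leave implicit is that $\RRom$ has full value group $\Lambda=\RR$, which is why the model space you land in is exactly $\Model=\Model(\polyt,\RR)$ rather than some $\Model(\polyt,\Lambda)$.
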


Since $\Omega$ and $\Model$ are proper metric spaces, this implies that we
have in fact Gromov--Hausdorff convergence (see \Cref{prop:UltralimitsAndGH}):

\begin{corollaryIntro}
\label{corol:AsymptoticConesPolytopalHilbertGeometries:GHConv}
Let $\polyt \subset \RR^{d-1} \subset\PP(\RR^d)$ be a bounded real convex polytope with non-empty interior $\Omega\subset \RR^{d-1} \subset\PP(\RR^d)$, 
and let $(\lambda_n)_{n \in \N}$ be a scaling sequence.
Then $(\Omega,\tfrac{1}{\lambda_n} d_{\Omega},x)_{n \in \N}$ is converging to $(\Model,o)$ in the pointed Gromov--Hausdorff topology.
\end{corollaryIntro}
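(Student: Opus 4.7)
The plan is to deduce pointed Gromov--Hausdorff convergence from the ultralimit identification provided by Theorem \ref{thm:Intro:AsymptoticConesPolytopalHilbertGeometries} by appealing to the general equivalence (Proposition \ref{prop:UltralimitsAndGH}) between having a common ultralimit for every non-principal ultrafilter and pointed Gromov--Hausdorff convergence. Since that equivalence requires properness of the spaces involved, the substance of the argument is the verification of properness on both sides.

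First I would check that $(\Omega, d_\Omega)$ is a proper metric space. This is the classical real Hilbert case: on a bounded open convex subset of an affine chart of $\PP(\R^d)$, the Hilbert metric induces the usual topology and its closed balls are compact, so properness follows. Next I would verify that $(\Model, \dmodel)$ is proper. By construction $\Model$ is the finite union of copies of the closed Weyl chamber $\Aap \subset \Aa$, all sharing the common cone point $o$, glued along faces dictated by the flag complex of $\polyt$. Each copy of $\Aap$ with $\dhex$ is a proper metric space (it is a closed polyhedral cone in the finite-dimensional normed space $(\Aa, \dhex)$), and a closed ball of radius $R$ about any point in $\Model$ is contained in the union of closed $R'$-balls (for some $R'$ depending on $R$ and the distance to $o$) in the finitely many copies, hence is compact.

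Then I would apply Theorem \ref{thm:Intro:AsymptoticConesPolytopalHilbertGeometries} with constant observation point $o_n = x$ for any non-principal ultrafilter $\omega$ on $\NN$: the asymptotic cone $(X_\omega, d_\omega, o_\omega)$ of $(\Omega, \tfrac{1}{\lambda_n} d_\Omega, x)$ is isometric to $(\Model, o)$, with the isometry sending $o_\omega$ to $o$. Crucially, the target pointed space $(\Model, o)$ does not depend on the choice of $\omega$. By Proposition \ref{prop:UltralimitsAndGH}, a sequence of proper pointed metric spaces whose ultralimit with respect to every non-principal ultrafilter is (isometric to) the same proper pointed metric space converges to that space in the pointed Gromov--Hausdorff topology. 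Applying this to our sequence yields the claimed convergence.

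The argument is essentially a citation once properness is established; the only step requiring genuine verification is the properness of $\Model$, which is straightforward from the finiteness of the flag complex of $\polyt$ and the properness of $(\Aap, \dhex)$. No step is expected to present a serious obstacle.
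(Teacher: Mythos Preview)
Your proposal is correct and follows exactly the route the paper takes: the paper deduces the corollary in one line from Theorem~\ref{thm:Intro:AsymptoticConesPolytopalHilbertGeometries} and Proposition~\ref{prop:UltralimitsAndGH}, after noting that $\Omega$ and $\Model$ are proper metric spaces. Your added justification of properness on both sides (classical for the real Hilbert metric, and via the finite gluing of Weyl chambers for $\Model$) is precisely what the paper leaves implicit.
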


\subsection{Structure of the paper}
The article is organized as follows.
In \Cref{section:Preliminaries} we recall background on ordered valued fields,  convex sets in projective spaces over such fields and the cross-ratio.
\Cref{section:HilbertGeometry} introduces the Hilbert pseudo-distance and the associated Hilbert metric space,  and we prove basic properties of $d_\Omega$.
We investigate families of examples of such metric spaces,  e.g.\ simplices and symmetric spaces over non-Archimedean real closed fields.

We then turn to the proofs of the main results.
We first prove the results on non-Archimedean integral polytopes in \Cref{section:Polytopes},  and we postpone the proofs of the results on ultralimits to the last section of this article.
This section is,  except maybe for \Cref{subsection:AsymptoticConeRealPolytope},  independent of \Cref{section:Polytopes},  which allows for an independent study.

In \Cref{section:Polytopes},  we develop the theory of polytopes over ordered valued fields and show \Cref{lem:Intro:FlagSimplexSandwichLemma}.
We use this to prove \Cref{thm:Intro:NAPolytopalHilbertMetricSpace} in \Cref{subsection:ProofHilbertMetricSpacePolytope}.
\Cref{section:UltralimitsHilbertGeometry} explains how to view ultralimits of real Hilbert geometries as non-Archimedean Hilbert geometries.
We recall background on ultralimits in Sections \ref{subsection:UltrafiltersProducts}-\ref{subsection:UlimInLinearGroup},  which can be omitted by experts.
We show \Cref{thm:Intro:UltralimitsHilbertGeometries} in \Cref{subsection:ProofUlimHilbMetrics}.
We end this article with the proofs of \Cref{thm:Intro:AsymptoticConesPolytopalHilbertGeometries}
in \Cref{subsection:AsymptoticConeRealPolytope} and \Cref{thm:Intro:DegenerationsConvProjStr} in \Cref{subsection:ConvexRealProjStr}.

\addtocontents{toc}{\SkipTocEntry}
\subsection*{Acknowledgments}
The authors thank the IHES and the Institut Fourier for excellent
working conditions,  where a lot of this work was undertaken, and
Fr\'ed\'eric Paulin for his helpful comments.
The first author expresses her gratitude to Jacques Audibert,  Marc Burger,  Fanny Kassel,  and Nicolas Tholozan for their interest in this work and fruitful discussions.
The second author thanks Daryl Cooper and Kelly Delp for interesting 
discussions.

Both authors were  partially supported by 
ANR-23-CE40-0012 HilbertXfield, 
ANR-22-CE40-0001 GAPR,
and 
ANR-23-CE40-0001 GALS.
X.F.\ is funded by the Max-Planck Institute for Mathematics in the Sciences and the Labex Carmin project.
A.P.\ is partially supported by 
ANR-22-CE40-0004 GoFR.

\tableofcontents

\section{Preliminaries}
\label{section:Preliminaries}

\subsection{Valued fields}
\label{subsection:NAValuedFields}
\label{dfn_Balls}
A \emph{valued field} $\F$ is a field together with a  multiplicative and sub-additive map $\abs{\cdot}\from \F \to \R_{\geq 0}$
  that only maps $0$ to $0$, called the \emph{absolute value}.
For an introduction to valued fields we recommend \cite{EnglerPrestel_ValuedFields}.
\compl{From the definition we immediately obtain that 
$|1|=1$ and $|x^{-1}|=|x|^{-1}$, 
thus $|\cdot| \from \F^{\times} \to \R_{>0}$ is a group homomorphism.
We also have that if $\xi \in \F^{\times}$ is a root of unity then $|\xi|=1$, hence $|-x|=|x|$ for all $x\in \F$.}%

\begin{example}
\label{exa:ValuedFields}
Any field $\F$ can be endowed with the \emph{trivial} absolute value, setting $|x|=1$ for all $0 \neq x \in \F$.
Non trivial examples of valued fields are $\R$ with its usual absolute value and $\Q_p$ with the $p$-adic absolute value.
If $\K$ is any field,  we can define an absolute value on the field of rational functions $\K(X)$ by setting $\left|\frac{P(X)}{Q(X)}\right| \coloneqq e^{\deg P - \deg Q}$ for every $P,Q \in \K[X]$.
\end{example}

The absolute value induces a metric structure on $\F$ by setting $d(x,y) \coloneqq |x-y|$ for all $x,y \in \F$.
The resulting topology turns $\F$ into a Hausdorff topological field, 
which is discrete if and only if the absolute value is trivial.
Note that in general valued fields are not locally compact.
For example $\K(X)$ is locally compact if and only if $\K$ is finite.
%
For $x \in \F$ and $r \in \R_{\geq 0}$ we write
\[ B(x,r) \coloneqq \{ y \in \F \mid |x-y|<r \}, \, \overline{B}(x,r) \coloneqq \{ y \in \F \mid |x-y| \leq r \}.\]
for the \emph{open} and \emph{closed balls} of radius $r$ centered at $x$.
%

We say that $\abs{\cdot}$ is \emph{non-Archimedean} if 
the \emph{ultrametric} triangle inequality holds, 
namely  when $|x+y| \leq \max\{|x|,|y|\}$ for all $x, y \in \F$.
%
For example, the fields $\QQ_{p}$ and $\KK(X)$ are non-Archimedean.
An easy but important consequence  is that 
$|x+y|= \max\{|x|,|y|\}$ whenever $|x|\neq |y|$.
Note that non-Archimedean absolute values are trivial on the prime subfield, namely the subfield generated by $1$.

For a non-Archimedean valued field $\F$,  we denote by $\mathcal{O}\coloneqq \{x \in \F \mid \abs{x} \leq 1 \}$ its \emph{valuation ring} and by $\mathcal{O}^\times\coloneqq \{x \in \F \mid \abs{x}=1 \}$ the set of units in $\mathcal{O}$.
The additive subgroup $\log \abs{\F^\times}$ of $\R$ is called the
\emph{value group} of $\F$ and will be denoted by $\Lambda$.
The absolute value is called \emph{discrete} if $\Lambda$ is discrete.
In the case $\F=\Q_p$ we have $\mathcal{O} = \Z_p$ and $\Lambda=\Z$.
In the case $\F=\K(X)$ we have $\mathcal{O} =\left\{\frac{P(X)}{Q(X)} \in \K(X) \mid
  \deg P \leq \deg Q\right\}$ and again $\Lambda=\Z$, so these two examples
have discrete absolute value.
Examples of non-Archimedean fields with non-discrete absolute value (in fact, with full value group $\Lambda=\RR$) are given by Robinson fields,  which will be described in \Cref{subsection:RobinsonField}.

If $\abs{\cdot}$ is non-Archimedean,  the balls $B(x,r)$ and
$\overline{B}(x,r)$ are both open and closed 
(in particular  $\overline{B}(x,r)$ is not the topological closure of $B(x,r)$).
As a consequence, $\F$ is totally disconnected.
Furthermore, in the non-Archimedean setting,  any point in a ball serves as center,  and hence,  if two balls intersect, one is contained in the other.

\subsection{Ordered and real closed fields}
\label{K(X)defOrder}
We refer to \cite[\S 1]{BochnakCosteRoy_RealAlgebraicGeometry} for a
detailed introduction to ordered and real closed fields,  and for their properties recalled below.
%
An \emph{ordered field} is a field together with a total order compatible with the field operations.
%
The field of real numbers is ordered.
The $p$-adic numbers cannot be ordered.
If $\KK$ is any ordered field,  then we can define an order on $\K(X)$ extending the order of $\K$ by defining $X>\lambda$ for all $\lambda \in \KK$.
Note that an ordered field has characteristic zero and hence any ordered field contains a copy of $\Q$.
Given an ordered field $\F$ and $a < b \in \F \cup \{\pm\infty\}$,  the sets
\begin{align*}
]a,b[ &\coloneqq \{x \in \F \mid a < x<b\}, \,]a,b] \coloneqq \{x \in \F \mid a < x\leq b\},\\
[a,b[ &\coloneqq \{x \in \F \mid a \leq x<b\},\,[a,b] \coloneqq \{x \in \F \mid a \leq x\leq b\}
\end{align*}
are called \emph{segments}.

The order on $\F$ defines the \emph{order topology}, which has a subbasis of segments $(a,\infty)$,  $(-\infty,b)$ for all $a,b \in \F$.
We have $\Int{[a,b]} = ]a,b[$,  $\overline{]a,b[}=[a,b]$ and hence $\partial[a,b]=\{a,b\}$,  
where $\Int{X}$,  $\overline{X}$ and $\partial X$ denote the interior respectively the closure respectively the boundary of subset $X$ of a topological space.

We write $\F_{\geq 0}$ and $\F_{>0}$ for the subsets of non-negative respectively positive elements in $\F$.
The \emph{$\FF$-valued absolute value} is defined by $|x|_\F \coloneqq \max\{x,-x\}>0$ for all $x\in \F$.

\begin{definition}
An ordered field is \emph{real closed} if there is no algebraic field extension to which the order extends,  or equivalently if every positive element has a square root and every odd degree polynomial has a root.
\end{definition}
The order on $\F$ is then unique, positive elements being given by squares.
Every ordered field $\K$ has a (unique up to isomorphism) \emph{real closure}, i.e.\ an algebraic field extension of $\K$ that is real closed and whose order extends the order on $\K$ \cite[Theorem 1.3.2]{BochnakCosteRoy_RealAlgebraicGeometry}.
Thus up to passing to an algebraic extension we can restrict ourselves to real closed fields.

\begin{examples}
First examples of real closed fields are $\RR$, and the field $\overline{\Q}^{r}$ of real algebraic numbers, which is the real closure of $\Q$.
For $\K$ an ordered field,  the field $\K(X)$ endowed with the order defined above is not real closed,  since $X$ is positive but $X$ does not have a square root.
Its real closure may be seen as a subfield of the field of real Puiseux series \cite[Example 1.2.3]{BochnakCosteRoy_RealAlgebraicGeometry},  which is real closed.
Further examples of real closed fields are the hyperreals \cite[\S 3]{Goldblatt_LecturesHyperreals} and Robinson fields;  see \Cref{subsection:RobinsonField}.
\end{examples}

As a final note we remark that an ordered field with the least upper
bound property is order isomorphic to $\R$.
Thus in an ordered field different from $\R$,  we do not have the tool of supremum at hand.

\subsection{Ordered valued fields}
\label{subsection:NAOrderedFields}
%
An \emph{ordered valued field} is an ordered field $(\F,\leq)$ 
together with an absolute value $\abs{\cdot}$
which is \emph{order-compatible}, 
namely such that  for all $x,y \in \F$
\[ 0 \leq x \leq y \implies |x| \leq |y|.\]
If  $(\F,\leq)$ is additionally real closed we will call it a
\emph{real closed valued field}.
Note that in this case, the value group $\Lambda=\log |\F^\times|$ always contains
$\Q$,  in particular it is dense in $\R$.

Examples of ordered valued fields are given by $\RR$ and its subfields, $\K(X)$ for any ordered field $\KK$ (with the order and the absolute value defined above in \Cref{K(X)defOrder} and \Cref{exa:ValuedFields}), and Robinson fields.

Let us now state  an important observation concerning the interplay of the order and the absolute value.

\begin{proposition}
\label{lem:AbsoluteValueNA:EqualityPos}
Let $\F$ be an ordered valued field and $x, y \in \F_{\geq 0}$.
\begin{enumerate}
\item If $|x|<|y|$,  then $x<y$.
\item If $\F$ is non-Archimedean,  then $|x+y|= \max\{|x|,|y|\}$.\qed
\end{enumerate}
\compl{
\begin{proof}
(1): If no then  $0\leq y\leq x$ and $|y| \leq |x|$.
(2): We already know equality if $|x| \neq |y|$, so let us assume the two values are equal.
From the ultrametric triangle inequality we already know $\leq$.
Assume $|x+y|<|x|=|y|$.
Since $y \geq 0$ we have $ x \leq x+y$, and since $|\cdot|$ is order compatible, we obtain $|x| \leq |x+y| <|x|$, a contradiction.
\end{proof}
}
\end{proposition}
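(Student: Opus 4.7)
For part (1), the natural approach is to argue by contrapositive. Suppose $x \not< y$; since the order is total, this means $y \leq x$, and combined with $0 \leq y$ this gives $0 \leq y \leq x$. Applying the order-compatibility axiom of $|\cdot|$ then yields $|y| \leq |x|$, contradicting $|x| < |y|$. So this part is a one-line application of the defining compatibility condition.

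For part (2), the ultrametric inequality gives immediately $|x+y| \leq \max\{|x|, |y|\}$, so only the reverse inequality needs work. The case $|x| \neq |y|$ is the classical fact about non-Archimedean absolute values (which was already noted in the excerpt right after the definition of non-Archimedean), so the only nontrivial case is $|x| = |y|$. Here I would again argue by contradiction: suppose $|x+y| < |x| = |y|$. The key observation is that positivity of $y$ gives the order inequality $x \leq x + y$ (and both sides are in $\F_{\geq 0}$, since $x, y \geq 0$ force $x+y \geq 0$). Now applying order-compatibility of $|\cdot|$ to $0 \leq x \leq x+y$ produces $|x| \leq |x+y|$, directly contradicting the assumption $|x+y| < |x|$.

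No step here looks genuinely hard: both parts reduce to a single use of order-compatibility combined with an elementary ordering observation. The only mild subtlety is remembering that in part (2) we must separate the already-known strict case $|x| \neq |y|$ from the equal-valuation case, since the standard ``$|x+y| = \max$ when $|x| \neq |y|$'' fact does not by itself cover the tied case, and it is precisely in that tied case where the extra hypothesis $x, y \geq 0$ is doing the work.
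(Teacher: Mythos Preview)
Your proof is correct and follows essentially the same argument as the paper's: the contrapositive for (1), and for (2) the split into the classical $|x|\neq|y|$ case and the tied case handled via $0\leq x\leq x+y$ together with order-compatibility to derive a contradiction.
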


As a consequence we have the following interplay between balls and segments in an ordered valued field $\F$  
\[B(x,|a|) \subset (x-a, x+a) \subset \overline{B}(x,|a|)\]
for all $x,a \in \F$.
%
In particular, if the absolute value $\abs{\cdot}$ is non-trivial,   the order topology and the topology induced by the absolute value agree.
\compl{Since $|F|$ has then arbitrary small elements.}

\subsection{Vector and projective spaces}
\label{subsection:VectorSpacesNAOrderedFields}
Let $\F$ be a valued field and $V$ an $\F$-vector space of dimension $d$.
We fix a basis  $(e_1,\ldots,e_d)$ of $V$.
%
Then we can endow $V$ with the associated 
\emph{$l^\infty$-norm} $\|\cdot\|$ defined by
\[\|x\| \coloneqq \max_{1\leq i \leq d}\{|x_i|\}  \in \R_{\geq 0}, \, \textrm{ for
  } x=\sum_ix_ie_i \textrm{ with } (x_1,\ldots,x_d) \in \F^d, \]
\compl{This is a standard norm}
and for $x, y \in V$ we write
$|xy| \coloneqq \|x-y\| \in \R_{\geq 0}$ for their ($\R$-valued) distance.
\compl{this is a usual metric}%

As in \Cref{subsection:NAValuedFields} we define for $x \in V$ and $r \in \R_{\geq 0}$ the \emph{open} and \emph{closed balls} of radius $r$ centered at $x$ as
\[ B(x,r) \coloneqq \{ y \in V \mid |xy| < r \}, \, 
\overline{B}(x,r) \coloneqq \{ y \in V \mid |xy| \leq r \}.\]
The open balls form a subbasis for the (product) topology on $V$.
Note that the topology does not depend on the choice of basis.

When $\F$ is additionally ordered (compatible with $\abs{\cdot}$),  we also define the \emph{$\F$-valued $l^\infty$-norm} $\|\cdot\|_\F$ on $V$ 	by
\[\|x\|_\F \coloneqq \max_{1\leq i \leq d}\{|x_i|_\F\} \in \F_{\geq 0},\]
and associated \emph{$\F$-valued distance} $d_\F(x,y) \coloneqq \|x-y\|_\F \in \F_{\geq 0}$.
An \emph{$\F$-valued distance} is any function $d \from \F\times \F \to \F_{\geq 0}$ that satisfies $d(x,y)=0$ if and only if $x=y$,  $d(x,y )=d(y,x)$ and
$d(x,y) \leq d(x,z) +d(z,y)$ for all $x,y,z \in \F$.
An \emph{$\F$-valued norm} is any function $N \from V \to \F_{\geq 0}$ that
satisfies $N(v)=0$ if and only if $v=0$,  $N( \alpha v )=|\alpha|_\F N(v)$ and
$N(v+w) \leq N(v) +N(w)$ for all $v,w \in V$ and $\alpha \in \F$.
This generalizes the $\F$-valued absolute value $\abs{\cdot}_\F$ on $\F$.
For any $\F$-valued norm $N$ on $V$,  we define for $R \in \F_{\geq 0}$ the \emph{open and  closed $N$-balls} of radius $R$ centered at $x$ as
\begin{align*}
 B_{N}(x,R) \coloneqq \{ y \in \F \mid N(x-y)<R \}, \\
\overline{B}_{N}(x,R) \coloneqq \{ y \in \F \mid N(x-y) \leq R \},
\end{align*}
which generalize the notion of segments in dimension one.
Note that $B_{\|\cdot\|_\F}(0,R)=[-R,R]^d$
and that $\|\cdot\|_\F$ refines the $\R$-valued norm in the following sense: 
we have $\|x\|= |\|x\|_\F|$ and $|xy|=|d_\F(x,y)|=|\|x-y\|_\F|$.
%
A subset $A$ of $V$ is \emph{bounded} 
if there exists $R \in \F$ such that $A \subset [-R,R]^d$, or equivalently if $A$ is contained in some ball $A \subset \overline{B}(x,r)$ with $x \in V$ and $r \in \R_{\geq 0}$.

Let $\F$ be a field and $V$ a finite-dimensional $\F$-vector space.
We denote by $\PP V$ its projectivization, i.e.
\[ \PP V \coloneqq (V \setminus \{0\})/\F^{\times},\]
endowed with the quotient topology.
We write $\PP$ for the projection map $V\setminus \{0\} \to \PP V $.
We set $V^*$ to be its dual vector space, and $\PP V^*$ its dual projective space.
The projective general linear group $\PGL(V)$ naturally acts on $\PP V$.

If $\eb=(e_1,\ldots,e_d)$ is a basis of $V$, the \emph{associated affine chart} is the map
\[
\function{f_\eb}{\F^{d-1}}{\PP V}%
{(x_1 , \ldots , x_{d-1})}{\PP\big(\,\sum_{i=1}^{d-1} x_ie_i +e_d \,\big).}
\]
Given a basis $\eb =(e_1,\ldots,e_d)$ of $V$, when   $(x_1,\ldots,x_d)\in \FF^d$
are \emph{homogeneous coordinates} 
of $x$ in the basis $\eb$, namely when $\PP(\sum_{i=1}^{d} x_ie_i
) = x$, we write $x = [x_1:\ldots:x_d]$.

We now define the dual of a subset in $\PP V$.
Note that the definition slightly differs from the usual one in the
real case.
This allows to be consistent with \cite{FalbelGuillouxWill_HilbertGeometryWithoutConvexity, FalbelGuillouxWill_HilbertMetricBoundedSymmetricDomains}.

\begin{definition}
Let $\Omega \subseteq \PP V$ be any subset.
We define its \emph{dual set} $\Omega^* \in \PP V^*$ by
\[\Omega^*\coloneqq \PP \left(\{ \tilde{\varphi} \in V^* \mid \tilde{\varphi}(\tilde{x}) \neq 0 \textrm{ for a lift } \tilde{x} \textrm{ of all } x \in \Omega \} \right) \subset \PP V^*.\]
\end{definition}

Note that $\Omega$ is contained in an affine chart if and only if $\Omega^*$ is non-empty.

Assume now that $\F$ is ordered.
As in the real case we then have the following.
\begin{proposition}
\label{lem:ProperAffineChart}
If $\Omega \subseteq \PP V$ is bounded in an affine chart,  then $\overline{\Omega}$ is contained in an affine chart.
\end{proposition}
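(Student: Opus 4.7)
The plan is to reduce the statement to a topological claim: the image under the affine chart $f_\eb$ of a closed bounded subset of $\F^{d-1}$ is closed in $\PP V$. By hypothesis and the equivalent formulation of boundedness recalled above, there exist a basis $\eb=(e_1,\ldots,e_d)$ of $V$ and $r \in \R_{\geq 0}$ such that $\Omega \subset K$, where $K \coloneqq f_\eb(B_r)$ and $B_r \coloneqq \{x \in \F^{d-1} : \abs{x_i} \leq r \text{ for all } i\}$. Once $K$ is shown closed in $\PP V$, it follows immediately that $\overline{\Omega} \subset K \subset f_\eb(\F^{d-1})$, so that the closure lies in the affine chart $f_\eb$.

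To show $K$ closed, I plan to exhibit, for every $y \in \PP V \setminus K$, an open neighborhood of $y$ in $\PP V$ disjoint from $K$. The easy case is when $y = f_\eb(z)$ with $z \notin B_r$: the set $B_r$ is closed in $\F^{d-1}$ by continuity of $\abs{\cdot}\from \F\to\R_{\geq 0}$, and $f_\eb$ is a homeomorphism onto its image (which is open in $\PP V$), so $f_\eb(\F^{d-1}\setminus B_r)$ provides such a neighborhood.

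The main content is the case $y=[y_1:\cdots:y_{d-1}:0]$, which lies at infinity of the chart $f_\eb$. After permuting $e_1,\ldots,e_{d-1}$ (which leaves $K$ unchanged) and rescaling homogeneous coordinates, I may assume $\abs{y_1}=\max_i\abs{y_i}>0$ and $y_1=1$. I then pass to the auxiliary affine chart $g\from \F^{d-1}\to \PP V$, $g(b_2,\ldots,b_d)=[1:b_2:\cdots:b_d]$, in which $y=g(y_2,\ldots,y_{d-1},0)$. For any $\delta\in\R_{>0}$, the image under $g$ of $\prod_{i=2}^{d-1} B(y_i,\delta)\times B(0,\delta) \subset \F^{d-1}$ is an open neighborhood of $y$ in $\PP V$. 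A point $[a_1:\cdots:a_{d-1}:1]\in K$ lies in this neighborhood only if $a_1\neq 0$ and $\abs{1/a_1}<\delta$, which by multiplicativity of $\abs{\cdot}$ forces $\abs{a_1}>1/\delta$, contradicting $\abs{a_1}\leq r$ as soon as $\delta<1/r$ (the case $r=0$ being trivial, since $\Omega$ then contains at most one point). The hard part is this second case: one must correctly identify an auxiliary chart around the point at infinity, describe its neighborhood via the $\R$-valued absolute value (rather than the $\F$-valued one, which does not reflect the topology of $\PP V$), and exploit multiplicativity of $\abs{\cdot}$ to convert smallness of $1/a_1$ into largeness of $\abs{a_1}$; the first case and the reduction to closedness of $K$ are formal.
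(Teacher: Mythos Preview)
Your proof is correct and shares the paper's overall strategy: show that the image $K=f_\eb(B)$ of a bounded box $B$ is closed in $\PP V$, so that $\overline{\Omega}\subset K$ lies in the chart $f_\eb$. The execution differs. You verify closedness of $K$ by showing its complement is open, splitting into points of the chart lying outside $B$ and points on the hyperplane at infinity, and in the latter case passing to an auxiliary chart to build an explicit open neighborhood disjoint from $K$ via the $\R$-valued absolute value. The paper instead uses the $\F$-valued bound $R\in\F_{>0}$ (so $B=[-R,R]^{d-1}$) and observes directly that the preimage of $f_\eb(B)$ under the quotient map $\PP\from V\setminus\{0\}\to\PP V$ is cut out by the closed homogeneous conditions $-Rx_d\le x_i\le Rx_d$ (together with their negatives), hence closed in $V\setminus\{0\}$; by definition of the quotient topology this finishes in one line. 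Your route is more hands-on and makes the behavior at infinity explicit, while the paper's exploits the quotient topology and the order on $\F$ to avoid any case analysis.
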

\begin{proof}
Pick a basis $\eb$ of $V$.
Up to acting by an element of $\PGL(V)$ we can assume that $\Omega$ is contained in the associated affine chart $f_\eb \from \F^{d-1} \to \PP V$,  $(x_1,\ldots,x_{d-1}) \mapsto \PP(\,\sum_{i=1}^{d-1} x_ie_i +e_d),$
i.e.\ $\Omega \subseteq f_\eb(\F^{d-1})$.
Since $\Omega$ is bounded,  there exists $R \in \F_{>0}$ such that $\Omega \subset f_\eb([-R,R]^{d-1})$.
Now $f_\eb([-R,R]^{d-1}) \subset \PP V$ is closed,  since its preimage
\begin{align*}
\PP^{-1}(f_\eb([-R,R]^{d-1}) = \{(x_1,\ldots,x_d) &\in V\setminus \{0 \} \mid -Rx_d \leq x_i \leq Rx_d \\
& \textnormal{ for all } i=1,\ldots,d-1 \}
\end{align*}
is closed in $V \setminus \{0 \}$.
Thus $\overline{\Omega} \subset f_\eb([-R,R]^{d-1})\subset  f_\eb(\F^{d-1})$.
\end{proof}

\subsection{The cross-ratio}

Let $\F$ be a field.
\begin{definition}
Let $a,x,y$ and $b \in \F \cup \{\infty\}$,  and assume that no three of the four points agree.
We define their \emph{cross ratio} $\CR{\cdot,\cdot,\cdot,\cdot}$ by
\[ \CR{a,x,y,b} = \frac{y-a}{x-a} \cdot \frac{x-b}{y-b} \in \F \cup \{\infty\},\]
with the conventions $\tfrac{0}{0}=1$,  $\tfrac{\infty}{\infty}=1$, $\tfrac{z}{0}=\infty$ and $\tfrac{z}{\infty}=0$ for all $0\neq z \in \F$.
\end{definition}
In the above convention we have $\CR{0,1,z,\infty}=z$ for any $z \in \F \cup \{\infty\}$.
If $L$ is a projective line,  we can define the cross ratio of four
points on $L$ using some (any) identification of $L$ with $\F \cup \{\infty\}$ by an
affine chart.

Assume now that $\F$ is an ordered field.
The order on $\F$ induces a natural cyclic order on $\F \cup \{\infty\}$.
This does not induce a cyclic order on any projective line $L$,  but there is a well-defined notion of positively oriented tuples on $L$.
Given a projective line $L$ we say that four points $a,x,y,b \in L$ are \emph{positively oriented} if in any affine chart $(a,x,y,b)$ or $(b,y,x,a)$ is cyclically ordered.
As in the real case we have the following monotonicity property.

\begin{proposition}[Monotonicity of the cross ratio]
\label{lem:PropertiesCR:Monotonicity} 
Let $a' \leq a \leq x \leq x' \leq y' \leq y \leq b \leq b' \in \F\cup \{\infty\}$ be such that the following two cross ratios are defined.
Then
\[ \CR{a',x',y',b'} \leq \CR{a,x,y,b}.\]
\end{proposition}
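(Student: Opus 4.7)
My strategy is to establish monotonicity of the cross ratio in each of its four arguments separately, keeping the other three fixed, and then to chain the resulting inequalities. The key algebraic identity, which I will verify by direct expansion of the definition, is
\[ \CR{a,x,y,b} - 1 \;=\; \frac{(b-a)(y-x)}{(x-a)(b-y)}. \]
Under the hypothesis $a \leq x \leq y \leq b$, each of the four linear factors on the right is non-negative, so $\CR{a,x,y,b} \geq 1$; the problem thus reduces to showing that the fraction above increases (in the non-strict sense) in $a$ and $y$ and decreases in $x$ and $b$, i.e.\ decreases when $a$ is replaced by $a' \leq a$, $x$ by $x' \geq x$, $y$ by $y' \leq y$, and $b$ by $b' \geq b$, provided the ordering $a' \leq x' \leq y' \leq b'$ is maintained throughout.

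I will establish each of the four one-variable monotonicities by regrouping the fraction as a product of two simpler fractions and checking directly that each factor moves in the required direction. For the variable $a$, the identity $\frac{b-a}{x-a} = 1 + \frac{b-x}{x-a}$ shows that this factor decreases as $a$ decreases (the denominator of the correction term grows while $b-x \geq 0$ is fixed), while the remaining factor $\frac{y-x}{b-y}$ is unchanged; their product, and hence $\CR{a,x,y,b}-1$, is therefore non-decreasing in $a$. For the variable $x$, the factor $\frac{y-x}{x-a}$ has non-increasing numerator and non-decreasing denominator as $x$ grows in $[a,y]$, so it is non-increasing, while $\frac{b-a}{b-y}$ is a fixed non-negative constant. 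The cases of $y$ and $b$ are symmetric, handled by the same algebraic trick applied to $\frac{y-x}{b-y}$ and to $\frac{b-a}{b-y} = 1 + \frac{y-a}{b-y}$ respectively.

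Chaining these four monotonicities then gives
\[ \CR{a',x',y',b'} \leq \CR{a',x',y',b} \leq \CR{a',x',y,b} \leq \CR{a',x,y,b} \leq \CR{a,x,y,b}, \]
where at each step the four points involved still satisfy the ordering needed to invoke the corresponding monotonicity, as follows immediately from the chain $a' \leq a \leq x \leq x' \leq y' \leq y \leq b \leq b'$. The remaining points to address are the cases where some argument equals $\infty$, handled via the conventions $\tfrac{z}{\infty}=0$ and $\tfrac{\infty}{\infty}=1$ (effectively reducing to one fewer finite variable), and the degenerate cases where two adjacent points in the ordering coincide (say $a=x$), which force the cross ratio to equal $+\infty$ via the convention $\tfrac{z}{0}=\infty$ and make the inequality trivial. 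I do not expect any conceptual difficulty; the only real pitfall is bookkeeping, namely tracking the sign and direction of variation of each factor at each stage, since in a general ordered field one cannot appeal to derivatives or continuity and must argue throughout by direct algebraic comparison.
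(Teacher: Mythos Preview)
The paper does not give a proof of this proposition; it is stated without argument as a fact carried over from the real case (``As in the real case we have the following monotonicity property''). Your approach---reducing via the identity $\CR{a,x,y,b}-1=\frac{(b-a)(y-x)}{(x-a)(b-y)}$ to separate monotonicity in each of the four variables and then chaining---is correct and is the standard elementary argument, valid in any ordered field since it uses only the compatibility of the order with addition and with multiplication by non-negative elements. One bookkeeping point worth making explicit: the hypothesis only guarantees that the two \emph{endpoint} cross ratios in your chain are defined, and an intermediate one such as $\CR{a',x',y,b}$ could in principle have three coinciding arguments (e.g.\ if $x'=y=b$, which then forces $x'=y'=y=b$); but in every such situation one of the endpoint cross ratios is already $1$ or $\infty$ and the inequality is immediate, so these fold into the degenerate cases you already flag at the end.
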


Let us additionally assume that $\F$ is endowed with an order-compatible non-Archimedean absolute value $|\cdot| \from \F \to \R$.
We now list some of the special properties of the absolute value of the cross-ratio in the setting of non-Archimedean ordered fields.

\begin{proposition}
\label{lem:NonArchimedeanPropertiesCR}
Let $a' \leq a \leq x \leq y \leq b \leq b' \in \F \cup \{ \infty\}$ be such that all of the following cross ratios are defined.
\begin{enumerate}
\item 
\label{lem:NonArchimedeanPropertiesCR:CRb}
If $|xy| \leq |xa|$, then $\abs{\CR{a,x,y,b}}=\tfrac{|xb|}{|yb|}$.
This is for example satisfied if $|xb|\leq |xa|$.
\item
\label{lem:NonArchimedeanPropertiesCR:CRa}
If $|xy| \leq |yb|$, then $\abs{\CR{a,x,y,b}}=\tfrac{|ya|}{|xa|}$.
This is for example satisfied if $|ya|\leq |xa|$.
\item
\label{lem:NonArchimedeanPropertiesCR:CR=1}
If $|xy| \leq \min \{|xa|,|yb|\}$, then $\abs{\CR{a,x,y,b}}=1$.
\item
\label{lem:NonArchimedeanPropertiesCR:CR=}
If $|bb'| \leq |yb|$ and $|aa'|\leq |xa|$, then $\abs{\CR{a,x,y,b}}=\abs{\CR{a',x,y,b'}}$.
This is for example satisfied if $\abs{\CR{b',x,y,b}}=\abs{\CR{a,x,y,a'}}=1$.
\end{enumerate}
\end{proposition}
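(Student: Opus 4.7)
The plan is to reduce every assertion to the elementary identity $|u+v|=\max\{|u|,|v|\}$ for $u,v\in\F_{\geq 0}$ from \Cref{lem:AbsoluteValueNA:EqualityPos}(2), applied through
\[ \abs{\CR{a,x,y,b}} = \frac{|ya|\cdot|xb|}{|xa|\cdot|yb|}. \]
Since $y-a = (x-a)+(y-x)$ and $b-x = (b-y)+(y-x)$ are sums of non-negative elements (by the chain $a\leq x\leq y\leq b$), this identity immediately gives
\[ |ya|=\max\{|xa|,|xy|\}, \qquad |xb|=\max\{|yb|,|xy|\}, \]
and order-compatibility of $\abs{\cdot}$ yields the monotonicities $|xa|\leq|ya|$ and $|yb|\leq|xb|$ from $0\leq x-a\leq y-a$ and $0\leq b-y\leq b-x$. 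These observations are essentially all that is needed.

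For (1), the hypothesis $|xy|\leq|xa|$ forces $|ya|=|xa|$, so $\abs{\CR{a,x,y,b}}$ collapses to $|xb|/|yb|$; the sufficient condition $|xb|\leq|xa|$ implies the hypothesis because $|xy|\leq|xb|$ (by order-compatibility applied to $0\leq y-x\leq b-x$). Item~(2) is the symmetric statement obtained by swapping the roles of $(a,x)$ and $(b,y)$: the hypothesis $|xy|\leq|yb|$ now forces $|xb|=|yb|$, leaving $|ya|/|xa|$. Item~(3) is the conjunction of (1) and (2).

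For (4), the first sufficient condition is handled by the same technique. The chain $|bb'|\leq|yb|\leq|xb|$ (respectively $|aa'|\leq|xa|\leq|ya|$), together with the expansions $b'-y=(b-y)+(b'-b)$, $b'-x=(b-x)+(b'-b)$, $x-a'=(x-a)+(a-a')$, $y-a'=(y-a)+(a-a')$ as sums of same-sign elements, yields $|yb'|=|yb|$, $|xb'|=|xb|$, $|xa'|=|xa|$ and $|ya'|=|ya|$, so substituting into the explicit formula for $\abs{\CR{a',x,y,b'}}$ recovers $\abs{\CR{a,x,y,b}}$. The second sufficient condition is handled purely formally via the multiplicative cocycle identity
\[ \CR{a',x,y,b'} = \CR{a',x,y,a}\cdot\CR{a,x,y,b}\cdot\CR{b,x,y,b'}, \]
which is immediate from the definition, combined with the antisymmetries $\CR{b,x,y,b'}=\CR{b',x,y,b}^{-1}$ and $\CR{a',x,y,a}=\CR{a,x,y,a'}^{-1}$. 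Taking absolute values and invoking $\abs{\CR{b',x,y,b}}=\abs{\CR{a,x,y,a'}}=1$ gives the desired equality. The proof contains no serious conceptual obstacle; the only point requiring care is orienting each difference correctly so that every sum to which the ultrametric identity is applied consists of same-sign elements, as imposed by the full chain $a'\leq a\leq x\leq y\leq b\leq b'$.
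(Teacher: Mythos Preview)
Your proof is correct and follows essentially the same approach as the paper: both arguments rest on writing $\abs{\CR{a,x,y,b}}=\frac{|ya|\,|xb|}{|xa|\,|yb|}$ and applying the ultrametric identity $|u+v|=\max\{|u|,|v|\}$ for same-sign elements (\Cref{lem:AbsoluteValueNA:EqualityPos}) to the decompositions $y-a=(x-a)+(y-x)$ and $b-x=(b-y)+(y-x)$. You are simply more explicit than the paper, which dispatches (4) with ``a similar argument''; your term-by-term verification and the cocycle identity for the second sufficient condition are a welcome elaboration but not a different method.
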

\begin{proof}
By definition of the cross ratio and the multiplicity property of $\abs{\cdot}$ we have 
\[\abs{\CR{a,x,y,b}}= \frac{|ya|}{|xa|}\cdot \frac{|xb|}{|yb|}.\]
Because the points $a,x,y,b$ are aligned in this order,  we have $|ya|=\max\{|yx|,|xa|\}$ and $|xb|=\max\{|yx|,|xb|\}$ by \Cref{lem:AbsoluteValueNA:EqualityPos}.
This immediately implies (\ref{lem:NonArchimedeanPropertiesCR:CRb}) and (\ref{lem:NonArchimedeanPropertiesCR:CRa}).
Putting these together gives (\ref{lem:NonArchimedeanPropertiesCR:CR=1}).
A similar argument proves (\ref{lem:NonArchimedeanPropertiesCR:CR=}) using the order-compatibility of $\abs{\cdot}$.
\end{proof}

Let $\F$ be again any field and $V$ a finite-dimensional $\F$-vector space.
The cross-ratio of four aligned points is well-defined.
We now also define the cross-ratio of two points and two hyperplanes.

\begin{definition}
Let $x,y \in \PP V$ and $\varphi, \psi \in \PP V^*$, and assume that neither one of the following occurs: $x=y \in \ker(\varphi)$,  or $x=y \in \ker(\psi)$,  or $x \in \ker(\varphi)=\ker(\psi)$,  or $y \in \ker(\varphi)=\ker(\psi)$.
We define the \emph{cross ratio }$\CR{\cdot,\cdot,\cdot,\cdot}$ of two points and two hyperplanes by
\[\CR{\varphi,x,y,\psi} \coloneqq \frac{\tilde{\psi}(\tilde{x})}{\tilde{\psi}(\tilde{y})} \cdot \frac{\tilde{\varphi}(\tilde{y})}{\tilde{\varphi}(\tilde{x})} \in \F\cup \{ \infty \},\]
where $\tilde{x}, \tilde{y} \in V$ and $\tilde{\varphi}, \tilde{\psi} \in V^*$ are representatives of $x, y$ respectively $\varphi, \psi$.
\end{definition}

Note that this definition does not depend on the choice of lifts.
Like in the real case we have the following properties.

\begin{proposition} \label{lem:PropertiesCR}
Let $x, y,z \in \PP V$ and $\varphi, \psi,\alpha \in \PP V^*$ be such that all of the following cross ratios are defined.
Then
\begin{enumerate}
\item \label{lem:PropertiesCR:Multiplicativity}
\begin{align*}
\CR{\varphi,x,y,\psi} &= \CR{\varphi,x,z,\psi}\CR{\varphi,z,y,\psi}\\
&=\CR{\varphi,x,y,\alpha}\CR{\alpha,x,y,\psi}.
\end{align*}
\item \label{lem:PropertiesCR:Dim1}
Let $L$ be a projective line through $x$ and $y$.
If we set $a\coloneqq L \cap \PP(\ker(\varphi))$ and $b \coloneqq L \cap \PP(\ker(\psi))$,  then $\CR{\varphi, x,y,\psi} = \CR{a,x,y,b}$.
\item
\label{lem:PropertiesCR:InvProjTransf}
The cross ratio is invariant by projective transformations, namely for $g \in \PGL(V)$ we have $\CR{ga,gx,gy,gb}=\CR{a,x,y,z}$.
\qed
\end{enumerate}
\end{proposition}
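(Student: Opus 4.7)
The plan is to verify each of the three properties by a direct computation from the definition
\[
\CR{\varphi,x,y,\psi} \;=\; \frac{\tilde\psi(\tilde x)}{\tilde\psi(\tilde y)} \cdot \frac{\tilde\varphi(\tilde y)}{\tilde\varphi(\tilde x)},
\]
relying only on the bilinearity of the pairing $V^* \times V \to \F$ together with the fact that the formula is homogeneous of degree zero in each of the four chosen representatives, so every computation is independent of the lifts.

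For part (\ref{lem:PropertiesCR:Multiplicativity}), I would expand the product $\CR{\varphi,x,z,\psi}\,\CR{\varphi,z,y,\psi}$ with a common choice of lift $\tilde z$ for $z$: the two factors of $\tilde\psi(\tilde z)$ cancel, as do the two factors of $\tilde\varphi(\tilde z)$, leaving exactly $\CR{\varphi,x,y,\psi}$. The second identity is dual in the same way, with $\tilde\alpha(\tilde x)$ and $\tilde\alpha(\tilde y)$ cancelling when one expands $\CR{\varphi,x,y,\alpha}\,\CR{\alpha,x,y,\psi}$.

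For part (\ref{lem:PropertiesCR:Dim1}), I would fix lifts $\tilde x,\tilde y$ and parametrize $L$ by homogeneous coordinates $[s:t]\mapsto \PP(s\tilde x + t\tilde y)$, so that $x=[1:0]$ and $y=[0:1]$. Evaluating $\tilde\varphi(s\tilde x + t\tilde y) = s\tilde\varphi(\tilde x) + t\tilde\varphi(\tilde y)$ identifies $a$ with the point of parameter $t_a = -\tilde\varphi(\tilde x)/\tilde\varphi(\tilde y)$ (i.e.\ with $[t_a:1]$ after suitable normalization), and analogously $b$ with $t_b = -\tilde\psi(\tilde x)/\tilde\psi(\tilde y)$. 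Using the unique projectivity of $L$ sending $(a,x,b)$ to $(0,1,\infty)$, the four-point cross-ratio $\CR{a,x,y,b}$ evaluates to $t_b/t_a$, which matches the right-hand side of the displayed formula above after cancellation of signs. Working throughout in homogeneous coordinates on $L$, rather than first passing to an affine chart, avoids any need to manipulate the formal symbol $\infty$.

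For part (\ref{lem:PropertiesCR:InvProjTransf}), I would use the contragredient action of $\GL(V)$ on $V^*$: a lift $\tilde g$ of $g$ acts by $\tilde g \cdot \tilde\varphi \coloneqq \tilde\varphi \circ \tilde g^{-1}$, so that the pairing becomes equivariant, $(\tilde g\tilde\varphi)(\tilde g\tilde x) = \tilde\varphi(\tilde x)$. Every factor in the definition of $\CR{g\varphi, gx, gy, g\psi}$ is therefore equal to the corresponding factor of $\CR{\varphi,x,y,\psi}$, and the identity descends to $\PGL(V)$. The only real subtlety throughout is the coordinate bookkeeping in part (\ref{lem:PropertiesCR:Dim1}); the other two items are essentially formal manipulations, and the ``defined'' hypothesis propagates automatically since every cancelled or introduced linear form is, by assumption, non-vanishing on the relevant vector.
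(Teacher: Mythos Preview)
The paper gives no proof of this proposition at all: it is prefaced by ``Like in the real case we have the following properties'' and closed with a bare \qed, so your direct verification from the definition is exactly the intended argument and more than the paper itself supplies.

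One small bookkeeping point in part~(\ref{lem:PropertiesCR:Dim1}): with your parametrization $[s:t]\mapsto \PP(s\tilde x+t\tilde y)$ and normalization $a=[t_a:1]$, solving $t_a\,\tilde\varphi(\tilde x)+\tilde\varphi(\tilde y)=0$ gives $t_a=-\tilde\varphi(\tilde y)/\tilde\varphi(\tilde x)$, the reciprocal of what you wrote; and the four-point cross-ratio in this chart (where $x$ sits at $\infty$ and $y$ at $0$) comes out to $t_a/t_b$, not $t_b/t_a$. These two inversions cancel, so your final answer is correct, but if you write the computation out in full you should straighten them out.
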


Assume now that $\F$ is ordered.
In this case one can compute the cross-ratio using any $\F$-valued norm on an affine chart.
\begin{proposition}
Let $a,x, y,b \in L\subset \PP V$ be aligned and such that no three of the four points agree, i.e.\ their cross ratio is defined.
Take any affine chart $f=f_\eb \from \F^{d-1} \to \PP V$ such that $a=f(a')$,  $x=f(x')$, $y=f(y')$,  $b=f(b')$ for $a',x',b',y' \in \F^{d-1}$,  and let $N$ be any $\F$-valued norm on $\F^{d-1}$.
If $a,x,y,b$ are a positively oriented on $L$,  then 
\[\CR{a,x,y,b} = \frac{N(y'-a')}{N(x'-a')} \cdot \frac{N(x'-b')}{N(y'-b')}.\]
\hfill $\square$
\end{proposition}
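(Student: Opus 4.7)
The plan is to reduce the statement to a one-dimensional computation on the affine line supporting $a', x', y', b'$. Since these four points are collinear in $\FF^{d-1}$, I would fix a direction vector $v \in \FF^{d-1}$ of this line and write $x' = a' + sv$, $y' = a' + tv$, $b' = a' + uv$ for uniquely determined scalars $s, t, u \in \FF$. The map $r \mapsto f(a' + rv)$ then extends to a bijection $\FF \cup \{\infty\} \to L$, providing an affine chart of $L$ under which $a, x, y, b$ correspond to $0, s, t, u$ respectively.

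Under this identification, the definition of the cross ratio on $L$ gives
\[
\CR{a,x,y,b} \;=\; \CR{0,s,t,u} \;=\; \frac{t}{s} \cdot \frac{s-u}{t-u},
\]
while the $\FF$-homogeneity of the norm $N$ yields $N(x'-a') = |s|_\FF \cdot N(v)$, and similarly for the three other differences, so that
\[
\frac{N(y'-a')}{N(x'-a')} \cdot \frac{N(x'-b')}{N(y'-b')} \;=\; \frac{|t|_\FF}{|s|_\FF} \cdot \frac{|s-u|_\FF}{|t-u|_\FF}.
\]
The two right-hand sides always have the same $\FF$-valued absolute value, so proving the desired equality reduces to showing that the scalar $\tfrac{t(s-u)}{s(t-u)}$ is non-negative in $\FF$ under the positive orientation hypothesis.

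This last step is arguably the only subtlety. The positive orientation condition translates, via the parametrization, to $(0,s,t,u)$ or $(u,t,s,0)$ being cyclically ordered in $\FF \cup \{\infty\}$. Writing out the cyclic shifts, this yields eight configurations for the positions of $s, t, u$ relative to each other and to $0$ in the linear order of $\FF$, namely the rotations of $0 < s < t < u$ and of its reverse $u < t < s < 0$. In each configuration, a direct inspection of the signs of the four factors $s, t, s-u, t-u$ shows that $t/s$ and $(s-u)/(t-u)$ have the same sign, so their product is non-negative and the absolute value signs can be dropped. I expect this orientation/sign bookkeeping to be the only real obstacle; the reduction to dimension one and the norm computation are essentially definitional.
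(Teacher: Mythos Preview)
Your proof is correct and follows essentially the same approach as the paper: parametrize the affine line carrying $a',x',y',b'$ by a direction vector, use the $\FF$-homogeneity of $N$ to reduce the norm ratio to a ratio of $\FF$-absolute values of scalars, and invoke the positive orientation hypothesis to drop the absolute value signs. Your treatment of the sign bookkeeping is in fact more explicit than the paper's, which simply asserts the last equality ``since $a,x,y,b$ are positively ordered''.
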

\compl{
\begin{proof}
Since $a,x,y,b$ lie on $L$ there exist $v \in \F^{d-1}$ and $t_a, t_x, t_y, t_b \in \F$ such that $a'=t_av, x'=t_x v,  y'=t_y v,  b'=t_b v$.
One obtains
\begin{align*}
\frac{N(y'-a')}{N(x'-a')} \cdot \frac{N(x'-b')}{N(y'-b')}&= \frac{N((t_y-t_a)v)}{N((t_x-t_a)v)} \cdot \frac{N((t_x-t_b)v)}{N((t_y-t_b)v)}\\
&= \frac{|t_y-t_a|_\F}{|t_x-t_a|_\F} \cdot \frac{|t_x-t_b|_\F}{|t_y-t_b|_\F}\\
&=\CR{a,x,y,b},
\end{align*}
where the last equality holds,  since $a,x,y,b$ are positively ordered.
\end{proof}}

\subsection{Convex sets}
\label{subs:ConvexSets}
Let $\F$ be an ordered field and  $V$ a finite-dimensional $\F$-vector space.

\begin{definition}[Convex sets in vector spaces]
\label{dfn:ConvexSetsVectorSpaces}
A subset $C$ of $V$ is \emph{convex} if for all $v, w \in C$ the segment $[v,w] \coloneqq \{ tv + (1-t)w \mid 0\leq t\leq1, \, t \in \F\}$ is included in $C$.
\end{definition}

Note that, unlike in the real case, convex subsets of $\FF$ are not
necessarily segments:  
for example the balls as defined in \Cref{dfn_Balls} are convex.
%
It is easy to verify that half-spaces are convex, as well as linear
subspaces and intersections and products of convex sets.
For any $\F$-valued norm $N$ on $V$,  we have that $B_N(x,R)$ is convex for all $x \in V$ and $R \in \F_{\geq 0}$.
Recall that $V$ is endowed with the product topology.
The interior,  the relative interior and the closure of a convex set are convex.
\compl{
\begin{proof}
Let $v,w \in \Int{C}$.
Then by convexity of $C$ the segment $[v,w]$ is included in $C$.
Fix $0<t<1$. 
We have $tC+(1−t)C\subseteq C$ by convexity, so $t\Int{C}+(1−t)\Int{C} \subseteq C$. 
But $t\Int{C}$ is open, so $t\Int{C}+(1−t)\Int{C}= \bigcup_{x \in (1-t)\Int{C}} (t\Int{C}+x)$ is open,  and thus $t\Int{C} +(1-t)\Int{C} \subseteq \Int{C}$,  hence $\Int{C}$ is convex.

We turn to the convexity of $\overline{C}$.
Let $v,w \in \overline{C}$.
By definition of $\overline{C}$ we have that for all $s \in \F_{>0}$ there exist $v_0,w_0 \in C$ such that $\|v-v_0\|,  \|w-w_0\| < s$.
Since $C$ is convex $[v_0,w_0] \subset C$.
Take $u = tv+(1-t)w$ with $0\leq t \leq 1$.
We claim that $u \in \overline{C}$.
Indeed for all $s \in \F_{>0}$ we have that for $u_0 = tv_0+(1-t)w_0 \in C$
\[\|u-u_0\| \leq t\|v-v_0\| + (1-t) \|w-w_0\| <s,\]
which shows that  $u \in \overline{C}$.
ALTERNATIVE: Without using $\norm{\cdot}$.
Let $C$ be a nonempty convex subset of $X$.
For $x,y \in C$ and $\lambda \in [0,1]$,  we prove that any neighborhood of $z=\lambda x + (1-\lambda) y$ intersects $C$. 
So let $W$ be a neighborhood of $0$. 
Because $(u,v) \to  \lambda u + (1-\lambda) v$ is continuous, there exist open subsets $U$ and $V$ such that $\lambda U +(1-\lambda) V \subset W$; $x + U$ (resp. $y+V$) is an open neighborhood of $x$ (resp. of $y$) so there exists $x_1 \in (x+U)\cap C$ (resp. $y_1 \in (y+V)\cap C$).
Therefore,  $z_1 = \lambda x_1 + (1-\lambda y_1) \in C$ because $C$ is convex and $z_1 \in \lambda(x+U)+(1-\lambda)(y+v) \subset z+W$, i.e. $(z+W) \cap C \neq \emptyset$.
\end{proof}
}

As any intersection of convex subsets is convex, any subset $A\subset
V$ has a well-defined \emph{convex hull} $\conv(A)$,  namely the
intersection of all convex sets in $V$ containing $A$.
This is the same as the set of all convex combinations of points in $A$.
\compl{Every convex set containing $A$ contains all convex combinations of points in $A$, thus the set of all such convex combinations is contained in $\conv(A)$.
Conversely,  the set of all convex combinations is convex and contains $A$,  thus contains the intersection of all such sets.}
By Caratheodory's theorem (which holds for arbitrary ordered fields),  it suffices to look at convex combinations of at most $d+1$ points,  where $d$ is the dimension of $V$.
If we consider ordered field extensions $\F\subseteq \L$ we write $\conv_\L(A)$ to specify the $\L$-convex hull, i.e.\ the set of all $\L$-convex combinations of points in $A$,  where we view $A$ as a subset of the $\L$-vector space $V \otimes \L$.
In the case that $A$ is a finite set of vectors in $V$,  its convex hull is called an \emph{affine polytope}.

We now turn to convex sets in projective spaces.

\begin{definition}[Convex sets in projective lines]
If $\PP V$ is a projective line,  a subset $\Omega \subseteq \PP V$ is \emph{convex} if either $\Omega = \PP V$ or $\Omega$ is convex in one (or any) affine chart.
\end{definition}

Note that this is well-defined because the change of affine chart is a monotonic map.

\begin{definition}[Convex sets in projective spaces]
Let $\Omega \subseteq \PP V $ be a subset.
We say that $\Omega$ is \emph{convex} if the intersection of $\Omega$ with every projective line in $\PP V$ is convex.
\end{definition}

Note that if $\Omega$ is contained in an affine chart,  then $\Omega$ is convex in $\PP V$ if and only if $\Omega$ is convex in the affine chart (in the sense of \Cref{dfn:ConvexSetsVectorSpaces}).
Observe that, like in the real case,  if $\Omega$ is convex then $\Omega^*$ is a convex subset of $\PP V^*$.

\begin{definition}
Let $\Omega \subseteq \PP V$ be a subset.
We say that $\Omega$ is \emph{properly convex} if 
there exists an affine chart in which $\Omega$ is convex and bounded.
Equivalently,  $\Omega$ is the image of a properly convex cone.
\end{definition}

\Cref{lem:ProperAffineChart}
together with the observation that the closure of a convex set is convex,  implies in particular 
that if $\Omega$ is properly convex,  then so is $\overline{\Omega}$.

\section{Hilbert geometry over ordered valued fields and first examples}
\label{section:HilbertGeometry}

In this section we define the basics of Hilbert geometry over general ordered valued fields and present some examples.
From now on let $V$ be a finite-dimensional vector space over an ordered valued field $\F$.

\subsection{Hilbert pseudo-distance and associated Hilbert metric space}
With the above definitions we can now introduce the Hilbert pseudo-distance.

\begin{definition}[Hilbert pseudo-distance]
Let $\Omega \subset \PP V$ be a convex set.
We define the function $d_\Omega \from \Omega \times \Omega \to \R \cup \{\infty\}$ by
\[d_\Omega (x,y) \coloneqq \sup_{\varphi, \psi \in \Omega^*} \log |\CR{\varphi,x,y,\psi}|.\]
\end{definition}

The supremum of the empty set is defined to be $\infty$.
Note that this definition makes sense more generally for all valued fields (not necessarily ordered),  
when $\Omega$ is taken to be any subset; 
compare also to \cite{Guilloux_PAdicHypDisc} in the case of the $p$-adic numbers and to \cite{FalbelGuillouxWill_HilbertGeometryWithoutConvexity,FalbelGuillouxWill_HilbertMetricBoundedSymmetricDomains} in the case of complex numbers.

By \Cref{lem:PropertiesCR}~(\ref{lem:PropertiesCR:Dim1}), we have
the following alternative expression for $d_\Omega$, namely 
\begin{equation}
d_\Omega (x,y) = \sup_{a, b \in L \setminus \Omega} \log \abs{\CR{a,x,y,b}}, 
\end{equation}
where $L$ is any line passing through $x$ and $y$.

We recover the following properties of $d_\Omega$ which are classical in the case $\F=\R$.

\begin{proposition}[Pseudo-distance]
\label{lem:PseudoDistdOmega}
Let $\Omega \subset \PP V$ be a convex set with $\Omega^* \neq \emptyset$.
Then $d_\Omega$ is a pseudo-distance, i.e.\ it has the following properties: For all $x,y,z \in \Omega$ we have
\begin{enumerate}
	\item 
	\label{lem:PropertiesdOmega:Zero}
	$d_\Omega(x,x)=0$,
	\item  \label{lem:PropertiesdOmega:Pos}
	$d_\Omega(x,y)\geq 0$,
	\item  \label{lem:PropertiesdOmega:Sym}
	$d_\Omega(x,y)=d_\Omega(y,x)$,  and
	\item  \label{lem:PropertiesdOmega:TriangleInequality}
	$d_\Omega(x,y) \leq d_\Omega(x,z)+d_\Omega(z,y)$.
	\qed
\end{enumerate}
\end{proposition}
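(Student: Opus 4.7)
The plan is to deduce all four properties directly from the defining supremum and the elementary identities for the cross-ratio collected in Proposition~\ref{lem:PropertiesCR}. The first thing I would record once and for all is that, for every $x,y\in\Omega$ and every $\varphi,\psi\in\Omega^*$, the cross-ratio $\CR{\varphi,x,y,\psi}$ is genuinely defined: indeed any $\varphi\in\Omega^*$ satisfies $\tilde\varphi(\tilde x)\neq 0$ for every $x\in\Omega$, so none of the degenerate cases excluded in the definition of the cross-ratio can occur. In particular every term appearing in the supremum is a finite, non-zero element of $\F$, and its absolute value is a well-defined positive real number.

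With this observation in hand, property (\ref{lem:PropertiesdOmega:Zero}) is immediate: substituting $y=x$ into the defining formula gives $\CR{\varphi,x,x,\psi} = \frac{\tilde\psi(\tilde x)}{\tilde\psi(\tilde x)}\cdot\frac{\tilde\varphi(\tilde x)}{\tilde\varphi(\tilde x)}=1$ for every admissible pair, so $d_\Omega(x,x)=\sup\{0\}=0$. For property (\ref{lem:PropertiesdOmega:Pos}) I would simply note that choosing $\varphi=\psi$ yields $\CR{\varphi,x,y,\varphi}=1$, and since $\Omega^*\neq\emptyset$ at least one such pair exists; hence $0=\log 1$ belongs to the set over which the supremum is taken, forcing $d_\Omega(x,y)\geq 0$. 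For property (\ref{lem:PropertiesdOmega:Sym}) the key algebraic identity, obtained by direct computation from the definition, is
\[
\CR{\varphi,y,x,\psi}=\frac{\tilde\psi(\tilde y)}{\tilde\psi(\tilde x)}\cdot\frac{\tilde\varphi(\tilde x)}{\tilde\varphi(\tilde y)}=\CR{\psi,x,y,\varphi}.
\]
Since the index set $(\Omega^*)^2$ is invariant under swapping the two coordinates, relabeling dummy variables in the supremum gives $d_\Omega(y,x)=d_\Omega(x,y)$.

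The last step, the triangle inequality (\ref{lem:PropertiesdOmega:TriangleInequality}), uses the multiplicativity of the cross-ratio from Proposition~\ref{lem:PropertiesCR}~(\ref{lem:PropertiesCR:Multiplicativity}): for any $z\in\Omega$ and any $(\varphi,\psi)\in(\Omega^*)^2$, one has $\CR{\varphi,x,y,\psi}=\CR{\varphi,x,z,\psi}\cdot\CR{\varphi,z,y,\psi}$. Taking $\log\abs{\cdot}$ turns this product into a sum of two terms, and each of those terms is bounded above by $d_\Omega(x,z)$, respectively $d_\Omega(z,y)$, since the supremum defining the latter is taken over the same index set $(\Omega^*)^2$ and the inequality $d_\Omega(x,z)\geq\log|\CR{\varphi,x,z,\psi}|$ holds regardless of the sign of the right-hand side. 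Passing to the supremum on the left over all $(\varphi,\psi)$ then yields the desired inequality. None of the steps is a real obstacle; the only mild care required is in checking the symmetry identity of step (\ref{lem:PropertiesdOmega:Sym})---one must swap both the points \emph{and} the hyperplanes---and in being explicit in (\ref{lem:PropertiesdOmega:TriangleInequality}) that the pointwise bound by $d_\Omega(x,z)$ is valid even when the log in question is negative.
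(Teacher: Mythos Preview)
Your argument is correct and is exactly the direct verification the paper has in mind: the paper does not spell out a proof (the statement ends with \qed), remarking only that the properties follow directly from the definition and the cross-ratio identities of Proposition~\ref{lem:PropertiesCR}, and that neither the order on $\F$ nor the convexity of $\Omega$ is needed. Your write-up simply fills in those elementary details.
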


The above proposition follows directly from the definition and neither uses that $\F$ is an ordered field nor that $\Omega$ is convex.
In fact,  they hold more generally for valued fields (not necessarily ordered) and arbitrary subsets $\Omega$ (see e.g.\ \cite[Proposition 2.6]{FalbelGuillouxWill_HilbertGeometryWithoutConvexity} in the complex case for an analogue of \Cref{lem:PseudoDistdOmega}~(\ref{lem:PropertiesdOmega:Zero})-(\ref{lem:PropertiesdOmega:TriangleInequality})).

It cannot be excluded that $d_\Omega$ takes the value $\infty$.
The following gives a sufficient condition to ensure that $d_\Omega$ is finite.

\begin{proposition}[Finiteness]
\label{lem:PropertiesdOmega:Finite}
Let $\Omega \subset \PP V$ be a convex set with $\Omega^* \neq \emptyset$.
If $\Omega$ is open,  then $d_\Omega(x,y)< \infty$ for all $x,y \in \Omega$.
\end{proposition}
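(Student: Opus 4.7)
The plan is to use the line description $d_\Omega(x,y) = \sup_{a,b \in L \setminus \Omega} \log |\CR{a,x,y,b}|$, where $L$ is any projective line through $x$ and $y$, and to bound $|\CR{a,x,y,b}|$ uniformly in $a,b \in L \setminus \Omega$ by exploiting openness of $\Omega$.

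We may assume $x \neq y$, since otherwise $\CR{a,x,x,b}=1$ gives $d_\Omega(x,x)=0$. Because $\Omega^* \neq \emptyset$, $\Omega$ is contained in an affine chart; choose a basis $\eb$ so that both $x$ and $y$ lie in the image of $f_\eb$. The line $L = L_{xy}$ is then identified, via this chart, with $\F \cup \{\infty_L\}$, where $\infty_L$ is the unique point of $L$ outside the chart. Since $\Omega$ is open in $\PP V$ and $f_\eb$ is a homeomorphism onto its image, $L \cap \Omega$ is open in $L$; since the absolute value on $\F$ is non-trivial, we can find real numbers $\varepsilon_x, \varepsilon_y > 0$ such that $\{t \in \F : |t-x|<\varepsilon_x\} \subseteq L \cap \Omega$ and $\{t \in \F : |t-y|<\varepsilon_y\} \subseteq L \cap \Omega$.

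Now fix any $a, b \in L \setminus \Omega$. If $a = \infty_L$ then by the conventions $\frac{\infty}{\infty}=1$ we have $\frac{y-a}{x-a}=1$. Otherwise $a \in \F$ with $|x-a| \geq \varepsilon_x$, and the triangle inequality for the $\R$-valued absolute value gives
$\frac{|y-a|}{|x-a|} \leq 1 + \frac{|y-x|}{|x-a|} \leq 1 + \frac{|y-x|}{\varepsilon_x}$.
Symmetrically $\frac{|x-b|}{|y-b|} \leq 1 + \frac{|y-x|}{\varepsilon_y}$ in all cases. Multiplying produces a finite upper bound on $|\CR{a,x,y,b}|$ independent of $a$ and $b$, whence $d_\Omega(x,y) < \infty$.

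The argument is essentially routine; the only subtlety is handling the point $\infty_L$, which is resolved painlessly by the stated convention. No separate treatment of the non-Archimedean case is needed, since the $\R$-valued absolute value on $\F$ always satisfies the usual (non-ultrametric) triangle inequality used in the estimate above.
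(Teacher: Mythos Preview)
Your proof is correct. It differs from the paper's in approach: the paper argues by contradiction, supposing $d_\Omega(x,y)=\infty$, extracting sequences $a_n,b_n\in L\setminus\Omega$ with $\abs{\CR{a_n,x,y,b_n}}\to\infty$, and then using multiplicativity of the cross-ratio with a fixed auxiliary point $z\in L\setminus\Omega$ to force either $a_n\to x$ or $b_n\to y$, contradicting openness. Your argument is instead direct and constructive: you produce an explicit uniform bound $\abs{\CR{a,x,y,b}}\leq\big(1+\tfrac{|x-y|}{\varepsilon_x}\big)\big(1+\tfrac{|x-y|}{\varepsilon_y}\big)$ via the ordinary triangle inequality for the $\RR$-valued absolute value. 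What this buys you is an effective estimate on $d_\Omega(x,y)$ in terms of the distance from $x$ and $y$ to the complement of $\Omega$ along $L$; the paper's argument is slightly more economical in that it avoids affine coordinates and the case analysis for $\infty_L$, working intrinsically with cross-ratio identities. Both routes are short, and neither requires convexity of $\Omega$.
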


\compl{
Alternatively :
\begin{proposition}[Finiteness of $d_\Omega$]
Let $\Omega \subset \PP V$ be a convex set with $\Omega^* \neq \emptyset$.
For any $x \neq y \in \Omega$,  we have $d_\Omega(x,y) = \infty$ if and only if $x \in \partial\Omega$ or $y\in \partial\Omega$.
In particular,  if $\Omega$ is open,  then $d_\Omega(x,y)<\infty$ for all $x,y \in \Omega$.
\end{proposition}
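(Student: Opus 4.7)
The plan is to reduce the supremum to a one-dimensional computation on the projective line through $x$ and $y$, and then to use openness of $\Omega$ to bound $|x-a|$ and $|y-b|$ from below, so that a direct triangle inequality estimate yields a finite bound on the cross-ratio.

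First I would dispose of the trivial case $x=y$. For $x\neq y$, let $L$ denote the projective line through $x$ and $y$ and use the alternative expression $d_\Omega(x,y)=\sup_{a,b\in L\setminus\Omega}\log|\CR{a,x,y,b}|$ that follows from \Cref{lem:PropertiesCR}~(\ref{lem:PropertiesCR:Dim1}). Since $\Omega^*\neq\emptyset$, there is a projective hyperplane disjoint from $\Omega$, and I would choose an affine chart in which this hyperplane lies at infinity; then $\Omega$ sits inside $\F^{d-1}$ and $L$ becomes an affine line, which I identify with $\F$ via some parametrization.

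Next I would exploit openness: the subset $C\coloneqq L\cap\Omega$ is an open convex, hence order-convex, subset of $L\cong\F$ containing both $x$ and $y$. By openness in the order topology, there exist $\epsilon_x,\epsilon_y\in\F_{>0}$ such that $(x-\epsilon_x,x+\epsilon_x)\subset C$ and $(y-\epsilon_y,y+\epsilon_y)\subset C$. Hence any $a\in L\setminus\Omega$ satisfies $|x-a|_\F\geq\epsilon_x$, which by order-compatibility of $|\cdot|$ (cf.\ \Cref{lem:AbsoluteValueNA:EqualityPos}) yields $|x-a|\geq|\epsilon_x|$ in $\R$; likewise $|y-b|\geq|\epsilon_y|$ for any $b\in L\setminus\Omega$.

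Finally, using multiplicativity of $|\cdot|$ together with the ordinary $\R$-valued triangle inequality $|y-a|\leq|y-x|+|x-a|$ and its symmetric counterpart, I obtain
\[
|\CR{a,x,y,b}|=\frac{|y-a|}{|x-a|}\cdot\frac{|x-b|}{|y-b|}\leq\Bigl(1+\tfrac{|y-x|}{|x-a|}\Bigr)\Bigl(1+\tfrac{|y-x|}{|y-b|}\Bigr)\leq\Bigl(1+\tfrac{|y-x|}{|\epsilon_x|}\Bigr)\Bigl(1+\tfrac{|y-x|}{|\epsilon_y|}\Bigr),
\]
a finite bound independent of $a,b$, which yields $d_\Omega(x,y)<\infty$. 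The only point meriting care is when $a$ or $b$ is the point at infinity of $L$, but then the cross-ratio convention $\tfrac{y-\infty}{x-\infty}=1$ simply removes the corresponding factor, making the estimate smaller; so this is not an obstacle. I anticipate no genuine difficulty here — the structural input is simply that openness of $\Omega$ keeps its complement a definite $\F$-distance away from any interior point, and this translates, via order-compatibility, into a definite $\R$-distance.
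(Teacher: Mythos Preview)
Your argument is correct, but it proves only the ``in particular'' clause, not the full biconditional asserted in the statement. You establish that if $x,y\in\Int\Omega$ (in particular if $\Omega$ is open) then $d_\Omega(x,y)<\infty$, which is the contrapositive of one implication. The other implication---that $x\in\partial\Omega$ or $y\in\partial\Omega$ forces $d_\Omega(x,y)=\infty$---is entirely missing. For that direction the paper takes $x\in\partial\Omega$, chooses a sequence $a_n\in L\cap\Omega^c$ with $a_n\to x$, and observes that $\CR{a_n,x,y,b}\to\infty$ for any fixed $b\in L\setminus\Omega$; this is short but you do need to supply it.

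For the direction you do prove, your route is genuinely different from the paper's. The paper argues by contradiction: assuming $d_\Omega(x,y)=\infty$, it picks sequences $a_n,b_n$ with $\CR{a_n,x,y,b_n}\to\infty$, splits the cross-ratio multiplicatively through a fixed $z\in L\setminus\Omega$, and concludes that one of $a_n\to x$ or $b_n\to y$, forcing a boundary point. Your approach is more direct and quantitative: openness gives explicit $\epsilon_x,\epsilon_y$ bounding $|x-a|,|y-b|$ from below, and the sub-additivity of $|\cdot|$ then yields a uniform finite bound on the cross-ratio. Your version has the advantage of producing an explicit upper bound for $d_\Omega(x,y)$ in terms of $|y-x|$, $|\epsilon_x|$, $|\epsilon_y|$, whereas the paper's argument is purely qualitative. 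One cosmetic point: you write ``$C=L\cap\Omega$ is open'' while assuming $\Omega$ open; for the biconditional you would only have $x,y\in\Int\Omega$, but the same $\epsilon$-argument goes through since an interior point of $\Omega$ is still surrounded by an open interval in $L\cap\Omega$.
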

\begin{proof}
Assume first that $d_\Omega(x,y)=\infty$. 
Then we have $x\neq y$ and, denoting $L$ the line passing through $x$ and $y$,
there exist two sequences $a_n, b_n \in L \setminus \Omega$ 
 with $\log|\CR{a_n,x,y,b_n}| \to \infty$,  or equivalently $\CR{a_n,x,y,b_n} \to \infty$ (in $\F$).
Choose some $z \in L \setminus \Omega$. 
By multiplicativity of the cross-ratio,  see \Cref{lem:PropertiesCR} (\ref{lem:PropertiesCR:Multiplicativity}),  we have that 
\[\CR{a_n,x,y,z} \CR{z,x,y,b_n}=\CR{a_n,x,y,b_n}  \to \infty,\]
which implies that either $\CR{a_n,x,y,z} \to \infty$ or $\CR{z,x,y,b_n} \to \infty$.
Let us assume we are in the first case.
By definition of the cross-ratio this implies that $a_n\to x$. 
Since $a_n \in \Omega^c$,  we have $x \in \partial\Omega$.
In the second case,  we argue similarly and obtain that $y \in \partial\Omega$.

Conversely,  assume,  up to exchanging $x$ and $y$,  that $x \in \partial\Omega$.
Let $L$ be the line through $x$ and $y$.
Take a sequence $a_n \in \Omega^c \cap L$ with $a_n \to x$.
Then $\CR{a_n,x,y,b} \to \infty$ for any $b_0 \in \Omega^c$ different from any $a_n$.
In particular,  $d_\Omega(x,y) = \sup_{a,b \in L \cap \Omega^c} \log |\CR{a,x,y,b}| \geq \log |\CR{a_n,x,y,b_0}| \to \infty$.

If $\Omega$ is open,  then $\Omega$ is disjoint from $\partial\Omega$,  which concludes.
\end{proof}
}

\begin{proof}
Assume there exists points $x,y \in \Omega$ with $d_\Omega(x,y)=\infty$. 
Then we have $x\neq y$ and, denoting $L$ the line passing through $x$ and $y$,
there exist two sequences $a_n, b_n \in L \setminus \Omega$ 
 with $\log|\CR{a_n,x,y,b_n}| \to \infty$,  or equivalently $\CR{a_n,x,y,b_n} \to \infty$ (in $\F$).
Choose some $z \in L \setminus \Omega$. 
By multiplicativity of the cross-ratio,  see \Cref{lem:PropertiesCR}~(\ref{lem:PropertiesCR:Multiplicativity}),  we have that 
\[\CR{a_n,x,y,z} \CR{z,x,y,b_n}=\CR{a_n,x,y,b_n}  \to \infty,\]
which implies that either $\CR{a_n,x,y,z} \to \infty$ or $\CR{z,x,y,b_n} \to \infty$ as the quadruples are positively oriented.
Let us assume we are in the first case.
By definition of the cross-ratio this implies that $a_n\to x$. 
\compl{$\CR{a_n,x,y,z}=\CR{x,a_n,z,y}=\CR{y,z,a_n,x}=\CR{0,1,t_n,\infty}=t_n$}
Since $a_n \in \Omega^c$,  we have $x \in \partial\Omega$,  which contradicts the fact that $x \in \Omega$ and $\Omega$ is open.
In the second case,  we argue similarly and obtain that $y \in \partial\Omega$.
\end{proof}

Thus if $\Omega$ is open, convex and contained in an affine chart (or equivalently $\Omega^* \neq \emptyset$) the function $d_\Omega$ is a pseudo-distance,  called the \emph{Hilbert pseudo-distance} on $\Omega$.
If $\F$ is non-Archimedean,  $d_\Omega$ does not distinguish points, i.e.\ there are distinct points at distance zero.
Indeed,  for $\Omega = ]-1,1[ \subset \F$ we have $d_\Omega(0,x) = 0$ for all $x \in ]-1,1[\cap\Q \subset \F$.
Hence $d_\Omega$ does not define a distance on $\Omega$ (in contrast to the real case),  and we define the quotient metric space.

\begin{definition}[Associated Hilbert metric space]
\label{dfn_AssociatedMetricSpace}
Let $\Omega \subseteq \PP V$ be open, convex and contained in an affine chart with Hilbert pseudo-distance $d_\Omega$.
The \emph{associated Hilbert metric space} is 
\[ X_\Omega \coloneqq \Omega/\{d_\Omega = 0\},\]
endowed with the distance induced by $d_\Omega$.
\end{definition}

We denote by $\pi \from \Omega \to X_\Omega$ the projection from $\Omega$ to the associated Hilbert metric space.
For $x \in \Omega$ we write $\overline{x}\coloneqq \pi(x)$ for its image in $X_\Omega$.

We describe two Hilbert metric spaces in the one-dimensional case.
Remark that in the case that $\F$ is non-Archimedean,  $\F$ contains more open convex sets than just segments (see e.g.\ \Cref{example:ball} or \Cref{examp:ConvexSetNEQSegment}).

\begin{example}[One-dimensional segment]
\label{lem:X(OneDimensionalSegment)}
Let $\F$ be any ordered valued field with $\Lambda \coloneqq \log |\F^\times| \subseteq \R$, and let $\Omega = ]a,b[ \subset \F$ with $a<b \in \F$.
Then the map $\Phi \from ]a,b[ \to \R$,  $x \mapsto \log \big|\tfrac{x-a}{x-b}\big|$ descends to an isometry from $(X_\Omega, d_\Omega)$ to $(\Lambda, |\cdot|_\R)$, where $|\cdot|_\R$ denotes the standard absolute value on $\R$ and its induced distance.
Clearly $\Phi$ is surjective onto $\Lambda$.
Furthermore,  for all $x\leq y \in \Omega$ we have
\[|\Phi(y)-\Phi(x)|_\R = \left| \log \tfrac{|y-a|}{|y-b|} - \log \tfrac{|x-a|}{|x-b|} \right|_\R = \log \abs{\CR{a,x,y,b}} = d_\Omega(x,y),\]
hence $\Phi$ preserves pseudo-distances.
Thus $\Phi$ descends to an isometry $X_\Omega \to \Lambda$.
\end{example}

This generalizes the case $\F=\R$ with the standard absolute value to the non-Archimedean setting.
The following however is in strong contrast with the real case.

\begin{example}[Balls]
\label{example:ball}
Assume now that $\F$ is non-Archimedean.
For all $r \in \R_{>0}$ and $x \in \F$,  we have $X_{B(x,r)} =\{\overline{x}\}$.
Indeed,  let $y \in B(x,r)$ with $x \neq y$.
We claim that $d_{B(x,r)}(x,y) = 0$.
By definition 
\[d_{B(x,r)}(x,y) = \sup_{a,b \in \F:\, |ax|,|bx| \geq r} \log \abs{\CR{a,x,y,b}}.\]
Since $|xy| < r$ it follows by \Cref{lem:NonArchimedeanPropertiesCR}~(\ref{lem:NonArchimedeanPropertiesCR:CR=1}) that $\abs{\CR{a,x,y,b}}=1$ for all $a,b \in \F$  with $|ax|,|bx| \geq r$.
Thus $d_{B(x,r)}(x,y) =0$.
\end{example}

\begin{remarks}
\label{rem:Segment}
\noindent\begin{enumerate}
\item
In the setting of \Cref{lem:X(OneDimensionalSegment)},  the Hilbert metric space $X_\Omega$ is complete if and only if $\Lambda$ is either discrete in $\R$ (e.g.\ for $\R(X)$,  see \Cref{exa:ValuedFields}) or equal to $\R$ (e.g.\ for Robinson fields,  see \Cref{subsection:RobinsonField}).
If $\Lambda$ is dense in $\R$,  e.g.\ if $\F$ is real closed, then the metric completion of $X_\Omega$ is isometric to $\R$.

\item \label{rem:Segment:CompleteGeod}
In the real case,  through any two points in an open properly convex set $\Omega$ there passes a complete geodesic.
For general ordered valued fields,  this might no longer be true,  see e.g.\ \Cref{example:ball}. 
However,  if $x, y$ are on a projective line $L$ and 
$\Omega \cap L = ]a,b[$ for some $a<b \in \F$, then the image of $]a,b[$ in $X_\Omega$ is a complete \emph{$\Lambda$-geodesic},  i.e.\ an isometric embedding of $(\Lambda,|\cdot|_\R)$ in $X_\Omega$.
\end{enumerate}
\end{remarks}

In turns out that convex sets as in Remark \ref{rem:Segment}~(\ref{rem:Segment:CompleteGeod}) are ``well-behaved'',  which motivates the definition of well-bordered convex sets (\Cref{dfn:WellBordered}) in the following section.
However it is good to keep examples with pathological behavior in mind (see \Cref{example:ball}) 
to not be misguided by our intuition in the Archimedean case.

\subsection{Basic properties}
\label{subsection:BasicProperties}
In this subsection we collect basic properties of $d_\Omega$ and $X_\Omega$.
We also define well-bordered convex sets on which one can define an $\F$-valued multiplicative distance on $\Omega$,  and we relate them to $\Lambda$-metric spaces.

Again let $\F$ be an ordered valued field,  and $V$ a finite-dimensional $\F$-vector space.

\begin{proposition}[Basic properties of $d_\Omega$]
\label{lem:PropertiesdOmega}
Let $\Omega \subset \PP V$ be a convex set with $\Omega^* \neq \emptyset$.
Then $d_\Omega$ has the following properties: 
\begin{enumerate}
	\item \label{lem:PropertiesdOmega2:Sandwich}
  	Let $\Omega'$ be a convex set with $\Omega' \subseteq \Omega $.
  	Then for all $x,y \in \Omega'$ we have $d_\Omega(x,y) \leq d_{\Omega'}(x,y)$.
  	\item \label{lem: intersectingWithSubspace}
  	Let $\PP W$ be a projective subspace of $\PP V$,  then $d_\Omega = d_{\Omega \cap \PP W}$ on $\Omega \cap \PP 		W$.  
	\item \label{lem:InvProjTrans}
	If $g \in \PGL(V)$ preserves $\Omega$,  then $d_\Omega(x,y)=d_\Omega(gx,gy)$ for all $x, y \in \Omega$.
  	\qed
\end{enumerate}
\end{proposition}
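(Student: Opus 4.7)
The plan is to reduce all three statements to the alternative line-formula
\[ d_\Omega(x,y) = \sup_{a,b \in L_{xy} \setminus \Omega} \log\abs{\CR{a,x,y,b}}, \]
valid by \Cref{lem:PropertiesCR}~(\ref{lem:PropertiesCR:Dim1}), where $L_{xy}$ is any projective line through $x$ and $y$. Once phrased this way, each of (1)--(3) becomes a direct bookkeeping statement about the complement $L \setminus \Omega$ and the behavior of the cross-ratio, and should not require any new geometric input. None of the three should present a serious obstacle; the main thing to watch is that taking a supremum interacts correctly with inclusions.

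For part (1), I would fix $x,y \in \Omega' \subseteq \Omega$ and let $L = L_{xy}$. The inclusion $\Omega' \subseteq \Omega$ gives $L \setminus \Omega \subseteq L \setminus \Omega'$, so the supremum defining $d_{\Omega'}(x,y)$ is taken over a larger set than the one defining $d_\Omega(x,y)$, yielding the inequality. For part (2), I would note that for $x,y \in \Omega \cap \PP W$ the line $L_{xy}$ is contained in $\PP W$; therefore
\[ L_{xy} \setminus \Omega \;=\; L_{xy} \setminus (\Omega \cap \PP W), \]
so the two suprema coincide term by term. This is the step where one checks that no ``outer'' obstruction points are lost by restricting to $\PP W$, and it works precisely because $L_{xy} \subseteq \PP W$ automatically.

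For part (3), I would fix $g \in \PGL(V)$ preserving $\Omega$ and $x,y \in \Omega$, and note that $g$ sends $L_{xy}$ bijectively to $L_{gx,gy}$. Since $g(\Omega) = \Omega$, the map $g$ restricts to a bijection $L_{xy} \setminus \Omega \to L_{gx,gy} \setminus \Omega$. Combined with the projective invariance of the cross-ratio stated in \Cref{lem:PropertiesCR}~(\ref{lem:PropertiesCR:InvProjTransf}), we have
\[ \log\abs{\CR{a,x,y,b}} \;=\; \log\abs{\CR{ga,gx,gy,gb}} \]
for every $a,b \in L_{xy} \setminus \Omega$, so taking suprema on both sides gives $d_\Omega(x,y) = d_\Omega(gx,gy)$. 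The only subtlety worth flagging is that in all three parts one must know that the relevant sets $L \setminus \Omega$ (equivalently, the dual sets over which the supremum is really taken) are non-empty, which is ensured by the hypothesis $\Omega^* \neq \emptyset$ inherited by $\Omega \cap \PP W$ and $g(\Omega) = \Omega$; this is the one point where one should be mildly careful, but it follows immediately from the definitions.
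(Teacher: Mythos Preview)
Your proposal is correct. The paper does not supply a proof for this proposition: it ends with a \qed, indicating that the authors regard all three parts as immediate from the definitions and from \Cref{lem:PropertiesCR}. Your argument via the line-formula is exactly the standard verification one would expect here, and there is nothing to compare against.
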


This implies that given $g \in \PGL(V)$ that preserves $\Omega$,  then $g$ descends to an
isometry of $X_\Omega$.

\begin{proposition}[Additivity on segments]
\label{lem:PropertiesdOmega:AdditivityOnSegments}
Let $\Omega \subset \PP V$ be a convex set contained in an affine chart.
For all $x,y,z \in \Omega$, if $z \in [x,y]$ (in the affine chart) then $d_\Omega(x,y) = d_\Omega(x,z)+d_\Omega(z,y)$.
\end{proposition}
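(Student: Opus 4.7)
The plan is to reduce the additivity statement to a one-dimensional computation on the projective line $L$ through $x, z, y$. By part~(2) of \Cref{lem:PropertiesdOmega}, $d_\Omega$ agrees with $d_{\Omega \cap L}$ on $\Omega \cap L$, so I may replace $\Omega$ by $\Omega \cap L$ and work inside $L$. Since $\Omega$ lies in an affine chart of $\PP V$, the complement $L \setminus \Omega$ is non-empty. If it reduces to a single point $p$, then the only admissible hyperplanes in $(\Omega \cap L)^*$ are $\varphi = \psi = p$, yielding $\CR{p,x,y,p}=1$ and $d_\Omega\equiv 0$ on $\Omega \cap L$, so the claim is trivial. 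Otherwise $|L\setminus \Omega|\geq 2$, and I would pick an affine chart of $L$ placing one omitted point at infinity, identify $L\setminus\{\infty\}$ with $\F$, and order so that $x<z<y$. Convexity of $\Omega\cap L$ together with $[x,y]\subset \Omega$ then forces $L\setminus \Omega \subset (-\infty,x)\cup (y,+\infty)\cup\{\infty\}$.

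The key observation is that the sup defining $d_\Omega(x,y)$ splits into independent one-variable sups. From
\[
\log\bigl|\CR{a,x,y,b}\bigr| = \log\Bigl|\tfrac{y-a}{x-a}\Bigr|+\log\Bigl|\tfrac{x-b}{y-b}\Bigr|,
\]
the two summands depend only on $a$ and on $b$ respectively, and by order-compatibility of $|\cdot|$ the first is $\geq 0$ exactly for $a<x$ (and $\leq 0$ for $a>y$), symmetrically for the second. Hence $d_\Omega(x,y)=A(x,y)+B(x,y)$ where
\[
A(u,v)\coloneqq\sup_{a \in\mathcal{A}}\log\Bigl|\tfrac{v-a}{u-a}\Bigr|,\qquad B(u,v)\coloneqq\sup_{b\in\mathcal{B}}\log\Bigl|\tfrac{u-b}{v-b}\Bigr|,
\]
with $\mathcal{A}\coloneqq\{a<x : a\notin\Omega\}$ and $\mathcal{B}\coloneqq\{b>y : b\notin\Omega\}$. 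Crucially, the inclusions $[x,z]\subset \Omega$ and $[z,y]\subset \Omega$ imply that the \emph{same} feasible sets $\mathcal{A}$ and $\mathcal{B}$ appear in the analogous decompositions of $d_\Omega(x,z)$ and $d_\Omega(z,y)$.

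It then suffices to prove the one-variable additivity $A(x,y)=A(x,z)+A(z,y)$ and the symmetric identity for $B$. From the factorisation
\[
\Bigl|\tfrac{y-a}{x-a}\Bigr|=\Bigl|\tfrac{z-a}{x-a}\Bigr|\cdot\Bigl|\tfrac{y-a}{z-a}\Bigr|,
\]
both factors on the right are non-negative on $\mathcal{A}$ and, by order-compatibility of $|\cdot|$ applied to the two rational functions $a\mapsto\tfrac{z-a}{x-a}$ and $a\mapsto\tfrac{y-a}{z-a}$ (each strictly monotone in $a$ in the same direction on $(-\infty,x)$), they are monotone non-decreasing together in $a$ on $\mathcal{A}$. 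Any sequence $a_n\in\mathcal{A}$ approaching $\sup\mathcal{A}$ therefore realises the supremum of both factors simultaneously, so the sup of the sum coincides with the sum of the sups. The same argument applies to $B$, and adding the two identities yields $d_\Omega(x,y)=d_\Omega(x,z)+d_\Omega(z,y)$. I expect the main subtlety to be precisely this commutation of sup with sum: the set $\mathcal{A}$ may have no maximum in $\F$ (indeed in the non-Archimedean case its ``supremum'' may fail to lie in $\F$ altogether), but the joint monotonicity ensures that the sup is controlled purely by the limiting behaviour of $a$ towards the left boundary of $\Omega \cap L$, independently of the fine structure of $L\setminus \Omega$.
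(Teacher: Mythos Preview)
Your argument is essentially correct and rests on the same two ingredients as the paper's proof---monotonicity and multiplicativity of the cross-ratio after reducing to dimension one---but the packaging differs. You first decouple the two-variable supremum into $A+B$ and then prove additivity of each one-variable piece via ``joint monotonicity''; the paper instead keeps the full cross-ratio and, given $(a,b)$ witnessing $d_\Omega(x,z)$ and $(a',b')$ witnessing $d_\Omega(z,y)$, passes to $a''=\max(a,a')$, $b''=\min(b,b')$, uses monotonicity to bound both cross-ratios from above, and then applies multiplicativity $\CR{a'',x,z,b''}\CR{a'',z,y,b''}=\CR{a'',x,y,b''}$. This gives $d_\Omega(x,z)+d_\Omega(z,y)\leq d_\Omega(x,y)$ in one line, the reverse being the triangle inequality. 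The paper's route avoids your extra splitting and the attendant edge cases.

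One point in your write-up needs tightening. The phrase ``a sequence $a_n\in\mathcal{A}$ approaching $\sup\mathcal{A}$'' is not meaningful in a general ordered field, as you yourself note. The clean justification of $\sup(F_1+F_2)=\sup F_1+\sup F_2$ under joint monotonicity is: given $a_1,a_2\in\mathcal{A}$ with $F_i(a_i)>\sup F_i-\varepsilon$, set $a=\max(a_1,a_2)\in\mathcal{A}$ (total order); monotonicity gives $F_i(a)\geq F_i(a_i)$, hence $(F_1+F_2)(a)>\sup F_1+\sup F_2-2\varepsilon$. This is precisely the $\max/\min$ trick the paper uses, just applied after rather than before the splitting. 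A minor secondary point: you should allow $a=\infty$ (contributing $0$) so that $A$ is well-defined when $\mathcal{A}$ happens to be empty; with that convention everything goes through.
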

\begin{proof}
It suffices to check the one-dimensional case $\Omega \subset \F$ (see \Cref{lem:PropertiesdOmega}~(\ref{lem: intersectingWithSubspace})).
Let $a,a',b,b' \in \Omega^{*}\simeq\F\cup\{\infty\} \setminus \Omega$  with $a<x<y<b$ and $a'<x<y<b'$.
Then for $a''\coloneqq \max\{a,a'\}$ and $b'' \coloneqq \min\{b,b'\}$,  by monotonicity and multiplicativity of the cross-ratio,  see \Cref{lem:PropertiesCR:Monotonicity} and \Cref{lem:PropertiesCR}~(\ref{lem:PropertiesCR:Multiplicativity}), we have
\begin{align*}
\CR{a,x,z,b}\CR{a',z,y,b'} &\leq \CR{a'',x,z,b''}\CR{a'',z,y,b''}\\
&=\CR{a'',x,y,b''} \in \F_{\geq 1}.
\end{align*}
Hence taking the logarithm of the absolute values,  we obtain
\begin{align*}
\log \abs{\CR{a,x,z,b}} + \log \abs{\CR{a',z,y,b'}}\leq \log \abs{\CR{a'',x,y,b''}} \leq d_\Omega(x,y)
\end{align*}
Now taking suprema on $a,b$ and on $a',b'$ we get
\[d_\Omega(x,z) + d_\Omega(z,y)\leq d_\Omega(x,y),\]
which concludes using the triangle inequality.
\end{proof}

Recall that in \Cref{example:ball} we saw that the intersection of $\Omega$
with a projective line is not necessarily a segment --- an observation which lead to some degenerate behavior.
This motivates the following.

\begin{definition}
\label{dfn:WellBordered}
We say that an open and convex set $\Omega \subseteq V$ is \emph{well-bordered} if the intersection of $\Omega$
with every affine line in $V$ is an open segment $]a,b[$ with $a,b \in V$.

An open and convex subset of the projective space $\PP V$ is \emph{well-bordered} if it is contained in some affine chart and well-bordered in it (note that this do not depend on the affine chart considered).
\end{definition}

\begin{example}
\label{examp:WellBordered}
Semi-algebraic, i.e.\ solutions to a finite set of polynomial equalities and inequalities,  open convex sets over real closed fields $\F$ are well-bordered. 
Indeed intersections with lines are semi-algebraic convex subsets of $\F$,  hence segments \cite[Proposition 2.1.7]{BochnakCosteRoy_RealAlgebraicGeometry}.
We will see later that ultralimits of convex subsets are always well-bordered (\Cref{prop-UltralimitConvexAndLines}).
\end{example}

Note that $\Omega \cap L = ]a,b[$ implies $\overline{\Omega} \cap L =[a,b]$.
\compl{This is slightly weaker than ``the intersection of the closure with every line is a closed segment'', which may not be true for ultralimits,  see Example  \ref{exa:UltralimitConvexAndLines:NotSegment}.}
Thus if $\Omega$ is well-bordered,  we can get rid of the supremum in the definition of the Hilbert pseudo-metric and retrieve the usual real case definition:
for $x,y \in \Omega$,  for any line $L$ passing through $x$ and $y$, we have by monotonicity of the cross-ratio that 
\[d_\Omega (x,y) = \log \abs{\CR{a,x,y,b}},\]
where $a,b$ are the endpoints of the segment $\Omega \cap L$, and $a,x,y,b$ are positively oriented.

We finish this section with two remarks and a few examples.
\begin{remark}
\label{subs:FvaluedMultHilbertDistance}
If $\Omega$ is well-bordered,  we may also define the \emph{$\F$-valued (multiplicative) Hilbert distance} on $\Omega$ by
\[D_\Omega (x,y) \coloneqq \CR{a,x,y,b} \in \F_{\geq 1},\]
where $a,b$ are the endpoints of the segment $\Omega \cap L$, and $a,x,y,b$ are positively ordered.
Then $D_\Omega$ has the following properties for all $x,y,z \in \Omega$:
\begin{enumerate}
\item $D_\Omega (x,y) \leq D_\Omega (x,z)D_\Omega (z,y) $;
\item $D_\Omega (x,y)=1 \Leftrightarrow x=y $;
\item $d_\Omega (x,y)=\log |D_\Omega (x,y)|$. 
\end{enumerate}
Note that contrary to the $\R$-valued pseudo-distance $d_\Omega$, the $\F$-valued multiplicative distance separates points.
Since a supremum is not available in ordered fields different from $\R$,  compare the end of \Cref{subsection:NAOrderedFields},  the description of the Hilbert pseudo-distance in terms of the $\F$-valued multiplicative Hilbert distance does not hold for convex sets which are not well-bordered.
\end{remark}

Well-bordered convex sets give rise to $\Lambda$-metric spaces;  refer to  \cite[Chapter 1.2]{Chiswell_IntroductionLambdaTrees} for an introduction.
A \emph{$\Lambda$-metric space} is a metric space $(X,d)$ for which $d$ takes values in $\Lambda \subset \R$.
It is \emph{geodesic} if between any two points $x,y \in X$ there exists a \emph{$\Lambda$-segment} with endpoints $x,y$,  i.e.\ the image of an isometric map $f \from [p,q]_\Lambda \to X$ with $f(p)=x$ and $f(q) = y$,  where $[p,q]_\Lambda \coloneqq [p,q] \cap \Lambda$ for $p\leq q \in \Lambda$.

\begin{remark}
If $\Omega$ is well-bordered,  then $X_\Omega$ is a geodesic $\Lambda$-metric space.
\compl{The expression of the Hilbert pseudo-distance for well-bordered sets (\Cref{subsection:WellBordered}) implies that $X_\Omega$ is a $\Lambda$-metric space.
That $X_\Omega$ is geodesic follows since we can restrict to the one-dimensional case (\Cref{lem:PropertiesCR} (\ref{lem:PropertiesCR:Dim1})) and in this case we have a complete description of the Hilbert metric space (\Cref{lem:X(OneDimensionalSegment)}).}
However,  if $\Lambda \neq \R$ and $\Omega$ is not well-bordered, then $d_\Omega$ can
take values in $\R \setminus \Lambda$ as the following example shows.
\end{remark}

\begin{example}
\label{examp:ConvexSetNEQSegment}
Let $\F$ be a non-Archimedean ordered valued field with $\Lambda=\Q$.
Let $\epsilon=e^{-\sqrt{2}}\in \R$.
Consider $\Omega = ]-1,1[\, \cup B(1,\epsilon) \subset \F$.
Then $d_\Omega(0,1) =\sqrt{2}\notin \Q$.

More generally for $\epsilon \in \R_{>0}$, $z \in \F_{>0}$ consider the following convex subsets of $\F$:
\begin{align*}
N_\epsilon(]-z,z[) &\coloneqq B(-z,\epsilon) \cup\, ]-z,z[ \, \cup  B(z,\epsilon), \\
N^+_\epsilon(]-z,z[) &\coloneqq ]-z,z[ \, \cup B(z,\epsilon).
\end{align*}
If $|z| < \epsilon$,  then $X_{N_\epsilon(]-z,z[)}$ and $X_{N^+_\epsilon(]-z,z[)}$ are isometric to a point.
If $|z| \geq \epsilon$ and $\Lambda$ is dense in $\R$,  then $X_{N_\epsilon(]-z,z[)}$ is isometric to $]-\log(|z|/\epsilon),\log(|z|/\epsilon)[_\Lambda \cup \{ \pm \log(|z|/\epsilon)\}$,
and $X_{N^+_\epsilon(]-z,z[)}$ is isometric to $]-\infty, \log (|z|/\epsilon)[_\Lambda \cup \{\log (|z|/\epsilon)\}$.
This follows by carefully combining the arguments in \Cref{lem:X(OneDimensionalSegment)} and \Cref{example:ball}.
In particular,  $X_{N_\epsilon(]-z,z[)}$ and $X_{N^+_\epsilon(]-z,z[)}$ are $\Lambda$-metric spaces if and only if $\epsilon \in |\F|$.
Furthermore,  if $\abs{\cdot}$ is surjective, we obtain using the above constructions $]-\infty,a]$, $[a,\infty[$, $[-a,a]$ for all $a \in \R_{\geq 0}$ and $]-\infty,\infty[$ (\Cref{lem:X(OneDimensionalSegment)}) as associated Hilbert metric spaces of non-Archimedean one-dimensional convex sets.

\compl{If $|z| < \epsilon$,  then $N_\epsilon(]-z,z[)=B(0,\epsilon)$ and the result follows from \Cref{propo:X(OneDimensionalBalls)}.
Assume now that $\delta \coloneqq |z| \geq \epsilon$.
We start by computing $d_{N_\epsilon(]-z,z[)}(0,x)$ for $x \in N_\epsilon(]-z,z[)$,  $y>0$.
We have 
\[d_{N_\epsilon(]-z,z[)}(0,x) = \sup_{a,b \in \F_{>0}: \,  a,b > z, \,\delta \geq |az|,|bz| \geq \epsilon} \log |\CR{-a,0,x,b}|,\]
where we can assume $\delta \geq |az|,|bz|$ by monotonicity of the cross ratio \Cref{lem:PropertiesCR:Monotonicity}.
In this case, by ultrametricity and alignment of the points (see \Cref{lem:AbsoluteValueNA:EqualityPos}) we obtain $|a| = \max\{|az|,|z|\} = \delta$ and analogously $|b|=\delta$.
Thus $|\CR{-a,0,x,b}|=\tfrac{|x(-a)|}{|xb|}$.
Furthermore,  $|x(-a)|=\max\{|x|,|z|,|az|\}=\delta$.
Putting everything together we can write
\begin{align*}
d_{N_\epsilon(]-z,z[)}(0,x) &= \sup_{b>z,\,\delta \geq |bz| \geq \epsilon} \log \tfrac{\delta}{|xb|} \\
&= \log \delta - \inf_{b>z,  \, \delta \geq |bz| \geq \epsilon} \log |xb|.
\end{align*}
We now consider three cases.

\textbf{Case 1:}
Let us first assume that $|xz|<\epsilon$, i.e.  $x \in B_z(\epsilon)$.
If $x>z$ then $\epsilon \leq |bz|=\max\{|bx|,|xz|\}=|bx|$,  since $|xz|<\epsilon$.
If $x<z$ then $|bx|=\max\{|bz|,|zx|\}=|bz|$,  since $|bz| \geq \epsilon$.
In both cases $\inf_{b>z,  \, \delta \geq |bz| \geq \epsilon} \log |xb| = \inf_{b>z,  \, \delta \geq |bz| \geq \epsilon} \log |bz|$.
Since $\Lambda$ is dense in $\R$,   $\inf_{b>z,  \, \delta \geq |bz| \geq \epsilon} \log |bz|=\epsilon$.
Thus $d_{N_\epsilon(]-z,z[)}(0,x) = \log(\delta/\epsilon)$ for all $x \in B_z(\epsilon)$.
In particular,  by additivity on segments, see \Cref{lem:PropertiesdOmega} (\ref{lem:PropertiesdOmega:AdditivityOnSegments}), we obtain $d_{N_\epsilon(]-z,z[)}(x,y) = 0$ for all $x,y \in B_z(\epsilon)$.

\textbf{Case 2:}
Assume now that $|xz|=\epsilon$.
By the same argument as above we have $|bx|=\max\{|bz|,|zx|\}=|bz|$,  and hence $d_{N_\epsilon(]-z,z[)}(0,x) = \log(\delta/\epsilon)$.

\textbf{Case 3:}
Assume $|xz|>\epsilon$.
By density of $\Lambda$ in $\R$, there exists $b \in \F_{>0}$ with $|xz|>|bz|\geq \epsilon$.
Thus $\inf_{b>z,  \, \delta \geq |bz| \geq \epsilon} \log |xb|=\log|xz|$.
We obtain $d_{N_\epsilon(]-z,z[)}(0,x) = \log(\delta/|xz|)$.

By symmetry,  we obtain the same results for $x<0$.
We define
\begin{align*}
\Phi \from N_\epsilon(]-z,z[) \to ]-\log(|z|/\epsilon),\log(|z|/\epsilon)[_\Lambda \cup \{ \pm \log(|z|/\epsilon)\\
x \mapsto \begin{cases}
\log \tfrac{|x(-z)|}{|xz|},  &\textnormal{ if } x \in ]-z,z[\setminus B(z,\epsilon)\\
\log \tfrac{\delta}{\epsilon},  &\textnormal{ if } x \in B(z,\epsilon)\setminus]-z,z[\\
-\log \tfrac{\delta}{\epsilon},  &\textnormal{ if } x \in B_\epsilon(-z)\setminus]-z,z[.
\end{cases} 
\end{align*}
Then $\Phi$ preserves distances,  is surjective and induces hence an isometry from $X_{N_\epsilon(]-z,z[)}$ to $]-\log(|z|/\epsilon),\log(|z|/\epsilon)[_\Lambda \cup \{ \pm \log(|z|/\epsilon)\}$ (compare the proof of \Cref{lem:X(OneDimensionalSegment)} for more details).}
\end{example}

Let us now move to some higher dimensional examples.

\subsection{Simplices and model flats}
\label{subsection:ModelFlats}
In classical Hilbert geometry,  simplices play the role of flat subspaces,  since they are the only domains whose Hilbert geometry is isometric to a normed vector space \cite{Vernicos_HilbertGeometryConvexPolytopes}.
They also generalize the example of the one-dimensional segment (\Cref{lem:X(OneDimensionalSegment)}) to higher dimensions.
Before we can study the non-Archimedean counterpart of simplices,  we need to introduce the model flat,  given by a Cartan subalgebra of the Lie algebra of $\SL(d,\R)$ endowed with a norm invariant by the Weyl group of the root system $A_{d-1}$.

Let $\Aa \coloneqq \R^d / \R(1,\ldots,1)$.
For $\alpha=(\alpha_1,\ldots,\alpha_d) \in \R^d$,  we write $[\alpha]$ for its class in $\Aa$.
We endow $\Aa$ with the metric $\dhex$ induced by the norm on $\Aa$ defined as
\[\nhex{[\alpha]} \coloneqq \max_{1 \leq i,j \leq d} |\alpha_i - \alpha_j|.\]
On the closed model Weyl chamber 
$\Aap \coloneqq \{ [\alpha] \in \Aa \mid \alpha_1 \geq \ldots \geq \alpha_d \}$ we have $\nhex{[\alpha]} = \alpha_1 - \alpha_d$.
We call $\Aa$ together with the metric $\dhex$ the $(d-1)$-dimensional \emph{model flat}.

For $\Lambda < \R$ an ordered abelian subgroup of $(\R,+)$ we define
analogously $\Aa_\Lambda \coloneqq \Lambda^d/\Lambda(1,\ldots,1)$ and
 $\Aap_\Lambda\coloneqq\Aa_\Lambda \cap \Aap$.
Then
$\Aa_\Lambda  \subset \Aa$
\compl{Assume $(y_1,\ldots,y_d)+\R(1,\ldots,1)=(x_1,\ldots,x_d)+\R(1,\ldots,\R)$ with $(x_i),(y_i) \in \Lambda^d$.
Then there exists $\alpha \in \R$ with $(y_1,\ldots,y_d)+\alpha=(x_1,\ldots,x_d)$.
Since $\Lambda$ is a group,  we get that $\alpha \in \Lambda$.
}
and the restriction of $\dhex$ to $\Aa_\Lambda$ 
induces a metric (with values in $\Lambda$) on $\Aa_\Lambda$,  also denoted by $\dhex$.
Then $\Aa_\Lambda$ together with the metric $\dhex$ is the $(d-1)$-dimensional \emph{$\Lambda$-model flat}.

\begin{remark}
Let $\Lambda^d_0 \coloneqq \{\alpha \in \Lambda^d \mid \sum_{i=1}^d \alpha_i =0 \}\subseteq \Lambda^d$.
Then $\Aa_\Lambda \cong \Lambda^d_0$ if and only if $\Lambda$ is $d$-divisible, i.e.\ for all $\lambda \in \Lambda$ there exists $\eta \in \Lambda$ with $d\eta = \lambda$.
In fact we always have an injective map $\Lambda^d_0 \to \Aa_\Lambda$.
However for $\Lambda=\ZZ$,  the element $[(1,0,\ldots,0)] \in \Aa_\ZZ$ does not lie in the image of this map.
For real closed valued fields with value group $\Lambda$ we always have that $\Q \subseteq \Lambda$, and hence $\Lambda$ is $d$-divisible for every $d$.
\end{remark}

Let us now turn to the study of simplices.
Let $\F$ be an ordered valued field with value group $\Lambda =\log |\F^\times|$,  and $V$ a $d$-dimensional $\F$-vector space. 
To a basis  $\eb = (e_i)$ of $V$ we associate the following closed
and open  $(d-1)$-dimensional \emph{simplices}
\begin{align*}
\simplex_{\eb} &\coloneqq \PP\big(\big\{ x=\sum_{i=1}^d x_i e_i \bigm| x_i \in \F_{\geq 0} 
\textrm{ for all } i=1,\ldots,d\big\}\big),  \textnormal{ and }\\
\simplex_{\eb}^{o} &\coloneqq \PP\big(\big\{ x=\sum_{i=1}^d x_i e_i \bigm|x_i \in \F_{> 0} 
\textrm{ for all } i=1,\ldots,d\big\}\big) \subset \PP V.
\end{align*}
Then there is a well-defined map from the open simplex to the $\Lambda$-model flat
\[
      \log_{\eb} \from \simplex_{\eb}^{o} \to \Aa_\Lambda, \;
\PP(x) \mapsto   [\log|x_1|, \ldots, \log|x_d|].
\]

\begin{proposition}
\label{propo:Simplex}
Let  $\eb = (e_i)$ be a basis of $V$ and 
let $\simplex\coloneqq  \simplex_{\eb}^{o} \subset \PP V$ be the associated open $(d-1)$-dimensional simplex.
Then the map $\log_{\eb} \from \simplex \to \Aa_\Lambda$ sends the Hilbert
pseudo-metric $d_\simplex$ to $\dhex$
and thus descends to a well-defined
isometry  
\[\log_{\eb} \from (X_\simplex,d_\simplex) \to (\Aa_\Lambda,\dhex)\]
from the associated Hilbert metric space to the model flat.
\end{proposition}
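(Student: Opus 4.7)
The plan is to establish the two inequalities $d_\simplex \geq \dhex \circ \log_\eb$ and $d_\simplex \leq \dhex \circ \log_\eb$, and then to verify surjectivity onto $\Aa_\Lambda$. First, $\log_\eb$ is well-defined on $\simplex$: rescaling a lift by $\lambda \in \F^\times$ shifts the coordinate vector $(\log|x_i|)_i$ by $\log|\lambda|\cdot(1,\ldots,1)$, which is trivial in $\Aa_\Lambda$. A key preliminary identification is that $\simplex^*$ is the set of classes $[\tilde\varphi]$ admitting a representative $\tilde\varphi = \sum_i a_i e_i^*$ with all $a_i \in \F_{\geq 0}$ (not all zero): a form with coefficients of a single sign is nonzero on the positive cone, while a form with mixed signs can be made to vanish on a point of the positive cone by solving a linear equation in one coordinate.

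For the lower bound, taking $\varphi = [e_i^*]$ and $\psi = [e_j^*]$, both in $\simplex^*$, a direct computation from the definition of the cross-ratio of two points and two hyperplanes yields
\[\log\abs{\CR{\varphi, x, y, \psi}} = \gamma_i - \gamma_j, \qquad \gamma_k \coloneqq \log|y_k| - \log|x_k|.\]
Maximizing over $i, j$ gives $\max_i\gamma_i - \min_j\gamma_j = \nhex{[\gamma]} = \dhex(\log_\eb(x), \log_\eb(y))$, and the definition of $d_\simplex$ as a supremum yields $d_\simplex(x, y) \geq \dhex(\log_\eb(x), \log_\eb(y))$.

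For the upper bound, take an arbitrary $\tilde\varphi = \sum_i a_i e_i^*$ with $a_i \in \F_{\geq 0}$ and choose positive lifts $\tilde x, \tilde y \in \F_{>0}^d$. Write
\[\frac{\tilde\varphi(\tilde y)}{\tilde\varphi(\tilde x)} = \frac{\sum_i (a_i x_i)(y_i/x_i)}{\sum_i a_i x_i},\]
an $\F$-valued convex combination of the positive ratios $y_i/x_i \in \F_{>0}$ with non-negative weights $a_i x_i \in \F_{\geq 0}$. Hence this quotient lies in $\F$ between $\min_i(y_i/x_i)$ and $\max_i(y_i/x_i)$. Applying order-compatibility of $|\cdot|$ on non-negative elements of $\F$ (the $\F$-max of a finite set of non-negative elements has the largest real absolute value), one obtains
\[\left|\frac{\tilde\varphi(\tilde y)}{\tilde\varphi(\tilde x)}\right| \leq \max_i \frac{|y_i|}{|x_i|}.\]
The analogous bound for $\tilde\psi$ combined with the cross-ratio formula in \Cref{lem:PropertiesCR} gives $\abs{\CR{\varphi, x, y, \psi}} \leq \exp(\max_i\gamma_i - \min_j\gamma_j)$, so taking suprema yields $d_\simplex(x, y) \leq \dhex(\log_\eb(x), \log_\eb(y))$, and equality holds.

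Finally, for $[\alpha] \in \Aa_\Lambda$, since $\Lambda = \log|\F^\times|$ and the order on $\F$ is total, one can pick $x_i \in \F_{>0}$ with $\log|x_i| = \alpha_i$; then $\log_\eb(\PP(\sum x_i e_i)) = [\alpha]$, so $\log_\eb$ is surjective and the induced map $X_\simplex \to \Aa_\Lambda$ is an isometry. The main obstacle is the upper bound, where the sup over arbitrary hyperplanes in $\simplex^*$ must be reduced to coordinate hyperplanes: this relies crucially on the order-compatibility axiom relating the valuation and the order on $\F$, which is what lets us pass from the $\F$-valued convex-combination inequality to a real-valued bound on the cross-ratio.
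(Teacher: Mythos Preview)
Your proof is correct and follows the same strategy as the paper: compute the cross-ratio at the coordinate hyperplanes $e_i^*, e_j^* \in \simplex^*$ and argue that this achieves the supremum defining $d_\simplex$. The paper simply asserts the latter geometrically (``they correspond to the hyperplanes bounding $\simplex$, and hence the maximum of the cross-ratio will be attained for the linear forms in the dual basis''), whereas your convex-combination argument, together with the explicit identification of $\simplex^*$, makes this step rigorous.
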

\begin{proof}
Let $\PP(x),\PP(y) \in \simplex$ be two distinct points,  and $\eb^*=(e_i^*)$ the dual basis of $\eb$.
Note that $e_i^* \in \simplex^*$ for all $i=1,\ldots,d$.
In fact,  they correspond to the hyperplanes bounding $\simplex$,  and hence the maximum of the cross-ratio will be attained for the linear forms in the dual basis.
For all $i, j \in \{1,\ldots,d\}$ we have
\[ \CR{e_i^*,\PP(x),\PP(y),e_j^*}=\frac{y_i}{x_i}\cdot \frac{x_j}{y_j},\]
and thus
\[d_\simplex(\PP(x),\PP(y)) = \max_{1\leq i,j\leq d} (\log\abs{y_i}-\log\abs{x_i}) - (\log\abs{y_j}-\log\abs{x_j}),\]
which is equal to $\nhex{\log_{\eb}(y)-\log_{\eb}(x)}$.
Thus $\log_{\eb}$ preserves pseudo-distances.
Furthermore,  $\log_{\eb}$ is surjective onto $\Aa_\Lambda$,  and induces hence the desired isometry.
\compl{
Alternative proof:
The map $f \from \simplex \to \Aa$,  $\PP(x) \mapsto [\log|x_1|, \ldots, \log|x_d|]$ is well-defined and surjective onto $\Aa_\Lambda$.
Furthermore $f$ preserves distances,  and induces thus the isometry $\log_{\eb} \from X_\simplex \to \Aa_\Lambda$.
Indeed, let $x_0 = \sum_{i=1}^d e_i$ and $x =\sum_{i=1}^d x_i e_i$ with $x_1 \geq \ldots \geq x_d >0$.
A computation shows that the line through $\PP(x_0)$ and $\PP(x)$ intersects the closed simplex $\overline{S}$ in $\PP(a)$ and $\PP(b)$,  where
\[a\coloneqq \sum_{i=1}^{d-1} \left(-1+\tfrac{x_i}{x_d}\right)e_i\; \textnormal{ and } b\coloneqq \sum_{i=2}^d \left(1-\tfrac{x_i}{x_1}\right)e_i\; .\]
The computation of the cross-ratio yields that
\[
d_{S}\left(\PP(x_0), \PP(x)\right)=\log \abs{\CR{\PP(a),\PP(x_0),\PP(x),\PP(b)}} =\log \frac{\abs{x_1}}{\abs{x_d}},
\]
which is equal to $\max_{i\neq j} \log|x_i| -\log |x_j|$, since $x_1 \geq \ldots \geq x_d >0$.
Every projective transformation preserving $S$ preserves distances. 
Thus given any two points $x_1, x_2 \in \simplex$, we can put them in the positions of $x_0$ and $x$,  which finishes the proof.
}
\end{proof}

In particular,  the map $\log_{\eb}$ maps the points $\PP(x) \in \simplex$ with $x_1\geq \ldots \geq x_d$ to $\Aap_{\Lambda}$.
Furthermore,  for $D \subset \{1,\ldots,d-1\}$,  the points of
\[\simplex^D_{\eb} \coloneqq \PP\big( \big\{ x=\sum_{i=1}^d x_i e_i \mid x_1\geq  \ldots \geq x_{d}>0\text{ and }x_j=x_{j+1} \, \forall \,  j \in \Delta \setminus D \big\}\big) \subset \simplex_\eb^o\]
are mapped to $\Aa_{D,\Lambda}$,  where
\[\Aa_{D,\Lambda} \coloneqq \{ [\alpha] \in \Aa_\Lambda \mid
\alpha_j=\alpha_{j+1} \textnormal{ for all } j  \in \Delta \setminus D\}.\]
We have $D' \subseteq D \subseteq \Delta$ if and only if $\Aa_{D',\Lambda} \subseteq \Aa_{D,\Lambda}$.

The results on simplices can be used in the study of non-Archimedean symmetric spaces.

\subsection{Symmetric spaces over real closed valued fields}
\label{subsection:SymSpaces}
For this subsection we assume that $\F$ is a real closed valued field with value group $\Lambda =\log |\F^\times| $.

Let us first consider the hyperbolic space over $\F$ as introduced and studied by \cite{Brumfiel_TreeNonArchimedeanHyperbolicPlane, Bouzoubaa_CompRealSpectrumCharVarSOn1}.
For this let $V$ be a finite-dimensional $\F$-vector space,  and let
$\B \subset \PP V$ be the open unit ball in an affine chart,  i.e.\
\[\B \coloneqq \big\{[x:1] \in f_\eb(\F^{d-1}) \mid \norm{x}_{2,\F} < 1\big\} \subset  \PP V, \]
where $\|x\|_{2,\F} \coloneqq \sqrt{x_1^2+\ldots+x_{d-1}^2} \in \F_{\geq 0}$ is the $\F$-valued \emph{$l^2$-norm} on $\F^{d-1}$.
Then $\B$ is a properly convex subset of the projective space over $\F$,  and
the associated Hilbert metric space $X_\B$ is a $\Lambda$-tree (see \cite[\S 2.1]{Chiswell_IntroductionLambdaTrees} for a definition).
Indeed,  the two-dimensional case follows from \cite[Theorem (28)]{Brumfiel_TreeNonArchimedeanHyperbolicPlane},  and in general from \cite[Theorem 1.5]{Bouzoubaa_CompRealSpectrumCharVarSOn1}.
Note that the latter considers the hyperboloid model of hyperbolic space over $\F$ (instead of the projective model),  but since these two models (up to multiplying $d_\B$ by a factor $1/2$) are related over $\R$ by a semi-algebraic homeomorphism,  the same holds true over any real closed field $\F$ by the Tarski--Seidenberg transfer principle \cite[\S 1.4]{BochnakCosteRoy_RealAlgebraicGeometry}.
Since for real closed valued fields $\Lambda$ is dense in $\R$,  we obtain that the metric completion of $X_\B$ is an $\R$-tree, i.e.\ a geodesic $0$-hyperbolic metric space,  see \cite{Imrich_MetricPropertiesTreeLikeSpaces} or \cite[\S 2,  Theorem 4.14]{Chiswell_IntroductionLambdaTrees}.
For the relation between $X_\B$ in dimension two and the boundary points of Thurston's compactification of Teichm\"uller space,  we refer to \cite[\S 7]{Brumfiel_RSCTeichmullerSpace}.

\smallskip
We now turn to the symmetric space of $\SL(n,\F)$.
Let $\Symm(n,\F)$ denote the vector space of symmetric $n \times n$-matrices.
Then
\[\PP\Symm_{>0}(n,\F) \coloneqq \{[M] \in \PP\Symm(n,\F)\mid M \textnormal{ is positive definite}\}\]
is open,  convex and bounded in an affine chart.
Note that $\PP\Symm_{>0}(n,\R)$ is the projective model for the symmetric space of $\SL(n,\R)$.

The group $\SL(n,\F)$ acts linearly on $\Symm(n,\F)$ by $g_*M \coloneqq  gMg^t$ for all $g \in \SL(n,\F)$ and $M \in \Symm(n,\F)$,  where $g^t$ denotes the transpose of the matrix $g$.
This action preserves positive definite matrices and descends to a projective transformation of $\PP\Symm_{>0}(n,\F)$,  which preserves the cross-ratio and hence also the Hilbert pseudo-metric.
As in the real case this action is transitive,  see e.g.\ \cite[Theorem 9]{Kaplansky_LinearAlgebraGeometry}.\compl{Here we need only Euclidean fields.}
The interior $\simplex^o$ of the simplex spanned by the elementary diagonal matrices $E_{ii} = \diag(0,\ldots,0,1,0,\ldots,0)$, with a $1$ at position $i$ for $i =1,\ldots,n$,  isometrically embeds in $\PP\Symm_{>0}(n,\F)$, i.e.\ $d_{\PP\Symm_{>0}(n,\F) |_{\simplex^o \times \simplex^o}} = d_{\simplex^o}$, since $\simplex^o$ is the intersection of $\PP\Symm_{>0}(n,\F)$ with the projectivization of the subspace of diagonal matrices (\Cref{lem:PropertiesdOmega}~(\ref{lem: intersectingWithSubspace})).

Recall that $\mathcal{O}$ is the valuation ring of $\abs{\cdot}$,  and that we write $\overline{x}$ for the class in $X_\Omega$ of $x\in \Omega$.
\begin{proposition}[$\SL(n,\F)$-symmetric space]
\label{propo:SymSpace}
The orbit map descends to a map 
\[f \from \SL(n,\F)/\SL(n,\mathcal{O}) \to X_{\PP\Symm_{>0}(n,\F)},  \quad \overline{g}\mapsto \overline{[gg^t]},\]
that is well-defined,  bijective and $\SL(n,\F)$-equivariant.
Furthermore,  for all diagonal matrices $a=\diag(a_1,\ldots,a_n) \in \SL(n,\F)$,  the map $f$ satisfies
\begin{align}
\label{eqn:distSymSpaceSLnF}
d_{\PP\Symm_{>0}(n,\F)}(f(\overline{\Id}), f(\overline{a})) =  \max_{i\neq j} \log \tfrac{|a_i^2|}{|a_j^2|}.
\end{align}
\end{proposition}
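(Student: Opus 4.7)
The plan is to use the spectral and polar decompositions for symmetric matrices over $\F$ --- valid since $\F$ is real closed, via the Tarski--Seidenberg transfer principle --- to reduce the computation of $d_{\PP\Symm_{>0}(n,\F)}$ to the diagonal (simplex) case already handled by \Cref{propo:Simplex}. The key observation singling out $\SL(n,\mathcal{O})$ as the correct stabilizer is that $\SO(n,\F) \subseteq \SL(n,\mathcal{O})$: for any $k \in \SO(n,\F)$, the orthogonality relation $\sum_j k_{ij}^2 = 1$ combined with order-compatibility of $\abs{\cdot}$ forces $\abs{k_{ij}^2} \leq 1$, hence $\abs{k_{ij}} \leq 1$ for all $i,j$.

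First I would verify formula~(\ref{eqn:distSymSpaceSLnF}): since $aa^t = \diag(a_1^2,\ldots,a_n^2) \in \simplex^o$, the note preceding the proposition together with \Cref{propo:Simplex} yield $d_{\PP\Symm_{>0}(n,\F)}([\Id],[aa^t]) = d_{\simplex^o}([\Id],[aa^t]) = \max_{i \neq j}\log(\abs{a_i^2}/\abs{a_j^2})$. Equivariance of $f$ is a direct unpacking of the definition. For well-definedness, the $\SL(n,\F)$-invariance of the Hilbert metric reduces matters to showing $d_{\PP\Symm_{>0}(n,\F)}([hh^t],[\Id]) = 0$ for every $h \in \SL(n,\mathcal{O})$. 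Diagonalizing $hh^t = kDk^t$ with $k \in \SO(n,\F)$ and $D = \diag(d_1,\ldots,d_n)$, $d_1 \geq \cdots \geq d_n > 0$, the elementary symmetric polynomials $\sigma_j(d_1,\ldots,d_n)$ are polynomial in the entries of $hh^t \in \Mat_n(\mathcal{O})$ and hence lie in $\mathcal{O}$, while $\prod d_i = 1$. Order-compatibility then gives $\abs{d_1} \leq \abs{\sigma_1} \leq 1$ and $\abs{d_n} \geq 1/\abs{\sigma_{n-1}} \geq 1$; combined with $\abs{d_n} \leq \abs{d_1}$ this forces $\abs{d_i} = 1$ for all $i$. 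Hence $\log_\eb[D] = 0 \in \Aa_\Lambda$, so $d_{\simplex^o}([D],[\Id]) = 0$ by \Cref{propo:Simplex}, and conjugation by $k$ (which is isometric and fixes $[\Id]$) yields $d_{\PP\Symm_{>0}(n,\F)}([hh^t],[\Id]) = 0$.

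Surjectivity of $f$ is immediate from transitivity of the $\SL(n,\F)$-action on $\PP\Symm_{>0}(n,\F)$. The main obstacle is injectivity, which is the converse of the eigenvalue argument above: assuming $d_{\PP\Symm_{>0}(n,\F)}([hh^t],[\Id]) = 0$ for $h = g^{-1}g'$, one must deduce $h \in \SL(n,\mathcal{O})$. I would diagonalize $hh^t = kDk^t$ with $k \in \SO(n,\F)$ and $D = \diag(d_1,\ldots,d_n)$; conjugation by $k^{-1}$ transports the hypothesis to $d_{\simplex^o}([D],[\Id]) = 0$, so by \Cref{propo:Simplex} all $\log\abs{d_i}$ coincide and $\prod d_i = 1$ forces $\abs{d_i} = 1$. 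Then the polar decomposition $h = Pk'$ with $P = \sqrt{hh^t} = kD'k^t$, $D' = \diag(\sqrt{d_1},\ldots,\sqrt{d_n})$, and $k' \in \SO(n,\F)$, combined with $\abs{\sqrt{d_i}} = 1$, gives $D' \in \SL(n,\mathcal{O})$ and hence $P, h \in \SL(n,\mathcal{O})$.
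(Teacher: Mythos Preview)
Your proof is correct and follows the same route as the paper: reduce to the diagonal case via the Cartan/spectral decomposition (using the key inclusion $\SO(n,\F)\subset\SL(n,\mathcal{O})$), invoke \Cref{propo:Simplex} for the distance computation, and use transitivity for surjectivity. The only variation is in the well-definedness step, where the paper bypasses your symmetric-polynomial argument by writing the Cartan decomposition $h=kak'$ and observing directly that $a=k^{-1}hk'^{-1}\in\SL(n,\mathcal{O})$ as a product in the group, whence each diagonal entry $a_i\in\mathcal{O}^\times$.
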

\begin{proof}
Set $\Omega\coloneqq\PP\Symm_{>0}(n,\F)$.
Since $\SL(n,\F)$ acts transitively on $\SL(n,\F)/\SL(n,\mathcal{O})$ it suffices to check that for all $g \in \SL(n,\mathcal{O})$ we have $d_{\Omega}([gg^t], [\Id]) = 0$.
Using the Cartan decomposition over real closed fields,  see e.g.\ \cite[Proposition 4.4]{BurgerIozziParreauPozzetti_RSCCharacterVarieties2},  we write $g$ as $g=kak'$ with $k,k' \in \SO(n,\F) \subset \SL(n,\mathcal{O})$ and $a=\diag(a_1,\ldots,a_n) \in \SL(n,\mathcal{O})$ diagonal. 
Then by \Cref{propo:Simplex} and the considerations right before \Cref{propo:SymSpace} we have
\begin{align*}
d_{\Omega}([gg^t], [\Id]) = d_{\Omega}([ka^2k^t], [\Id])  = d_{\simplex^o}([a^2], [\Id])=\log \max_{i\neq j} \tfrac{|a_i^2|}{|a_j^2|}.
\end{align*}
Since $a$ is diagonal and invertible,  it follows that $a_i \in \mathcal{O}^\times$,  and thus $|a_i|=1$ for all $i=1,\ldots,n$.
Thus $d_{\Omega}([gg^t], [\Id]) = 0$,  which proves that $f$ is well-defined.
The same computation also shows that $f$ satisfies \eqref{eqn:distSymSpaceSLnF}.

A similar argument shows that $\SL(n,\mathcal{O})$ is exactly the stabilizer of $\overline{[\Id]}$ in $X_{\Omega}$,  thus $f$ is injective.
The map $f$ is $\SL(n,\F)$-equivariant and hence surjective,  since $\SL(n,\F)$ acts transitively on $\Omega$ and hence also on $X_{\Omega}$.
\end{proof}

\begin{remarks}
\noindent\begin{enumerate}
\item In fact $X_{\PP\Symm_{>0}(n,\F)}$ is
the \emph{metric shadow} $\mathcal{B}_{\SL(n,\F)}=\PP\Symm_{>0}(n,\F)/\sim_N$ 
defined in \cite[\S
5.2]{BurgerIozziParreauPozzetti_RSCCharacterVarieties2},  
endowed with the $\F$-valued multiplicative norm $N$ given by $N(a_1,\ldots,a_n) = \max_{i\neq j} \tfrac{|a^2_i|_\F}{|a^2_j|_\F}$. 
Even more,  the $\F$-valued multiplicative Hilbert pseudo-distance $D_{ \PP\Symm_{>0}(n,\F)}$ (\Cref{subs:FvaluedMultHilbertDistance}) satisfies $D_{ \PP\Symm_{>0}(n,\F)}=N \circ \delta_\F$,  where $\delta_\F$ is the Cartan projection of $\SL(n,\F)$.
\item
The quotient $\SL(n,\F)/\SL(n,\mathcal{O})$ has the structure of an affine $\Lambda$-building in the sense of Bennett \cite{Bennett_AffineLambdaBuildingsI},  called the \emph{affine $\Lambda$-building associated to $\SL(n,\F)$},  see \cite{KramerTent_AffineLambdaBuildingsUltrapowers,Appenzeller_Thesis}.
It can be thought of as a non-Archimedean analogue of a symmetric space,  and the higher rank analogue of a $\Lambda$-tree.\footnote{
The quotient $\SL(n,\F)/\SL(n,\mathcal{O})$ can be endowed with an $\SL(n,\F)$-invariant function $d$ that satisfies $d(\overline{\Id}, \overline{a}) = 2 \log \max_{i\neq j}  \tfrac{|a_i|}{|a_j|}$ for a diagonal matrix $a=\diag(a_1,\ldots,a_n) \in \SL(n,\F)$.
\Cref{propo:SymSpace} then implies that $d$ is an $\SL(n,\F)$-invariant metric on $\SL(n,\F)/\SL(n,\mathcal{O})$.}
\end{enumerate}
\end{remarks}

\section{Hilbert geometry of non-Archimedean integral polytopes}
\label{section:Polytopes}

The goal of this section is to determine the Hilbert metric space associated to a non-Archimedean integral polytope.
For this we first prove a general result for convex sets sandwiched between two simplices whose vertices have coefficients of the same ``size'' (\Cref{subsection:FlagSimplexSandwichLemma}).
Secondly we show that for polytopes,  such a configuration can always be obtained (\Cref{subsection:ApproximationPolytopesSimplices}).
We begin by introducing in detail the objects in question.

\subsection{Preliminaries on polytopes}
\label{subsection:PreliminariesPolytopes}
Let $\FF$ be any ordered field and $V$ a $d$-dimensional vector space over $\F$.
A subset $\polyt \subset \PP V$ is a \emph{polytope} if $\polyt$ is the image under an affine chart $f_\eb \from \F^{d-1} \to \PP V$ of an affine polytope $A$ in $\FF^{d-1}$, namely the convex hull of a finite number of points.
For example,  simplices as defined in \Cref{subsection:ModelFlats} are polytopes.
A (closed) \emph{face} of a polytope $\polyt$ is the image under an affine chart of the intersection $\sigma=A \cap H$ of the affine polytope $A \subset \FF^{d-1}$ defining $\polyt$ and an affine hyperplane $H \subset \FF^{d-1}$ such that $A \cap H \neq \emptyset$ and $A$ lies in one of the closed half-spaces bounded by $H \cap \F^{d-1}$.
The notion of face is independent of the choice of affine chart containing $\polyt$.
The set of faces of $\polyt$ is denoted by $\Faces(\polyt)$, and it is partially
ordered by inclusion.
The dimension $\dim(\sigma)$ of a face $\sigma$ is the smallest dimension of a
projective subspace, denoted $\langle \sigma \rangle$, containing it. 
The $0$-dimensional faces of $\polyt$ are called its \emph{vertices}. 
Note that with this definition the polytope itself is a face.
The faces of polytopes are again polytopes,  and the faces of simplices are again (lower-dimensional) simplices.

A \emph{flag} of a polytope $\polyt$ is an increasing sequence of faces of $\polyt$.
The set of flags of $\polyt$ is denoted by $\Flags(\polyt)$, and the set of those containing $\polyt$ is denoted by $\Flags_*(\polyt)$.
If $F=(\sigma_1,\ldots,\sigma_k)$ and $F'= (\sigma'_1,\ldots,\sigma'_{k'})$ are two flags of $\polyt$,  we define their intersection $F\cap F'$ on the underlying subsets of faces.
The set of flags of $\polyt$ (as well as $\Flags_*(\polyt)$) is partially ordered by inclusion,  namely $F \prec  F'$ if and only if $F=F\cap F'$.
A flag $F$ is called \emph{maximal} if maximal for this order.
We call $\Flags(\polyt)$ the \emph{flag complex} associated to $P$.
We would like to give a different description of the flag complex in terms of the barycentric subdivision of $\polyt$ in a an affine chart.
This approach allows us to do computations in an affine chart,  however they do depend on a choice of such (even though the conclusions will not).

Recall that if $\simplex$ is a simplex associated to a basis $\eb=(e_1,\ldots,e_d)$ of $V$ as in \Cref{subsection:ModelFlats}, i.e.\ $\simplex=\simplex_\eb=\PP( \{ \sum_{i=1}^d x_i e_i \mid x_i \geq 0\})$,  then $\PP(e_i)$ are the vertices of $\simplex$.
We now define flag simplices,  following \cite[Definition 13]{VernicosWalsh_FlagApproxConvexBodiesVolumeGrowthHilbertGeom}.

\begin{definition}[Flag simplex]
A simplex $\simplex$ is a \emph{flag simplex} of a polytope $\polyt$ if there is a maximal flag $F=(\sigma_1,\ldots,\sigma_d)$ of $\polyt$ such that the relative interior of each face $\sigma_i$ contains exactly one vertex $s_i$ of $\simplex$.
\end{definition}

A special class of flag simplices of simplices are so-called barycentric simplices.
They play a crucial role in the flag simplex sandwich lemma (\Cref{lem:Intro:FlagSimplexSandwichLemma}).

\begin{definition}[Barycentric simplex]
A simplex $\simplex'$ is a \emph{barycentric simplex} of a simplex $\simplex$ \emph{with respect to a point} $s\in \Int{\simplex}$,  if there exists a basis $\eb=(e_1,\ldots,e_d)$ of $V$ such that $\simplex=\simplex_\eb$,
$s= \PP(e_1+\ldots+e_d)$ and $\simplex'=\simplex_{\eb'}$ with $\eb'=(e_1,e_1+e_2,\ldots,e_1+\ldots+e_d)$.
\end{definition}

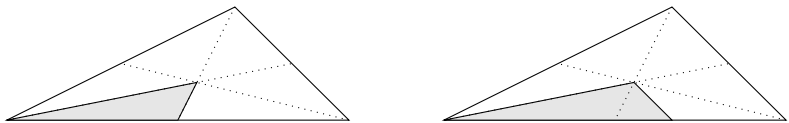
\begin{figure}[h]
\begin{tikzpicture}[scale=1.5]
\filldraw[fill=gray!0] (0,0)--(3,0)--(2,1)--cycle;
\filldraw[fill=gray!20] (1.6666,0.3333)--(0,0)--(1.5,0)--cycle;
\draw[dotted]  (0,0)--(2.5,0.5);
\draw[dotted]  (3,0)--(1,0.5);
\draw[dotted]  (2,1)--(1.5,0);
\end{tikzpicture}
\hspace{1cm}
\begin{tikzpicture}[scale=1.5]
\filldraw[fill=gray!0] (0,0)--(3,0)--(2,1)--cycle;
\filldraw[fill=gray!20] (1.6666,0.3333)--(0,0)--(2,0)--cycle;
\draw[dotted] (0,0)--(2.5,0.5);
\draw[dotted] (3,0)--(1,0.5);
\draw[dotted] (2,1)--(1.5,0);
\end{tikzpicture}
\caption{A barycentric simplex (left) and a flag simplex that is not a barycentric simplex (right).}
\end{figure}

\noindent If we do not specify the point $s$,  we just say that $\simplex'$ is a barycentric simplex of $\simplex$.
In particular,  a barycentric simplex is a flag simplex.

Note that barycentric simplices are exactly the image under an affine chart of maximal simplices in the barycentric subdivision of the affine simplex.
Given an  affine polytope $\polyt$ (the convex hull of finitely many points in $\F^{d-1}$), we denote by $b_\polyt$ its \emph{barycenter}, namely 
\[b_P\polyt = \tfrac{1}{k} \sum_{i=1}^k v_i \in \F^{d-1},\]
where $v_1,\ldots,v_k$ are the vertices of $\polyt$ (which is well defined as $\FF$ has characteristic $0$).

\begin{definition}[Barycentric subdivision]
The \emph{barycentric subdivision} $\Barc(\polyt)$ of $\polyt$ is the simplicial complex defined as follows.
To every flag $F=(\sigma_1,\ldots,\sigma_{k}) \in \Flags(\polyt)$ we associate the $(k-1)$-simplex $\simplex_F$ whose vertices are the barycenters of the $\sigma_i$.
Then $\Barc(\polyt)$ is the set of simplices $\simplex_F$,  $F\in\Flags(\polyt)$, ordered by inclusion. 
\end{definition}

The union of the $\simplex_F$ is equal to $\polyt$ and if $F, F' \in \Flags(\polyt)$ are two flags of $\polyt$ then 
$\simplex_{F} \cap \simplex_{F'} = \simplex_{F' \cap F}$.
Note that $\Barc(\polyt)$ is isomorphic to $\Flags(\polyt)$ as an abstract simplicial complex since we have $\simplex_{F'} \subseteq \simplex_{F} \iff F' \prec F$.
Note that $F \in \Flags_*(\polyt)$  if and only if $\simplex_F$ contains $b_\polyt$ as a vertex,  and we write $\Barc_*(\polyt)$ for the set of all barycentric simplices that contain $b_\polyt$ as a vertex.
If $\polyt$ is a simplex,  say $\polyt=\simplex_\eb$,  and $F \in \Flags_*(\polyt)$,  we have an explicit description of $\simplex_F$.
Indeed,  for $D \subset \Delta$ we defined in \Cref{subsection:ModelFlats}
\[\simplex^D_{\eb} = \PP\big( \big\{ x=\sum_{i=1}^d x_i e_i \mid x_1\geq  \ldots \geq x_{d}>0\text{ and }x_j=x_{j+1} \, \forall \,  j \in \Delta \setminus D \big\}\big).\]
With this notation we have $\simplex_F \cap \simplex_{\eb}^o=\simplex_{\sigma(\eb)}^{D(F)}$ for some  
permutation $\sigma$ of the order of the basis vectors,  or equivalently the vertices of $\simplex_\eb$,  where $\sigma(\eb)=(e_{\sigma(1)},\ldots,e_{\sigma(d)})$.

\subsection{%
\texorpdfstring{%
Approximation lemmas and proof of  \Cref{lem:Intro:FlagSimplexSandwichLemma}}%
{Approximation lemmas and proof of  Lemma D}}
\label{subsection:FlagSimplexSandwichLemma}
The goal of this section is to prove approximation lemmas for convex sets sandwiched in between two simplices whose vertices have coefficients of the same ``size''.

Let $\F$ be a non-Archimedean ordered valued field.
Recall that $\mathcal{O} = \{ x \in \F \mid |x| \leq 1\}$ denotes the valuation ring of $\abs{\cdot}$,  and $\mathcal{O}^\times = \{ x \in \F \mid |x| = 1\}$ its invertible elements.
We have seen in \Cref{subsection:ModelFlats} how to associate to a basis $\eb=(e_1,\ldots,e_d)$ of $V$ an open simplex $S^o_\eb=\PP( \{ \sum_{i=1}^d x_i e_i \mid x_i > 0\})$ and a map
\[ \log_{\eb} \from S_\eb^o \to \Aa_\Lambda, \quad \PP\big(\sum_{i=1}^d x_i e_i\big) \mapsto
  [(\log |x_1|, \ldots, \log|x_d|)].\]
We define the standard parabolic subgroup $P_D(d,\F)$ of
$\GL(d,\F)$ associated to a subset $D \subset \Delta=\{1,\ldots,d-1\}$,  say $D=\{d_1<\ldots<d_k\}$. 
Then $P_D(d,\F)$ is the group of invertible $d \times d$ block upper
triangular matrices with blocks of sizes $d_l-d_{l-1}$ for all
$l=1,\ldots,k$, where we set $d_0 \coloneqq 0$ and $d_{k+1}\coloneqq d$.
We also define $P_D(d,\mathcal{O}) \coloneqq P_D(d,\F) \cap \GL(d,\mathcal{O})$.

Given two simplices that agree on a common barycentric simplex,  we can give a sufficient condition on when the two logarithmic maps agree on this simplex.
\begin{lemma}[Base change in $P_D(d,\mathcal{O})$]
\label{lem:BaseChangeGL(O)}
Let $\eb=(e_1,\ldots,e_d)$ and $\eb'=(e'_1,\ldots,e'_d)$ be two bases of $V$, 
and let $M = \Mat_{\eb'}(\eb)$ be the matrix of $\eb$ in $\eb'$.
If $M \in P_D(d,\mathcal{O})$ and $\simplex^D_{\eb}=\simplex^D_{\eb'}$ for some $D \subseteq \Delta$,  then $\log_{\eb}=\log_{\eb'}$ on $\simplex^D_{\eb}=\simplex^D_{\eb'}\subset S^o_{\eb} \cap \subset S^o_{\eb'}$.
\end{lemma}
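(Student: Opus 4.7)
The plan is to realise every $p \in \simplex^D_\eb$ as a positive combination of a canonical family of projective ``vertex vectors'', link these vertex vectors across the two bases, and then read off the logarithmic coordinates. Writing $D = \{d_1 < \cdots < d_k\}$ with associated blocks $B_l = \{d_{l-1}+1, \ldots, d_l\}$ (where $d_0 = 0$, $d_{k+1} = d$), and setting $F_l \coloneqq e_1 + \cdots + e_{d_l}$, summation by parts expresses any $p \in \simplex^D_\eb$ as $p = \PP\bigl(\sum_{l=1}^{k+1} \zeta_l F_l\bigr)$ with $\zeta_l \coloneqq \xi_l - \xi_{l+1} \geq 0$ for $l \leq k$ and $\zeta_{k+1} \coloneqq \xi_{k+1} > 0$, where $\xi_l$ denotes the common value of the $x_j$ for $j \in B_l$. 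The points $\PP(F_l)$ are thus the vertices of the closed simplex $\overline{\simplex^D_\eb}$, with only $\PP(F_{k+1})$ lying in $\simplex^D_\eb$ itself, and the analogous description applies to $\eb'$.

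From the hypothesis $\simplex^D_\eb = \simplex^D_{\eb'}$ the vertex sets $\{\PP(F_l)\}$ and $\{\PP(F'_l)\}$ coincide, and the block upper-triangularity of $M$ yields $F_l \in \Vect(e'_1, \ldots, e'_{d_l})$. Any matching $\PP(F_l) = \PP(F'_{\sigma(l)})$ is thus forced by dimension counting to satisfy $d_{\sigma(l)} \leq d_l$; induction then shows $\sigma = \id$, whence $F_l = \mu_l F'_l$ for scalars $\mu_l \in \F^\times$. By linear independence of the $F'_l$ the two parametrizations transform as $\eta_l = c\zeta_l \mu_l$ for a single scalar $c$; preservation of the positive-cone conditions $\eta_l \geq 0$, $\eta_{k+1} > 0$ then forces all $\mu_l$ to share a common sign.

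Next I would show $|\mu_l| = 1$. The identity $F_l = \mu_l F'_l$, read on the first $d_l$ coordinates in the basis $\eb'$, becomes $M_{[l]} \mathbf{1} = \mu_l \mathbf{1}$ with $M_{[l]}$ the upper-left $d_l \times d_l$ block of $M$, itself block upper-triangular with diagonal blocks $M_1, \ldots, M_l$. Since $\det M \in \mathcal{O}^\times$ factors as $\prod_{l'} \det M_{l'}$ with each $\det M_{l'} \in \mathcal{O}$, every diagonal block determinant is a unit, so $M_{[l]} \in \GL(d_l, \mathcal{O})$. The ultrametric inequality gives $\mu_l = \sum_{j \leq d_l} M_{1j} \in \mathcal{O}$; reducing the eigenvalue equation modulo the maximal ideal $\mathfrak{m}$, the reduction $\overline{M}_{[l]}$ remains invertible and sends $\overline{\mathbf{1}} \neq 0$ to $\overline{\mu}_l \, \overline{\mathbf{1}}$, whence $\overline{\mu}_l \neq 0$ and $|\mu_l| = 1$.

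To conclude, substituting into a general representative $v = \sum_l \zeta_l F_l = \sum_l \zeta_l \mu_l F'_l$ gives the $\eb'$-coordinate $y_i = \sum_{l \geq l_0} \zeta_l \mu_l$ at any index $i \in B_{l_0}$, while the $\eb$-coordinate is $x_i = \xi_{l_0} = \sum_{l \geq l_0} \zeta_l$. Since the summands $\zeta_l \mu_l$ all share a common sign and $|\mu_l| = 1$, iterating \Cref{lem:AbsoluteValueNA:EqualityPos}(2) (applied to negatives if necessary) yields $|y_i| = \max_{l \geq l_0} |\zeta_l \mu_l| = \max_{l \geq l_0} |\zeta_l| = |x_i|$, so $\log_\eb(p) = \log_{\eb'}(p)$ in $\Aa_\Lambda$. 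I expect the main obstacle to be the vertex-matching step: the mere set equality $\simplex^D_\eb = \simplex^D_{\eb'}$ gives only an a priori unordered matching, and both the block upper-triangular structure of $M$ and the ordered-cone structure must be leveraged together to force the permutation to be the identity and to pin down the common sign of the $\mu_l$.
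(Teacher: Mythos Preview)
Your proof is correct and follows essentially the same route as the paper: introduce the vertex vectors $F_l=e_1+\cdots+e_{d_l}$, deduce $F_l=\mu_lF'_l$ with $|\mu_l|=1$, and conclude via the ultrametric equality for sums of same-sign terms. The paper states the matching $\PP(F_l)=\PP(F'_l)$ and the fact that $\mu_l\in\mathcal{O}^\times$ (as an eigenvalue of $M\in\GL(d,\mathcal{O})$) more tersely, while you spell out the permutation argument and the block-determinant reasoning; conversely the paper obtains $|y_i|=|y'_i|$ by proving one inequality and reversing the roles of $\eb,\eb'$, whereas you get it directly from the common sign of the $\mu_l$.
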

\begin{figure}[H]
\begin{tikzpicture}[scale=2.5]
\filldraw[fill=gray!20, semitransparent] (0,0) --(1,0) --  (1/2,3/2) -- cycle;
\draw[dotted] (0,0)--({(1+1/2)/2},{1.5/2});
\draw[dotted] (1,0)--(1/4,{1.5/2});
\draw[dotted] (1/2,{1.5})--(1/2,0);
\draw (.6,1.5) node{$\simplex_{\eb}$} ;

\filldraw[fill=gray!60, semitransparent] (0,0) --(0,1) --  (3/2, 1/2) -- cycle;
\draw[dotted] (0,0)--({(0+3/2)/2},{(1+1/2)/2});
\draw[dotted] (0,1)--(3/4,{0.5/2});
\draw[dotted] (3/2, 1/2)--(0,1/2);
\draw[very thick] (0,0)--(1/2,1/2);
\draw (-.4,1/4) node{$\simplex^D_{\eb}=\simplex^D_{\eb'}$} ;
\draw[->] (-.1,.24) .. controls +(0.1,0.1) and +(-0.1,0.1) .. (.18,.23);
\draw (1/2,1/2) node{$\bullet$} ;
\draw (1.4,.62) node{$\simplex_{\eb'}$} ;
\end{tikzpicture}
\begin{tikzpicture}[scale=2]
\filldraw[fill=gray!20, semitransparent] (0,0) --(1.5,0) --  ({1/2*1.5},3/2) -- cycle;
\draw[dotted] (0,0)--({(1+1/2)/2*1.5},{1.5/2});
\draw[dotted] ({1*1.5},0)--({1/4*1.5},{1.5/2});
\draw[dotted] ({1/2*1.5},{1.5})--({1/2*1.5},0);
\draw ({.2*1.5},1.5) node{$\simplex_{\eb'}$} ;

\filldraw[fill=gray!60, semitransparent] ({-1/4*1.5},0) --({3/4*1.5},0) --  ({1/2*1.5}, {2}) -- cycle;
\draw[dotted] ({-1/4*1.5},0)--({(9/16*3/4+1/4*1/2)/(13/16)*1.5},{(1/4*2)/(13/16)});
\draw[dotted] ({3/4*1.5},0)--({(-3/16*1/4+1/4*1/2)/(7/16)*1.5},{(1/2)/(7/16)});
\draw[dotted] ({1/2*1.5}, {2})--({1/2*1.5},0);
\draw[very thick] ({1/2*1.5},0)--({1/2*1.5},1/2);
\draw ({1/2*1.5},1/2) node{$\bullet$} ;
\draw ({.9*1.5},.62) node{$\simplex_{\eb}$} ;
\draw (1.9,1/4) node{$\simplex^D_{\eb}=\simplex^D_{\eb'}$} ;
\draw[->] (1.5,.24) .. controls +(-0.1,0.1) and +(0.1,0.1) .. (.8,.23);
\end{tikzpicture}
\caption{Configuration for $D=\{1\}$ (left) and $D=\{2\}$ (right) in \Cref{lem:BaseChangeGL(O)} in the case $d=3$.}
\label{fig_BaseChangeGL(O)}
\centering
\end{figure}
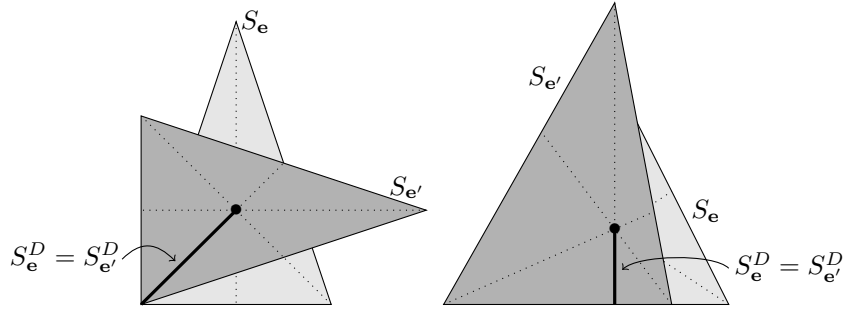
\begin{proof}
Let $D=\{d_1<\ldots<d_k\}$ and $d_{k+1} \coloneqq d$,  and set $v_j \coloneqq e_1+\ldots+e_{d_j}$ and $v'_j \coloneqq e'_1+\ldots+e'_{d_j}$ for all $j =1,\ldots,k+1$.
We first remark that $\simplex^D_{\eb}=\simplex^D_{\eb'}$ and $M \in P_D(d,\F)$ is equivalent to asking that $\PP(v_j)=\PP(v'_j)$ for all $j=1,\ldots,k+1$.
Thus there exists $\lambda_j \in \F$,  $\lambda_j \neq 0$ with $v_j = \lambda_j v'_j$ for all $j=1,\ldots,k+1$.
In other words,  $\lambda_j$ is an eigenvalue of $M$.
Since $M \in \GL(d,\mathcal{O})$,  it follows that $\lambda_j \in \mathcal{O}^\times$,  i.e.\ $\abs{\lambda_j}=1$ for all $j =1,\ldots,k+1$.

Let now $p =\PP(\sum_{j=1}^{k+1} x_j v_j) \in \simplex^D_{\eb}$ with $x_j >0$.
Then $p =\PP(\sum_{j=1}^{k+1} x_j \lambda_j v'_j) \in  \simplex^D_{\eb'}$.
Written in the basis $\eb$ we have $p=\PP(\sum_{i=1}^{d} y_i e_i)$ with $y_i=\sum_{j=1}^{k+1} x_j  \delta_i^j$ where 
\[\delta_i^j =\begin{cases} 1,  \textnormal{ if } i \leq d_j,  \\ 0,  \textnormal{ if } i > d_j. \end{cases}\]
We also get $p=\PP(\sum_{i=1}^{d} y'_i e'_i)$ with $y'_i=\sum_{j=1}^{k+1} \lambda_j x_j \delta_i^j$.
Hence taking absolute values and using the ultrametric triangle inequality gives 
\[\abs{y_i}=\max_{j=1,\ldots,k+1}\big\{|x_j \delta_i^j|\big\}=\max_{j=1,\ldots,k+1}\big\{|\lambda_j x_j \delta_i^j|\big\} \geq \abs{y'_i}, \]
as $\abs{\lambda_j}=1$ for all $j =1,\ldots,k+1$.
Reversing the roles of $\eb$ and $\eb'$ yields the reverse inequality.
Thus we have
\[\log_{\eb}(p) = (\log(\abs{y_i}))_{i=1}^d=(\log(\abs{y'_i}))_{i=1}^d=\log_{\eb'}(p).\qedhere\]
\end{proof}

If the base change matrix is non-negative we can even say more.

\begin{lemma}[Non-negative base change in $P_D(d,\mathcal{O})$]
\label{lem:BaseChangeUO}
Let $\eb=(e_1,\ldots,e_d)$ and $\eb'=(e'_1,\ldots,e'_d)$ be two bases of $V$,  and let $M = \Mat_{\eb'}(\eb)$ be the matrix of $\eb$ in $\eb'$.
If $M \in P_{D}(d,\mathcal{O})$ and $M$ is non-negative,  i.e.\ $M$ has only non-negative coefficients,  
then $\log_{\eb} = \log_{\eb'}$ on $S^D_{\eb}\subseteq S^D_{\eb'}$.
\end{lemma}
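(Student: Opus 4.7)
The strategy parallels the preceding lemma: express any $x \in \simplex^D_\eb$ in both bases and use the ultrametric property together with non-negativity to compare the resulting coordinates. I would write $x = \sum_i x_i e_i = \sum_l x'_l e'_l$, and observe that, by definition of $\simplex^D$, the first expression is block-constant in the sense that $x_i = y_s$ for every $i$ in the $s$-th block $\{d_{s-1}+1,\ldots,d_s\}$ (with $d_0 \coloneqq 0$, $d_{k+1} \coloneqq d$), and $y_1 \geq \cdots \geq y_{k+1} > 0$. The hypothesis $\simplex^D_\eb \subseteq \simplex^D_{\eb'}$ gives the analogous statement on the primed side: $x'_l = y'_t$ for every $l$ in block $t$. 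Consequently it suffices to show $|y_s| = |y'_s|$ for every $s$, since then $\log_\eb(x) = \log_{\eb'}(x)$ already as elements of $\R^d$, hence a fortiori in $\Aa_\Lambda$.

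For the computation, fix $l$ in block $t$. Writing the change of basis and using that $M \in P_D$ is block upper triangular gives
\[ x'_l = \sum_i m_{li} x_i = \sum_{s \geq t} y_s A_{l,s}, \qquad A_{l,s} \coloneqq \sum_{i \in \text{block }s} m_{li} \geq 0. \]
All terms are non-negative, so \Cref{lem:AbsoluteValueNA:EqualityPos}(2) yields $|x'_l| = \max_{s \geq t} |y_s|\,|A_{l,s}|$. The upper bound $|x'_l| \leq |y_t|$ then follows at once: $|A_{l,s}| \leq 1$ since the entries of $M$ lie in $\mathcal{O}$, and the real sequence $|y_s|$ is non-increasing by order-compatibility of $|\cdot|$ applied to $y_1 \geq \cdots \geq y_{k+1} \geq 0$.

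The crux is the matching lower bound $|x'_l| \geq |y_t|$, which reduces to establishing $|A_{l,t}| = 1$. This is where the full integrality of $M$ enters: because $M$ is block upper triangular, $\det M = \prod_t \det M_t$, where $M_t$ denotes the $t$-th diagonal block. Since $|\det M| = 1$ and each $|\det M_t| \leq 1$, every $|\det M_t|$ must equal $1$, so $M_t \in \GL(d_t - d_{t-1}, \mathcal{O})$. Reducing $M_t$ modulo the maximal ideal $\mathfrak{m} = \{x \in \mathcal{O} \mid |x| < 1\}$ produces an invertible matrix over the residue field, which in particular has no zero row; hence every row of $M_t$ contains some entry $m_{li}$ with $|m_{li}| = 1$. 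A further application of non-negativity and \Cref{lem:AbsoluteValueNA:EqualityPos}(2) then yields $|A_{l,t}| = \max_{i \in \text{block }t} |m_{li}| = 1$, and combining this with the upper bound gives $|x'_l| = |y_t|$, as required. I expect the delicate point to be precisely this last step, since it is the only place where the hypothesis $M \in P_D(d,\mathcal{O})$---the combination of integrality with block structure, as opposed to merely non-negativity plus $M \in \GL(d,\mathcal{O})$---is truly essential.
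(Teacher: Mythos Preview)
Your argument is essentially the same as the paper's: both compute $|x'_l|$ via the non-Archimedean maximum formula and sandwich it between $|y_t|$ from above (using $|m_{li}|\leq 1$ and the monotonicity of $|y_s|$) and from below (using that each diagonal block lies in $\GL(\cdot,\mathcal{O})$, hence has a unit in every row). The only slip is organizational: you treat the inclusion $\simplex^D_{\eb}\subseteq \simplex^D_{\eb'}$ as a hypothesis and use it to introduce the block-constant values $y'_t$, but in the lemma this inclusion sits in the conclusion, and indeed the paper's proof only establishes $\simplex^D_{\eb}\subseteq \simplex^o_{\eb'}$ (which is what is needed for $\log_{\eb'}$ to make sense). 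In fact the stronger inclusion $\simplex^D_{\eb}\subseteq \simplex^D_{\eb'}$ can fail---for $d=3$, $D=\{2\}$, and $M=\begin{psmallmatrix}1&1&0\\0&1&0\\0&0&1\end{psmallmatrix}$, the point $[1:1:1]_\eb$ goes to $[2:1:1]_{\eb'}\notin \simplex^D_{\eb'}$---so your $y'_t$ need not exist. Fortunately your computation never uses them: you show $|x'_l|=|y_t|$ directly, which is exactly what both you and the paper need, so the argument is correct once you drop the superfluous sentence about the primed side.
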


\begin{remark}
\label{rem:GeometricInterpretationBaseChangeLemma}
One can interpret the above lemma,  as well as the assumptions on $M$,  geometrically; see \Cref{fig_GeomInterpretationSandwichLemma}.
Indeed,  asking for $M$ to be non-negative is equivalent to the fact that the simplex $S_{\eb}$ is contained in the simplex $\simplex_{\eb'}$.
Furthermore,  $M$ being block upper triangular means that there are
two flags $F=(\sigma_1,\ldots,\sigma_k, \simplex_\eb)$ of $\simplex_\eb$ and $F'=(\sigma'_1,\ldots,\sigma'_k,\simplex_{\eb'})$ of
$\simplex_{\eb'}$
such that $\langle \sigma_i \rangle =\langle \sigma'_i \rangle$ with 
$d_i=\dim \langle \sigma_i \rangle +1$ for all $i=1,\ldots, k$.
The subset $\simplex_{\eb}^D$ on which $\log_{\eb}$ and $\log_{\eb'}$ agree,  corresponds to the \xfchanged{intersection of $\simplex^o_{\eb}$ with the barycentric simplex}
of $\simplex_{\eb}$ corresponding to the flag $F$.

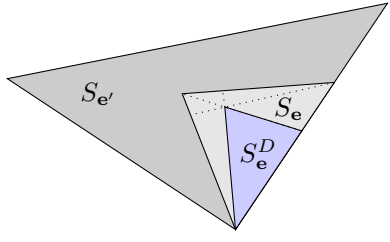
\begin{figure}[h]
\begin{tikzpicture}[scale=1]
\filldraw[fill=gray!40] (0,2) --(3,0) --  (5,3) -- cycle;
\draw (1.2,1.8) node{$\simplex_{\eb'}$} ;

\filldraw[fill=gray!20] (3,0) --(4.3,1.95) --  (2.3,1.8)-- cycle;
\draw (3.7,1.6) node{$\simplex_{\eb}$} ;

\filldraw[fill=blue!20] (3,0) -- (3.872,1.308) -- (2.86,1.625) -- cycle;
\draw (3.3,1) node{$\simplex_{\eb}^D$} ;

\draw[dotted] (3.872,1.308)--(2.3,1.8);
\draw[dotted] (4.3,1.95)--(2.41,1.52);
\draw[dotted] (2.86,1.625)--(2.83,1.85);

\end{tikzpicture}
\caption{Geometric interpretation of \Cref{lem:BaseChangeUO}.}
\label{fig_GeomInterpretationSandwichLemma}
\centering
\end{figure}
\end{remark}

\begin{proof}
Let $p = \PP(\sum_{j=1}^d x_j e_j) \in S^D_{\eb}$ with $x_j >0$ for all $j=1,\ldots,d$.
Then $p = \PP(\sum_{i=1}^d x'_i e'_i)$ with $x'_i = \sum_{j=1}^d m_{ij} x_j \in \F$.
By the assumption that $M$ has only non-negative coefficients and $x_j >0$ for all $j$,  we have $x'_i \geq 0$ for all $i=1,\ldots,d$.
We claim that $|x_i|=|x'_i|$ for all $i=1,\ldots,d$.

Fix thus $l \in \{1,\ldots, k\}$ and $i \in \{d_l+1,\ldots, d_{l+1}\}$ 
and write $c_l \coloneqq |x_j|$ for all $j \in \{d_l+1,\ldots, d_{l+1}\}$. 
We also set $n_l\coloneqq d_l-d_{l-1}$.
We have
\begin{align*}
 |x'_i| &= \max_{1\leq j \leq d} \{ |m_{ij}| \cdot |x_j| \} \geq \max_{d_l+1\leq j \leq d_{l+1}} \{ |m_{ij}| \cdot |x_j| \} \\
&= c_l \cdot \max_{d_l+1\leq j \leq d_{l+1}} \{ |m_{ij}| \} = c_l,
\end{align*}
because the $(n_{l+1})\times (n_{l+1})$-block matrix 
on the diagonal is in $\GL(n_{l+1}\mathcal{O})$,  
hence in every row of this submatrix there is at least one coefficient in $\mathcal{O}^\times$.
On the other hand we have
\begin{align*}
|x'_i| &= \max_{1\leq j \leq d} \{ |m_{ij}| \cdot |x_j| \} = \max_{d_{l+1}\leq j \leq d} \{ |m_{ij}| \cdot |x_j| \} \\
&\leq  \max_{d_{l+1}\leq j \leq d} \{ |x_j| \} = |x_{d_{l+1}}| = c_l,
\end{align*}
where the first equality holds since $M$ is block upper triangular.  
The inequality is true since $M$ has coefficients in $\mathcal{O}$,  and the order-compatibility of $\abs{\cdot}$ implies that $|x_j| \leq |x_{d_{l+1}}|$ for all $j \geq d_{l+1}$.

Thus $|x_i|=|x'_i|$ for all $i=1,\ldots,d$.
In particular we obtain that the $x'_i$ are strictly positive,  hence $p \in S^o_{\eb'}$,  and thus
\[\log_{\eb'}(p) = [(\log|x'_i|)_{i=1}^d]=[(\log|x_i|)_{i=1}^d] =\log_{\eb}(p).\qedhere\]
\end{proof}

We now apply the above lemma to prove an approximation property for convex sets sandwiched in between two simplices.
Recall that we denote by $\pi$ the projection map from the convex set $\Omega$ to the associated Hilbert metric space $X_\Omega$.

\begin{lemma}[Flag simplex sandwich lemma, \Cref{lem:Intro:FlagSimplexSandwichLemma}]
\label{lem:FlagSimplexSandwichLemma}
Let $\Omega\subset \PP V$ be an open convex subset.
Let $\simplex$,  $\simplex_\eb$ and $\simplex_{\eb'}$ be three simplices such that $\simplex \subset \simplex_\eb \subset \overline{\Omega} \subset \simplex_{\eb'}$, $\simplex$ is a flag simplex of both $\simplex_\eb$ and $\simplex_{\eb'}$,  and $\simplex$ is a barycentric simplex of $\simplex_\eb$.
If $\Mat_{\eb'}(\eb) \in \GL(d,\mathcal{O})$,  then
\begin{enumerate}
\item $\log_{\eb}=\log_{\eb'}$ on $\simplex \cap \simplex_\eb^o$,  and
\item $d_\Omega = d_{\simplex^o_\eb} = d_{\simplex^o_{\eb'}}$ on $\simplex \cap \Omega \times \simplex \cap \Omega$.
\end{enumerate}
In particular,  $\pi(\simplex \cap \Omega) \subset X_\Omega$ is isometric to $\Aap_\Lambda$.
\end{lemma}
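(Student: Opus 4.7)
My plan is to deduce the lemma from the two preceding base change lemmas applied to the transition matrix $M \coloneqq \Mat_{\eb'}(\eb)$. The main step is to show that, after a suitable normalization, $M$ lies in $P_\Delta(d,\mathcal{O})$ with non-negative entries (where $\Delta = \{1,\ldots,d-1\}$), so that \Cref{lem:BaseChangeUO} applies with $D = \Delta$.

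To extract this structure I proceed as follows. First, since $\simplex_\eb \subset \simplex_{\eb'}$, each $\PP(e_i)$ lies in $\simplex_{\eb'}$, so after multiplying each $e_i$ by a suitable sign $\pm 1 \in \mathcal{O}^\times$ (which preserves $\simplex_\eb \subset \PP V$ and keeps $M \in \GL(d,\mathcal{O})$, as the diagonal sign matrix belongs to $\GL(d,\mathcal{O})$), I may assume $M$ has non-negative entries. Second, since $\simplex$ is a barycentric simplex of $\simplex_\eb$, I label so that $\simplex = \simplex_{\eb''}$ with $\eb'' = (e_1, e_1+e_2, \ldots, e_1+\cdots+e_d)$; the associated maximal flag of $\simplex_\eb$ is then the standard one $(\PP(e_1), \PP\langle e_1,e_2\rangle, \ldots, \simplex_\eb)$. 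Now $\simplex$ is also a flag simplex of $\simplex_{\eb'}$, and since the flag is nested, I can inductively permute $\eb'$ (which keeps $M \in \GL(d,\mathcal{O})$, as permutation matrices are in $\GL(d,\mathcal{O})$) so that the vertex $\PP(e_1+\cdots+e_k)$ of $\simplex$ sits in the relative interior of $\PP\langle e'_1,\ldots,e'_k\rangle \cap \simplex_{\eb'}$ for each $k$. Since $M$ is non-negative, this last condition forces $e_1+\cdots+e_k \in \mathrm{span}(e'_1,\ldots,e'_k)$ for every $k$, from which $M$ is upper triangular by taking successive differences. Together with $M \in \GL(d,\mathcal{O})$, this yields $M \in P_\Delta(d,\mathcal{O})$.

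Applying \Cref{lem:BaseChangeUO} with $D = \Delta$ then gives $\log_\eb = \log_{\eb'}$ on $\simplex^\Delta_\eb \subseteq \simplex^\Delta_{\eb'}$. A direct computation in $\eb''$-coordinates identifies $\simplex \cap \simplex^o_\eb = \simplex^\Delta_\eb$: a point $\sum y_j(e_1+\cdots+e_j) \in \simplex$ has $\eb$-coordinates $x_i = \sum_{j \geq i} y_j$, so $x_1 \geq x_2 \geq \cdots \geq x_d > 0$ exactly when $y_d > 0$ and $y_j \geq 0$ for $j<d$, which proves~(1). For~(2), the hypotheses $\simplex_\eb \subset \overline{\Omega} \subset \simplex_{\eb'}$ together with openness of $\Omega$ yield $\simplex^o_\eb \subseteq \Omega \subseteq \simplex^o_{\eb'}$, so the sandwich property \Cref{lem:PropertiesdOmega}~(\ref{lem:PropertiesdOmega2:Sandwich}) gives the two-sided bound $d_{\simplex^o_{\eb'}} \leq d_\Omega \leq d_{\simplex^o_\eb}$ on $\Omega \times \Omega$. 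By \Cref{propo:Simplex} these outer pseudo-distances equal $\nhex{\log_\eb(y) - \log_\eb(x)}$ and $\nhex{\log_{\eb'}(y) - \log_{\eb'}(x)}$, which agree on $\simplex \cap \simplex^o_\eb$ by~(1); hence all three pseudo-distances coincide on $(\simplex \cap \Omega) \times (\simplex \cap \Omega)$ (after checking that every point of $\simplex \cap \Omega$ lies in $\simplex^o_\eb$, or passing to the quotient $X_\Omega$). The ``in particular'' statement then follows, since \Cref{propo:Simplex} identifies $\pi(\simplex \cap \simplex^o_\eb)$ isometrically with $\Aap_\Lambda$.

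The main obstacle is the initial algebraic step: extracting both the non-negativity and the upper-triangular form of $M$ while carefully preserving the hypothesis $M \in \GL(d,\mathcal{O})$ throughout. Only unit rescalings (signs and permutations) may be used; a generic positive rescaling would not preserve the $\GL(d,\mathcal{O})$ condition. Once $M$ is known to lie in $P_\Delta(d,\mathcal{O})$ with non-negative entries, the conclusion follows mechanically from \Cref{lem:BaseChangeUO} and the isometric description of the Hilbert pseudo-distance of an open simplex from \Cref{propo:Simplex}.
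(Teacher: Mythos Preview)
Your proof is correct and follows essentially the same route as the paper's: reduce to \Cref{lem:BaseChangeUO} with $D=\Delta$ by showing $M$ is non-negative and upper triangular, then use the sandwich inequality and \Cref{propo:Simplex}. The paper's version is more terse---it invokes the geometric interpretation of \Cref{rem:GeometricInterpretationBaseChangeLemma} to assert non-negativity and upper-triangularity directly, whereas you carefully track that the needed normalizations (sign flips on $\eb$, permutation of $\eb'$, and reordering $\eb$ to realize the barycentric structure) are all effected by signed permutation matrices in $\GL(d,\mathcal{O})$, hence preserve the hypothesis $M\in\GL(d,\mathcal{O})$. This extra care is warranted and your argument is sound; the paper leaves these normalizations implicit.

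Your parenthetical flag about whether every point of $\simplex\cap\Omega$ lies in $\simplex^o_\eb$ is a fair observation: the paper's proof has the same imprecision, silently passing from $\simplex\cap\simplex^o_\eb$ (where part~(1) is proved) to $\simplex\cap\Omega$ (where part~(2) is asserted). In the applications this does not cause trouble, but you are right to note it.
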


\begin{figure}[h]
\begin{tikzpicture}[scale=1.2]
\filldraw[fill=gray!40] (0,2) --(3,0) --  (5,3) -- cycle;
\draw (1,1.8) node{$\simplex_{\eb'}$} ;

\filldraw[fill=gray!30] (3,0)-- (4.5,2.25)  .. controls +(-.5,.2) and +(0,0) ..   (2.5,2.3)-- (1.5,1.875) .. controls +(-.1,-.2) and +(0,0) ..  cycle;
\draw (2,1.7) node{$\overline{\Omega}$} ;

\filldraw[fill=gray!20] (3,0) --(4.3,1.95) --  (2.3,1.8)-- cycle;
\draw (3.7,1.6) node{$\simplex_\eb$} ;

\filldraw[fill=blue!20] (3,0) -- (3.872,1.308) -- (2.86,1.625) -- cycle;
\draw (3.2,1) node{$\simplex$} ;

\draw[dotted] (3.872,1.308)--(2.3,1.8);
\draw[dotted] (4.3,1.95)--(2.41,1.52);
\draw[dotted] (2.86,1.625)--(2.83,1.85);
\end{tikzpicture}
\caption{Sandwiching $\overline{\Omega}$ between two simplices.}
\label{fig_SandwichLemmaProof}
\centering
\end{figure}

\begin{proof}
The first claim follows almost immediately from \Cref{lem:BaseChangeUO}.
To verify the conditions in \Cref{lem:BaseChangeUO} we use the considerations in \Cref{rem:GeometricInterpretationBaseChangeLemma}.
Indeed,  since $\simplex_\eb \subset \simplex_{\eb'}$ we have that $M \coloneqq \Mat_{\eb'}(\eb)$ is non-negative.  
Furthermore,  $M$ is upper triangular as $\simplex$ is a flag simplex of both $\simplex_{\eb}$ and $\simplex_{\eb'}$,  hence $M \in P_D(d,\F)$ with $D=\{1,\ldots,d-1\}$.
By assumption we have that $M \in P_D(d,\mathcal{O})$.
Thus we can apply \Cref{lem:BaseChangeUO},  which tells us that $\log_{\eb}=\log_{\eb'}$ on $\simplex_{\eb}^D=S \cap \simplex^o_{\eb}$.

Since $\simplex^o_{\eb} \subset \Omega \subset \simplex^o_{\eb'}$,  we already know that $d_{\simplex^o_{\eb'}} \leq d_\Omega \leq d_{\simplex^o_{\eb}}$ by \Cref{lem:PropertiesdOmega}~(\ref{lem:PropertiesdOmega2:Sandwich}).
However on $S\cap \Omega$ the pseudo-distances $d_{\simplex^o_{\eb'}}$ and $d_{\simplex^o_{\eb}}$ agree,  since $\log_{\eb} =\log_{\eb'}$ on $S \cap \simplex^o_{\eb}$,  which we know to be isometries by \Cref{propo:Simplex}.
Hence $d_{\simplex^o_{\eb'}}=d_\Omega=d_{\simplex^o_{\eb}}$ on $S\cap \Omega \times S \cap \Omega$.
\end{proof}

\subsection{Approximation of polytopes by simplices}
\label{subsection:ApproximationPolytopesSimplices}
The next lemma tells us that if $\Omega$ is the interior of a polytope $P$ over any ordered field we can always find simplices sandwiching $\overline{\Omega}=\polyt$ as in \Cref{lem:Intro:FlagSimplexSandwichLemma},  such that the barycenter (in some affine chart) is contained in the inner simplex.
The proof is inspired by \cite[Lemma 16]{VernicosWalsh_FlagApproxConvexBodiesVolumeGrowthHilbertGeom},  in which the authors prove a weaker version of this claim in the case $\F=\R$.

Let $\F$ now be any ordered field and $V$ a $d$-dimensional vector space over $\F$.

\begin{lemma}
\label{lem:SandwichLem}
Let $\polyt \subset \PP V $ be a polytope. 
Fix an affine chart containing $\polyt$ and let $b_\polyt$ be the  barycenter
and $\Barc(\polyt)$ be the barycentric subdivision of $\polyt$ in this affine chart.
Let $\simplex$ be a maximal simplex of $\Barc(\polyt)$.
Then there exist two simplices $\simplex_{\eb}, \simplex_{\eb'} \subset \PP V$ such that $\simplex_{\eb} \subset \polyt \subset \simplex_{\eb'}$, $\simplex$ is a flag simplex of both $\simplex_{\eb}$ and $\simplex_{\eb'}$,  and $\simplex$ is a barycentric simplex of $\simplex_{\eb}$ with respect to $b_\polyt$.
\end{lemma}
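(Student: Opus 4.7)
My plan is to construct both $\simplex_\eb$ and $\simplex_{\eb'}$ explicitly from the maximal flag $F = (\sigma_1, \ldots, \sigma_d)$ of $\polyt$ giving rise to $\simplex$, with vertices $b_i := b_{\sigma_i}$. Fix lifts $\tilde{b}_i = (b_i, 1) \in V$ in the given affine chart; these are linearly independent since $\simplex$ is a $(d-1)$-simplex.

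For the inner simplex, introduce scalars $0 < \lambda_1 < \ldots < \lambda_d$ to be chosen, and set $e_1 := \tilde{b}_1$ and $e_i := \lambda_i \tilde{b}_i - \lambda_{i-1}\tilde{b}_{i-1}$ for $i \geq 2$. Then $e_1 + \ldots + e_i = \lambda_i \tilde{b}_i$, so $\PP(e_1 + \ldots + e_i) = b_i$ for every $i$, and in particular $\PP(e_1 + \ldots + e_d) = b_\polyt$; by definition this exhibits $\simplex$ as a barycentric simplex of $\simplex_\eb$ with respect to $b_\polyt$. A direct computation in the affine chart gives
\[ v_i := \PP(e_i) = b_{i-1} + \mu_i(b_i - b_{i-1}), \qquad \mu_i := \frac{\lambda_i}{\lambda_i - \lambda_{i-1}}. \]
Since $b_i$ lies in the relative interior of $\sigma_i$ and $b_i - b_{i-1}$ is a direction in $\langle\sigma_i\rangle \setminus \langle\sigma_{i-1}\rangle$, the segment from $b_{i-1}$ through $b_i$ continues strictly into $\sigma_i$ past $b_i$, so each $\mu_i$ can be taken slightly above $1$ with $v_i \in \sigma_i \subset \polyt$; one then solves $\lambda_i/\lambda_{i-1} = \mu_i/(\mu_i - 1)$ recursively from $\lambda_1 = 1$. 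By convexity, $\simplex_\eb = \conv(v_1, \ldots, v_d) \subset \polyt$.

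For the outer simplex, the idea is to intersect supporting half-spaces of $\polyt$ coming from facets chosen to respect the flag. For each $j = 2, \ldots, d$, pick a facet $\Phi_j$ of $\polyt$ with $\sigma_{j-1} \subset \Phi_j$ but $\sigma_j \not\subset \Phi_j$; such a $\Phi_j$ exists because $\sigma_{j-1}$ equals the intersection of the facets of $\polyt$ containing it and $\sigma_{j-1} \subsetneq \sigma_j$. Let $h_j$ be the defining linear form of the supporting hyperplane $H_j$ of $\Phi_j$, normalized so that $h_j \geq 0$ on $\polyt$. Then $H_j \supset \langle \sigma_{j-1}\rangle$ while $H_j \not\supset \langle \sigma_j\rangle$ (a point of $\sigma_j \setminus \Phi_j$ in $\polyt$ cannot lie in $H_j$, since $\Phi_j = \polyt \cap H_j$). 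In coordinates adapted to the flag (so $\langle \sigma_i\rangle$ is the span of the first $i-1$ basis vectors), the $h_j$'s become upper triangular with nonzero diagonal, hence linearly independent. Pick a linear form $\ell_0$ strictly positive on the extreme rays of the pointed simplicial cone $C := \{h_j \geq 0 : j \geq 2\}$ (which exists because the interior of the dual cone is nonempty), and pick $c \in \F$ with $c > \ell_0$ on the bounded set $\polyt$; set $\simplex_{\eb'} := C \cap \{\ell_0 \leq c\}$, a $(d-1)$-simplex containing $\polyt$.

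The flag simplex property of $\simplex_{\eb'}$ is then straightforward. The vertex opposite $\{\ell_0 = c\}$ is $\bigcap_{j=2}^d H_j = \{0\} = b_1$. For $j \geq 2$, the vertex $v'_j$ lies on the line $\bigcap_{i \neq j, i \geq 2} H_i \cap \{\ell_0 = c\}$, and one checks that this intersection is contained in $\langle\sigma_j\rangle$: the constraints from $H_i$ with $i > j$ force $x_j = \ldots = x_{d-1} = 0$ by the upper-triangular structure, while the $j - 2$ constraints from $H_i$ with $i < j$ cut $\langle \sigma_j\rangle$ down to a line. The relations $h_j(b_i) = 0$ for $j > i$ (as $b_i \in \langle \sigma_i\rangle \subset \langle\sigma_{j-1}\rangle \subset H_j$) and $h_j(b_i) > 0$ for $j \leq i$ (as the relative interior of $\sigma_i$ cannot meet the proper face $\sigma_i \cap H_j$), together with $\ell_0(b_i) < c$, place each $b_i$ in the relative interior of $\conv(v'_1, \ldots, v'_i)$. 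I expect the main obstacle to be the cohesive organization of the outer step, in particular verifying that the $h_j$'s are in ``simplex position'' and that the combinatorial selection of the $\Phi_j$ is consistent with the flag structure; this is the only place where the combinatorics of $\polyt$ interacts non-trivially with the order structure of $\F$, through the existence of the positive form $\ell_0$.
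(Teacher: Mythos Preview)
Your proof is correct and takes a genuinely different route from the paper's. The paper argues by induction on $d$: for the inner simplex it cones an inductively obtained $S' \subset \sigma$ over a point $q'$ on the line through $b_\sigma$ and $b_\polyt$; for the outer simplex it again uses induction on the codimension-one face $\sigma$, then applies a shear $L_u$ to push $\polyt \setminus \sigma$ into the positive orthant and finally scales and cones. Your construction, by contrast, is entirely direct. For $\simplex_\eb$ you write down the vertices $v_i$ as explicit points on the rays $b_{i-1}b_i$ just past $b_i$, with the telescoping identity $e_1 + \cdots + e_i = \lambda_i \tilde b_i$ forcing the barycentric-simplex property automatically. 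For $\simplex_{\eb'}$ you exploit the facet structure of $\polyt$ itself: choosing one facet $\Phi_j$ separating $\sigma_{j-1}$ from $\sigma_j$ for each step of the flag yields $d-1$ supporting half-spaces whose linear forms are upper-triangular in flag-adapted coordinates, so their intersection is a simplicial cone with apex $b_1$, and a single truncating hyperplane $\{\ell_0 = c\}$ finishes the job.

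What each approach buys: your outer-simplex argument is arguably more transparent, since the upper-triangularity of the $h_j$ and the containment $\langle \conv(v'_1,\ldots,v'_i)\rangle = \langle \sigma_i \rangle$ drop out immediately from the facet choice, and the whole thing works uniformly over any ordered field (the existence of $\ell_0$ and $c$ using only that $\polyt$ has finitely many vertices). The paper's inductive proof is more uniform in spirit---both $\simplex_\eb$ and $\simplex_{\eb'}$ are built by ``cone over a lower-dimensional solution''---and avoids invoking the fact that a face of a polytope is the intersection of the facets containing it, but at the cost of the slightly ad hoc shearing step $L_u$.
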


\begin{figure}[h]
\begin{tikzpicture}[scale=1.2]
\filldraw[fill=gray!40] (0,2) --(3,0) --  (5,3) -- cycle;
\draw (1,1.8) node{$\simplex_{\eb'}$} ;

\filldraw[fill=gray!30] (3,0) --(4.5,2.25) --  (4,2.5)-- (1.5,1.875) -- (1.3,1.5) -- cycle;
\draw (2,1.7) node{$\polyt$} ;

\filldraw[fill=gray!20] (3,0) --(4.3,1.95) --  (2.3,1.8)-- cycle;
\draw (3.8,1.6) node{$\simplex_{\eb}$} ;

\filldraw[fill=blue!20] (3,0) -- (3.872,1.308) -- (2.86,1.625) -- cycle;
\draw (3.2,0.9) node{$\simplex$} ;

\draw (2.86,1.625) node{$\bullet$} ;
\draw (2.84,1.38) node[right]{$b_\polyt$} ; 

\draw[dotted] (3.872,1.308)--(2.3,1.8);
\draw[dotted] (4.3,1.95)--(2.431,1.55);
\draw[dotted] (2.86,1.625)--(2.83,1.85);

\end{tikzpicture}
\caption{The simplices $\simplex_{\eb},\simplex,\simplex_{\eb'}$ and the polytope $\polyt$ for $d=3$.}
\label{fig_SandwichLemma}
\centering
\end{figure}

\begin{proof}
We can assume that $\polyt \subset \F^{d-1}$ is $(d-1)$-dimensional,  and convex hulls are always taken in this fixed affine chart.
We proceed by induction on $d$.
If $d=2$, then $\polyt$ is a segment,  hence a simplex and we can take $\Sin=\polyt=\Sout$.
Let now $d>2$ and assume it holds true for $d-2$.
We first prove the existence of $\Sin$.
The vertices of $\simplex$ except $b_\polyt$ form a flag simplex $\simplex_\sigma$ of a face $\sigma$ of $\polyt$.
By induction hypothesis there exists a simplex $\simplex' \subset \sigma$ such that $\simplex_\sigma$ is a barycentric simplex of $\simplex'$ with respect to the barycenter $b_\sigma$ of $\sigma$.
Choose any $q'\in \polyt$ on the (affine) line through $b_\sigma$ and $b_\polyt$ outside of the segment between $b_\sigma$ and $b_\polyt$.
\footnote{We can take $q'$ to be the barycenter of the vertices of $\polyt$ not in $\sigma$.}
Then $\Sin \coloneqq \conv(\simplex',q')$ is such that $\Sin \subset \polyt$ is a simplex and $\simplex$ is a barycentric simplex of $\Sin$ with respect to $b_\polyt$,  because $q'$,  $b_\polyt$ and $b_\sigma$ are aligned.

\begin{figure}[H]
\begin{tikzpicture}[scale=1]
\filldraw[fill=gray!20] (0,0)--(3,0)--(2,1)--cycle;
\filldraw[fill=gray!40] (1.6666,0.3333)--(0,0)--(1.5,0)--cycle;
\draw (0,0)--(2,4);
\draw (3,0)--(2,4);
\draw (2,1)--(2,4);
\draw (0,0)--(2.5,0.5);
\draw (3,0)--(1,0.5);
\draw (2,1)--(1.5,0);
\draw (1.6666,0.3333) node{$\bullet$};
\draw (1.6666,0.3333)--(2,4);
\draw (1.75,1.25) node{$\bullet$};
\draw (1.75,1.25) node[left]{$b_\polyt$};
\draw (1.76,0.6) node[right]{$b_\sigma$};
\draw (2,4) node[above]{$q$};
\draw (0.8,0) node[below]{$\simplex_\sigma$};
\draw (1,2) node[left]{$\Sin$};
\end{tikzpicture}
\caption{The induction step for $\Sin$.}
\label{fig_InductionStepSin}
\end{figure}

We now turn to the proof of the existence of $\Sout$.
As in the construction of $\Sin$,  the vertices of $S$ except $b_\polyt$ form a flag simplex $\simplex_\sigma$ of a face $\sigma$ of $\polyt$.
By induction hypothesis there exists a simplex $\simplex''$ such that $\sigma \subset \simplex''$ and $\simplex_\sigma$ is a flag simplex of $\simplex''$.
Let $o$ be the vertex of $\polyt$ that is also a vertex of $S$.
Give $\F^{d-1}$ a vector space structure by putting $o$ as the origin.
We can assume that $\sigma$ and $\simplex''$ lie in the subspace $\F^{d-2} \times \{0\} \subset \F^{d-1}$, that $\simplex''$ is the standard simplex in $\F^{d-2}$,  i.e.\  $\simplex''=\{(x_1,\ldots,x_{d-2}) \in (\F_{\geq 0})^{d-2} \mid  \sum_{i=1}^{d-2} x_i \leq 1 \}$,  and that $\polyt \subset \F^{d-2} \times \F_{\geq 0} $ by convexity.
For $u \in (\F_{>0})^{d-2}$ the linear map 
\[L_u \from \F^{d-2} \times \F \to \F^{d-2} \times \F,  \; (x,t) \mapsto (x+tu, t)\]
fixes $\F^{d-2}\times \{0\}$ pointwise.
Since $\polyt$ is bounded,  there exists $u_0$ such that $L_{u_0}(\polyt) \setminus \sigma \subset (\F_{> 0})^{d-1}$.
Scaling yields that there exist $\alpha \in \F_{\geq 1}$ and $q'' \in \F^{d-1}$ such that
\[L_{u_0}(\polyt) \subset \conv(\alpha\simplex'',  q'' ).\]
Then $\Sout \coloneqq L_{u_0}^{-1}(\conv(\alpha\simplex'',  q'' ))$ is a simplex that contains $\polyt$ and that contains $\simplex$ as a flag simplex.

\begin{figure}[H]
\begin{tikzpicture}[scale=1.3]
\draw (5.9,0.1)--(7.9,4.1)--(13.9,4.1)--(11.9,0.1)--cycle;
\draw (11.9,0.1) node[right]{$\F^{d-2}\times \{0\}$};
\draw (7.2,4.5) node[left]{$\{0\}\times\F$};

\draw (7.3,0.1) -- (9.3,4.1);
\draw (6.1,0.5)--(12.1,0.5);
\draw[->] (7.5,0.5) -- (7.5,4.5);

\filldraw[fill=gray!40] (7.5,0.5)--(11.7,0.5)--(9.1,3.7)--cycle;
\draw (10.7,1.2) node{$\simplex''$};

\filldraw[fill=gray!20] (7.5,0.5)--(8.7,0.5)--(9.5,1)--(10.5,1.8)--(9.5,1.8)--(8.3,1.3)--cycle;
\draw (9.5,1.5) node{$\sigma$};

\filldraw[fill=blue!20] (7.5,0.5)--(8.1,0.5)--(9.2,1.3)--cycle;
\draw (8,.6) node{\footnotesize{$\simplex_\sigma$}};

\draw[dotted] (11.7,0.5)--(13.2,0.5)--(9.6,4.8)--(9.3,4.1);
\draw (12.3,2) node[above]{$\alpha\simplex''$};

\draw (7.5,0.5)--(7.7,1.5);
\draw (8.7,0.5)--(8.9,1.6);
\draw (9.5,1)--(10,2);
\draw (10.5,1.8)--(11,2.8);
\draw (9.5,1.8)--(9.7,2.8);
\draw (8.3,1.3)--(8.5,2.3);

\fill[fill=gray!20,semitransparent] (7.5,0.5)--(7.7,1.5)--(8.9,1.6)--(8.7,0.6);
\fill[fill=gray!20,semitransparent] (8.9,1.6)--(8.7,0.6)--(9.5,1)--(10,2);
\fill[fill=gray!20,semitransparent] (9.5,1)--(10,2)--(11,2.8)--(10.5,1.8);
\fill[fill=gray!20,semitransparent] (11,2.8)--(10.5,1.8)--(9.5,1.8)--(9.7,2.8);
\fill[fill=gray!20,semitransparent] (9.5,1.8)--(9.7,2.8)--(8.5,2.3)--(8.3,1.3);
\fill[fill=gray!20,semitransparent] (8.5,2.3)--(8.3,1.3)--(7.5,0.5)--(7.7,1.5);

\draw (7.5,0.5) node{$\bullet$};
\draw (7.45,0.35) node[left]{$o$};
\draw (11,3) node[left]{$L_{u_0}(\polyt)$};
\draw (8,4.5) node{$\bullet$};
\draw (8,4.5) node[above]{$q''$};

\draw[dotted] (7.5,0.5)--(8,4.5)--(12.3,0.9);
\draw[dotted] (8,4.5)--(9.6,4.8); 

\end{tikzpicture}
\caption{The induction step for $\Sout$.}
\label{fig_InductionStepSout}
\end{figure}
\end{proof}

\subsection{The geometric realization of $P$ over a Weyl chamber}
\label{subsection:DefinitionModelSpace}
Let $\F$ be an ordered valued field,  and $V$ a $d$-dimensional $\F$-vector space.
The goal of this subsection is to construct,  given a polytope $\polyt\subset \PP V$,  the geometric realization of $\Flags_*(\polyt)$ (or equivalently $\Barc_*(\polyt)$ in a fixed affine chart) modeled on the model Weyl chamber $\Aap_\Lambda$ (see \Cref{subsection:ModelFlats}) for $\Lambda = \log\abs{\F^\times} \subset \R$ the value group. 
Recall that we denote by $\Flags_*(\polyt)$ the set of all flags of $\polyt$ that contain $\polyt$,  and by $\Barc_*(\polyt)$ the set of all barycentric simplices of $\polyt$ that contain $b_\polyt$ (in a fixed affine chart).
Let us now make this more precise.

We use the notations from \Cref{subsection:ModelFlats}.
Recall that $\Aa_\Lambda=\Lambda^d/\Lambda(1,\ldots,1)$ is the $\Lambda$-model flat and $\Aap_\Lambda=\{[\alpha] \in \Aa_\Lambda \mid \alpha_1 \geq \ldots \geq \alpha_d\}$ a $\Lambda$-model Weyl chamber.
For a subset $D \subset \Delta=\{1,\ldots,d-1\}$ we have $\Aa_{D,\Lambda} = \{ [\alpha] \in \Aa_\Lambda \mid \alpha_i=\alpha_{i+1}$ for all $i \in \Delta \setminus D\}$,  and we define $\Aap_{D,\Lambda} \coloneqq \Aa_{D,\Lambda} \cap \Aap_\Lambda$,  called a \emph{face} of $\Aap_\Lambda$.
For a flag $F = (\sigma_1, \ldots,\sigma_k,P) \in \Flags_*(\polyt)$ containing the face $\polyt$ we define its \emph{type} $D(F)$  as
\[ D(F) \coloneqq \{d_{\sigma_1},\ldots,d_{\sigma_k}\}  \subseteq \Delta,\]
where for a face $\sigma$ of $\polyt$ we set $d_\sigma = \dim(\sigma)+1$.
Now the underlying set of $K$ is obtained by gluing copies of $\Aap_\Lambda$ according to the combinatorial data of the flag complex of $\polyt$.
More precisely,  we define the following.

\begin{definition}[$\Aap_\Lambda$-geometric realization of $\Flags_*(\polyt)$]
We set
\[\Model \coloneqq \Model(\polyt,\Lambda) \coloneqq \Big(\bigsqcup_{F \in \Flags_*(\polyt)} \Aap_{D(F),\Lambda} \times \{F\}\Big)_{\big/{\sim}},\]
where $\sim$ is the equivalence relation generated by $(x,F) \sim (x,F')$ if $F \prec F'$.
\end{definition}

\noindent Note that when $\Lambda=\R$ the space $\Model$ is a polyhedral fan of Weyl chambers.
We write $\overline{(x,F)}$ for the equivalence class of $(x,F)$.
Denote by $o=\ov{(0,F)}$ the cone point of $\Model$.

We have a projection $\projM \from \bigsqcup_{F \in \Flags_*(\polyt)} \Aap_{D(F),\Lambda} \times \{F\} \to \Model$,  and we define for all $F \in \Flags_*(\polyt)$
\[\projM_F \from \Aap_{D(F),\Lambda} \hookrightarrow\bigsqcup_{F \in \Flags_*(\polyt)} \Aap_{D(F),\Lambda} \times \{F\} \to \Model\]
as the composition of $\alpha$ with the inclusion.
Then $\projM_F$ is a bijection onto its image $K_F$,  and $K=\cup_{F \in \Flags_*(\polyt)}K_F$.
One should think of the maps $\projM_F$ as charts to
the model space $K$.

The next step is to define a metric on $\Model$.
For this we follow a general construction for defining metrics on disjoint unions and quotients of metric spaces as in \cite[Chapter I.5, 5.18, 5.19]{BridsonHaefliger_MetricSpacesNonPosCurv}.
{We quickly recall it here.
If $(X_i,d_i)_{i \in I}$ is a family of metric spaces,  and $X \coloneqq \bigsqcup_{i \in I} X_i \times \{i\}$ is their disjoint union,  one can define a distance function $d$ on $X$ by setting
\[d((x,i),(x',i'))=\begin{cases}d_i(x,x') &\textnormal{ if } i=i',\\
\infty &\textnormal{ if } i\neq i'.\end{cases}\]
If $(X,d)$ is a metric space and $\sim$ an equivalence relation on $X$,  we can define the \emph{quotient pseudo-metric} $\overline{d}$ on the set of equivalence classes $\overline{X}=X/\sim$ as follows.
For $\overline{x}, \overline{y}\in \overline{X}$,
a \emph{chain} joining $\overline{x}$ to $\overline{y}$ is a sequence $C=(x_1,y_1,\ldots,x_n,y_n)$ (for some $n \in \N$) of points of $X$ satisfying $x_1 \in \overline{x}$,  $y_n \in \overline{y}$,  and $y_k \sim x_{k+1}$ for all $k=1,\ldots, n-1$.
Its \emph{length} is $l(C) \coloneqq \sum_{k=1}^n d(x_k, y_k)$. 
We set
\[\overline{d}(\overline{x},\overline{y}) = \inf_{C \textnormal{ chain from }\overline{x} \textnormal{ to }\overline{y}} l(C).\]

Let us get back to our setting.}
On each {$\Aap_{D(F),\Lambda}$} we have the metric $\dhex$,  and we put the quotient pseudo-metric $\dquot$ on $\Model$.
In fact we have the following.

\begin{proposition}
The quotient pseudo-metric $\dquot$ on $\Model$ is a metric.
\end{proposition}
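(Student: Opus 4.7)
My plan is to verify the separation axiom $\dquot(\overline{x},\overline{y}) = 0 \Rightarrow \overline{x} = \overline{y}$ by introducing two invariants $\Psi$ and $T$ that together separate equivalence classes in $\Model$, and bounding chain lengths below using each in turn.

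First, I will define the underlying-point map $\Psi \from \Model \to \Aap_\Lambda$ by $\Psi(\overline{(x,F)}) \coloneqq x$; this is well-defined since the generating identifications $(x,F) \sim (x,F')$ for $F \prec F'$ do not move $x$. To see $\Psi$ is $1$-Lipschitz, I project an arbitrary chain $(x_1, y_1, \ldots, x_n, y_n)$ of length $l$ under $\Psi$ to a continuous piecewise path in $(\Aap_\Lambda, \dhex)$ from $\Psi(\overline{x})$ to $\Psi(\overline{y})$ of the same length $l$; continuity holds because $y_k \sim x_{k+1}$ forces $\Psi(y_k) = \Psi(x_{k+1})$. This handles the case $\Psi(\overline{x}) \neq \Psi(\overline{y})$, in which $\dquot \geq \dhex(\Psi(\overline{x}), \Psi(\overline{y})) > 0$.

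In the remaining case $\Psi(\overline{x}) = \Psi(\overline{y}) = x_0$ with $\overline{x} \neq \overline{y}$, I introduce a second invariant. For $z = [\alpha] \in \Aap_\Lambda$ (with $\alpha_1 \geq \ldots \geq \alpha_d$) set $D(z) \coloneqq \{i \in \Delta \colon \alpha_i > \alpha_{i+1}\}$ and define $T(\overline{(z,F)})$ as the sub-flag of $F$ consisting of those faces $\sigma_i \in F$ with $d_{\sigma_i} \in D(z)$, together with $\polyt$. Well-definedness holds because along a one-step move $F \prec F'$ (with $D(z) \subseteq D(F) \subseteq D(F')$), the dimension-$(d'-1)$ face in $F'$ for each $d' \in D(z)$ must already lie in $F$, since flags contain at most one face per dimension and $F \subseteq F'$ as sets of faces. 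Moreover $(\Psi, T)$ is a complete invariant: if $\Psi$ and $T$ agree on representatives $(z, F_1), (z, F_2)$ with common $T$-value $F_0$, then $(z, F_i) \sim (z, F_0)$ by a one-step move, so $(z, F_1) \sim (z, F_2)$.

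Thus in this case $T(\overline{x}) \neq T(\overline{y})$, both being sub-flags of common type $D_0 \coloneqq D(x_0)$; the case $x_0 = o$ is excluded because $D_0 = \emptyset$ then collapses every $T$-value at $o$ to the trivial flag $(\polyt)$. So $D_0 \neq \emptyset$, $\delta \coloneqq \min_{i \in D_0}(\alpha_i - \alpha_{i+1}) > 0$, and $T(\overline{x}), T(\overline{y})$ are incomparable in the flag poset. I will then argue that any chain from $\overline{x}$ to $\overline{y}$ must visit some transition point $z$ with $D(z) \subsetneq D_0$: otherwise, in each chamber $K_{F_k}$ traversed, the sub-flag of $F_k$ of type $D_0$ is a fixed object $G_0(F_k)$, preserved trivially by intra-chamber moves and preserved across transitions at points with $D(z) \supseteq D_0$ (since $T$ is preserved and $G_0(F_k)$ is the type-$D_0$ part of the common canonical sub-flag), forcing $T(\overline{x}) = G_0(F_1) = G_0(F_N) = T(\overline{y})$, a contradiction. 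For such a transition point $z$, some $i \in D_0$ satisfies $\beta_i = \beta_{i+1}$ with $\Psi(z) = [\beta]$, so the hexagonal norm gives $\dhex(x_0, \Psi(z)) \geq \alpha_i - \alpha_{i+1} \geq \delta$. The projected loop at $x_0$ then detours distance $\geq \delta$ and returns, yielding $l \geq 2\delta$ by the triangle inequality, hence $\dquot(\overline{x}, \overline{y}) \geq 2\delta > 0$. The main obstacle is the combinatorial step showing that bridging two incomparable type-$D_0$ sub-flags necessarily requires a detour through a strictly smaller stratum; the rest is bookkeeping once $T$ is identified as the correct auxiliary invariant.
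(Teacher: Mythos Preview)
Your argument is correct and takes a genuinely different route from the paper. The paper invokes the general criterion of Bridson--Haefliger \cite[Lemma 5.27, Corollary 5.28]{BridsonHaefliger_MetricSpacesNonPosCurv}: for each point $\overline{(x,F)}$ it chooses $\varepsilon = \tfrac{1}{2}\min_{D:\,x\notin \Aap_{D,\Lambda}}\dhex(x,\Aap_{D,\Lambda})$ and verifies that the $\varepsilon$-neighbourhood of the equivalence class is saturated, which is a local check. You instead separate points globally by two invariants: the $1$-Lipschitz projection $\Psi$ to $\Aap_\Lambda$, and the combinatorial ``type-$D(x_0)$ sub-flag'' invariant $T$; the key detour estimate is that any chain joining classes with the same $\Psi$-value but different $T$-value must pass through a stratum $\Aap_{D(z),\Lambda}$ with $D(z)\not\supseteq D_0$, which forces a round trip of length at least $2\delta$ under $\Psi$. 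The two arguments rest on the same geometric fact (positive distance from $x_0$ to the walls of $\Aap_\Lambda$ not containing it), but yours is self-contained and makes the role of the flag combinatorics explicit, while the paper's is shorter by outsourcing the chain bookkeeping to a standard lemma. One small remark: your aside that $T(\overline x)$ and $T(\overline y)$ are ``incomparable'' is superfluous---for flags of the same type $D_0$, distinct already implies incomparable, and you only use that they are distinct.
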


\begin{proof}
We apply \cite[Corollary 5.28]{BridsonHaefliger_MetricSpacesNonPosCurv} to show that $\dquot$ is a metric.
For this we need to verify the conditions in \cite[Lemma 5.27]{BridsonHaefliger_MetricSpacesNonPosCurv}, i.e.\  we need to show that for every $\overline{(x,F)} \in \Model$ there exists $\epsilon(\overline{(x,F)})=\epsilon>0$ such that
\begin{enumerate}
\item 
\label{item:IndEpsBalls}
for all $(x,F), (x,F') \in \overline{(x,F)}$ and all $(z,F) \in B((x,F), \varepsilon)$,  $(z,F') \in B((x,F'),\varepsilon)$ with $(z,F)\sim (z,F')$,  we have 
\[\dmodel((x,F), (z,F)) = \dmodel((x,F'), (z,F'));\textnormal{ and}\]
\item 
\label{item:UnionEquivClasses}
$X_\varepsilon \coloneqq \bigcup_{(x,F)\in \overline{(x,F)}} B((x,F), \varepsilon)$ is a union of equivalence classes.
\end{enumerate}
Note that (\ref{item:IndEpsBalls}) is satisfied for any $\varepsilon>0$,  since the distance on $\Aap_{D(F),\Lambda}$ is just the restriction of the distance $\dhex$ on $\Aap_\Lambda$ for any $F \in \Flags_*(\polyt)$.

If $x=0$,  we can choose any $\varepsilon>0$.
Assume now that $x\neq 0$, and choose $(x,F') \in \overline{(x,F)}$ with $F'$ maximal, i.e.\ $\Aap_{D(F'),\Lambda}=\Aap_\Lambda$.
Set 
\[\varepsilon \coloneqq \min_{ D\subset \Delta,\,x \notin \Aap_{D,\Lambda}} \tfrac{1}{2} \dhex(x, \Aap_{D,\Lambda}) >0\]
to be half of the minimal distance of $x$ to the faces of $\Aap_\Lambda$ not containing $x$.
Then $\varepsilon$ does not depend on the choice of $(x,F)$ in its equivalence class,  as any other choice of maximal flag leads to the same $\varepsilon$.

Let us verify that $\varepsilon$ satisfies (\ref{item:UnionEquivClasses}).
Let $(x,F) \in \overline{(x,F)}$ and $(z,F)\in B((x,F),\varepsilon)$.
We show that any $(z,F') \sim (z,F)$ is contained in $X_\varepsilon$,  which shows that $X_\varepsilon$ is a union of equivalence classes.
Since $(z,F') \sim (z,F)$ we have that $z \in \Aap_{D(F \cap F'),\Lambda}$.
By the choice of $\varepsilon$,  it follows that $\dmodel((x,F), (z,F)) <\min_{ D\subset \Delta,\,x \notin \Aap_{D,\Lambda}} \tfrac{1}{2} \dhex(x, \Aap_{D,\Lambda})$. 
In particular,  if $z \in \Aap_{D,\Lambda}$ for some $D \subset \Delta$,  then also $x \in \Aap_{D,\Lambda}$.
Thus we get that $x \in \Aap_{D(F \cap F'),\Lambda}$,  and hence $(z,F') \in X_\varepsilon$.
\end{proof}

From now on we denote the metric on $\Model$ obtained this way by $\dmodel$.
Then for all $F \in \Flags_*(\polyt)$ the map $\projM_F$ defined above is an isometry onto its image $K_F$.

\subsection{%
\texorpdfstring{Proof of \Cref{thm:Intro:NAPolytopalHilbertMetricSpace}}%
{Proof of Theorem C}}

\label{subsection:ProofHilbertMetricSpacePolytope}
Let $\K$ be an ordered field. 
Given an ordered field extension $\F$ of $\K$,  we define for an affine polytope $\Kpolyt \subset \K^{d-1}$ its \emph{$\F$-extension} 
\[\Kpolyt_\F \coloneqq \conv_\F(\Kpolyt)\subset \F^{d-1}\]
as the $\F$-convex hull of $\Kpolyt$.
Then $\Fpolyt \coloneqq \Kpolyt_\F$ is an affine polytope in $\FF^{d-1}$ with the same vertices as $\Kpolyt$,
using the natural inclusion $\K^{d-1} \subset \F^{d-1}$.
The polytope $\Fpolyt$ is called an \emph{integral} polytope over $\K$.
Note that $\Kpolyt=\Fpolyt\cap \K^{d-1}$.
If $\K$ and $\F$ are real closed field,  this definition agrees with the $\F$-extension of $\Kpolyt$ for semi-algebraic sets in the sense of real algebraic geometry \cite[Definition 5.1.2]{BochnakCosteRoy_RealAlgebraicGeometry}.

\xfchanged{Using affine charts we can make sense of integral polytopes in projective spaces and their $\F$-extensions. 
For example,  if $\simplex_\eb$ is the simplex associated to a basis $\eb$ of a finite-dimensional $\K$-vector space,  then $(\simplex_\eb)_\F=\PP(\{\sum_{i=1}^d x_i e_i \mid x_i \in \F,  x_i \geq 0\})$.}

Let now $\F$ be a non-Archimedean ordered valued field with value group $\Lambda=\log\abs{\F^\times}$ and valuation ring $\mathcal{O}=\{x \in \F \mid |x| \leq 1\}$.

\begin{theorem}[\Cref{thm:Intro:NAPolytopalHilbertMetricSpace}]
\label{thm:NAPolytopalHilbertMetricSpace}
Let $\Fpolyt \subset \F^{d-1} \subset \PP(\F^d)$ be an integral polytope over a subfield $\KK \subset \mathcal{O}$,  and let $\Omega \subset \F^{d-1} \subset \PP(\F^d)$ be its interior.
Then there exists a map $\Psi \from \Omega \to \Model(\Fpolyt,\Lambda)$ that induces a global isometry
\[ \overline{\Psi} \from (X_\Omega,d_\Omega)\to (\Model(\Fpolyt,\Lambda),\dmodel).\]
Moreover,  $\Psi$ maps the barycenter of $\Fpolyt$ to the cone point $o \in K$, 
and,  if $F$ is a maximal flag $F \in \Flags(\Fpolyt)$,  the restriction
of $\Psi$ to the associated barycentric simplex $\Omega_F\coloneqq \Omega \cap (S_F)_\F$ 
is of the form
\xfchanged{\[ \function{\Psi_F\coloneqq \Psi_{|\Omega_F}}{\Omega_F}{\Model_F\subset \Model(\Fpolyt,\Lambda) \;,}%
{\PP(x)}{\projM_F(\log_{\eb}(x))=\projM_F([\log|x_1|,\ldots,\log|x_d|])}\]
}for some basis $\eb$ of $V$ with
$\Omega_F=\PP\big(\big\{ x=\sum_{i=1}^d x_i e_i \bigm| x_1\geq \cdots \geq x_d >0,  x_i \in \F
\big\}\big)$.
\end{theorem}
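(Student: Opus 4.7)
The plan is to construct $\Psi$ chart by chart on each maximal barycentric simplex via \Cref{propo:Simplex}, to verify compatibility along the overlaps that define $\Model(\Fpolyt,\Lambda)$, and to promote the resulting family of local isometries to a global one using additivity of $d_\Omega$ along projective segments.

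First, I fix a maximal flag $F \in \Flags_*(\Fpolyt)$ with associated barycentric simplex $S_F$, and set $\Omega_F = \Omega \cap (S_F)_\F$. Applying \Cref{lem:SandwichLem} to $\Fpolyt$ produces simplices $\simplex_\eb \subset \overline{\Omega} \subset \simplex_{\eb'}$ for which $S_F$ is a barycentric simplex of $\simplex_\eb$ with respect to $b_\Fpolyt$ and a flag simplex of $\simplex_{\eb'}$. Since $\Fpolyt$ is integral over $\K$, the inductive construction proving \Cref{lem:SandwichLem}---involving only barycenters, a transvection $L_{u_0}$ selected by an open polyhedral condition on $u_0$, and a rescaling---can be performed entirely over $\K$, so $\eb,\eb'$ may be chosen with coordinates in $\K^d$. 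In particular $\Mat_{\eb'}(\eb) \in \GL(d,\K) \subseteq \GL(d,\mathcal{O})$, and \Cref{lem:FlagSimplexSandwichLemma} yields $d_\Omega = d_{\simplex_\eb^o}$ on $\Omega_F$. Combining this with the isometry of \Cref{propo:Simplex}, I set $\Psi_F(\PP(x)) \coloneqq \projM_F([\log|x_1|,\ldots,\log|x_d|])$; order-compatibility of $\abs{\cdot}$ forces the image into $\Aap_\Lambda = \Aap_{D(F),\Lambda}$ and surjectivity of $\log\abs{\cdot}$ onto $\Lambda$ makes $\Psi_F \from \Omega_F \to \Model_F$ a bijective isometry.

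Second, I check that two charts $\Psi_F$ and $\Psi_{F'}$ agree on $\Omega_F \cap \Omega_{F'}$, which is contained in $(S_{F\cap F'})_\F$ and maps into the shared face $\Aap_{D(F\cap F'),\Lambda}$. The respective sandwich bases $\eb$ and $\eb''$ share the flag structure along $F\cap F'$, so the change-of-basis matrix is block upper triangular, and by integrality it lies in $P_{D(F\cap F')}(d,\mathcal{O})$. \Cref{lem:BaseChangeUO} then gives $\log_\eb = \log_{\eb''}$ on the common barycentric subsimplex, which is exactly the identification $\sim$ defining $\Model$; hence the $\Psi_F$ glue into a well-defined map $\Psi \from \Omega \to \Model(\Fpolyt,\Lambda)$, with $\Psi(b_\Fpolyt) = o$ by construction. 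Third, I promote this to a global isometry $\overline{\Psi}\from X_\Omega \to (\Model,\dmodel)$ via additivity along segments. For $x, y \in \Omega$, the projective segment $[x,y]$ meets finitely many closed cells $\overline{\Omega_{F_i}}$ in consecutive pieces $[z_i, z_{i+1}]$, and \Cref{lem:PropertiesdOmega:AdditivityOnSegments} gives $d_\Omega(x,y) = \sum_i d_\Omega(z_i, z_{i+1})$. Each summand equals $\dhex(\Psi_{F_i}(z_i), \Psi_{F_i}(z_{i+1}))$ by the local isometry, producing a chain in $\Model$ of length exactly $d_\Omega(x,y)$, so $\dmodel(\ov{\Psi(x)}, \ov{\Psi(y)}) \leq d_\Omega(x,y)$. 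Conversely, any chain in $\Model$ from $\Psi(\ov{x})$ to $\Psi(\ov{y})$ lifts segment by segment through $\Psi_{F_i}^{-1}$ to a piecewise curve in $\Omega$; the triangle inequality for $d_\Omega$ combined with the local isometries bounds $d_\Omega(x,y)$ above by the chain length, and taking the infimum yields the reverse inequality.

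The hardest part will be the gluing step: I must verify that $\Psi_F$ depends only on $F$, modulo exactly the identifications built into $\Model$, and that this compatibility propagates coherently along every sub-flag inclusion. The combinatorial control comes from \Cref{lem:BaseChangeGL(O)} and \Cref{lem:BaseChangeUO}, but orchestrating them across all flags---ensuring in particular that the auxiliary permutations implicit in identifying different Weyl chambers along a shared face do not introduce a twist---requires a careful inspection of how the sandwich bases for different flags relate on shared barycentric subsimplices.
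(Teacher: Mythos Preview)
Your approach is essentially the paper's own: sandwich each barycentric simplex via \Cref{lem:SandwichLem}, use integrality to land in $\GL(d,\mathcal{O})$, invoke \Cref{lem:FlagSimplexSandwichLemma} for local isometries, check compatibility on overlaps, then glue via additivity on segments and the chain definition of $\dmodel$.

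There is one genuine slip in your compatibility step. You cite \Cref{lem:BaseChangeUO}, but that lemma requires the change-of-basis matrix to be \emph{non-negative}, equivalently $\simplex_{\eb}\subset \simplex_{\eb''}$. The two \emph{inner} sandwich simplices $\simplex_{\eb}$ and $\simplex_{\eb''}$ attached to distinct maximal flags $F,F'$ are in general not nested, so non-negativity fails. What you actually have is that both barycentric simplices $S_F$ and $S_{F'}$ share the face $S_{F\cap F'}$, hence $\simplex^{D}_{\eb}=\simplex^{D}_{\eb''}$ for $D=D(F\cap F')$; together with $\Mat_{\eb''}(\eb)\in P_D(d,\mathcal{O})$ this is precisely the hypothesis of \Cref{lem:BaseChangeGL(O)}, which is the lemma the paper invokes at this point. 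Swap the citation and your argument goes through.
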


The proof of \Cref{thm:Intro:NAPolytopalHilbertMetricSpace} follows in the first step the strategy of \cite[Theorem 1]{Vernicos_HilbertGeometryConvexPolytopes}.
However,  the ultrametric property together with the assumption $\K \subset \mathcal{O}$ allows to prove that the maps $\Psi_F$ descend to local isometries $\overline{\Psi}_F \from \pi(\Omega_F) \to K_F$ (instead of just bi-Lipschitz maps in the Archimedean case).
In the second step,  we show that the maps are well-defined on the intersection of two maximal flags.
Thirdly and lastly,  we glue the local isometries together to a global isometry. 

\begin{proof}
Let $\Kpolyt \coloneqq \Fpolyt \cap \K^{d-1}$ so that $\Kpolyt_\F = \Fpolyt$.

\textbf{Step 1: Construction of local isometries for maximal flags.}
Let $F$ be a maximal flag of $\Kpolyt$ (or equivalently of $\Fpolyt$) and $\simplex_F$ the
corresponding maximal simplex of the barycentric subdivision of $\Kpolyt$.
By \Cref{lem:SandwichLem} there exist two simplices $\Sin \subset \Kpolyt \subset \Sout$ in $\K^{d-1}$ such that $\simplex_F$ is a flag simplex of both $\Sin$ and $\Sout$,  and $\simplex_F$ is a barycentric simplex of $\Sin$ with respect to $b_\Kpolyt$.
The same holds true for their $\F$-extensions,  i.e.\ $(\Sin)_\F \subset \Fpolyt \subset (\Sout)_\F$ in $\F^{d-1}$ with $(S_F)_\F$ a flag simplex of both $(\Sin)_\F$ and $(\Sout)_\F$,  and $(S_F)_\F$ a barycentric simplex of $(\Sin)_\F$ with respect to $b_{\Fpolyt}=b_\Kpolyt$.
Recall that $(S_F)_\F$ consists of the points $\PP(x)$ 
of the form $\PP(\sum_{i=1}^d x_i e_i)$ with $x_1\geq \ldots \geq x_d \geq 0$,  $x_i \in \F$.
By \Cref{propo:Simplex} the map 
\[\Psi_F \coloneqq \alpha_F \circ \log_{\eb} \from \Omega_F \to \Model(\Fpolyt,\Lambda)\eqqcolon K\] is surjective onto $K_F$.

We claim that $\Psi_F$ preserves distances and induces hence an isometry from $\pi(\Omega_F)$ onto its image $K_F \subset K$.
Since the vertices of $(\Sin)_\F$ and $(\Sout)_\F$ are in $\K^{d-1} \subset \mathcal{O}^{d-1} \subset \F^{d-1} \subset \PP V$,  the matrix of $\eb$ in $\eb'$ is in $\GL(d,\K) \subset \GL(d,\mathcal{O})$.
Thus we can apply \Cref{lem:Intro:FlagSimplexSandwichLemma} to
$(S_F)_\F \subset (\Sin)_\F \subset \Fpolyt \subset (\Sout)_\F$ to conclude that $\Psi_F$ preserves distances.
Hence $\Psi_F$ induces an isometry $\overline{\Psi}_F$ from $\pi(\Omega_F) \subset X_\Omega$ to $K_F\subset K$.

\textbf{Step 2: Well-defined on the intersection of maximal flags.}
For two maximal flags $F_1$ and $F_2$ of $\Fpolyt$, we claim that $\Psi_1=\Psi_2$ on $\Omega_{1} \cap \Omega_{2}$,  where $\Psi_i\coloneqq \Psi_{F_i}$ and $\Omega_{i} \coloneqq \Omega_{F_i}$ for $i=1,2$.

\begin{figure}[h]
\begin{tikzpicture}[scale=1.2]
\fill[fill=gray!20] (1,2) --(1.5,3.5)--  (3,3) -- (2+2/3,2)  -- cycle;
\draw (-0.2,3.8)--(0,4);
\draw(0,4)--(3,3);
\draw (2+1/3,1)--(3,3);
\draw (2,0.8)--(2+1/3,1);
\draw (0,4) node[above]{$\Fpolyt$} ;
\draw (1,2) node{$\bullet$} ;
\draw (1,2) node[below]{$b_{\Fpolyt}$} ;
\draw[very thick]  (1,2)--(3,3);
\draw (1,2)--(1.5,3.5);
\draw (1,2)--(2+2/3,2);
\draw (1.9,2.9) node{$\Omega_{1}$} ;
\draw (2.3,2.3) node{$\Omega_{2}$} ;
\draw (0.4,2.5) node{$\Omega_1 \cap \Omega_{2}$} ;
\draw[->] (1,2.4) .. controls +(0.1,0.1) and +(-0.1,0.1) .. (1.4,2.3);

\fill [black!40,path fading=north,fading transform={rotate=-60}]
(4,3) -- (6.5,3) -- (5.25,5.16506) --cycle;
\draw[->] (2.5,3.5) .. controls +(0.3,0.3) and +(-0.3,0.3) .. (4,3.5);
\draw (3.5,4) node{$\Psi_1$} ;
\draw (4,3) node{$\bullet$} ;
\draw[very thick] (4,3)--(6,3);
\draw[dotted, very thick] (6,3)--(6.5,3);
\draw (4,3)--(5,4.73205); 
\draw[dotted] (5,4.73205)--(5.25,5.16506);
\draw (5,3.5) node{$\Aap_\Lambda$};
\draw (6.5,4) node{$K_{F_1}$};

\fill [black!40,path fading=south,fading transform={rotate=60}]
(4,2.5+0.5) -- (6.5,2.5+.5) -- (5.25,0.33494+.5) --cycle;
\draw (4,2.5+.5) node{$\bullet$} ;
\draw[very thick] (4,2.5+.5)--(6,2.5+.5);
\draw[dotted, very thick] (6,2.5+.5)--(6.5,2.5+.5);
\draw (4,2.5+.5)--(5,3.5-2.73205+.5); 
\draw[dotted] (5,3.5-2.73205+.5)--(5.25,0.33494+.5);
\draw[->] (3,2+.5) .. controls +(0.2,-0.2) and +(-0.2,-0.2) .. (4,2+.5);
\draw (3.5,1.6+.5) node{$\Psi_{2}$} ;
\draw (5,2+.5) node{$\Aap_\Lambda$};
\draw (6.5,1+.5) node{$K_{F_2}$};

\end{tikzpicture}
\caption{Intersection of two maximal flags.}
\end{figure}
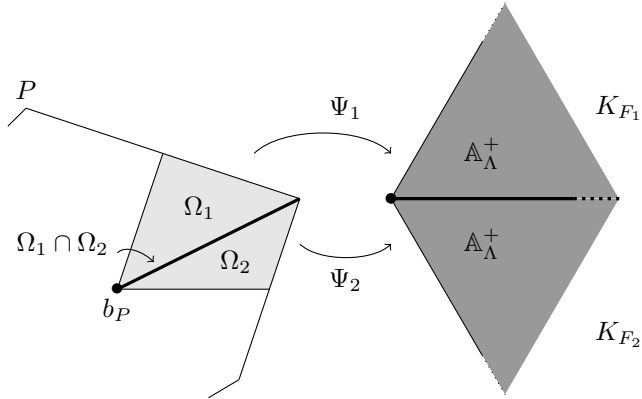

\noindent To see this,  we use \Cref{lem:BaseChangeGL(O)}.
For both $i=1,2$ denote by $\simplex_{\eb_i}$ the simplex constructed in Step 1 to define $\Psi_{i}$.
Let $F \coloneqq F_1 \cap F_2 \in \Flags_*(\Fpolyt)$ be the intersection of the two flags and $D=D(F)$.
Then $\simplex_{\eb_1}^D=\simplex_{\eb_2}^D$.
The same holds true for their $\F$-extensions.
Since $(S_F)_\F$ is a flag simplex of both $(\simplex_{\eb_1})_\F$ and $(\simplex_{\eb_2})_\F$,  the matrix $M \coloneqq \Mat_{\eb_1}(\eb_2)$ of $\eb_2$ in $\eb_1$ is in $P_D(d,\F)$.
Using as above that the vertices of $(\simplex_{\eb_1})_\F$ and $(\simplex_{\eb_2})_\F$ are in $\K^{d-1} \subset \mathcal{O}^{d-1} \subset \F^{d-1} \subset \PP V$,  we have that $M \in \GL(d, \mathcal{O})$,  hence in $P_D(d,\mathcal{O})$.
By \Cref{lem:BaseChangeGL(O)} we conclude $\log_{\eb_1}=\log_{\eb_2}$ on $\Omega_1 \cap \Omega_2$,  which finishes Step 2.
In particular $\Psi(\Omega_1 \cap \Omega_2) = \Model_{F_1}\cap \Model_{F_2}$.

\textbf{Step 3: Gluing the local isometries to a global isometry.}
We claim that
\[ \Psi=(\Psi_F)_{F \in \Flags_*(\Fpolyt)} \from \Omega=\bigcup_{F \in \Flags_*(\Fpolyt)} \Omega_F \to  \bigcup_{F \in \Flags_*(\Fpolyt)} K_F=K\]
preserves distances globally,  thus induces an isometry $\overline{\Psi} \from (X_\Omega,d_\Omega) \to (K,\dhex)$.
Since $\Psi=\Psi_F$ on $\Omega_F$ and $\Psi_F$ preserves distances, we already know that if $x,y \in \Omega_F$,  then $d_\Omega(x,y)=\dmodel(\Psi(x),\Psi(y))$.
Let thus $x \in \Omega_F$ and $y \in \Omega_{F'}$.
Consider the projective line segment from $x$ to $y$.
On it one finds finitely many points $x=x_0,  x_1, \ldots, x_k, x_{k+1} = y$,  such that any two consecutive points $x_i, x_{i+1}$ lie in \xfchanged{$\Omega_{F_i}$ for some maximal flag $F_i \in \Flags_*(\Fpolyt)$}.
Thus
\[d_\Omega(x,y)=\sum_{i=0}^{k}d_\Omega(x_{i},x_{i+1}) 
=\sum_{i=0}^{k}\dmodel \xfchanged{(\Psi_{F_i}(x_{i}),\Psi_{F_i}(x_{i+1}) ) }
\geq \dmodel(\Psi(x),\Psi(y)).\]
On the other hand, 
for every chain  $z_0=\Psi(x), z_1,\ldots,z_l,z_{l+1}=\Psi(y)$ connecting $\Psi(x)$ and $\Psi(y)$ in $K$,  denoting $F_{i}$ the flag such that $z_i,z_{i+1}\in K_{F_i}$,
there exists $x_0=x$,  $x_i\in \Omega_{F_i}\cap \Omega_{F_{i-1}}$,
$i=1,\ldots l$,  $x_{l+1} = y$
from $x$ to $y$ such that $\Psi(x_i)=z_i$ for $i=0,\ldots l+1$. 
Then
\begin{align*}
d_\Omega(x,y) \leq \sum_{i=0}^{l} d_\Omega(x_i,x_{i+1})= \sum_{i=0}^{l}  \dmodel(z_i,z_{i+1}),
\end{align*}
since $x_i,x_{i+1} \in \Omega_{F_i}$ and $\Psi$ is an isometry on $\Omega_{F_i}$.
This proves $\dmodel(\Psi(x),\Psi(y))=d_\Omega(x,y)$.
\end{proof}

\section{Ultralimits of Hilbert geometries as non-Archimedean Hilbert geometries}
\label{section:UltralimitsHilbertGeometry}
The goal of this section is to prove \Cref{thm:Intro:UltralimitsHilbertGeometries},  \Cref{thm:Intro:AsymptoticConesPolytopalHilbertGeometries},  and \Cref{thm:Intro:DegenerationsConvProjStr}.
Before doing so we introduce the necessary background on ultrafilters in Sections \ref{subsection:UltrafiltersProducts}-\ref{subsection:UlimInLinearGroup}.

\subsection{Ultrafilters and ultraproducts}
\label{subsection:UltrafiltersProducts}
We refer to \cite[I]{Bourbaki_GenTopo} for the background on
ultrafilters in this section.

For all the section, we fix a non-principal ultrafilter $\omega$ on $\NN$.
\compl{In fact $\omega$ will vary in the proof of \Cref{prop:UltralimitsAndGH}.}
Recall that $\omega$ may be seen as a finitely additive probability measure on
$\NN$ with values in $\{0,1\}$, null on finite sets, 
in particular, any assertion depending
on an integer $n\in\NN$ is either true for $\omega$-almost all $n$ or false for $\omega$-almost all $n$.
%
A sequence $(x_n)_{n\in\NN}$ in a  fixed topological space $X$ 
is said to converge to a point $x\in X$ \emph{according to $\omega$} 
if for every neighborhood $V$ of $x$, one has $x_n\in V$ for
$\omega$-almost all $n$. We will then say that $x$ is an {\em $\omega$-limit} of 
$(x_n)$ and denote $\lim_\omega x_n=x$. 
If $X$ is Hausdorff the $\omega$-limit
is unique if it exists. 
Note that if $\lim_\omega x_n=x$ then $x$ is a limit point of the sequence
$(x_n)$, namely there is a subsequence converging to $x$ is the usual
sense.
An important feature is that every sequence in a compact space has an
$\omega$-limit.
The $\omega$-limits of sequences of real numbers are taken
in the compact space $[\minfty,\pinfty]$.

Given a sequence of sets $(E_n)$, the \emph{ultraproduct} of the
$E_n$ following the ultrafilter $\omega$ is the quotient 
\[\prod_\omega E_n \coloneqq \prod_{n\in\NN} E_n /_{=_\omega}\]
where $(x_n)=_\omega(y_n)$ 
if $x_n=y_n$ for $\omega$-almost all $n$.

\subsection{Ultralimits of metric spaces}
\label{subsection:UltralimitsMetricSpaces}
A reference for this section is \cite{KlLe97}.
Given a sequence of metric spaces
$(E_n,d_n)$, 
we have an associated  pseudo-distance 
\[\dom: \prod_n E_n \times \prod_n E_n \to [0,\infty]\] 
given by
\[\dom((x_n),(y_n)) \coloneqq \lim_\omega d_n(x_n,y_n) \;.\]

Given a sequence of \emph{observation points} $o=(o_n) \in \prod_n E_n$, 
the {\em ultralimit} 
\[(E_\omega,d_\omega,o_\omega)=\Ulim_\omega(E_n,d_n,o_n)\]
of the sequence of pointed metric spaces $(E_n,d_n,o_n)$
is defined as the  quotient metric space of $(\prod_n E_n, \dom)$
based at $o$, namely 
\[E_\omega \coloneqq \{x \in \prod_n E_n \colon \dom(o,x) < \infty\}_{/\dom(x,y)=0}\]
with base-point the projection $o_\omega$ of  $o$.
In the case where 
$E_n$ is a fixed metric space $(E,d)$ and $d_n=\tfrac{1}{\lambda_n}d$, 
for $(\lambda_n)$ a sequence  of real numbers  such that $\lambda_n\geq 1$ and 
$\lambda_n \to \infty$ (called a {\em scaling sequence}), the ultralimit
$\Ulim_\omega(E_n,d_n,o_n)$ is called the \emph{asymptotic cone} of
$(E,d)$ {\em with respect to  $(\lambda_n)$ and  $(o_n)$}.

When $E_n$ is endowed with an isometric action $\rho_n \from \Gamma\to\Isom(E_n)$
of a finitely generated group $\Gamma$, such that 
\[\lim_\omega \dn(o_n,\rho_n(\gamma)o_n) < \infty \]
for every $\gamma$ in a finite symmetric generating set $F$ of $\Gamma$,
then there is an induced isometric action $\rho_\omega$ of $\Gamma$
on $E_\omega$, defined by
\[\rho_\omega(\gamma) (\ulim x_n) =\ulim \rho_n(\gamma) (x_n) \;.\]

Sequences $x=(x_n)$ in $\prod_n E_n$ such that $\dom(o,x) =\lim_\omega d_n(o_n,x_n) < \infty$ are called 
{\em $\omega$-bounded}.
The class in $E_\omega$ of a $\omega$-bounded sequence
$(x_n)$ will be called its {\em ultralimit} in $E_\omega$ and 
denoted by $\ulim_{E_\omega} x_n$ 
or simply $\ulim x_n$ if there is no ambiguity.
Note that changing the sequence on a $\omega$-null set of indices 
does not change the ultralimit, in particular the ultralimit is
well-defined even if the sequence is defined only on a $\omega$-full set.

We now recall the definition and some basic properties 
of ultralimits of subsets. 
Given a sequence of subsets $Y_n \subset E_n$, we denote by
$\ulim Y_n$ 
the subset of $E_\omega$  consisting of ultralimits $\ulim y_n$ of  
$\omega$-bounded sequences $(y_n) \in \prod_nE_n$ 
such that $y_n \in Y_n$ for $\omega$-almost all $n$, 
and call it the {\em ultralimit} of the sequence $(Y_n)$.
Note that it is not empty if and only if there exists an $\omega$-bounded
sequence such that $o_n'\in Y_n$ for $\omega$-almost all $n$.
In this case
$\ulim Y_n$ is isometric to the ultralimit of $(Y_n, \dn, o_n')$. 
It is easily seen that $Y_n$ and its closure have the same ultralimit
\[\ulim{\overline{Y_n}}=\ulim Y_n\;.\]
Indeed,  if $x_\omega=\ulim x_{n}$ with  $x_n\in \overline{Y_n}$, then 
taking
$y_n\in Y_n$ such that $d_n(x_n,y_n) \leq \frac{1}{n}$
we get $x_\omega=\ulim y_n \in \ulim Y_n$.
%
More generally the distance to a subset 
behaves well with respect to ultralimits. 

\begin{proposition} 
\label{prop:ulimDistToSubset}
Let  $Y_n \subset E_n$ be a sequence of subsets.
Let $x_\omega =\ulim x_n \in E_\omega$. Then
\begin{enumerate}
\item 
  \label{it:ulimDistToSubset}
$\dom(x_\omega,\ulim Y_n)  =   \lim_\omega d_n( x_n , Y_n )$, 
where the distance to the empty subset is $\infty$.
\item Suppose that $\ulim Y_n$ is non-empty.
There exists $y_\omega\in \ulim Y_n$ such that
  \label{it:ulimDistToSubsetRealized}
$\dom(x_\omega,y_\omega)= \dom(x_\omega,\ulim Y_n)$.
\end{enumerate}
\end{proposition}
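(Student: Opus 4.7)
My plan is to prove the two inequalities of part (1) separately, while extracting from the construction used in one direction a specific optimal point, which will give part (2) for free.

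For the inequality $d_\omega(x_\omega,\ulim Y_n) \geq \lim_\omega d_n(x_n,Y_n)$, I would take any $y_\omega=\ulim y_n \in \ulim Y_n$, represented by an $\omega$-bounded sequence with $y_n\in Y_n$ on an $\omega$-full set. Since $d_n(x_n,y_n) \geq d_n(x_n,Y_n)$ for $\omega$-almost all $n$, passing to the $\omega$-limit gives $d_\omega(x_\omega,y_\omega) \geq \lim_\omega d_n(x_n,Y_n)$, and then taking the infimum over $y_\omega \in \ulim Y_n$ yields the desired inequality (with the convention that the infimum over an empty set equals $\infty$, which matches the convention $d_\omega(x_\omega,\emptyset)=\infty$).

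For the reverse direction, the plan is to build a near-minimizing sequence. Assume first $Y_n\neq\emptyset$ for $\omega$-almost all $n$, and pick $y_n\in Y_n$ with $d_n(x_n,y_n)\leq d_n(x_n,Y_n)+\tfrac{1}{n}$. Set $L \coloneqq \lim_\omega d_n(x_n,Y_n)\in [0,\infty]$. When $L<\infty$, the triangle inequality $d_n(o_n,y_n)\leq d_n(o_n,x_n)+d_n(x_n,y_n)$ combined with $x_\omega \in E_\omega$ (so that $\lim_\omega d_n(o_n,x_n)<\infty$) shows that $(y_n)$ is $\omega$-bounded. Therefore $y_\omega \coloneqq \ulim y_n$ lies in $\ulim Y_n$ and satisfies $d_\omega(x_\omega,y_\omega)=\lim_\omega d_n(x_n,y_n)\leq L$, which gives both the matching inequality in (1) and the realization claim in (2).

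The only subtlety, and hence the main (minor) obstacle, is the case where $\ulim Y_n=\emptyset$: I would show that then necessarily $\lim_\omega d_n(x_n,Y_n)=\infty$, so both sides of (1) equal $\infty$. Indeed, if $Y_n=\emptyset$ for $\omega$-almost all $n$ then $d_n(x_n,Y_n)=\infty$ by convention; otherwise $Y_n\neq\emptyset$ $\omega$-almost surely, and the near-minimizers $(y_n)$ constructed above cannot be $\omega$-bounded (else $y_\omega \in \ulim Y_n$, contradicting emptiness), so $\lim_\omega d_n(o_n,y_n)=\infty$ and the triangle inequality forces $\lim_\omega d_n(x_n,y_n)=\infty$, which via $d_n(x_n,y_n)\leq d_n(x_n,Y_n)+\tfrac{1}{n}$ gives $L=\infty$. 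This completes both statements.
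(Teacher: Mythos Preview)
Your proof is correct and follows essentially the same approach as the paper's: both prove $d_\omega(x_\omega,\ulim Y_n)\geq \lim_\omega d_n(x_n,Y_n)$ by taking an arbitrary $y_\omega\in\ulim Y_n$, and both obtain the reverse inequality (together with the realization in part~(2)) by choosing near-minimizers $y_n\in Y_n$ with $d_n(x_n,y_n)\leq d_n(x_n,Y_n)+\tfrac{1}{n}$. Your write-up is in fact more careful than the paper's, since you explicitly verify $\omega$-boundedness of $(y_n)$ via the triangle inequality and you spell out the degenerate case $\ulim Y_n=\emptyset$, both of which the paper leaves implicit.
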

\begin{proof}
Let $y_\omega \in \ulim Y_n$ and $y_n \in Y_n$ an $\omega$-bounded sequence  with $y_\omega =\ulim y_n$.
Then we have that 
\[\dom(x_\omega,y_\omega)=\lim_\omega d_n(x_n,y_n) \geq \lim_\omega d_n(x_n,Y_n)\]
which shows that $\dom(x_\omega,\ulim Y_n) \geq \lim_\omega d_n(x_n,Y_n)$.
Moreover, if $\lim_\omega d_n( x_n , Y_n ) < \infty$
then  taking 
$y_n\in Y_n$ such that $d_n(x_n,y_n) \leq d_n(x_n,Y_n)+\frac{1}{n}$
we get $y_\omega \in \ulim Y_n$ such that 
$\dom(x_\omega,y_\omega)= \lim_\omega d_n(x_n,Y_n)$.
\end{proof}
Note that this also shows that $\ulim Y_n$ is always a closed subset of $E_\omega$,  since $\dom(x_\omega,\ulim Y_n)=0$ implies that $x_\omega\in \ulim Y_n$,  as the distance is always realized.

The following proposition relates ultralimits and
pointed Gromov--Hausdorff limits and is well-known.
 We refer to \cite[Section 8.1]{BBI}  for the notion of
 pointed Gromov--Hausdorff convergence.

\begin{proposition}[Link with Gromov--Hausdorff limits]
\label{prop:UltralimitsAndGH}
Let $(E_n,d_n,o_n)$, ${n\in\NN}$, and $(E,d,o)$ 
be proper pointed metric spaces.
The following are equivalent
\begin{enumerate}
\item 
\label{it:cvGH}
$(E_n,d_n,o_n)$ is converging to $(E,d,o)$ 
in the pointed Gromov--Hausdorff sense;
\item
\label{it:conAs}
$(E_\omega,d_\omega,o_\omega)$ is isometric to $(E,d,o)$  for all non-principal
ultrafilters $\omega$.
\footnote{By an isometry of pointed metric spaces we mean that
  the base-point is sent to the base-point.}
\end{enumerate}
\end{proposition}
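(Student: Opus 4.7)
The plan is to prove each implication separately, using properness of $E$ and the $E_n$ as the essential ingredient that lets ultralimits inherit convergence properties.

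For the direction $(1)\Rightarrow(2)$, I would unpack pointed Gromov--Hausdorff convergence: for every $R>0$ and $\epsilon>0$, and for $n$ large enough, there exists a pointed $\epsilon$-isometry $f_n\from \overline{B}(o_n,R)\to \overline{B}(o,R)$. Choose sequences $R_k\nearrow\infty$ and $\epsilon_k\searrow 0$, and for each $k$ let $N_k$ be an integer beyond which such an $\epsilon_k$-isometry on the $R_k$-balls exists; then for each $n$ define $f_n$ using the largest $k$ with $N_k\leq n$. Define
\[ \Phi \from E_\omega \to E, \quad \Phi(\ulim x_n) \coloneqq \lim_\omega f_n(x_n). \]
The right-hand side is well-defined because any $\omega$-bounded sequence $(x_n)$ stays in a ball $\overline{B}(o_n,R)$ for $\omega$-a.e.\ $n$ (some fixed $R$), so $f_n(x_n)$ eventually lies in $\overline{B}(o,R+1)$, which is compact by properness of $E$, and thus the $\omega$-limit exists in $E$. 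That $\Phi$ is distance-preserving follows from $|d(f_n(x_n),f_n(y_n))-d_n(x_n,y_n)|\leq \epsilon_n \to 0$, and $\Phi(o_\omega)=o$ is built in. Surjectivity uses the fact that the $f_n$ are almost surjective onto balls: given $y\in E$, fix $R$ with $y\in \overline{B}(o,R)$ and for each $n$ pick $x_n\in \overline{B}(o_n,R)$ with $d(f_n(x_n),y)<\epsilon_n+\tfrac{1}{n}$; then $\Phi(\ulim x_n)=y$.

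For the direction $(2)\Rightarrow(1)$, I would argue by contrapositive. Suppose $(E_n,d_n,o_n)$ does not converge to $(E,d,o)$ in the pointed Gromov--Hausdorff sense. Then there exist $R_0,\epsilon_0>0$ and an infinite set $A\subset\NN$ such that for every $n\in A$, no pointed $\epsilon_0$-isometry $\overline{B}(o_n,R_0)\to \overline{B}(o,R_0)$ exists. Choose a non-principal ultrafilter $\omega$ containing $A$. The plan is to show that for this $\omega$, there is no pointed isometry $E_\omega\to E$. If such an isometry $\Psi$ existed, then using properness of both $E$ and the $E_n$, together with the fact that the closed ball $\overline{B}(o_\omega,R_0)\subset E_\omega$ is naturally identified with $\ulim \overline{B}(o_n,R_0)$ (via \Cref{prop:ulimDistToSubset}), I would extract for large $n\in A$ an actual approximate isometry on $R_0$-balls by a finite net argument: take a finite $\delta$-net of $\overline{B}(o_\omega,R_0)$, represent it by sequences in the $\overline{B}(o_n,R_0)$, and use $\Psi$ to transport it to $E$; extend to a pointed $\epsilon_0$-isometry defined for $\omega$-a.e.\ $n$, contradicting membership of $A$ in $\omega$.

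The key technical step, and probably the main obstacle, is this finite-net extraction in the second direction: one needs to produce, for $\omega$-almost all $n$, a genuine $\epsilon_0$-isometry on the full $R_0$-ball $\overline{B}(o_n,R_0)$ from the data of a single isometry $\Psi$ on the ultralimit. The standard trick is to choose a finite $\delta$-net $N_\omega\subset \overline{B}(o_\omega,R_0)$ (possible because $E_\omega$ is proper—this follows from properness of the $E_n$ and an ultrafilter compactness/diagonal argument), lift each point to a sequence, apply $\Psi$, and check that for $\omega$-almost all $n$, the map $N_n\to \Psi(N_\omega)$ obtained at stage $n$ is a $\tfrac{\epsilon_0}{4}$-isometry between $\delta$-nets of the respective balls; by a Kuratowski--style extension, this yields a pointed $\epsilon_0$-isometry of the $R_0$-balls for $\omega$-almost every $n$, contradicting $A\in\omega$. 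Properness on both sides is used twice: to ensure nets are finite, and to ensure the target points $\Psi(N_\omega)$ sit in a compact ball whose finite approximations transport correctly.
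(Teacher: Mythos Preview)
Your proposal is correct and follows essentially the same approach as the paper: the paper cites \cite{KlLe97} for $(1)\Rightarrow(2)$ and gives the contrapositive-plus-finite-net argument for $(2)\Rightarrow(1)$ that you outline, building $f_n$ by projecting to a finite net $Y_n$ lifted from an $\epsilon/4$-net $Y_\omega$ of $B_\omega=\ulim \overline{B}_r(o_n)$.

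One small correction: in the $(2)\Rightarrow(1)$ direction you justify compactness of the ball in $E_\omega$ by saying ``$E_\omega$ is proper---this follows from properness of the $E_n$''. That inference is not valid in general (ultralimits of proper spaces need not be proper; think of asymptotic cones). The correct justification, which the paper uses, is simply that $E_\omega$ is isometric to $E$ by hypothesis~(2), and $E$ is proper; hence the closed $r$-ball in $E_\omega$ is compact and $B_\omega$, being closed inside it, is compact too.
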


\begin{proof}
The forward direction (\ref{it:cvGH}) $\Rightarrow$ (\ref{it:conAs}) 
follows from \cite[Lemma 2.4.3]{KlLe97}.

The converse statement is harder to find in the literature 
so we include a proof.
Let $\eps >0$ and $r\in\RR_{\geq 0}$ and let  $\ov{B}_r(o)$
be the closed ball
of radius $r$ centered at $o$ in $E$ and $\ov{B}_r(o_n)$ be the closed ball of radius $r$ in $E_n$.

We want to show that for $n$ big enough
we have  the following property, which we denote $P(n)$ for future reference: 
there exists a map $f_n \from \ov{B}_r(o_n) \to E$ sending $o_n$ to $o$ such that 
\[\operatorname{dis}(f_n)\coloneqq \sup_{x_1 ,x_2\in \ov{B}_r(o_n)}|d(f_n (x_1), f_n (x_2)) − d_n(x_1 , x_2 )| < \epsilon\]
and the $\epsilon$-neighborhood of the set $f_n(\ov{B}_r(o_n))$ contains the ball $\ov{B}_{r−\epsilon}(o)$ (see \cite[Section 8.1]{BBI}).

Note that 
it is enough to show that $P(n)$ is true for $\omega$-almost all $n$ for every non-principal ultrafilter $\omega$. 
Indeed if the subset $I$ of $n\in\NN$ such that $P(n)$ is false is infinite, then there exists a non principal ultrafilter $\omega$ on $\NN$ containing $I$. 

Let $\omega$ be a non-principal ultrafilter on $\NN$.
By assumption $(E,d,o)$  is isometric to $(E_\omega,d_\omega,o_\omega)$,  and we identify it to the latter in the following.

We construct the maps $f_n$ using $\eps$-nets.
Let $B_\omega$ be the ultralimit of $\ov{B}_r(o_n)$ in $E_\omega$.
It is a closed subset of $E_\omega$, included in $\ov{B}_r(o_\omega)$, hence compact
since $E_\omega$ is proper.
Thus $B_\omega$ contains a finite $\eps/4$-net 
$$Y_\omega=\{y_\omega^{(i)},\ i=0, \ldots, N\}$$
with $y_\omega^{(0)}=o_\omega$.
For $i=0$ to $N$, choose a sequence $(y_n^{(i)}) \in \prod_n\ov{B}_r(o_n)$ 
representing $y_\omega^{(i)}$. 
We may suppose that $y_n^{(0)}=o_n$ for all $n \in \N$.  

We first show that for $\omega$-almost all $n$ the set 
\[Y_n = \{y_n^{(i)},\ i=0, \ldots, N\}\subset \ov{B}_r(o_n)\]
is an $\eps/3$-net in $\ov{B}_r(o_n)$: 
if not, for $\omega$-almost all $n$ we can choose  $z_n \in \ov{B}_r(o_n)$
such that $d_n(z_n, Y_n) > \eps/3$.
Then the ultralimit $z_\omega =\ulim z_n$  is in $B_\omega$ and
$d_\omega(z_\omega, Y_\omega)=\lim_\omega d_n(z_n, Y_n) \geq \eps/3$, 
yielding a contradiction as $Y_\omega$ is an $\eps/4$-net of $B_\omega$.

Now there is an $\eps/3$-correspondence between $Y_n$ 
and $Y_\omega$, more precisely
there exists $I \subset \N$ with $\omega(I)=1$ such that for all $n\in I$
\begin{equation}
  \label{eq:disY}
| d_\omega(y_\omega^{(i)}, y_\omega^{(j)}) - d_n(y_n^{(i)}, y_n^{(j)}) | 
\leq \eps/3
\ \ \text{for all } (i,j) \in \{0,\ldots, N\}^2,\end{equation}
since  by definition of $d_\omega$ we have
\[d_\omega(y_\omega^{(i)}, y_\omega^{(j)}) = \lim_\omega d_n(y_n^{(i)}, y_n^{(j)})\]
for all $(i,j) \in \{0,\ldots, N\}^2$. 

We finally construct the maps $f_n \from \ov{B}_r(o_n) \to E_\omega$ for $n\in I$
and show that they satisfy $P(n)$.  
The construction consists in
projecting on the closest point in $Y_n$ and sending it to the corresponding point
in $Y_\omega$.
Namely, for $x\in \ov{B}_r(o_n)$, choose $i \in \{0,\ldots,N\}$ with $d_n(x,y_n^{(i)})$
minimal,  and let $f_n(x) \coloneqq y_\omega^{(i)}$.
In particular, $f_n(o_n)=o_\omega$.
Since $Y_n$ is an $\eps/3$-net in $\ov{B}_r(o_n)$ we have by (\ref{eq:disY})
that 
\[|d_\omega(f_n (x_1), f_n (x_2)) − d_n(x_1 , x_2 )| \leq \epsilon\]
for all $x_1 ,x_2 \in \ov{B}_r(o_n)$.
Moreover, $f_n(\ov{B}_r(o_n))$ is an $\eps/3$-net in $Y_\omega$,  which is a $\eps/4$-net in $B_\omega$, thus the $\epsilon$-neighborhood of the set $f_n(B_r(o_n))$ contains $B_\omega$.
This concludes since $B_\omega=\ulim \ov{B}_r(o_n)$ contains the ball
$\ov{B}_{r'}(o_\omega)$ for every $r'<r$, in particular for $r'=r-\eps$.
\compl{Since if $d_\omega(o_\omega,x_\omega)<r$ then $d_n(o_n,x_n)<r$ for $\omega$-almost
all $n$ (in fact it contains $\ov{B}_r(o_\omega)$)}%
\end{proof}

\subsection{The ultralimit field $\RRom$ (Robinson field)}
\label{subsection:RobinsonField}
We refer to \cite[\S 3.3]{Parreau_CompEspReprGroupesTypeFini} 
for more details.

The {\em asymptotic cone} $\RRom$ of $\RR$ 
{\em with respect to the scaling sequence $(\lambda_n)$}  
is a non-Archimedean real closed  field, that
may be defined as the ultralimit
of the sequence of valued fields 
$(\RR, \abs{\cdot}^{1/\lambda_n})$,
seen as metric spaces pointed 
at $o_n=0$, 
endowed with term-by-term operations and order, 
and with the absolute value
$\absom{\cdot}:\RRom\to\RR_{\geq 0}$ defined by
\[\absom{\ulim t_n} \coloneqq \lim_\omega \abs{t_n}^{1/\lambda_n}\;.\]
Note that this is a non-Archimedean real closed field, 
whose absolute value $\absom{\cdot}$  
takes all values in $\RR_{\geq 0}$.
We have a canonical order-preserving embedding $\R \hookrightarrow \RRom$ by sending a real number $t$ to the equivalence class of the constant sequence $(t)_{n\in \N}$,  which we also denote by $t$.
%
Given a sequence $(t_n)$ in $\RR$, 
we set $\ulim{t_n}=\infty$ if
$\lim_\omega\abs{t_n}^{1/\lambda_n}=\infty$, 
so that every sequence in $\RR$ has a well defined 
ultralimit in $\RRom \cup \{\infty\}$.

\subsection{Ultralimits in $\RR^d$ }
\label{subsection:UlimRd}
We now turn to ultralimits of sequences in  $V=\RR^d$.
We will denote by $\RRom^d$ the $\RRom$-vector space $(\RRom)^d$.
A sequence $x=(x_n)$  in $\RR^d$ is \emph{$\omega$-bounded} if all its
coordinates are $\omega$-bounded in $\RR$.
We then define its ultralimit $x_\omega=\ulim x_n \in (\RRom)^d$  
by taking ultralimits of coefficients, namely
\[x_\omega=(x_{\omega,1},\ldots,x_{\omega,d}) \in (\RRom)^d\] 
where $x_n=(x_{n,1},\ldots,x_{n,d}) \in \RR^d$ and  $x_{\omega,i} = \ulim x_{n,i}   \in \RRom$.

\begin{remark}
\label{rem:UlimVSExtension}
If $X\subseteq \R^d$ is semi-algebraic,  we have $\ulim X \subseteq X_{\RRom} \subseteq \RRom^d$,  where 
$X_{\RRom}$ is the \emph{$\RRom$-extension} of $X$, i.e.\ the set of solutions in $\RRom^d$ of the same polynomial equalities and inequalities defining $X$ \cite[Definition 5.1.2]{BochnakCosteRoy_RealAlgebraicGeometry}.

In the case that $X$ is an affine polytope or a closed ball for the $\RRom$-valued $l^2$-norm $\|\cdot\|_{2,\RRom}$ (see \Cref{subsection:SymSpaces})
one has in fact equality,  i.e.\  $\ulim X=X_{\RRom}$.
\compl{
If for all $x \in X$ they satisfy $P(x)>0$ or $Q(x)=0$ for some polynomials,  then the same holds true for the ultralimits,  which shows $\ulim X \subseteq X_{\RRom}$.
In the case of the affine polytope,  one has that $\ulim X$ is $\RRom$-convex and thus contains the $\RRom$-convex hull of $X$,  which agrees with $X_{\RRom}$ by the considerations in \Cref{subs:ConvexSets}.
For the closed ball $B$ of radius $1$ (the other radii are analogous),  take an $\omega$-bounded sequence $(v_n) \in (\R^d)^\N$ with $v_\omega=\ulim v_n \in B_{\RRom}$.
If $\sum_{i=1}^d (v_\omega)_i^2< r^2$ (in $\RRom$),  we get that for $\omega$-almost all $n \in \N$,  we have $\sum_{i=1}^d (v_n)_i^2 \leq r^2$.
Thus $v_n \in B$ for $\omega$-almost all $n \in \N$,  and hence $v_\omega \in \ulim B$.
If $\sum_{i=1}^d (v_\omega)_i^2= r^2$,  we do not necessarily get that for $\omega$-almost all $n \in \N$,  we have $\sum_{i=1}^d (v_n)_i^2 \leq r^2$.
Consider thus $v'_n \coloneqq \tfrac{v_n}{\|v_n\|} \in B$ for all $n \in \N$.
We claim that $\ulim v'_n =\ulim v_n=v_\omega$,  and thus $v_\omega \in B$.
Indeed
\[
\lim_\omega \|v_n-v'_v\|^{1/\lambda_n} = \lim_\omega |\|v_n\|-1|^{1/\lambda_n}=0,
\]
where the last equality follows from the assumption $\sum_{i=1}^d (v_\omega)_i^2= r^2$ (this is an equality in $\RRom$).
}
\end{remark}

Denoting by $\norm{\cdot}$ the standard Euclidean norm on $\RR^d$,
observe that
\[\norm{x}\coloneqq\lim_\omega\norm{x_n}^{1/\lambda_n}=\max_i \abs{x_{\omega,i}} \;.\] 
In particular $x=(x_n)$ is $\omega$-bounded if and only if 
$\norm{x}=\lim_\omega\norm{x_n}^{1/\lambda_n}<\infty$,  and
$\ulim x_n=\ulim y_n$ if and only if  
$\norm{y-x}=0$.
Hence  $(\RRom)^d$ endowed with the $l^\infty$-norm 
is isomorphic as a normed vector space to 
the asymptotic cone of the normed vector space $(V, \norm{\cdot})$
with respect to the scaling sequence $(\lambda_n)$
as defined in \cite[Proposition
3.13]{Parreau_CompEspReprGroupesTypeFini},  see Proposition 3.14 therein.

\subsection{Ultralimits in the linear group}
\label{subsection:UlimInLinearGroup}
A sequence $(g_n)$ in $\GL(V)\simeq \GL(d,\RR)$ is 
{\em $\omega$-bounded in $\GL(V)$} if
\[\lim_\omega\norm{g_n}^{1/\lambda_n} <\infty 
\text{ and } \lim_\omega \norm{g_n^{-1}}^{1/\lambda_n} <\infty \;.\]
where $\norm{\cdot}$ is any norm on $\End(V)$.
Then 
we may define the  ultralimit $g_\omega \in M(d,\RRom)$ 
of $(g_n)$ by taking ultralimits on  coefficients
\[(g_\omega)_{ij}=\ulim (g_n)_{ij}\]
and we have $g_\omega \in \GL(d,\RRom)$
\cite[Proposition 5.1]{Parreau_InvariantWeaklyConvexCocompactSubspacesSurfaceGroupsA2Buildings}.
For any $\omega$-bounded sequence $(x_n)$ in $V$ we then have 
$g_\omega (\ulim x_n)= \ulim g_n(x_n)$.

We now relate the eigenvalues of  $g_\omega \in \GL(d,\RRom)$ with the eigenvalues of the elements in the sequence $(g_n)$,  where $g_n$ is an $\omega$-bounded sequence in $\GL(V)$ with $\ulim g_n=g_\omega$.
Denote by $\lambda_1(g_n), \ldots, \lambda_d(g_n) \in \C$ the eigenvalues of $g_n$ counted with multiplicity and sorted by descending modulus $|\lambda_1(g_n)|\geq  \ldots \geq |\lambda_d(g_n)|$.
Then the sequence $(\lambda_1(g_n))$ in $\C$ is $\omega$-bounded, since $|\lambda_1(g_n)|\leq \norm{g_n}$ for any operator norm $\norm{\cdot}$ on $\End(V)$.
The same holds true for the sequences $(\lambda_i(g_n))$ for all $i=2,\ldots,d$.
Thus 
\[\ulim\lambda_1(g_n), \ldots, \ulim\lambda_d(g_n) \in \C_\omega, \]
are defined, where $\C_\omega$ is the Robinson field over $\C$,  which is defined as the ultralimit of the sequence of pointed metric spaces $(\C, \abs{\cdot}^{1/\lambda_n}, 0)$ as in \Cref{subsection:RobinsonField}.
In fact $\C_\omega=\RRom[\sqrt{-1}]$.
We remark furthermore,  that  $\ulim\lambda_1(g_n), \ldots, \ulim\lambda_d(g_n)$ satisfy 
\[|\ulim\lambda_1(g_n)|_{\RRom} \geq \ldots \geq |\ulim\lambda_d(g_n)|_{\RRom} \in \RRom,\]
where $\abs{x_\omega +y_\omega\sqrt{-1} }_{\RRom}\coloneqq \sqrt{x_\omega^2+y_\omega^2} \in (\RRom)_{\geq 0}$ for $x_\omega +y_\omega\sqrt{-1} \in \C_\omega$.
We now want to relate $\ulim\lambda_1(g_n), \ldots, \ulim\lambda_d(g_n)$ to the eigenvalues of $g_\omega$.

As a first observation we have that if $v_n \in V$ is a sequence of eigenvectors of $g_n$ of unit norm for the eigenvalue $\lambda_n$,  then $(v_n)$ is $\omega$-bounded and $\ulim v_n \in \Vom$ is an eigenvector of $g_\omega$ for the eigenvalue $\ulim \lambda_n$ (in particular,  non-zero).
On the other hand we have the following.

\begin{proposition}
\label{propo:UlimEV}
The ultralimits $\ulim\lambda_1(g_n), \ldots, \ulim\lambda_d(g_n) \in \C_\omega$ are the eigenvalues of $g_\omega$ counted with multiplicity.
\end{proposition}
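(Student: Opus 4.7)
The plan is to argue via the characteristic polynomial and the fact that ultralimits of $\omega$-bounded sequences commute with polynomial operations (since the field structure on $\C_\omega$ is defined coordinate-wise on representatives).

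First I would observe that the characteristic polynomial $\chi_n(X)=\det(X\Id-g_n)$ can be written as
\[
\chi_n(X)=X^d-c_1(g_n)X^{d-1}+c_2(g_n)X^{d-2}-\cdots+(-1)^d c_d(g_n),
\]
where each coefficient $c_k(g_n)$ is a polynomial expression in the entries of $g_n$. Since $(g_n)$ is $\omega$-bounded in $\GL(V)$, its matrix entries form $\omega$-bounded real sequences, and hence so do the $c_k(g_n)$. Their ultralimits in $\RRom$ are precisely the coefficients of $\det(X\Id-g_\omega)$, so that $\chi_\omega(X)\coloneqq \det(X\Id-g_\omega)$ equals the coefficient-wise ultralimit of $\chi_n(X)$.

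Next I would use the factorization $\chi_n(X)=\prod_{i=1}^d(X-\lambda_i(g_n))$, so that each $c_k(g_n)$ is the $k$-th elementary symmetric polynomial $e_k$ in $\lambda_1(g_n),\ldots,\lambda_d(g_n)$. As noted in the excerpt, the sequences $(\lambda_i(g_n))$ are $\omega$-bounded in $\C$ because $|\lambda_i(g_n)|\leq \|g_n\|$, and similarly $|\lambda_i(g_n)^{-1}|\leq \|g_n^{-1}\|$, so their ultralimits $\mu_i\coloneqq \ulim\lambda_i(g_n)$ lie in $\C_\omega^\times$. Since the field operations on $\C_\omega$ are defined by term-by-term operations on representatives, polynomial expressions commute with ultralimits; in particular
\[
\ulim c_k(g_n)=\ulim e_k(\lambda_1(g_n),\ldots,\lambda_d(g_n))=e_k(\mu_1,\ldots,\mu_d).
\]

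Combining the two identifications gives
\[
\chi_\omega(X)=\prod_{i=1}^d(X-\mu_i)\in\C_\omega[X],
\]
which shows that $\mu_1,\ldots,\mu_d$ are exactly the roots of the characteristic polynomial of $g_\omega$, counted with multiplicity. There is no serious obstacle here; the only point requiring a little care is confirming that the ultralimit construction for complex numbers is compatible with sums and products (so that elementary symmetric polynomials commute with $\ulim$), which is immediate from the coordinate-wise definition of $\C_\omega=\RRom[\sqrt{-1}]$ and the compatibility of $\ulim$ with addition and multiplication on $\RRom$.
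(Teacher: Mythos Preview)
Your proof is correct and follows essentially the same approach as the paper: both show that the characteristic polynomial of $g_\omega$ factors as $\prod_{i=1}^d(X-\ulim\lambda_i(g_n))$ by exploiting that polynomial expressions commute with ultralimits. The only cosmetic difference is that you compare coefficients (via elementary symmetric polynomials), whereas the paper checks the identity by evaluating both sides at every $t_\omega=\ulim t_n\in\C_\omega$ and using $\det(t_\omega\Id-g_\omega)=\ulim\det(t_n\Id-g_n)$.
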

\begin{proof}
It suffices to show that the characteristic polynomial of $g_\omega$ is equal to $\prod_{i=1}^d(T-\ulim \lambda_i(g_n))$,  in other words $\det(T\Id-g_\omega)=\prod_{i=1}^d(T-\ulim \lambda_i(g_n))$.
Since $\C_\omega$ is infinite, it suffices to check the equality by evaluating for any point $T=t_\omega \in \C_\omega$.
Let thus $t_\omega=\ulim t_n$ for an $\omega$-bounded sequence $(t_n)$ in $\C$.
Then
\begin{align*}
\det(t_\omega\Id-g_\omega)&=\ulim \det(t_n\Id-g_n)\\
&=\ulim \prod_{i=1}^d(t_n-\lambda_i(g_n))= \prod_{i=1}^d(t_\omega-\ulim \lambda_i(g_n)),
\end{align*}
where the first equality holds since the determinant is a polynomial expression.
\end{proof}

\subsection{The ultralimit projective space $\PVom$}
\label{subsection:UlimProjectiveSpace}
Ultralimits of projective spaces were introduced in
\cite{Parreau_InvariantWeaklyConvexCocompactSubspacesSurfaceGroupsA2Buildings},
to which we refer for more details.

The projective space $\PP V$ on $V=\RR^d$ is endowed with 
the spherical metric induced by the canonical norm $\norm{\cdot}$, 
which is  defined by
\[\dPV(x,y) \coloneqq \inf_{\tilde{x},\tilde{y}}\norm{\tilde{y}-\tilde{x}}\]
where the infimum is taken over all unit vectors $\tilde{x}$ and $\tilde{y}$ lifting $x$ and $y$.

The {\em asymptotic cone} $\PVom$ of $\PP V$ with respect to the scaling
sequence $(\lambda_n)$ is defined as the ultralimit of the sequence of
metric spaces $(\PP V, \dPV^{1/\lambda_n})$ with respect to any sequence of
base points.
Note that the ultralimit does not depend on the choice of base points as these metric spaces are uniformly bounded.
%
The asymptotic cone $\PVom$ identifies canonically with the projective space $\PP(\Vom)$ over
$\Vom=(\RRom)^d$, in such a way that
\[ \PP(\ulim x_n)= \ulim \PP(x_n) \]
for any $\omega$-bounded sequence $(x_n)$ in $V$ with $\ulim x_n \neq 0$ in $\Vom$. 

\begin{proposition}[Ultralimits and affine charts]
\label{prop:ultralimits:affineCharts}
\label{prop:UlimAffineChartsBounded}
Let $\eom=(e_{\omega,1},\ldots,e_{\omega,d})$ be a basis of $\Vom$ and let
$f_\omega:\RRom^{d-1}\to \PVom$ be the associated  affine chart.
For $i=1,\ldots,d$, let $(e_{n,i})$  be a $\omega$-bounded sequence in $V$ with
ultralimit $e_{\omega,i}$. 
Then for $\omega$-almost all $n\in \N$,   $\en=(e_{n,1},\ldots,e_{n,d})$ is a basis of $V$, 
and we have
\[f_\omega(\ulim x_n) \coloneqq \ulim f_n(x_n) \; ,\]
where $f_n \from \RR^{d-1}\to \PV$ is the affine chart associated to
$\en$ 
(which is defined $\omega$-almost everywhere).
Moreover  given a sequence $(A_n)$ of subsets of $\PP V$,  if $\ulim
A_n$ is contained in the image of $f_\omega$,  then $A_n$ is contained in
the image of $f_n$ for $\omega$-almost all $n$, and $A_n$ is bounded in that chart.
\end{proposition}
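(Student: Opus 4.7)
The plan is to prove the three claims in succession.

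For the first claim, I would invoke that the determinant $(v_1,\ldots,v_d) \mapsto \det(v_1,\ldots,v_d)$ is polynomial in the coefficients. Since polynomial expressions in $\omega$-bounded sequences commute with ultralimits (as used in the proof of \Cref{propo:UlimEV}), one has $\ulim \det(\en) = \det(\eom)$ in $\RRom$. As $\eom$ is a basis of $\Vom$, $\det(\eom) \neq 0$, and hence $\det(\en) \neq 0$ for $\omega$-almost all $n$, meaning $\en$ is a basis of $V$.

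For the second claim, I would lift directly to $V$: setting $\tilde{y}_n \coloneqq \sum_{i=1}^{d-1} (x_n)_i e_{n,i} + e_{n,d}$, we have $f_n(x_n) = \PP(\tilde{y}_n)$. Since $(x_n)$ and each $(e_{n,i})$ are $\omega$-bounded, $(\tilde{y}_n)$ is $\omega$-bounded and $\ulim \tilde{y}_n = \sum_{i=1}^{d-1} (x_\omega)_i e_{\omega,i} + e_{\omega,d}$. This vector is nonzero because its $e_{\omega,d}$-coordinate in the basis $\eom$ equals $1$, so $\ulim f_n(x_n) = \PP(\ulim \tilde{y}_n) = f_\omega(x_\omega)$.

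For the moreover part, I would pass to the dual picture. Let $\varphi_n \in V^*$ denote the $d$-th vector of the dual basis of $\en$ (defined for $\omega$-almost all $n$), and similarly $\varphi_\omega \in \Vom^*$ for $\eom$. By Cramer's rule, the coefficients of $\varphi_n$ are cofactors of $\en$ divided by $\det(\en)$, whose ultralimit we just saw is nonzero; hence $\varphi_n$ is $\omega$-bounded in $V^*$ and $\ulim \varphi_n = \varphi_\omega$. The image of $f_n$ is $\PV \setminus \PP(\ker \varphi_n)$, and since $\PV$ is compact with this hyperplane closed, the conjunction ``$A_n \subset f_n(\R^{d-1})$ and bounded there'' is equivalent to $\overline{A_n} \cap \PP(\ker \varphi_n) = \emptyset$.

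Now suppose this disjointness fails on a set $I \subset \N$ with $\omega(I) = 1$. For $n \in I$, pick $a_n \in \overline{A_n} \cap \PP(\ker \varphi_n)$ with a unit lift $\tilde{a}_n \in V$, so $\varphi_n(\tilde{a}_n) = 0$. The sequence $(\tilde{a}_n)$ is $\omega$-bounded and $\ulim \tilde{a}_n \in \Vom$ has norm $1$, hence is nonzero, so $\ulim a_n = \PP(\ulim \tilde{a}_n)$ is well-defined in $\PVom$. Using the identity $\ulim \overline{A_n} = \ulim A_n$ in $\PVom$ (valid because points of $\overline{A_n}$ can be approximated in the rescaled metric at rate $1/n$), we obtain $\ulim a_n \in \ulim A_n \subset f_\omega(\RRom^{d-1})$, forcing $\varphi_\omega(\ulim \tilde{a}_n) \neq 0$. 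However, by continuity of the bilinear evaluation pairing under ultralimits, $\varphi_\omega(\ulim \tilde{a}_n) = \ulim \varphi_n(\tilde{a}_n) = 0$, a contradiction. The main obstacle lies in this last step: recognizing the dual reformulation via $\varphi_n$, together with the correct translation between ``bounded in the chart'' and disjointness from the projective hyperplane at infinity.
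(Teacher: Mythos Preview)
Your proof is correct. For the first two claims you argue directly (determinant commuting with ultralimits; explicit lift computation), whereas the paper first conjugates everything to the standard basis via $g_n^{-1}$ and then computes in the standard chart. These are equivalent.

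For the ``moreover'' part your route differs more visibly. The paper, having reduced to the standard chart, treats the two conclusions separately: first it shows $A_n \subset f_n(\RR^{d-1})$ by contradiction (a bad point $x_n \in A_n \cap \PP(\RR^{d-1}\times\{0\})$ would have $\ulim x_n$ on the hyperplane at infinity), and then it argues boundedness by observing that an $\omega$-unbounded coordinate sequence would again send $\ulim f_n(x_n)$ to that hyperplane. You instead unify the two conclusions via the topological equivalence ``$A_n$ is in the chart and bounded there $\iff \overline{A_n}$ misses the hyperplane at infinity'' and run a single contradiction argument, at the cost of tracking the dual vectors $\varphi_n$ explicitly through Cramer's rule. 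Your packaging is tidier (one argument instead of two), while the paper's reduction to the standard chart avoids the Cramer bookkeeping altogether. Both are short and clean; the ideas are the same.
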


\begin{proof}
Let $g_\omega \in \GL(\Vom)$ be the map sending the canonical basis to 
$\eom$, and let 
$g_n \in \End(V)$ be the map sending the canonical basis to 
$\en$. 
We have $g_\omega=\ulim g_n$, 
therefore by \cite[Proposition 5.1]{Parreau_InvariantWeaklyConvexCocompactSubspacesSurfaceGroupsA2Buildings}
we have $g_n \in \GL(V)$  and $\en$ is a basis of $V$ for $\omega$-almost all $n \in \N$. 
Up to precomposing $f_n$ by $g_n^{-1}$ and $f_\omega$ by $g_\omega^{-1}$, 
we may then suppose that 
$f_n$ is the standard affine chart $\RR^{d-1} \to \PP(\RR^d)$.
Then $f_\omega$ is the standard affine chart $\RRom^{d-1} \to \PP(\RRom^d)$
and for any $\omega$-bounded sequence 
$x_n=(x_{n,1} , \ldots , x_{n,d-1}) \in (\RR^{d-1})^\N$ we have
\begin{align*}
  \ulim f_n(x_n)
&= \ulim [x_{n,1} : \ldots : x_{n,d-1} : 1]\\
&= [ \ulim x_{n,1} : \ldots :\ulim x_{n,d-1} : 1]\\
&= [x_{\omega,1} : \ldots : x_{\omega,d-1} : 1]\\
&=f_\omega(x_\omega).
\end{align*}

For the last assertion, since $\omega$ is an ultrafilter  
we have either  $A_n$ is contained in the image of $f_n$ 
for $\omega$-almost all $n$, or 
$A_n$ is not contained in the image of $f_n$ for $\omega$-almost all
$n$,  i.e.\ there exists $x_n \in A_n \setminus f_n(\R^{d-1}) = A_n \cap \PP(\RR^{d-1}\times \{0\})$.
Then $\ulim x_n \in \ulim A_n$ belongs to $\ulim \PP(\RR^{d-1}\times \{0\}) = \PP(\RRom^{d-1}\times \{0\})$, contradicting
the hypothesis that $\ulim A_n$ is contained in the image of $f_\omega$.
Observe that if a sequence 
$x_n=(x_{n,1} , \ldots , x_{n,d-1}) \in (\RR^{d-1})^\N$ is not $\omega$-bounded,
$\ulim f_n(x_n)$ belongs to the hyperplane at infinity $\PP(\RRom^{d-1}\times\{0\})$ of the affine chart $f_\omega$,  \xfchanged{ since we need to normalize the last projective coordinate (which is constant equal to one) by a sequence of numbers tending to infinity.}
This proves that  there exists a
constant $C\in \RR_{>0}$ such that $A_n\subset[-C^{\lambda_n},C^{\lambda_n}]^{d-1}$ for
$\omega$-almost all $n$.
\end{proof}

We now relate proximality and ultralimits.
Let $\F$ be a real closed field and $V$ a finite-dimensional $\F$-vector space.
An element $g \in \GL(V)\cong \GL(d,\F)$ is \emph{proximal} if $g$ has a unique generalized eigenvalue $\lambda_1(g)$ (in $\F[\sqrt{-1}]$) with largest modulus (in $\F$),  where eigenvalues are counted with multiplicity \cite[\S 4.4]{BurgerIozziParreauPozzetti_RSCCharacterVarieties2}.
This generalizes the definition in the real case, see e.g.\ \cite[\S 4.1]{BenoistQuint_RandomWalksReductiveGroups}.
Furthermore,  if $g$ is proximal then $\lambda_1(g) \in \F$, and $g$ has a unique attracting fixed point in $\PP V$ and in $\PP V^*$ (corresponding to the repelling hyperplane of $g$) \cite[Corollary 4.11 (1)]{BurgerIozziParreauPozzetti_RSCCharacterVarieties2}.
We say that $g$ is \emph{biproximal} if both $g$ and $g^{-1}$ are proximal.
In this case $g$ has a unique attracting and repelling fixed point in $\PP V$ and in $\PP V^*$.

The goal of the next proposition is to relate the ultralimits of the attracting and repelling fixed points to the attracting and repelling fixed points of the ultralimit.

\begin{proposition}[Proximality and ultralimits]
\label{propo:UlimProximalElements}
Let $(g_n)$ be an $\omega$-bounded sequence of elements in $\GL(V)\cong\GL(d,\R)$ such that $g_\omega =\ulim g_n \in \GL(d,\RRom)$ is proximal with attracting fixed point $x_\omega  \in \PP \Vom$.
Then for $\omega$-almost all $n \in \N$,  $g_n$ is proximal,  and,  denoting by $x_n \in \PP V$ the attracting fixed point of $g_n$ in $\PP V$,  we have $x_\omega = \ulim x_n$.
\end{proposition}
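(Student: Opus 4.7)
I would prove the two assertions in order, using Proposition \ref{propo:UlimEV} as the main input.

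\emph{Proximality of $g_n$.} By Proposition \ref{propo:UlimEV}, the eigenvalues of $g_\omega$ in $\C_\omega$ counted with multiplicity and sorted by decreasing $\RRom$-modulus are exactly $\lambda_i(g_\omega) = \ulim \lambda_i(g_n)$ for $i = 1, \ldots, d$. Proximality of $g_\omega$ means that $\abs{\lambda_1(g_\omega)}_{\C_\omega} \neq \abs{\lambda_2(g_\omega)}_{\C_\omega}$ in $\RRom$, i.e.\ the ultralimits $\ulim \abs{\lambda_1(g_n)}$ and $\ulim \abs{\lambda_2(g_n)}$ are distinct elements of $\RRom$. Since sequences that coincide $\omega$-almost everywhere have equal ultralimits, this forces $\abs{\lambda_1(g_n)} \neq \abs{\lambda_2(g_n)}$ for $\omega$-almost all $n$, and combined with the descending-modulus convention we get $\abs{\lambda_1(g_n)} > \abs{\lambda_2(g_n)}$ there. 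A non-real top eigenvalue would have its complex conjugate as a distinct eigenvalue of equal modulus, so this also forces $\lambda_1(g_n) \in \R$, showing that $g_n$ is proximal for $\omega$-almost all $n$.

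\emph{Convergence of the fixed points.} For each such $n$, pick a unit vector $\tilde{x}_n \in V$ lifting $x_n$, so $g_n \tilde{x}_n = \lambda_1(g_n) \tilde{x}_n$ and $\norm{\tilde{x}_n} = 1$. The sequence $(\tilde{x}_n)$ is $\omega$-bounded, and by Subsection \ref{subsection:UlimRd} its ultralimit $\tilde{x}_\omega \coloneqq \ulim \tilde{x}_n \in \Vom$ satisfies $\norm{\tilde{x}_\omega} = \lim_\omega \norm{\tilde{x}_n}^{1/\lambda_n} = 1$, so $\tilde{x}_\omega \neq 0$. Taking term-by-term ultralimits of the eigenvector equation and using that $g_\omega(\ulim \tilde{x}_n) = \ulim(g_n \tilde{x}_n)$ together with compatibility of ultralimits with multiplication on $\omega$-bounded scalar sequences (Subsection \ref{subsection:UlimInLinearGroup}), one obtains
\[
g_\omega \tilde{x}_\omega = \lambda_1(g_\omega)\, \tilde{x}_\omega.
\]
Since $g_\omega$ is proximal, the $\lambda_1(g_\omega)$-eigenspace in $\Vom$ is one-dimensional and its projectivization is by definition the attracting fixed point $x_\omega$. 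Hence $\PP(\tilde{x}_\omega) = x_\omega$, and the identification $\PP(\ulim \tilde{x}_n) = \ulim \PP(\tilde{x}_n)$ recorded in Subsection \ref{subsection:UlimProjectiveSpace} yields $\ulim x_n = x_\omega$.

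\emph{Main obstacle.} The delicate point is the first step: one must not try to replace the strict $\RRom$-modulus inequality by a strict inequality of $\R$-valued absolute values, which may fail because non-Archimedean corrections can collapse the latter while keeping the former genuine. The correct argument runs through the weaker statement of \emph{distinctness} in $\RRom$, which is automatically preserved under ultralimits. Once proximality of $g_n$ is in hand, the rest is a clean application of the linearity and multiplicativity of ultralimits, with the unit-norm normalization ensuring that the candidate eigenvector $\tilde{x}_\omega$ is nonzero.
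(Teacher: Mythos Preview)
Your proof is correct and follows essentially the same approach as the paper: use Proposition~\ref{propo:UlimEV} to push the strict modulus gap from $g_\omega$ back to $\omega$-almost all $g_n$, then take the ultralimit of unit-norm eigenvectors to recover $x_\omega$. Your version spells out a few details the paper leaves implicit (reality of $\lambda_1(g_n)$, nonvanishing of $\tilde{x}_\omega$), but the structure is identical.
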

\begin{proof}
Denote by $\lambda_1(g_n), \ldots, \lambda_d(g_n) \in \C$ the generalized eigenvalues of $g_n$ sorted by descending modulus $|\lambda_1(g_n)|\geq  \ldots \geq |\lambda_d(g_n)|$.
Then $\ulim\lambda_1(g_n), \ldots, \ulim\lambda_d(g_n) \in \C_\omega$ are the generalized eigenvalues of $g_\omega$,  see \Cref{propo:UlimEV}.
Since $g_\omega$ is proximal,  we obtain $\ulim \lambda_1(g_n)>|\ulim\lambda_2(g_n)|_{\RRom}$.
Thus $\lambda_1(g_n)>|\ulim\lambda_2(g_n)|$ for $\omega$-almost all $n\in \N$.
This proves the first claim.

Let $v_n \in V$ be a lift of $x_n$ of norm one for $\omega$-almost all $n\in \N$ (for the others we can choose arbitrary vectors in $V$).
Then $\ulim v_n \in \Vom$ is an eigenvector of $\ulim \lambda_1(g_n)$,  and thus $x_n=\PP(\ulim v_n)=x_\omega$.
\end{proof}

\subsection{Ultralimits of convex projective subsets} 
\label{subsection:UlimConvexProjSets}

Let $(\Omega_n)_{n\in\NN}$ be a sequence of convex subsets in $\PP V$. 
We consider the  ultralimit projective space $\PVom$ defined in \Cref{subsection:UlimProjectiveSpace}.

First observe that the ultralimit $\ulim \Omega_n$ of the $\Omega_n$ is a convex subset of $\PVom$. 
This follows from the fact that the ultralimit of a sequence of segments $[x_n,y_n] \subset V$ is the
segment $[x_\omega,y_\omega] \subset V_\omega$, where $x_\omega=\ulim x_n$ and $y_\omega=\ulim y_n$.
We now prove that the boundary behaves well with respect to ultralimits.

\begin{proposition}[Boundary of ultralimits]
\label{prop:ultralimitsOfConvex:boundary}
Let $(\Omega_n)_{n\in\NN}$ be a sequence of convex subsets in
$\PP V$.
Suppose that $\ulim \Omega_n$ is contained in an affine chart.
Then 
\[\partial \ulim \Omega_n = \ulim \partial\Omega_n \;.\]
\end{proposition}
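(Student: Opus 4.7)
The plan is to work in an affine chart containing $\ulim \Omega_n$, so that by \Cref{prop:UlimAffineChartsBounded} the sets $\Omega_n$ are contained in that chart and bounded for $\omega$-almost all $n$, and then prove the two inclusions separately in $\RRom^{d-1}$ with its topology coming from the $\R$-valued $l^\infty$-norm.

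For the inclusion $\ulim \partial \Omega_n \subseteq \partial \ulim \Omega_n$, I take $x_\omega = \ulim x_n$ with $x_n \in \partial \Omega_n$. The point $x_\omega$ lies in $\ulim \overline{\Omega_n} = \ulim \Omega_n$ since ultralimits of subsets are invariant under closure. To exhibit points outside $\ulim \Omega_n$ arbitrarily close to $x_\omega$, I apply Hahn--Banach at each $x_n$ to obtain a unit vector $v_n \in \RR^{d-1}$ defining a supporting half-space $\Omega_n \subseteq \{z : v_n \cdot z \leq v_n \cdot x_n\}$. Since $\norm{v_n} = 1$, the ultralimit $v_\omega = \ulim v_n$ still has $\norm{v_\omega} = 1$, and the supporting inequalities pass term-by-term to the order on $\RRom$, yielding $\ulim \Omega_n \subseteq \{z_\omega : v_\omega \cdot z_\omega \leq v_\omega \cdot x_\omega\}$. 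For any real $r>0$, the perturbations $y_n \coloneqq x_n + (r/2)^{\lambda_n} v_n$ lie in the opposite half-space, hence outside $\Omega_n$, while their ultralimit $y_\omega$ satisfies $\dom(x_\omega, y_\omega) = r/2$ and $v_\omega \cdot y_\omega > v_\omega \cdot x_\omega$. This forces $y_\omega \notin \ulim \Omega_n$, so $x_\omega \in \partial \ulim \Omega_n$.

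For the reverse inclusion $\partial \ulim \Omega_n \subseteq \ulim \partial \Omega_n$, since $\ulim \partial \Omega_n$ is closed in $\PVom$, \Cref{prop:ulimDistToSubset} reduces the problem to showing $\lim_\omega d_n(x_n, \partial \Omega_n)^{1/\lambda_n} = 0$ for every $x_\omega \in \partial \ulim \Omega_n$ and every representative $(x_n)$ with $x_n \in \Omega_n$. I will argue by contradiction: if the limit equals some $c>0$, then $d_n(x_n, \partial \Omega_n) \geq (c/2)^{\lambda_n}$ for $\omega$-almost all $n$. This in particular forces $\Omega_n$ to have non-empty interior (otherwise $x_n \in \Omega_n \subseteq \overline{\Omega_n} = \partial \Omega_n$ and the distance would be zero), so a standard connectedness argument shows that the ball $B(x_n, (c/2)^{\lambda_n})$, which avoids $\partial \Omega_n$ and contains $x_n \in \Omega_n$, must lie entirely in $\Omega_n$. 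Taking ultralimits yields that the open $\dom$-ball of radius $c/2$ around $x_\omega$ is contained in $\ulim \Omega_n$, contradicting $x_\omega \in \partial \ulim \Omega_n$.

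The main technical care needed throughout is the scaling bookkeeping: since distances in $\PVom$ come from $d_n^{1/\lambda_n}$, a point at $\dom$-distance $r$ from $x_\omega$ corresponds to a representative at ambient distance of order $r^{\lambda_n}$ from $x_n$, so the perturbations used in both directions have exponentially small ambient scale. The supporting hyperplanes $v_n$ also need to be handled carefully, to ensure that $v_\omega$ is non-zero and that the inequalities defining the half-spaces survive the passage to $\RRom$; both points follow from the normalization $\norm{v_n}=1$ and the term-by-term compatibility of the order on $\RRom$ with that on $\R$.
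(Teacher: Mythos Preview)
Your proof is correct and follows essentially the same strategy as the paper's. For the inclusion $\ulim\partial\Omega_n \subseteq \partial\ulim\Omega_n$, the paper also moves outward from $x_n \in \partial\Omega_n$ by $\eps^{\lambda_n}$ along a normal direction, but instead of passing the half-space inequality to $\RRom$ it records the stronger fact $d(y_n,\Omega_n)=\eps^{\lambda_n}$ and invokes \Cref{prop:ulimDistToSubset} to get $\dom(y_\omega,\ulim\Omega_n)=\eps>0$; your Hahn--Banach bookkeeping is just a more explicit version of the same construction. For the other inclusion, the paper gives a slightly more direct argument (valid for arbitrary subsets, not only convex ones): from $x_\omega \in \partial\ulim\Omega_n$ one gets $x_\omega \in \ulim(\Omega_n^c)$ via $(\ulim\Omega_n)^c \subseteq \ulim(\Omega_n^c)$, hence a second representative $y_n \in \Omega_n^c$, and then picks $z_n \in \partial\Omega_n$ with $d(x_n,z_n)\leq d(x_n,y_n)$; your contradiction/connectedness route reaches the same conclusion but is a bit heavier.
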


\begin{remarks}
    \noindent\begin{enumerate}
\item  The inclusion $\partial\ulim Y_n \subset \ulim \partial Y_n$ is true for any
    sequence of subsets $(Y_n)$  in $\PP V$ (see the proof below).
\item If $\ulim \Omega_n$ is not contained in an affine chart,  then 
the inclusion $\partial\ulim \Omega_n \subset \ulim \partial\Omega_n$ 
may be strict:
for example with $\Omega_n$ the ball of center $0$ and radius $n^{\lambda_n}$ in
the affine chart  $\R^{d-1} \subset \PV$, we have $\ulim \Omega_n =\PVom$ and  
$\ulim \partial \Omega_n$ is the hyperplane at infinity but $\partial \ulim \Omega_n$ is empty.
  \end{enumerate}
\end{remarks}

\begin{proof}
We first prove that $ \partial \ulim \Omega_n \subset \ulim \partial\Omega_n$. 
Let $x_\omega \in \partial \ulim \Omega_n$. Since $\ulim \Omega_n$ is closed, 
there is a sequence $x_n \in \Omega_n$ such that $x_\omega=\ulim x_n$. 
We have $\dom(x_\omega, (\ulim \Omega_n)^c)=0$.
Since $(\ulim \Omega_n)^c \subset \ulim (\Omega_n^c)$,
\compl{if $\ulim x_n \notin \ulim \Omega_n$ then $x_n \notin \Omega_n$ for $\omega$-almost all
$n$ (else we would have $x_n \in \Omega_n$ for $\omega$-almost all $n$)}%
this implies that
$\dom(x_\omega, \ulim (\Omega_n^c) )=0$.
Thus since $\ulim (\Omega_n^c)$ is closed, we have $x_\omega \in \ulim
  (\Omega_n^c)$, hence we can choose a sequence $y_n \in \Omega_n^c$ such that
$x_\omega=\ulim y_n$
Now choose $z_n \in \partial \Omega_n $ such that $d(x_n,z_n)\leq d(x_n,y_n)$.
Then $\dom(x_\omega,\ulim z_n) \leq \dom(x_\omega,x_\omega)=0$,
therefore $x_\omega=\ulim z_n \in \ulim \partial\Omega_n$.

We now prove the reverse inclusion.
Since $\ulim \Omega_n$ is contained in an affine chart,
by Proposition \ref{prop:ultralimits:affineCharts}
we may reduce to the case where
$\Omega_n \subset \RR^{d-1}$ for all $n \in \N$ (possibly changing $\Omega_n$ on a
$\omega$-null subset of indices).
Let $x_\omega \in \ulim \partial\Omega_n \subset \RRom^{d-1}$. 
Then $x_\omega \in \ulim \ov{\Omega_n}= \ulim \Omega_n$. 
We have to show that $x_\omega$ is not an interior point of $\ulim \Omega_n$.
Fix some $\eps>0$ in $\RR$.
Let  $x_n \in \partial\Omega_n$ be a sequence such that $x_\omega=\ulim x_n$. 
For each $n$, since $\Omega_n$ is convex we can choose a point $y_n\in\RR^{d-1}$
such that 
$d( y_n , \Omega_n) = d( y_n , x_n) = \eps^{\lambda_n}$, 
(where $d$ denotes the Euclidean metric in $\RR^{d-1}$).
Then passing to ultralimits in $\RRom^{d-1}$ we get
\[\dom(y_\omega,x_\omega) = \lim_\omega d( y_n , x_n )^{1/\lambda_n} = \eps \] 
where $y_\omega=\ulim y_n$, and 
\[\dom(y_\omega,\ulim \Omega_n) = \lim_\omega d( y_n , \Omega_n )^{1/\lambda_n} = \eps \] 
by \Cref{prop:ulimDistToSubset}\eqref{it:ulimDistToSubset},  
hence $y_\omega \in (\ulim \Omega_n)^c$.
In particular the ball of radius $\eps$ centered at $x_\omega$  
meets the complement
of $\ulim \Omega_n$, finishing the proof.
\end{proof}

We now study the behavior of the intersection of a convex subset with a line with respect to ultralimits.
First notice that in contrast with the real case,
the intersection of the ultralimit convex
with a line may not be a closed segment, even for semi-algebraic $\Omega_n$,
as the following example shows.

\begin{example}  
\label{exa:UltralimitConvexAndLines:NotSegment}
Consider the ultralimit of the sequence of the semi-algebraic strictly convex $C^1$ subsets
\[ \Omega_n \coloneqq \{(x,y) \in \R^2 \mid x^{2n}+y^{2}<1\}\]
for the scaling sequence  $\lambda_n=n$. 
Then the ultralimit convex $\ulim \Omega_n$ is not strictly convex and the intersection of the 
line $L_\omega=\RR_\omega \times \{-1\}$  with $\ulim \Omega_n$ is   
\[L_\omega \cap \ulim \Omega_n = I \times \{-1\},\]
where $I \subset \RR_\omega$ is the subset of infinitesimal elements, namely
elements given by sequences  $(x_n)\in \RR^{\NN}$ whose $\omega$-limit in $\RR$ is $0$.
In particular,  it is not a closed segment.
\compl{
$\subseteq$: We first prove $L_\omega\cap \ulim\Omega_n \subseteq I \times \{-1\}$.
Let $(x_n,y_n) \in \Omega_n$ such that $\ulim y_n=-1$,  i.e. \ $\lim_\omega|y_n+1|^{1/n}=0$.
In particular,  $\lim_\omega y_n=-1$.
We need to show that $\lim_\omega x_n=0 \in \R$.
We have 
\begin{align*}
\lim_\omega x_n\leq \lim_\omega (1-y_n^2)^{1/2n}&=\lim_\omega (1-y_n)^{1/2n}(1+y_n)^{1/2n}\\
&=\lim_\omega (1+y_n)^{1/2n}\leq \lim_\omega(1+y_n)^{1/n}=0.
\end{align*}

$\supseteq$: We now prove $L_\omega\cap \ulim\Omega_n \supseteq I \times \{-1\}$.
It is clear that $I \times \{-1\} \subseteq L_\omega$.
Thus it is left to show that $I\times \{-1\} \subset \ulim \Omega_n=\ulim \overline{\Omega_n}$.
Let $x_\omega =\ulim x_n \in I$, i.e.\ $\lim_\omega x_n =0$.
Then $\lim_\omega x_n^{2n}=0$ and for $n$ large enough $x_n^{2n}<1$.
Let $y_n\coloneqq -(1-x_n^{2n})^{1/2}$ be a sequence in $\R$.
By definition $y_n^2+x_n^{2n}=1$,  thus $(x_n,y_n) \in \overline{\Omega_n}$ (for $n$ large enough).
We claim that $\ulim y_n=-1$.
We compute
\[
\lim_\omega |y_n+1|^{1/n}= \lim_\omega |1-(1-x_n^{2n})^{1/2}|^{1/n}=\lim_\omega |x_n^n|^{1/n}=
0,\]
since $1-(1-x_n^{2n})^{1/2}$ goes to $0$ at the same speed as $x_n^{n}$.
}
\begin{figure}[h]
\begin{tikzpicture}[scale=.8]
\begin{axis}
[axis lines=middle, domain=-1:1, samples=100, axis on top=true, xmin=-1.2, xmax=1.2, ymax=1.2, ymin=-1.2, height=7cm, width=7cm, xticklabel=\empty, yticklabel=\empty, cycle list={}, tick style={draw=none}]
\addplot+[mark=none,fill=gray!10,draw=black] ({x},{sqrt(1-x^100)}) \closedcycle;
\addplot+[mark=none,fill=gray!10,draw=black] ({x},{-sqrt(1-x^100)}) \closedcycle;

\addplot+[mark=none,fill=gray!30,draw=black] ({x},{sqrt(1-x^10)}) \closedcycle;
\addplot+[mark=none,fill=gray!30,draw=black] ({x},{-sqrt(1-x^10)}) \closedcycle;

\addplot+[mark=none,fill=gray!50,draw=black] ({x},{sqrt(1-x^4)}) \closedcycle;
\addplot+[mark=none,fill=gray!50,draw=black] ({x},{-sqrt(1-x^4)}) \closedcycle;

\addplot+[mark=none,fill=gray!70,draw=black] ({x},{sqrt(1-x^2)}) \closedcycle;
\addplot+[mark=none,fill=gray!70,draw=black] ({x},{-sqrt(1-x^2)}) \closedcycle;
\end{axis}
\end{tikzpicture}
\caption{The subsets $\Omega_n$ for $n=1,2,5,50$.}
\end{figure}
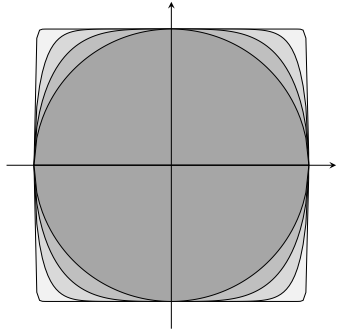
\end{example}

We now show that intersections with lines behave well 
with respect to ultralimits 
when the ultralimit line 
meets the interior of the ultralimit convex.

\begin{proposition}[Ultralimits of convex subsets are well-bordered]
\label{prop-UltralimitConvexAndLines}
Suppose that $\ulim \Omega_n$ is contained in an affine chart.
Let $(L_n)$ be a sequence of projective lines in $\PP V$ and $L_\omega$ its ultralimit.

Suppose that the line $L_\omega$ meets the interior $\Omega_\omega$ of $\ulim \Omega_n$. 
Then for $\omega$-almost all $n$ the line $L_n$ meets $\Omega_n$  
and, denoting $L_n \cap \Omega_n=]a_n,b_n[$ the intersection 
and $a_\omega=\ulim a_n$, $b_\omega=\ulim b_n$ the ultralimits of the endpoints,
we have 
\[ L_\omega \cap \ulim \Omega_n=[a_\omega,b_\omega] \text{ and } L_\omega \cap \Omega_\omega = ]a_\omega,b_\omega[ \;.\]
In particular,  the open convex $\Omega_\omega$ is well-bordered.
\end{proposition}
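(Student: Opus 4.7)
The plan is to parametrize the lines $L_n$ and $L_\omega$ compatibly in an affine chart containing $\ulim \Omega_n$, reducing the proposition to the study of ultralimits of open intervals on the real line. By \Cref{prop:UlimAffineChartsBounded}, we may assume that each $\Omega_n$ lies in a common affine chart $\R^{d-1}$ (and is bounded there in the scaled sense) and likewise that $L_n$ meets this chart for $\omega$-almost all $n$. Fixing $p_\omega \in L_\omega \cap \Omega_\omega$ and a second point $q_\omega \in L_\omega \setminus \{p_\omega\}$, we lift them to $\omega$-bounded sequences $p_n, q_n \in L_n$; setting $u_n = q_n - p_n$ and $u_\omega = q_\omega - p_\omega$, the affine parametrizations $\phi_n \from \R \to L_n$, $t \mapsto p_n + t u_n$, satisfy $\phi_\omega(\ulim t_n) = \ulim \phi_n(t_n)$ for every $\omega$-bounded sequence $(t_n)$ in $\R$.

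The technical core of the argument, which I expect to be the main obstacle, is the following interior detection lemma: \emph{if $x_\omega \in \Omega_\omega = \Int{\ulim \Omega_n}$ and $(x_n)$ is any $\omega$-bounded sequence in $\R^{d-1}$ with $\ulim x_n = x_\omega$, then $x_n \in \Omega_n$ for $\omega$-almost all $n$.} The proof is by contradiction. If $x_n \notin \Omega_n$ for $\omega$-almost all $n$, the supporting hyperplane theorem applied to the convex set $\Omega_n$ yields a linear functional $\ell_n$ on $\R^{d-1}$ with $\|\ell_n\|_{\mathrm{op}} = 1$ and $\ell_n(x_n) \geq \sup_{\Omega_n} \ell_n$. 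The sequence $(\ell_n)$ is $\omega$-bounded, so its ultralimit $\ell_\omega$ is a non-zero linear functional on $\RRom^{d-1}$ satisfying $\ell_\omega(z) \leq \ell_\omega(x_\omega)$ for every $z \in \ulim \Omega_n$. But since $x_\omega$ lies in an open ball $B(x_\omega, \eps) \subset \ulim \Omega_n$ for some $\eps > 0$, such a ball cannot fit in the closed half-space $\{\ell_\omega \leq \ell_\omega(x_\omega)\}$---contradiction.

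Applying the lemma to $p_\omega$ yields $p_n \in \Omega_n$ for $\omega$-almost all $n$, so $\phi_n^{-1}(\Omega_n)$ is a non-empty open convex subset of $\R$, hence an open interval $]\alpha_n, \beta_n[$. Boundedness of $\Omega_n$ in the chart together with $u_\omega \neq 0$ forces $\alpha_n, \beta_n$ to be $\omega$-bounded, and we set $\alpha_\omega = \ulim \alpha_n$, $\beta_\omega = \ulim \beta_n$ in $\RRom$, $a_\omega = \phi_\omega(\alpha_\omega)$, $b_\omega = \phi_\omega(\beta_\omega)$. The inclusion $[a_\omega, b_\omega] \subset L_\omega \cap \ulim \Omega_n$ follows by taking ultralimits of sequences in $[a_n, b_n] \subset \overline{\Omega_n}$ together with $\ulim \overline{\Omega_n} = \ulim \Omega_n$. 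For the reverse, given $x_\omega = \phi_\omega(t_\omega) \in L_\omega \cap \ulim \Omega_n$ with say $t_\omega > \beta_\omega$, the standard convex-geometric fact that $[p_\omega, x_\omega) \subset \Omega_\omega$ (valid because $p_\omega \in \Int{\ulim \Omega_n}$ and $x_\omega \in \ulim \Omega_n$, which is convex) produces some $s \in (\beta_\omega, t_\omega) \cap \RRom$ with $\phi_\omega(s) \in \Omega_\omega$; the lemma then forces $s < \beta_n$ for $\omega$-almost all $n$, hence $s \leq \beta_\omega$, contradicting $s > \beta_\omega$.

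The identification $L_\omega \cap \Omega_\omega = ]a_\omega, b_\omega[$ follows from the same ingredients: the inclusion $\supset$ comes from the segment property applied between $p_\omega$ and $a_\omega$ respectively $b_\omega$, while $\subset$ follows because the endpoints $a_n, b_n$ lie in $\partial \Omega_n$, so by \Cref{prop:ultralimitsOfConvex:boundary} their ultralimits $a_\omega, b_\omega$ lie in $\partial \ulim \Omega_n$ and are thus excluded from $\Omega_\omega$. Since $L_\omega$ was an arbitrary affine line meeting $\Omega_\omega$, and the endpoints $a_\omega, b_\omega$ lie in the affine chart, this proves that $\Omega_\omega$ is well-bordered.
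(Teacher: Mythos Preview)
Your proof is correct but follows a different route than the paper. The paper never isolates your ``interior detection lemma''; instead, after establishing $[a_\omega,b_\omega]\subset I\coloneqq L_\omega\cap\ulim\Omega_n$, it argues purely from convexity: the relative interior $J$ of $I$ in $L_\omega$ is contained in $\Omega_\omega$ (segment property, since $L_\omega$ meets $\Omega_\omega$), while $a_\omega,b_\omega\in\ulim\partial\Omega_n=\partial\ulim\Omega_n$ by \Cref{prop:ultralimitsOfConvex:boundary}, so $a_\omega,b_\omega\notin J$; but a convex subset $I$ of a line containing $[a_\omega,b_\omega]$ with both endpoints outside its relative interior must equal $[a_\omega,b_\omega]$. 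This is shorter and avoids separating hyperplanes, yet it silently presupposes that $L_n\cap\Omega_n\neq\emptyset$ for $\omega$-almost all $n$ so that $a_n,b_n$ exist---a claim stated in the proposition but not actually argued in the paper's proof. Your supporting-hyperplane lemma is precisely what supplies this missing step (and you reuse it for the reverse inclusion), so your approach is more self-contained while the paper's is more economical once that point is granted. One small slip: in your reverse-inclusion step you write ``$s<\beta_n$'', but $s\in\RRom$; you mean $s_n<\beta_n$ for a representing sequence $(s_n)$ of $s$, whence $s=\ulim s_n\leq\beta_\omega$.
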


\begin{proof}
Since $\ulim \Omega_n$ is contained in an affine chart,
by \Cref{prop:ultralimits:affineCharts}
we may reduce to the case where
$\Omega_n \subset \RR^{d-1}$ for all $n \in\N$ and $\ulim \Omega_n \subset \RRom^{d-1}$.

We have
\[\ulim (L_n \cap \ov{\Omega_n}) \subset (\ulim L_n) \cap (\ulim \ov{\Omega_n}) 
= L_\omega \cap  \ulim \Omega_n\]
and $\ulim [a_n,b_n]=[a_\omega,b_\omega]$, 
thus  $[a_\omega,b_\omega] \subset L_\omega \cap \ulim \Omega_n$.

We now show  that $[a_\omega,b_\omega]= L_\omega \cap \ulim \Omega_n$.
Let $I=L_\omega \cap \ulim \Omega_n$ and denote by $J$ the relative interior of $I$. 
Since $\ulim \Omega_n$ is convex and $L_\omega$ meets its interior $\Omega_\omega$,
we have  $J \subset L_\omega \cap \Omega_\omega$.
By \Cref{prop:ultralimitsOfConvex:boundary} we have that
$a_\omega,b_\omega \notin \Omega_\omega$, hence $a_\omega,b_\omega \notin J$.
As $[a_\omega,b_\omega] \subset I$ and $I$ is convex, this implies that $[a_\omega,b_\omega] = I$. 
Hence $]a_\omega,b_\omega[=J \subset L_\omega \cap \Omega_\omega$ which concludes since $L_\omega \cap \Omega_\omega \subset I$.
\end{proof}

\subsection{%
\texorpdfstring%
{Proof of \Cref{thm:Intro:UltralimitsHilbertGeometries}}%
{Proof of Theorem A}
}
\label{subsection:ProofUlimHilbMetrics}
Let $(\Omega_n)$ be a sequence of non empty open convex subsets in
the projective space $\PP V$.
Let $o_n \in \Omega_n$ and let $(\lambda_n)$  be a {\em scaling sequence}, 
namely a sequence  of real numbers  such that $\lambda_n\geq 1$ and 
$\lambda_n \to \infty$. 

Let 
\[X_\omega \coloneqq \Ulim_\omega{(\Omega_n,\tfrac{1}{\lambda_n} d_{\Omega_n},o_n)}\] 
be the ultralimit of the sequence of pointed metric spaces
$(\Omega_n,\tfrac{1}{\lambda_n} d_{\Omega_n},o_n)$, where $d_{\Omega_n}$ 
is the Hilbert metric on $\Omega_n$.

Let $\PVom$  be the asymptotic cone  of the projective space $\PP V$
for the scaling sequence $(\lambda_n)$.
We denote $\Omega_\omega$ the interior of the convex subset  $\ulim \Omega_n \subset\PVom$.
It is an open convex subset of $\PVom$ (possibly empty).

We have the following characterization of points in $\Omega_\omega$: they
correspond to sequences which do not go to the boundary too quickly.
More precisely we have
\begin{equation}
\label{eq:UltralimitConvexCriterionInteriorPoint}
x_\omega \in \Omega_\omega
\Leftrightarrow
\lim_\omega d(x_n, \partial\Omega_n)^{1/\lambda_n}>0 \;.
\end{equation}
Indeed, by Proposition \ref{prop:ultralimitsOfConvex:boundary} 
we have
\[\dom(x_\omega, \partial\Omega_\omega) 
=\dom(x_\omega, \ulim \partial\Omega_n) 
= \lim_\omega d(x_n, \partial\Omega_n)^{1/\lambda_n} \;.\]

Let $\oom=\ulimPVom{o_n} \in \ulim \Omega_n$ be the ultralimit of the sequence
$(o_n)$ in $\PVom$. 

\begin{theorem}[\Cref{thm:Intro:UltralimitsHilbertGeometries}]
\label{thm:UltralimitsHilbertGeometries}
Suppose that $\ulim \Omega_n$ is contained in an affine chart
and $\oom \in \Omega_\omega$. 
\begin{enumerate}
\item 
The map
\[\Phi \from
\begin{array}[t]{cll}
{\prod_n \Omega_n} & \rightarrow & {\ulim \Omega_n} \\
{x=(x_n)} & \mapsto & {\ulimPVom x_n}
\end{array}\]   
sends the pseudo-distance $\dom$ 
to the Hilbert pseudo-distance. 
In particular it induces 
an isometry $\ov{\Phi}\from \Xom \to X_{\Omega_\omega}$, satisfying
$\ov{\Phi}(\ulimXom x_n)=\overline{\ulimPVom x_n}$.

\item 
Let $\Gamma$ be a finitely generated group, and $\rho_n \from \Gamma\to\PGL(V)$ be a representation preserving $\Omega_n$ for each $n \in \N$.
Suppose that for every $\gamma$ in a finite symmetric generating set of $\Gamma$, we have
\[\lim_\omega\, \norm{\rho_n(\gamma)}^{1/\lambda_n} < \infty \]
for any fixed norm $\norm{\cdot}$ on $\End(V)$.
Then $(\rho_n)$ induces a well-defined isometric action of $\Gamma$ on $\Xom$ and on $X_{\Omega_\omega}$, for which the map $\ov{\Phi}$ is equivariant.
\end{enumerate}
\end{theorem}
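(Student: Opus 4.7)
My plan is to reduce the verification to a cross-ratio computation along a single projective line, and then to exploit the definition of the Robinson absolute value $\absom{\cdot}$ as $\lim_\omega\abs{\cdot}^{1/\lambda_n}$, combined with the well-borderedness of $\Omega_\omega$ established in Proposition~\ref{prop-UltralimitConvexAndLines}.

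First I would establish the core cross-ratio identity. Given $x=(x_n)\in\prod_n\Omega_n$ with $x_n\neq o_n$ $\omega$-almost surely, the line $L_\omega$ through $\oom$ and $x_\omega\coloneqq\ulimPVom x_n$ meets $\Omega_\omega$ at $\oom$, so Proposition~\ref{prop-UltralimitConvexAndLines} yields, for $\omega$-almost all $n$, $L_n\cap\Omega_n=\,]a_n,b_n[$ and $L_\omega\cap\Omega_\omega=\,]a_\omega,b_\omega[$ with $a_\omega=\ulim a_n$ and $b_\omega=\ulim b_n$. Well-borderedness of $\Omega_n$ gives $d_{\Omega_n}(o_n,x_n)=\log\abs{\CR{a_n,o_n,x_n,b_n}}$, and since the cross-ratio is rational in coordinates while $\absom{\cdot}=\lim_\omega\abs{\cdot}^{1/\lambda_n}$, taking the rescaled $\omega$-limit gives
\[\lim_\omega \tfrac{1}{\lambda_n} d_{\Omega_n}(o_n,x_n)=\log\absom{\CR{a_\omega,o_\omega,x_\omega,b_\omega}}.\]
Finiteness of the left-hand side thus forces $x_\omega\in\,]a_\omega,b_\omega[\,\subseteq\Omega_\omega$, so $\Phi$ sends $\omega$-bounded sequences into $\Omega_\omega$. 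The same argument with $(x_n,y_n)$ in place of $(o_n,x_n)$ — valid because the line through $x_\omega,y_\omega$ still meets $\Omega_\omega$ once we know both endpoints lie there — yields $\dom(x,y)=d_{\Omega_\omega}(x_\omega,y_\omega)$. Surjectivity of $\overline{\Phi}$ follows at once: any $p\in\Omega_\omega\subseteq\ulim\Omega_n$ equals $\ulim p_n$ with $p_n\in\Omega_n$, and the identity above shows that $(p_n)$ is $\omega$-bounded from $o$.

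Part~(2) is then formal. The hypothesis on $\norm{\rho_n(\gamma)}^{1/\lambda_n}$ produces a well-defined homomorphism $\rho_\omega\from\Gamma\to\PGL(\Vom)$, as recalled in Section~\ref{subsection:UlimInLinearGroup}; each $\rho_\omega(\gamma)$ preserves $\ulim\Omega_n$, hence its interior $\Omega_\omega$, giving an isometric action on $X_{\Omega_\omega}$. On $\Xom$, the termwise action by $\rho_n(\gamma)$ is well-defined because each $\rho_n(\gamma)$ is a $d_{\Omega_n}$-isometry and $(\rho_n(\gamma)o_n)$ is $\omega$-bounded (its ultralimit is $\rho_\omega(\gamma)\oom\in\Omega_\omega$, so Part~(1) supplies finiteness of its $\dom$-distance to $o$). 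Equivariance of $\overline{\Phi}$ is tautological from $\overline{\Phi}(\ulim \rho_n(\gamma)x_n)=\rho_\omega(\gamma)\overline{\Phi}(\ulim x_n)$.

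The main technical point is justifying the interchange $\lim_\omega\abs{\CR{a_n,o_n,x_n,b_n}}^{1/\lambda_n}=\absom{\CR{a_\omega,o_\omega,x_\omega,b_\omega}}$. Using Proposition~\ref{prop:UlimAffineChartsBounded}, one passes to an affine chart containing $\ulim\Omega_n$ in which all four points become $\omega$-bounded sequences in $\R^{d-1}$ with the expected ultralimits; multiplicativity of $\absom{\cdot}$ then reduces the identity to the elementary fact $\absom{\ulim t_n}=\lim_\omega\abs{t_n}^{1/\lambda_n}$. The hypothesis $\oom\in\Omega_\omega$ is precisely what rules out the degenerate case $\oom\in\{a_\omega,b_\omega\}$ that would produce $0/0$ indeterminacies in the cross-ratio.
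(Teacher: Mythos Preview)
Your proof is correct and follows essentially the same route as the paper's: reduce to a single line via Proposition~\ref{prop-UltralimitConvexAndLines}, identify $\dom(x,y)$ with $\log\absom{\CR{a_\omega,x_\omega,y_\omega,b_\omega}}$ through the Robinson absolute value, and bootstrap from the base point $o$ to arbitrary pairs. The only cosmetic difference is that the paper defines $L_\omega$ as $\ulim L_n$ (which is cleaner when $x_\omega=o_\omega$) and cites \cite[Proposition~5.4]{Parreau_InvariantWeaklyConvexCocompactSubspacesSurfaceGroupsA2Buildings} for the cross-ratio ultralimit identity rather than sketching it via affine charts as you do.
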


\begin{remarks}
\label{rem:UltralimitConvexCriterion}
 \noindent \begin{enumerate}
  \item 
\label{rem:UltralimitConvexCriterionAffineChart}
The hypotheses of the theorem are always
satisfied up to replacing $\Omega_n$ by $g_n(\Omega_n)$ for some $g_n \in
\PGL(V)$. 
Indeed we may then suppose that $\Omega_n \subset \RR^{d-1}$, $o_n=0$ and $\ov{B}(0,1)\subset \Omega_n \subset \ov{B}(0,r)$ for some fixed $r \in \RR_{>0}$
(\cite{Benzecri_VarietiesLocalementAffinesProjectives},  see also \cite[\S 9]{Marquis_AroundGroupsHilbertGeometry}).
Using the notations from \Cref{subsection:VectorSpacesNAOrderedFields} and \Cref{rem:UlimVSExtension},  we have
 $\ov{B}_{\|\cdot\|_{2,\RRom}}(0,1) \subset \ulim \Omega_n \subset \ov{B}_{\|\cdot\|_{2,\RRom}}(0,r) \subset \RRom^{d-1}$.
Thus $\oom=0$ is in $\Omega_\omega$.
\item 
Note that $\Omega_\omega$ is empty if and only if $\ulim \Omega_n$ is contained in a proper projective subspace.  
In particular, in the setting of the second point of the theorem, the
hypothesis that $\Omega_\omega$ is non-empty is always satisfied if $\rho_\omega$ is
non-parabolic,
i.e.\ $\rho_\omega(\Gamma)$ does not lie in a proper parabolic subgroup of $\PSL(d,\RRom)$.
\end{enumerate}
\end{remarks}

\begin{proof}[Proof of \Cref{thm:UltralimitsHilbertGeometries}]
Recall the construction of $\Xom$:  
we have a pseudo-distance $\dom$ on $\prod_n \Omega_n$ with values in $[0,\infty]$ 
defined by
 \[\dom(x,y) \coloneqq \lim_\omega\tfrac{1}{\lambda_n} d_{\Omega_n}(x_n,y_n) \]
for $x=(x_n)$ and $y=(y_{n})$ in $\prod_n \Omega_n$,
and $\Xom$ is defined as the quotient metric space, of $\omega$-bounded sequences,  namely the
quotient of the subspace 
\[\etp{\prod_n \Omega_n} \coloneqq \big\{x \in \prod_n \Omega_n \mid \dom (o,x) < \infty \big\}\]
by the equivalence relation $\dom(x,y)=0$, endowed with the metric
induced by $\dom$.
It is enough to  show that  the map
\[ \Phi \from
\begin{array}[t]{cll}
{\prod_n \Omega_n} & \rightarrow & {\PVom} \\
{x=(x_n)} & \mapsto & {x_\omega=\ulimPVom x_n}
\end{array}\]   
sends the subspace $\etp{\prod_n \Omega_n}$ of $\omega$-bounded sequences 
to $\Omega_\omega$ and 
preserves the pseudo-distances, i.e.\ for all $x$ and $y$ in $\prod_n \Omega_n$ we have
\[\dom(x,y)=d_{\Omega_\omega}(x_\omega,y_\omega) \;.\]
Let thus $x=(x_n)$ and $y=(y_n)$ in $\prod_n\Omega_n$.
Let $L_n \subset \PP V$ be a line through $x_n$ and $y_n$,   and let $a_n,b_n \in L_n \cap \partial \Omega_n$ with $a_n,x_n,y_n,b_n$ in that order.
Denote by $a_\omega,x_\omega,y_\omega,b_\omega \in \ulim \Omega_n$
the corresponding ultralimits in $\PVom$, which are on the line 
$L_\omega \coloneqq \ulim {L_n}$ and in the same order. 
Note that some of these points may collapse, for example we may have
$y_\omega=b_\omega$.
We first show that the pseudo-distance $\dom(x,y)$ can be expressed in
terms of the cross-ratio $\CR{a_\omega,x_\omega,y_\omega,b_\omega}$ when $x_\omega \in \Omega_\omega$.

Suppose that $x_\omega \in \Omega_\omega$.
Since $L_\omega \cap \Omega_\omega$ is non-empty,  and $\ulim \Omega_n$ is included in some affine chart,  \Cref{prop-UltralimitConvexAndLines} implies that
\[ L_\omega \cap \Omega_\omega = ]a_\omega,b_\omega[ \;.  \]
In particular $ a_\omega, x_\omega, b_\omega$ are three distinct points
and by \cite[Proposition 5.4]{Parreau_InvariantWeaklyConvexCocompactSubspacesSurfaceGroupsA2Buildings} we have
\[ \CR{a_\omega,x_\omega,y_\omega,b_\omega}=\ulim \CR{a_n,x_n,y_n,b_n} \] 
in $\RRom \cup \{\infty\}$.
Taking logarithm of absolute values and $\omega$-limits it follows that 
\[ \dom(x,y)
=\lim_\omega\tfrac{1}{\lambda_n} d_{\Omega_n}(x_n,y_n) 
= \log \abs{\CR{a_\omega,x_\omega,y_\omega,b_\omega}} \]
in $[0,\infty]$. 
In particular for $x_\omega,y_\omega \in \Omega_\omega$ we have 
\[d_{\Omega_\omega}(x_\omega,y_\omega) = \dom(x,y)  \; ,\]
since \(d_{\Omega_\omega}(x_\omega,y_\omega)=\log \abs{\CR{a_\omega,x_\omega,y_\omega,b_\omega}}\) by \Cref{prop-UltralimitConvexAndLines}.

Taking $(x_n)=(o_n)=o$ we  get that
$\dom(o,y) < \infty$ 
if and only if 
 $y_\omega \neq b_\omega$, 
 which is equivalent to
$y_\omega \in \Omega_\omega$ since $y_\omega \in [a_\omega,b_\omega]$ and $]a_\omega,b_\omega[ \subset \Omega_\omega$. This
shows that  
\begin{equation}
\label{eq:om-boundedIffUltralimitInInterior}
\lim_\omega\tfrac{1}{\lambda_n} d_{\Omega_n}(o_n,y_n) < \infty \text{ if and only if }
y_\omega \in \Omega_\omega \;.
\end{equation}
In particular,  the map $\Phi$ sends the set $\etp{\prod_n \Omega_n}$ of $\omega$-bounded sequences to $\Omega_\omega$, 
which concludes the proof of the first point.

For the second point, recall that we want to prove that 
$(\rho_n)$ induces well-defined isometric actions of $\Gamma$ on
$\Xom$ and on $X_{\Omega_\omega}$. We first deal with  the action on  $X_{\Omega_\omega}$.

Observe that by assumption 
$(\rho_n(\gamma))$ is $\omega$-bounded in $\GL(V)$ with respect to  the scaling sequence $(\lambda_n)$
for any $\gamma\in F$, hence for any $\gamma\in\Gamma$.
Thus $(\rho_n(\gamma))$ has a well-defined ultralimit $\rho_\omega(\gamma)$ in
$\PGL(\Vom)$, whose action on $\PVom$ satisfies
\[\rho_\omega(\gamma) (\ulim x_n) =\ulim \rho_n(\gamma) (x_n)\]
for every sequence $(x_n)$ in $\PP V$
(see \Cref{subsection:UlimInLinearGroup}). 
Since $\rho_n(\gamma)$ preserves $\Omega_n$, we have that $\rho_\omega(\gamma)$
preserves $\ulim \Omega_n$,  hence its interior $\Omega_\omega$ and the associated
Hilbert pseudo-distance,
thus induces an isometry of $X_{\Omega_\omega}$.

We now turn to the action on $\Xom$.
Since
$\rho_\omega(\gamma)(\oom)=\ulim_{\PVom} \rho_n(\gamma)(o_n)$ is in $\Omega_\omega$, 
we have
$\lim_\omega\tfrac{1}{\lambda_n} d_{\Omega_n}(o_n , \rho_n(\gamma) o_n) < \infty$
by \eqref{eq:om-boundedIffUltralimitInInterior}.
Hence it induces an isometry $\rho_\omega'(\gamma)$
of $\Xom$ defined by
\[\rho_\omega'(\gamma) (\ulim_{\Xom} x_n) \coloneqq \ulim_{\Xom} \rho_n(\gamma) (x_n)\; .\]
The equivariance of the map $\ov{\Phi}$
follows from the first point, since
\[\ov{\Phi}(\ulimXom \rho_n(\gamma) (x_n))=\overline{\ulimPVom \rho_n(\gamma) (x_n)}=\rho_\omega(\gamma) (\overline{\ulimPVom x_n})\;.\qedhere \]
\end{proof}

\subsection{%
\texorpdfstring{Asymptotic cones of real polytopes and proof of \Cref{thm:Intro:AsymptoticConesPolytopalHilbertGeometries}}
{Asymptotic cones of real polytopes and proof of Theorem E}}
\label{subsection:AsymptoticConeRealPolytope}

In this section we combine \Cref{thm:Intro:UltralimitsHilbertGeometries} and \Cref{thm:Intro:NAPolytopalHilbertMetricSpace} to show that all asymptotic cones of a real polytope $\polyt$ with Hilbert metric 
are isometric to the geometric realization $\Model=\Model(\polyt,\RR)$ of $\Flags_*(\polyt)$ modeled on $\Aap$.
Denote $o=\ov{0}$ the cone point of $\Model$.

Recall from \Cref{subsection:UltralimitsMetricSpaces} that given a metric space $(E,d)$,  a scaling sequence
is a sequence $(\lambda_n)$ of real numbers  
such that $\lambda_n\geq 1$ and $\lambda_n \to \infty$.
The asymptotic cone of $(E,d)$ with respect to $(\lambda_n)$,  observation points $(o_n)$ in $E$ and ultrafilter $\omega$ is the $\omega$-ultralimit of 
the sequence of pointed metric spaces $(E,\tfrac{1}{\lambda_n}d, o_n)$.

\begin{theorem}[\Cref{thm:Intro:AsymptoticConesPolytopalHilbertGeometries}]
\label{thm:AsymptoticConesPolytopalHilbertGeometries}
Let $\polyt \subset \RR^{d-1} \subset\PP(\RR^d)$ be a bounded real convex polytope with non-empty interior $\Omega  \subset \RR^{d-1} \subset\PP(\RR^d)$.
Let $\omega$ be a non-principal ultrafilter on $\NN$,  and let $(X_\omega,d_\omega,o_\omega)$ be the asymptotic cone of $(\Omega, d_\Omega)$ for the scaling sequence $(\lambda_n)_{n \in \N}$ and fixed observation points $o_n=x_0\in \Omega$.
Then $(X_\omega,o_\omega)$ is isometric to $(\Model,o)$.
\end{theorem}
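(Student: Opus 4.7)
The plan is to combine Theorem \ref{thm:UltralimitsHilbertGeometries} (ultralimits of Hilbert geometries) with Theorem \ref{thm:NAPolytopalHilbertMetricSpace} (polytopal non-Archimedean Hilbert metric spaces), and then to reconcile the basepoints.

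First I would apply Theorem \ref{thm:UltralimitsHilbertGeometries} to the constant sequence $\Omega_n = \Omega$ with observation points $o_n = x_0$, so that $\Xom$ is literally the asymptotic cone in question. The two hypotheses are immediate: since $\polyt$ is bounded in some affine chart, $\ulim \Omega_n \subset \RRom^{d-1}$ sits in the corresponding affine chart of $\PVom$; and since $d(x_0,\partial\Omega)$ is a fixed positive real number, $\lim_\omega d(x_0,\partial\Omega)^{1/\lambda_n}=1>0$, which by the criterion \eqref{eq:UltralimitConvexCriterionInteriorPoint} places $\oom$ in the interior $\Omega_\omega$ of $\ulim \Omega_n$. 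The theorem then yields an isometry $\overline{\Phi}\from(\Xom,\oom)\to(X_{\Omega_\omega},\pi(x_0))$.

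Second, I would identify $\Omega_\omega$ as the interior of an integral polytope in the sense of Theorem \ref{thm:NAPolytopalHilbertMetricSpace}. By Remark \ref{rem:UlimVSExtension}, $\ulim\polyt = \polyt_{\RRom}$, the $\RRom$-extension of $\polyt$, whose vertices are constant real sequences and therefore satisfy $\absom{\cdot}=1$, hence lie in the valuation ring $\mathcal{O}$ of $\RRom$. Thus $\polyt_{\RRom}$ is integral over the subfield $\KK=\RR\subset\mathcal{O}$, and the value group of $\RRom$ is all of $\RR$. Theorem \ref{thm:NAPolytopalHilbertMetricSpace} then supplies an isometry $\overline{\Psi}\from X_{\Omega_\omega}\to \Model(\polyt_{\RRom},\RR)=\Model$, the last equality holding because $\polyt_{\RRom}$ and $\polyt$ share the same flag complex.

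The main point remaining --- the only non-routine step --- will be showing that the basepoints line up. Theorem \ref{thm:NAPolytopalHilbertMetricSpace} sends the barycenter $b_\polyt$ to the cone point $o$, but our basepoint $x_0$ is an arbitrary interior real point, not necessarily the barycenter. I would show that $\pi(x_0)=\pi(b_\polyt)$ in $X_{\Omega_\omega}$ by checking that $d_{\Omega_\omega}(x_0,b_\polyt)=0$. Since $\Omega_\omega$ is well-bordered by Proposition \ref{prop-UltralimitConvexAndLines}, this pseudo-distance equals $\log\absom{\CR{a,x_0,b_\polyt,b}}$, where $a,b$ are the endpoints of the projective line segment through $x_0,b_\polyt$ inside $\Omega_\omega$. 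Both $a$ and $b$ lie in $\partial\polyt\subset\RR^{d-1}$, so this cross-ratio is a nonzero real number, which has absolute value $1$ in $\RRom$. Composing $\overline{\Psi}\circ\overline{\Phi}$ then produces the desired pointed isometry $(\Xom,\oom)\to(\Model,o)$.
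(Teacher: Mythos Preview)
Your proposal is correct and follows essentially the same route as the paper: verify the hypotheses of Theorem~\ref{thm:UltralimitsHilbertGeometries} for the constant sequence, identify $\ulim\Omega$ with $\polyt_{\RRom}$ via Remark~\ref{rem:UlimVSExtension}, and then apply Theorem~\ref{thm:NAPolytopalHilbertMetricSpace}. The only cosmetic difference is in the basepoint step: the paper argues on the $X_\omega$ side that $\ulim_{X_\omega} x_0 = \ulim_{X_\omega} b_\polyt$ because $d_\Omega(x_0,b_\polyt)$ is a fixed real constant and hence $\tfrac{1}{\lambda_n}d_\Omega(x_0,b_\polyt)\to 0$, whereas you argue on the $X_{\Omega_\omega}$ side via the cross-ratio; through the isometry $\overline{\Phi}$ these are the same computation.
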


\begin{proof}
We first apply Theorem \ref{thm:Intro:UltralimitsHilbertGeometries} to identify $X_\omega$ with the metric space associated to $\polyt_{\FF}$ with its Hilbert geometry for some non-Archimedean real closed field $\FF$.
Let $\ulim \Omega$ be the ultralimit of $\Omega$ (seen as a constant
sequence of subsets) in the asymptotic cone $\PVom$ of the projective space
$\PP V$ for the scaling sequence $(\lambda_n)$ and ultrafilter $\omega$; refer to \Cref{subsection:UlimConvexProjSets}.
%
Since $\Omega$ is bounded in the standard affine chart,  $\ulim \Omega$  is contained in the standard affine chart.
Moreover, since  $d(o , \Omega^c)>0$ {(in particular constant)},  we have 
$\lim_\omega d( o ,\Omega^c )^{1/\lambda_n} = 1>0$.
Hence  $\oom = \ulim o$ is in the interior $\Omega_\omega$ of {$\ulim \Omega$} by \Cref{eq:UltralimitConvexCriterionInteriorPoint} {in \Cref{subsection:ProofUlimHilbMetrics}.}
Hence we can apply Theorem \ref{thm:Intro:UltralimitsHilbertGeometries} 
and we get an isometry $\ov{\Phi} \from \Xom \to X_{\Omega_\omega}$ satisfying
$\ov{\Phi}(\ulimXom x_n)=\overline{\ulimPVom x_n}$.

The ultralimit $\ulim \Omega =\ulim \polyt$ of $\polyt$ is the $\RRom$-extension of $\polyt$ (\Cref{rem:UlimVSExtension}),
where $\RRom=\Ulim_\omega(\RR, \abs{\cdot}^{1/\lambda_n},0)$ is the
associated Robinson field.
In particular $\ulim \Omega= \ulim \polyt= \polyt_{\RRom}$.
Moreover $\ov{\Phi}$ sends $\oom$ to the barycenter $b_\omega$ of $\polyt_{\RRom}$: indeed denoting $b$ the barycenter of $\polyt$ we have that $\ulimXom o =
\ulimXom b$,  since $d_\Omega(o,b)$ is constant and hence $\lim_\omega \tfrac{1}{\lambda_n}d_\Omega(o , b ) = 0$.
Furthermore,  we have $\ulimPVom b=b_\omega$ since the ultralimit in $\PVom$ of a constant sequence preserves the barycenter.
Putting everything together we get
\[\ov{\Phi}(\oom)=\ov{\Phi}(\ulimXom o)=\ov{\Phi}(\ulimXom b)=\ulimPVom b=b_\omega.\]
We conclude by applying \Cref{thm:Intro:NAPolytopalHilbertMetricSpace} with
$\FF=\RRom$ and remarking that $\RRom$ has surjective valuation.
\end{proof}

\subsection{%
\texorpdfstring%
{Convex real projective structures and proof of  \Cref{thm:Intro:DegenerationsConvProjStr}}%
{Convex real projective structures and proof of Theorem B}
}
\label{subsection:ConvexRealProjStr}

The goal of this section is to use the above result to associate to a closed point in the boundary of the real spectrum compactification of the space of convex real projective structures a properly convex open subset of the projective space over a Robinson field.
%

Let $M$ be a closed connected oriented topological manifold of dimension $d-1 \geq 2$ with hyperbolic fundamental group $\Gamma=\pi_1(M)$.
Denote by $\PS(M)$ the space of marked convex real projective structures on $M$.
Via the holonomy representations the space $\PS(M)$ can be identified with a subset of the $\PSL(d,\R)$-character variety of reductive representations modulo conjugation
\[\mathcal{X}(\Gamma, d) \coloneqq \Hom_\textnormal{red}(\Gamma,\PSL(d,\R))/\PSL(d,\R).\]
where two representations are identified if they are conjugate.
This is true,  since the holonomy representation of a marked convex real projective structure is irreducible \cite{Vey_AutomAffinesOuvertsConvexesSaillants}.
It turns out that $\PS(M)$ is a union of connected components of $\mathcal{X}(\Gamma,d)$ \cite{Koszul_DeformationsConnexionsLocalementPlates, Benoist_ConvexesDivisiblesIII},  and thus a closed and semi-algebraic subset of $\mathcal{X}(\Gamma,d)$.
\compl{This is true if and only if $\Gamma$ has trivial virtual center, e.g.\ if $M$ is hyperbolic.}

As such $\PS(M)$ admits a compactification using the real spectrum, denoted by $\overline{\PS(M)}^{\RSp}\subset \overline{\mathcal{X}(\Gamma,d)}^{\RSp}$; for more details we refer to \cite{BurgerIozziParreauPozzetti_RSCCharacterVarieties2}.
%
The space $\overline{\mathcal{X}(\Gamma,d)}^{\RSp}$ is endowed with a topology that turns it into a compact space,  but that is not necessarily Hausdorff.
Restricting to the set of closed points,  yields indeed a compact Hausdorff space \cite[Proposition 7.1.25 (ii)]{BochnakCosteRoy_RealAlgebraicGeometry}.
In \cite[Theorem 1.1]{BurgerIozziParreauPozzetti_RSCCharacterVarieties2} the authors prove that there is a bijective correspondence 
\begin{align*}
\partial^{\RSp}\mathcal{X}(\Gamma,d)&\coloneqq \overline{\mathcal{X}(\Gamma,d)}^{\RSp}\setminus \mathcal{X}(\Gamma,d)\\
&\cong \left\{ (\rho, \F_\rho) \ \middle\vert 
\begin{array}{l}
	\rho \from \Gamma \to \PSL(d,\F_\rho) \textrm{ reductive homomorphism,}\\
\F_\rho \supseteq \R \textrm{ real closed,  non-Archimedean,  minimal}
\end{array}
\right\} {\Big/ \sim} \,,
\end{align*}
where two pairs $(\rho,\F_\rho)$ and $(\rho',\F'_{\rho'})$ are \emph{equivalent}, if there exists a field isomorphism $\sigma \from \F \to \F'$ such that $\sigma \circ \rho$ and $\rho'$ are $\PSL(d,\F_{\rho'})$-conjugate.
The field $\F_\rho$ is called \emph{minimal} if $\rho$ cannot be $\PSL(d,\F_\rho)$-conjugated into a representation $\rho \from \Gamma \to \PSL(d,\K)$ for $\K \subset \F_\rho$ a proper real closed subfield.

\begin{definition}
\label{definition:RepresentingPtsinRSC}
We say that a reductive representation $\rho' \from \Gamma \to \PSL(d,\F')$ \emph{represents} a point $\alpha=[(\rho,\F_\rho)] \in \overline{\mathcal{X}(\Gamma,d)}^\RSp$ if $\F'\supseteq \R$ is real closed and there exist a real closed field $\F$ and two field homomorphisms $\sigma' \from \F' \to \F$ and $\sigma_\rho \from \F_\rho \to \F$ such that $\sigma' \circ \rho'$ and $\sigma_\rho \circ \rho$ are conjugate in $\PSL(d,\F)$.
\end{definition}

We can now prove \Cref{thm:Intro:DegenerationsConvProjStr}.

\begin{theorem}[\Cref{thm:Intro:DegenerationsConvProjStr}]
\label{thm:DegenerationsConvProjStr}
Let $\alpha \in \partial^{\RSp} \PS(M)$ be a closed point.
Then there exist a non-Archimedean,  real closed valued field $\F$ and a representation $\rho \from \Gamma \to \PSL(d,\F)$ representing $\alpha$,  such that $\rho(\Gamma)$ preserves a non-empty,  open,  properly convex,  well-bordered subset $\Omega \subset\PP(\F^d)$,  and $\Gamma$ acts on the associated Hilbert metric space $X_\Omega$ without global fixed point.
\end{theorem}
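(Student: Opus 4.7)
The plan is to realize $\rho$ as an ultralimit of holonomies of convex projective structures on $M$ approximating $\alpha$, and then invoke \Cref{thm:Intro:UltralimitsHilbertGeometries}. Since $\alpha$ is a closed point of $\partial^{\RSp}\PS(M)$, the accessibility theorem \cite[Theorem 7.16]{BurgerIozziParreauPozzetti_RSCCharacterVarieties2} will provide a non-principal ultrafilter $\omega$ on $\N$, a scaling sequence $(\lambda_n)_{n \in \N}$, and a sequence $(\rho_n)_{n \in \N}$ of holonomies of convex real projective structures on $M$ such that the ultralimit $\rho_\omega \from \Gamma \to \PSL(d,\RRom)$, with $\F \coloneqq \RRom$ the associated Robinson field, is $\PSL(d,\F)$-conjugate to a representation representing $\alpha$. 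I will then take $\rho$ to be a representative of the conjugacy class of $\rho_\omega$. By construction each $\rho_n$ preserves a non-empty open properly convex set $\Omega_n \subset \PP(\R^d)$.

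Next, I would construct $\Omega$. By Benz\'ecri's theorem, I can conjugate each $\rho_n$ in $\PSL(d,\R)$ and replace $\Omega_n$ accordingly so that, in the standard affine chart $\R^{d-1} \subset \PP(\R^d)$, the inclusions $B \subset \Omega_n \subset rB$ hold uniformly for some fixed $r > 1$, where $B$ is the open unit Euclidean ball (compare \Cref{rem:UltralimitConvexCriterionAffineChart}). Setting $\Omega$ to be the interior in $\PP(\RRom^d)$ of $\ulim \Omega_n$, the fact that the ultralimit of the real closed balls is the $\RRom$-extension of the closed unit ball (\Cref{rem:UlimVSExtension}) yields $B_{\RRom} \subseteq \ulim \Omega_n \subseteq rB_{\RRom}$, so $\Omega$ is non-empty, open, properly convex, and contained in the standard affine chart; by \Cref{prop-UltralimitConvexAndLines}, it is well-bordered. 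Since each $\rho_n(\Gamma)$ preserves $\Omega_n$, $\rho_\omega(\Gamma)$ preserves $\ulim \Omega_n$ and hence its interior $\Omega$. Because $(\|\rho_n(\gamma)\|^{1/\lambda_n})$ is automatically $\omega$-bounded for every $\gamma \in \Gamma$ (as $\rho_\omega$ is well-defined in $\PSL(d,\RRom)$), \Cref{thm:Intro:UltralimitsHilbertGeometries}(2) then produces a well-defined isometric action of $\Gamma$ on $X_\Omega$ via $\rho_\omega$.

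The main obstacle will be ruling out a global fixed point. Suppose for contradiction that some $\ov x \in X_\Omega$ is fixed by all of $\rho_\omega(\Gamma)$; then every $\rho_\omega(\gamma)$ has zero translation length on $X_\Omega$. Since $\Omega$ is well-bordered and properly convex, the classical real-case formula carries over: for a projective transformation $g$ preserving $\Omega$ with a lift having generalized eigenvalues $\mu_1,\ldots,\mu_d \in \C_\omega$ sorted by decreasing $\RRom$-absolute value, the translation length of $g$ on $X_\Omega$ equals $\tfrac{1}{2}\bigl(\log\abs{\mu_1}_{\RRom} - \log\abs{\mu_d}_{\RRom}\bigr)$. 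Combining this with \Cref{propo:UlimEV}, vanishing of translation length for every $\gamma \in \Gamma$ translates into $\lim_\omega \tfrac{1}{\lambda_n}\bigl(\log|\mu_1(\rho_n(\gamma))| - \log|\mu_d(\rho_n(\gamma))|\bigr) = 0$, that is, the Jordan projection of $\rho_\omega(\gamma)$ is trivial valuation-wise for every $\gamma$. By a standard reduction to the residue field of $\RRom$, which is $\R$, this would force $\rho_\omega$ to be $\PSL(d,\F)$-conjugate to a representation into $\PSL(d,\R)$ representing $\alpha$, placing $\alpha$ in $\PS(M)$ and contradicting $\alpha \in \partial^{\RSp}\PS(M)$.
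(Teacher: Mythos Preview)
Your overall strategy (accessibility, ultralimit of the invariant convex sets, translation-length lower bound) matches the paper's, but two of the steps have genuine gaps.

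The Benz\'ecri normalization is the first. Conjugating each $\rho_n$ by some $g_n \in \PSL(d,\R)$ replaces it by $g_n\rho_n g_n^{-1}$, and for this sequence still to have a well-defined ultralimit in $\PSL(d,\RRom)$ with the \emph{same} scaling $(\lambda_n)$ supplied by the accessibility theorem, you would need $(g_n)$ to be $\omega$-bounded, which Benz\'ecri does not give you. The remark you cite only says the hypotheses of \Cref{thm:Intro:UltralimitsHilbertGeometries} can be arranged after such a conjugation; it does not say the resulting ultralimit still represents $\alpha$. The paper avoids this by keeping the $\rho_n$ fixed: it transfers non-parabolicity of holonomies in $\PS(M)$ to $\rho_\omega$ (a semi-algebraic condition, so Tarski--Seidenberg applies), which forces $\ulim\Omega_n$ to span and hence have non-empty interior, and then runs the same argument on the duals $\Omega_n^*$ to produce a simplex inside $\ulim(\Omega_n^*)$ whose dual simplex contains $\ulim\Omega_n$, yielding proper convexity.

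The second gap is the ``standard reduction to the residue field''. The paper only needs and proves the inequality $\ell_{X_{\Omega_\omega}}(\rho_\omega(\gamma)) \geq \log\bigl(\abs{\lambda_1(\rho_\omega(\gamma))}/\abs{\lambda_d(\rho_\omega(\gamma))}\bigr)$ (no factor $\tfrac12$ with the paper's cross-ratio convention), obtained by transferring positive biproximality to $\rho_\omega(\gamma)$ and checking that the attracting and repelling hyperplanes $\varphi_\omega^\pm = \ulim \varphi_n^\pm$ lie in $\Omega_\omega^*$, so that $\CR{\varphi_\omega^-, y, \rho_\omega(\gamma)y, \varphi_\omega^+}$ is admissible in the supremum defining $d_{\Omega_\omega}$. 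The existence of some $\gamma$ with $\abs{\lambda_1(\rho_\omega(\gamma))} > \abs{\lambda_d(\rho_\omega(\gamma))}$ is then precisely \cite[Theorem~1.2]{BurgerIozziParreauPozzetti_RSCCharacterVarieties2}, which characterizes closed boundary points this way and which you should cite. Your contrapositive---trivial Jordan valuation for every $\gamma$ forces a conjugation into $\PSL(d,\R)$---is not routine: having all eigenvalues in $\mathcal{O}^\times$ does not by itself conjugate $\rho_\omega$ into $\PSL(d,\mathcal{O})$, and even if it did, the reduction modulo the maximal ideal would not obviously represent $\alpha$ in the sense of \Cref{definition:RepresentingPtsinRSC}.
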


\begin{proof}
We construct $\rho$, $\F$ and $\Omega$ as ultralimits of the corresponding real objects.
Choose any reductive representation $\rho' \from \Gamma \to \PSL(d,\F')$
representing $\alpha$ with $\F'$ non-Archimedean,  real closed and minimal.

Pick a non-principal ultrafilter $\omega$ and a sequence of scalars $\lambda=(\lambda_n) \in \R^\N$ tending to infinity.
Since $(\rho',\F')$ represents a closed point,  there exist an order-preserving field injection $i\from \F' \hookrightarrow \RRom$ into the Robinson field with respect to $\lambda$ and $\omega$,  and a sequence of representations $\rho_n \from \Gamma \to \PSL(d, \R)$ 
such that $i \circ \rho'$ and $\rho_\omega$,  the ultralimit of $\rho_n$ (see \Cref{subsection:UlimInLinearGroup} and proof of \Cref{thm:Intro:UltralimitsHilbertGeometries}),  are $\PSL(d,\RRom)$-conjugate \cite[Theorem 7.16]{BurgerIozziParreauPozzetti_RSCCharacterVarieties2}.
In other words $(\rho_\omega, \RRom)$ represents $\alpha$.

For $n\in \N$,  let $\Omega_n \subset \PP V$ be the properly convex domain divided by $\rho_n(\Gamma)$.
By the considerations at the beginning of \Cref{subsection:UlimConvexProjSets},  the ultralimit $\ulim \Omega_n \subset \PVom$ is a closed convex subset preserved by $\rho_\omega(\Gamma)$.
We have that the interior $\Omega_\omega$ of $\ulim\Omega_n$ is open,  convex and well-bordered,  see \Cref{prop-UltralimitConvexAndLines}.
We are left to show that $\Omega_\omega$ is non-empty and properly convex.

We first show that any reductive representation $\rho \from \Gamma \to \PSL(d,\F)$ representing $\alpha$ is non-parabolic.
Since $\PS(M)$ is a semi-algebraic subset consisting of conjugacy classes of representations that are non-parabolic \cite{Vey_AutomAffinesOuvertsConvexesSaillants},  its preimage $\mathfrak{T}$ in $\Hom_{\textnormal{red}}(\Gamma,\PSL(d,\R))$ is a closed,  $\PSL(d,\R)$-invariant semi-algebraic subset consisting of non-parabolic representations.
Since $\rho \from \Gamma \to \PSL(d,\F)$ also represents a point in the real spectrum compactification of $\mathfrak{T}$ \cite[Corollary 7.8]{BurgerIozziParreauPozzetti_RSCCharacterVarieties2},  we conclude that $\rho$ is non-parabolic \cite[Proposition 6.6]{BurgerIozziParreauPozzetti_RSCCharacterVarieties2}. 
Since $(\rho_\omega,\RRom)$ represents $\alpha$,  this implies that $\rho_\omega$ is non-parabolic,  and hence $\ulim \Omega_n$ is not contained in a proper projective subspace.  
In particular,  its interior $\Omega_\omega$ is a non-empty.

To prove that $\Omega_\omega$ is properly convex, i.e.\ bounded in an affine chart,  we look at the dual $\Omega_n^*$ of $\Omega_n$ in $\PV^*$.
Observe that the ultralimit $\ulim(\Omega_n^*) \subset \PP(\Vom^*)$ is a convex subset preserved by the contragradient representation $\rho_\omega^{-\top}$.
Thus $\ulim(\Omega_n^*)$ has non-empty interior since $\rho_\omega^{-\top}$ is also non-parabolic. 
In particular $\ulim(\Omega_n^*)$ contains a $(d-1)$-simplex
$S_{\eb_\omega^*}=\PP(\sum_{i=1}^d  x_i e_{\omega,i}^{*} \mid x_i \in \RRom, \, x_i \geq0)$,  where
$\eb_\omega^*=(e_{\omega,i}^{*})_{i}$ is a basis of $\Vom^{*}$.
Taking for each $i$ a sequence $(e_{n,i}^{*})_n$ in $\Omega_n^{*}$
with ultralimit $e_{\omega,i}^* \in \ulim(\Omega_n^*)$, we have that  $\eb_n^{*}=(e_{n,i}^{*})_i$ is a
basis of $V^{*}$ for $\omega$-almost all $n$, 
and the associated simplex $S_{\eb_n^*}$ is included in $\Omega_n^{*}$ and
has ultralimit  $S_{\eb_\omega^*}$.
Then for $\omega$-almost all $n$, the convex $\Omega_n$ is included in the simplex $S_{\eb_n}$,  where  $\eb_n$  is the dual basis of $\eb_n^{*}$.  
Passing to ultralimits, it is easy to see that 
the ultralimit $\eb_\omega$  of $\eb_n$ is the dual basis of $\eb_\omega^{*}$,
hence $\ulim \Omega_n$ is contained in the simplex $\ulim S_{\eb_n}=S_{\eb_\omega}$.
In particular $\ulim \Omega_n$ is contained in an affine chart and
bounded in it,  hence also its interior $\Omega_\omega$.
\compl{This proves also that the interior of $\ulim(\Omega_n^*)$ is included in $(\ulim \Omega_n)^*$, hence equal since the latter is open and included in $\ulim(\Omega_n^*)$.}

\smallskip
To show that $\Gamma$ acts on $X_\Omega$ without global fixed point,  
we show that the translation length 
\[\ell_{X_{\Omega_\omega}}(\gamma) \coloneqq \inf_{x \in X_{\Omega_\omega}} d_\Omega(x,\rho_\omega(\gamma)x)\]
of an element $\gamma \in \pi_1(M)$ is bounded from below by the logarithm of the ratio of the absolute values of the largest and smallest eigenvalue of $\rho_\omega(\gamma)$.

Let $\gamma\in\pi_1(M)$ be non-trivial.
For every $n \in \NN$,  $\rho_n(\gamma)$ is positively biproximal \cite[Proposition 5.1]{Benoist_ConvexesDivisiblesI}.
We denote by $x_n^+, x_n^- \in \PP V$ and $\varphi_n^+, \varphi_n^- \in \PP V^*$ the attracting respectively repelling fixed points of $g_n\coloneqq \rho_n(\gamma)$.
Then $x_n^+, x_n^- \in \partial \Omega_n$ and $\varphi_n^+, \varphi_n^- \in \partial \Omega_n^*$.

We use the same argument as for non-parabolicity to show that any representation representing $\alpha$ is positively biproximal.
Since positive biproximality is a semi-algebraic condition satisfied by every $\rho \in \mathfrak{T}$,  which is a closed $\PSL(d,\R)$-invariant semi-algebraic set, the same property is satisfied for every representation representing a point in the real spectrum compactification of $\mathfrak{T}$ (and thus also of $\PS(M)$) \cite[Proposition 6.15]{BurgerIozziParreauPozzetti_RSCCharacterVarieties2}.
In particular $g_\omega \coloneqq \rho_\omega(\gamma)$ is positively biproximal,  and
its two attractive and repulsive fixed points $\varphi_\omega^\pm$ in $\PP \Vom$ are the ultralimits of those of $g_n$ by \Cref{propo:UlimProximalElements}.
This implies that  $\varphi_\omega^\pm$ is in $\Omega_\omega^*$. 
Indeed, $\varphi_\omega^\pm=\ulim \varphi_n^\pm$ do not change of sign on the convex $\ulim \Omega_n$, hence do not vanish on its interior $\Omega_\omega$.

Thus for every $y_\omega \in \Omega_\omega$ we have
\[d_{\Omega_\omega}(y_\omega,g_\omega y_\omega) \geq \log \left|\CR{\varphi_\omega^-,y_\omega,g_\omega y_\omega,\varphi_\omega^+}\right|= \log \frac{|\lambda_1(g_\omega)|}{|\lambda_d(g_\omega)|}\eqqcolon \ell_\infty(g_\omega), \] 
which implies that $\ell_{X_{\Omega_\omega}}(g_\omega) \geq \ell_\infty(g_\omega)$.

Recall that $\alpha$ is represented by a homomorphism $\rho' \from \pi_1(M) \to \PSL(d,\F')$ with $\F'$ a minimal field.
The field $\F'$ is endowed with a non-trivial order-compatible absolute value $\abs{\cdot}$, that is unique up to multiplication by a positive real scalar \cite[Lemma 2.7]{BurgerIozziParreauPozzetti_RSCCharacterVarieties2}.
Since $\alpha$ is a closed point there exists $\gamma \in \pi_1(M)$ with 
$|\lambda_1(\rho'(\gamma))|>|\lambda_d(\rho'(\gamma))|$, see \cite[Theorem 1.2]{BurgerIozziParreauPozzetti_RSCCharacterVarieties2}.

The inclusion $i \from \F' \hookrightarrow \RRom$ preserves absolute values (for some choice of positive scaling).
Since $i \circ \rho'$ and $\rho_\omega$ are conjugate representations,  it follows also for $\rho_\omega$ that
\[|\lambda_1(\rho_\omega(\gamma))|>|\lambda_d(\rho_\omega(\gamma))|.\]
Thus $\Gamma$ acting on $X_{\Omega_\omega}$ via $\rho_\omega$ does not have a global fixed point as $\ell_{X_{\Omega_\omega}}(g_\omega) \geq \ell_\infty(g_\omega)>0$.
\end{proof}

\bibliographystyle{alpha}
\bibliography{bibl}

\end{document}